\tikzset{cross/.style={cross out, draw=black, minimum size=2*(#1-\pgflinewidth), inner sep=0pt, outer sep=0pt},
cross/.default={1pt}}
\newcommand{\mI}{\mathrm{I}}
\newcommand{\mB}{\mathrm{B}}
\newcommand{\conv}{\mathrm{conv}}
\newcommand{\inte}{\mathrm{int}}
\newcommand{\tg}{\tilde{g}}
\newcommand{\tx}{\tilde{x}}
\newcommand{\ty}{\tilde{y}}
\newcommand{\tz}{\tilde{z}}
\renewcommand{\phi}{\varphi}
\newcommand{\tr}{\mathrm{Tr}}
\renewcommand{\P}{\mathbb{P}}
\newcommand{\E}{\mathbb{E}}
\newcommand{\N}{\mathbb{N}}
\newcommand{\R}{\mathbb{R}}
\newcommand{\cZ}{\mathcal{Z}}
\newcommand{\cA}{\mathcal{A}}
\newcommand{\cR}{\mathcal{R}}
\newcommand{\cB}{\mathcal{B}}
\newcommand{\cN}{\mathcal{N}}
\newcommand{\cL}{\mathcal{L}}
\newcommand{\cX}{\mathcal{X}}
\newcommand{\cY}{\mathcal{Y}}
\newcommand{\cS}{\mathcal{S}}
\newcommand{\cE}{\mathcal{E}}
\newcommand{\cK}{\mathcal{K}}
\newcommand{\cD}{\mathcal{D}}
\def\ds1{\mathds{1}}
\renewcommand{\epsilon}{\varepsilon}
\newcommand{\argmin}{\mathop{\mathrm{argmin}}}
\renewcommand{\tilde}{\widetilde}
\newlength{\minipagewidth}
\newcommand{\beq}{\begin{equation}}
\newcommand{\eeq}{\end{equation}}
\newcommand{\beqa}{\begin{eqnarray}}
\newcommand{\eeqa}{\end{eqnarray}}
\newcommand{\beqan}{\begin{eqnarray*}}
\newcommand{\eeqan}{\end{eqnarray*}}
\def\ba#1\ea{\begin{align*}#1\end{align*}} 
\def\banum#1\eanum{\begin{align}#1\end{align}} 
\title{Convex Optimization: Algorithms and Complexity}
\author{S\'ebastien Bubeck \\
Theory Group, Microsoft Research \\
sebubeck@microsoft.com}
\begin{document}

\frontmatter

\maketitle

\tableofcontents

\mainmatter

\begin{abstract}
This monograph presents the main complexity theorems in convex optimization and their corresponding algorithms.
Starting from the fundamental theory of black-box optimization, the material progresses towards recent advances in structural optimization and stochastic optimization. Our presentation of black-box optimization, strongly influenced by Nesterov's seminal book and Nemirovski's lecture notes, includes the analysis of cutting plane methods, as well as (accelerated) gradient descent schemes. We also pay special attention to non-Euclidean settings (relevant algorithms include Frank-Wolfe, mirror descent, and dual averaging) and discuss their relevance in machine learning. We provide a gentle introduction to structural optimization with FISTA (to optimize a sum of a smooth and a simple non-smooth term), saddle-point mirror prox (Nemirovski's alternative to Nesterov's smoothing), and a concise description of interior point methods. In stochastic optimization we discuss stochastic gradient descent, mini-batches, random coordinate descent, and sublinear algorithms. We also briefly touch upon convex relaxation of combinatorial problems and the use of randomness to round solutions, as well as random walks based methods.
\end{abstract}

\chapter{Introduction}
\label{intro}
The central objects of our study are convex functions and convex sets in $\R^n$.

\begin{definition}[Convex sets and convex functions]
A set $\cX \subset \R^n$ is said to be convex if it contains all of its segments, that is
$$\forall (x,y,\gamma) \in \cX \times \cX \times [0,1], \; (1-\gamma) x + \gamma y \in \mathcal{X}.$$
A function $f : \mathcal{X} \rightarrow \R$ is said to be convex if it always lies below its chords, that is
$$ \forall (x,y,\gamma) \in \cX \times \cX \times [0,1], \; f((1-\gamma) x + \gamma y) \leq (1-\gamma)f(x) + \gamma f(y) .$$
\end{definition}
We are interested in algorithms that take as input a convex set $\cX$ and a convex function $f$ and output an approximate minimum of $f$ over $\cX$. We write compactly the problem of finding the minimum of $f$ over $\cX$ as
\begin{align*}
& \mathrm{min.} \; f(x) \\
& \text{s.t.} \; x \in \cX .
\end{align*}
In the following we will make more precise how the set of constraints $\cX$ and the objective function $f$ are specified to the algorithm. Before that we proceed to give a few important examples of convex optimization problems in machine learning.

\section{Some convex optimization problems in machine learning} \label{sec:mlapps}
Many fundamental convex optimization problems in machine learning take the following form:
\begin{equation} \label{eq:veryfirst}
\underset{x \in \R^n}{\mathrm{min.}} \; \sum_{i=1}^m f_i(x) + \lambda \cR(x) ,
\end{equation}
where the functions $f_1, \hdots, f_m, \cR$ are convex and $\lambda \geq 0$ is a fixed parameter. The interpretation is that $f_i(x)$ represents the cost of using $x$ on the $i^{th}$ element of some data set, and $\cR(x)$ is a regularization term which enforces some ``simplicity'' in $x$. We discuss now major instances of \eqref{eq:veryfirst}. In all cases one has a data set of the form $(w_i, y_i) \in \R^n \times \cY, i=1, \hdots, m$ and the cost function $f_i$ depends only on the pair $(w_i, y_i)$. We refer to \cite{HTF01, SS02, SSS14} for more details on the origin of these important problems. The mere objective of this section is to expose the reader to a few concrete convex optimization problems which are routinely solved.
 
In classification one has $\cY = \{-1,1\}$. Taking $f_i(x) = \max(0, 1- y_i x^{\top} w_i)$ (the so-called hinge loss) and $\cR(x) = \|x\|_2^2$ one obtains the SVM problem. On the other hand taking $f_i(x) = \log(1 + \exp(-y_i x^{\top} w_i) )$ (the logistic loss) and again $\cR(x) = \|x\|_2^2$ one obtains the (regularized) logistic regression problem.

In regression one has $\cY = \R$. Taking $f_i(x) = (x^{\top} w_i - y_i)^2$ and $\cR(x) = 0$ one obtains the vanilla least-squares problem which can be rewritten in vector notation as
$$\underset{x \in \R^n}{\mathrm{min.}} \; \|W x - Y\|_2^2 ,$$
where $W \in \R^{m \times n}$ is the matrix with $w_i^{\top}$ on the $i^{th}$ row and $Y =(y_1, \hdots, y_n)^{\top}$.
With $\cR(x) = \|x\|_2^2$ one obtains the ridge regression problem, while with $\cR(x) = \|x\|_1$ this is the LASSO problem \cite{Tib96}.

Our last two examples are of a slightly different flavor. In particular the design variable $x$ is now best viewed as a matrix, and thus we denote it by a capital letter $X$. The sparse inverse covariance estimation problem can be written as follows, given some empirical covariance matrix $Y$,
\begin{align*}
& \mathrm{min.} \; \mathrm{Tr}(X Y) - \mathrm{logdet}(X) + \lambda \|X\|_1 \\
& \text{s.t.} \; X \in \R^{n \times n}, X^{\top} = X, X \succeq 0 .
\end{align*}
Intuitively the above problem is simply a regularized maximum likelihood estimator (under a Gaussian assumption). 

Finally we introduce the convex version of the matrix completion problem. Here our data set consists of observations of some of the entries of an unknown matrix $Y$, and we want to ``complete" the unobserved entries of $Y$ in such a way that the resulting matrix is ``simple" (in the sense that it has low rank). After some massaging (see \cite{CR09}) the (convex) matrix completion problem can be formulated as follows:
\begin{align*}
& \mathrm{min.} \; \mathrm{Tr}(X) \\
& \text{s.t.} \; X \in \R^{n \times n}, X^{\top} = X, X \succeq 0, X_{i,j} = Y_{i,j} \; \text{for} \; (i,j) \in \Omega ,
\end{align*}
where $\Omega \subset [n]^2$ and $(Y_{i,j})_{(i,j) \in \Omega}$ are given. 

\section{Basic properties of convexity}
A basic result about convex sets that we shall use extensively is the Separation Theorem.

\begin{theorem}[Separation Theorem]
Let $\mathcal{X} \subset \R^n$ be a closed convex set, and $x_0 \in \R^n \setminus \mathcal{X}$. Then, there exists $w \in \R^n$ and $t \in \R$ such that
$$w^{\top} x_0 < t, \; \text{and} \; \forall x \in \mathcal{X}, w^{\top} x \geq t.$$
\end{theorem}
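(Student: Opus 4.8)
The plan is to separate $x_0$ from $\mathcal{X}$ by means of the point of $\mathcal{X}$ nearest to $x_0$. Assume first $\mathcal{X} \neq \emptyset$ (if $\mathcal{X} = \emptyset$ take $w = 0$, $t = 1$, the universally quantified claim being vacuous). I would begin by showing that the infimum $\inf_{x \in \mathcal{X}} \|x_0 - x\|$ is attained at some $y \in \mathcal{X}$: fixing any $z \in \mathcal{X}$, the minimization is unchanged if we restrict to $\mathcal{X} \cap \{x : \|x_0 - x\| \leq \|x_0 - z\|\}$, which is bounded and, being the intersection of the closed set $\mathcal{X}$ with a closed ball, closed, hence compact; the continuous map $x \mapsto \|x_0 - x\|$ attains its minimum on it, and this minimizer is global over $\mathcal{X}$.

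Next I would set $w = y - x_0$, which is nonzero since $y \in \mathcal{X}$ and $x_0 \notin \mathcal{X}$. The \emph{crucial step} is the first-order optimality condition
$$(y - x_0)^{\top}(x - y) \geq 0 \qquad \text{for all } x \in \mathcal{X},$$
which is where convexity enters. For $x \in \mathcal{X}$ and $\gamma \in (0,1]$ the point $(1-\gamma) y + \gamma x$ lies in $\mathcal{X}$ by convexity, so $\|x_0 - y\|^2 \leq \|x_0 - y - \gamma(x - y)\|^2$; expanding the right-hand side, cancelling $\|x_0 - y\|^2$, dividing by $\gamma > 0$, and letting $\gamma \to 0^+$ yields the displayed inequality.

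Finally, I would take $t = w^{\top} y$. The optimality condition rewrites as $w^{\top} x \geq w^{\top} y = t$ for every $x \in \mathcal{X}$, which is the second assertion; and $t - w^{\top} x_0 = (y - x_0)^{\top}(y - x_0) = \|w\|^2 > 0$, giving $w^{\top} x_0 < t$. The main obstacle is not any single computation but deploying the two hypotheses in the right places: closedness of $\mathcal{X}$ is exactly what guarantees the projection $y$ exists (the statement fails, e.g., for an open ball with $x_0$ on its boundary), and convexity is exactly what makes the variational inequality hold; the remainder is routine algebra with the squared Euclidean norm.
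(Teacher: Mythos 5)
Your proof is correct and is the canonical projection argument: project $x_0$ onto $\mathcal{X}$ (using closedness for existence, via compactness of the restricted domain), derive the variational inequality $(y-x_0)^\top(x-y)\geq 0$ from convexity, and read off $w$ and $t$. The paper states the Separation Theorem as a standing fact without giving a proof, so there is nothing in the text to compare against; your argument is exactly the one a reader would be expected to supply, and it correctly isolates where each hypothesis (closedness, convexity) is used.

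One tiny remark: in the edge case $\mathcal{X}=\emptyset$ you take $w=0$, which separates only in the degenerate sense; if you want $w\neq 0$ always, take any nonzero $w$ and $t = w^\top x_0 + 1$ so that $w^\top x_0 < t$ and the universally quantified clause remains vacuously true. In the nonempty case your $w=y-x_0$ is automatically nonzero, as you note.
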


Note that if $\mathcal{X}$ is not closed then one can only guarantee that $w^{\top} x_0 \leq w^{\top} x, \forall x \in \mathcal{X}$ (and $w \neq 0$). This immediately implies the Supporting Hyperplane Theorem ($\partial \cX$ denotes the boundary of $\cX$, that is the closure without the interior):

\begin{theorem}[Supporting Hyperplane Theorem]
Let $\mathcal{X} \subset \R^n$ be a convex set, and $x_0 \in \partial \mathcal{X}$. Then, there exists $w \in \R^n, w \neq 0$ such that
$$\forall x \in \mathcal{X}, w^{\top} x \geq w^{\top} x_0.$$
\end{theorem}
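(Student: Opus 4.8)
The plan is to reduce the statement to the Separation Theorem by approximating $\mathcal{X}$ from outside, since the difficulty is exactly that $x_0$ lies on the boundary of $\mathcal{X}$ rather than strictly outside it, so the Separation Theorem does not apply directly. First I would handle the trivial case: if $\mathcal{X}$ has empty interior it is contained in an affine hyperplane, and any normal vector $w$ to that hyperplane works immediately. So assume $\mathrm{int}(\mathcal{X}) \neq \emptyset$.

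Next I would construct a sequence of points converging to $x_0$ from outside the closure. Since $x_0 \in \partial \mathcal{X}$, there is a sequence $x_k \to x_0$ with $x_k \notin \overline{\mathcal{X}}$ for all $k$. The set $\overline{\mathcal{X}}$ is closed and convex, so the Separation Theorem yields, for each $k$, a vector $w_k \in \R^n$ (which we may normalize so that $\|w_k\|_2 = 1$) and a scalar satisfying $w_k^{\top} x_k < w_k^{\top} x$ for all $x \in \overline{\mathcal{X}}$, hence in particular for all $x \in \mathcal{X}$.

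The key step is then a compactness argument: the sequence $(w_k)$ lives on the unit sphere of $\R^n$, which is compact, so it has a subsequence converging to some $w$ with $\|w\|_2 = 1$, in particular $w \neq 0$. Passing to the limit along this subsequence in the inequality $w_k^{\top} x_k \leq w_k^{\top} x$ — using that $x_k \to x_0$ and that the inner product is continuous — gives $w^{\top} x_0 \leq w^{\top} x$ for every $x \in \mathcal{X}$, which is precisely the claimed conclusion.

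The main obstacle is ensuring the normal directions do not degenerate in the limit; this is exactly why the normalization $\|w_k\|_2 = 1$ together with the compactness of the unit sphere is essential, and why one cannot simply take a naive limit of the separating functionals without rescaling. One should also be slightly careful that the strict inequality $w_k^{\top} x_k < w_k^{\top} x$ only survives as a non-strict inequality after taking limits, which is consistent with the statement of the theorem (and with the remark preceding it about non-closed sets).
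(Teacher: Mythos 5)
Your proof is correct and is essentially the standard argument. The paper itself gives no explicit proof of the Supporting Hyperplane Theorem: it merely remarks that the Separation Theorem, stated for closed sets, still yields non-strict separation ($w^{\top}x_0 \leq w^{\top}x$) when $\mathcal{X}$ is not closed and $x_0 \notin \mathcal{X}$, and then asserts that the Supporting Hyperplane Theorem ``immediately'' follows. What you have done is supply the missing argument that makes that assertion rigorous: you reduce to the \emph{closed} Separation Theorem by approximating $x_0$ from outside $\overline{\mathcal{X}}$, normalize the separating vectors onto the unit sphere, and pass to a subsequential limit. This is precisely the standard route, and the normalization-plus-compactness step you highlight is exactly where the real content lies.

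One small step you take for granted: the existence of a sequence $x_k \to x_0$ with $x_k \notin \overline{\mathcal{X}}$ requires that $x_0 \notin \mathrm{int}(\overline{\mathcal{X}})$. You know $x_0 \notin \mathrm{int}(\mathcal{X})$, but for this to give what you need one must use the (standard, but not trivial) fact that $\mathrm{int}(\mathcal{X}) = \mathrm{int}(\overline{\mathcal{X}})$ for a convex set with nonempty interior. Your separate handling of the empty-interior case is also correct and necessary. These are minor points; the structure and the main ideas of the proof are sound, and in fact more complete than what the paper offers.
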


We introduce now the key notion of {\em subgradients}.

\begin{definition}[Subgradients]
Let $\mathcal{X} \subset \R^n$, and $f : \mathcal{X} \rightarrow \R$. Then $g \in \R^n$ is a subgradient of $f$ at $x \in \mathcal{X}$ if for any $y \in \mathcal{X}$ one has
$$f(x) - f(y) \leq g^{\top} (x - y) .$$
The set of subgradients of $f$ at $x$ is denoted $\partial f (x)$.
\end{definition}
To put it differently, for any $x \in \cX$ and $g \in \partial f(x)$, $f$ is above the linear function $y \mapsto f(x) + g^{\top} (y-x)$. The next result shows (essentially) that a convex functions always admit subgradients.

\begin{proposition}[Existence of subgradients] \label{prop:existencesubgradients}
Let $\mathcal{X} \subset \R^n$ be convex, and $f : \mathcal{X} \rightarrow \R$. If $\forall x \in \mathcal{X}, \partial f(x) \neq \emptyset$ then $f$ is convex. Conversely if $f$ is convex then for any $x \in \mathrm{int}(\mathcal{X}), \partial f(x) \neq \emptyset$. Furthermore if $f$ is convex and differentiable at $x$ then $\nabla f(x) \in \partial f(x)$. 
\end{proposition}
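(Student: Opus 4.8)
The plan is to prove the three assertions in turn. For the first implication, suppose every point of $\cX$ admits a subgradient; I want to verify the chord inequality from the definition of convexity. Fix $x, y \in \cX$ and $\gamma \in [0,1]$, and let $z = (1-\gamma)x + \gamma y$, which lies in $\cX$ since $\cX$ is convex. Pick $g \in \partial f(z)$. Applying the subgradient inequality at $z$ once with the point $x$ and once with the point $y$ gives $f(z) - f(x) \leq g^\top(z - x)$ and $f(z) - f(y) \leq g^\top(z - y)$. Now form the convex combination $(1-\gamma)$ times the first plus $\gamma$ times the second: the right-hand side becomes $g^\top\bigl(z - (1-\gamma)x - \gamma y\bigr) = g^\top(z - z) = 0$, so $f(z) \leq (1-\gamma) f(x) + \gamma f(y)$, which is exactly convexity.

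For the converse, assume $f$ is convex and fix $x \in \inte(\cX)$; I must produce a subgradient at $x$. The natural idea is to apply the Supporting Hyperplane Theorem to the epigraph $\cE = \{(y, t) \in \cX \times \R : t \geq f(y)\} \subset \R^{n+1}$, which is convex precisely because $f$ is convex. The point $(x, f(x))$ lies on $\partial \cE$, so there is a nonzero $(w, s) \in \R^n \times \R$ with $w^\top y + s t \geq w^\top x + s f(x)$ for all $(y,t) \in \cE$. Letting $t \to +\infty$ forces $s \geq 0$. The key point — and the main obstacle — is ruling out $s = 0$: if $s = 0$ then $w \neq 0$ and $w^\top y \geq w^\top x$ for all $y \in \cX$, which is impossible when $x$ is interior (one can move from $x$ in the direction $-w$ and stay in $\cX$, strictly decreasing $w^\top y$). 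Hence $s > 0$, and setting $g = -w/s$ and taking $t = f(y)$ in the inequality yields $f(y) \geq f(x) + g^\top(y - x)$ for all $y \in \cX$, i.e. $g \in \partial f(x)$.

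For the last assertion, suppose $f$ is convex and differentiable at $x$. Convexity gives, for any $y \in \cX$ and $\gamma \in (0,1]$,
$$ f\bigl(x + \gamma(y - x)\bigr) \leq (1-\gamma) f(x) + \gamma f(y), $$
hence
$$ \frac{f\bigl(x + \gamma(y-x)\bigr) - f(x)}{\gamma} \leq f(y) - f(x). $$
Letting $\gamma \to 0^+$, the left-hand side tends to the directional derivative $\nabla f(x)^\top (y - x)$, so $f(y) - f(x) \geq \nabla f(x)^\top(y-x)$, which rearranges to the subgradient inequality $f(x) - f(y) \leq \nabla f(x)^\top(x - y)$; thus $\nabla f(x) \in \partial f(x)$.

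The only genuinely delicate step is the interior argument excluding a "vertical" supporting hyperplane ($s = 0$) in the converse; everything else is a short manipulation of the defining inequalities. One should also note that the statement as phrased only claims existence of subgradients on $\inte(\cX)$, so no boundary subtleties need to be addressed.
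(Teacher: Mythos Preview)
Your proof is correct and follows essentially the same approach as the paper: the convex-combination trick with a subgradient at $z=(1-\gamma)x+\gamma y$ for the first claim, the supporting hyperplane to the epigraph together with the interior-point argument to exclude a vertical hyperplane for the second, and the difference-quotient limit for the third. The only cosmetic difference is a sign convention in how the supporting hyperplane inequality is written.
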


Before going to the proof we recall the definition of the epigraph of a function $f : \mathcal{X} \rightarrow \R$:
$$\mathrm{epi}(f) = \{(x,t) \in \mathcal{X} \times \R : t \geq f(x) \} .$$
It is obvious that a function is convex if and only if its epigraph is a convex set.

\begin{proof}
The first claim is almost trivial: let $g \in \partial f((1-\gamma) x + \gamma y)$, then by definition one has
\begin{eqnarray*}
& & f((1-\gamma) x + \gamma y) \leq f(x) + \gamma g^{\top} (y - x) , \\
& & f((1-\gamma) x + \gamma y) \leq f(y) + (1-\gamma) g^{\top} (x - y) ,
\end{eqnarray*}
which clearly shows that $f$ is convex by adding the two (appropriately rescaled) inequalities.
\newline

Now let us prove that a convex function $f$ has subgradients in the interior of $\mathcal{X}$. We build a subgradient by using a supporting hyperplane to the epigraph of the function. Let $x \in \mathcal{X}$. Then clearly $(x,f(x)) \in \partial \mathrm{epi}(f)$, and $\mathrm{epi}(f)$ is a convex set. Thus by using the Supporting Hyperplane Theorem, there exists $(a,b) \in \R^n \times \R$ such that
\begin{equation} \label{eq:supphyp}
a^{\top} x + b f(x) \geq a^{\top} y + b t, \forall (y,t) \in \mathrm{epi}(f) .
\end{equation}
Clearly, by letting $t$ tend to infinity, one can see that $b \leq 0$. Now let us assume that $x$ is in the interior of $\mathcal{X}$. Then for $\epsilon > 0$ small enough, $y=x + \epsilon a \in \mathcal{X}$, which implies that $b$ cannot be equal to $0$ (recall that if $b=0$ then necessarily $a \neq 0$ which allows to conclude by contradiction). Thus rewriting \eqref{eq:supphyp} for $t=f(y)$ one obtains
$$f(x) - f(y) \leq \frac{1}{|b|} a^{\top} (x - y) .$$
Thus $a / |b| \in \partial f(x)$ which concludes the proof of the second claim.
\newline

Finally let $f$ be a convex and differentiable function. Then by definition:
\begin{eqnarray*}
f(y) & \geq & \frac{f((1-\gamma) x + \gamma y) - (1- \gamma) f(x)}{\gamma} \\
& = & f(x) + \frac{f(x + \gamma (y - x)) - f(x)}{\gamma} \\
& \underset{\gamma \to 0}{\to} & f(x) + \nabla f(x)^{\top} (y-x),
\end{eqnarray*}
which shows that $\nabla f(x) \in \partial f(x)$.
\end{proof}

In several cases of interest the set of contraints can have an empty interior, in which case the above proposition does not yield any information. However it is easy to replace $\mathrm{int}(\cX)$ by $\mathrm{ri}(\cX)$ -the relative interior of $\cX$- which is defined as the interior of $\cX$ when we view it as subset of the affine subspace it generates. Other notions of convex analysis will prove to be useful in some parts of this text. In particular the notion of {\em closed convex functions} is convenient to exclude pathological cases: these are the convex functions with closed epigraphs. Sometimes it is also useful to consider the extension of a convex function $f: \cX \rightarrow \R$ to a function from $\R^n$ to $\overline{\R}$ by setting $f(x)= + \infty$ for $x \not\in \cX$. In convex analysis one uses the term {\em proper convex function} to denote a convex function with values in $\R \cup \{+\infty\}$ such that there exists $x \in \R^n$ with $f(x) < +\infty$. \textbf{From now on all convex functions will be closed, and if necessary we consider also their proper extension.} We refer the reader to \cite{Roc70} for an extensive discussion of these notions.

\section{Why convexity?}
The key to the algorithmic success in minimizing convex functions is that these functions exhibit a {\em local to global} phenomenon. We have already seen one instance of this in Proposition \ref{prop:existencesubgradients}, where we showed that $\nabla f(x) \in \partial f(x)$: the gradient $\nabla f(x)$ contains a priori only local information about the function $f$ around $x$ while the subdifferential $\partial f(x)$ gives a global information in the form of a linear lower bound on the entire function. Another instance of this local to global phenomenon is that local minima of convex functions are in fact global minima:

\begin{proposition}[Local minima are global minima]
Let $f$ be convex. If $x$ is a local minimum of $f$ then $x$ is a global minimum of $f$. Furthermore this happens if and only if $0 \in \partial f(x)$.
\end{proposition}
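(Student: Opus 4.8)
The plan is to prove this in two parts: first the equivalence of the three conditions ``$x$ is a local minimum'', ``$x$ is a global minimum'', ``$0 \in \partial f(x)$'', and along the way the implication from local to global, which is really the heart of the statement.

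First I would handle the easy equivalence involving the subgradient. By the very definition of a subgradient, $0 \in \partial f(x)$ means precisely that $f(x) - f(y) \leq 0^{\top}(x-y) = 0$ for all $y \in \cX$, i.e. $f(x) \leq f(y)$ for all $y$, which is exactly the statement that $x$ is a global minimum. So ``$0 \in \partial f(x)$'' and ``$x$ is a global minimum'' are trivially the same condition, and a global minimum is obviously a local minimum. The only real content is therefore: a local minimum is a global minimum.

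For that implication, suppose $x$ is a local minimum, so there is a radius $r > 0$ with $f(x) \leq f(z)$ for all $z \in \cX$ with $\|z - x\| \leq r$. Let $y \in \cX$ be arbitrary. For $\gamma \in [0,1]$ small enough, the point $z_\gamma = (1-\gamma)x + \gamma y$ lies in $\cX$ (by convexity) and within distance $r$ of $x$ (since $\|z_\gamma - x\| = \gamma \|y - x\|$). Convexity of $f$ gives $f(z_\gamma) \leq (1-\gamma) f(x) + \gamma f(y)$, and the local minimality gives $f(x) \leq f(z_\gamma)$. Combining, $f(x) \leq (1-\gamma) f(x) + \gamma f(y)$, hence $\gamma f(x) \leq \gamma f(y)$, and dividing by $\gamma > 0$ yields $f(x) \leq f(y)$. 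Since $y$ was arbitrary, $x$ is a global minimum.

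I do not anticipate a serious obstacle here; the argument is a clean application of the definition of convexity along the segment from $x$ to a competitor $y$. The only point requiring a little care is making sure that for small $\gamma$ the point $z_\gamma$ is simultaneously in $\cX$ (immediate from convexity) and in the neighborhood where local minimality applies (immediate from $\|z_\gamma - x\| = \gamma\|y-x\| \to 0$), and then that dividing by $\gamma$ is legitimate because we may take $\gamma > 0$. This ``pull a global competitor into the local neighborhood along a segment'' trick is exactly the local-to-global phenomenon the preceding paragraph advertises.
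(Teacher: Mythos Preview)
Your proof is correct and follows essentially the same approach as the paper: first the trivial equivalence between $0\in\partial f(x)$ and global minimality, then the local-to-global implication via the chain $f(x)\le f((1-\gamma)x+\gamma y)\le (1-\gamma)f(x)+\gamma f(y)$ for small $\gamma>0$. You spell out a few details (why the convex combination lands in both $\cX$ and the local neighborhood) that the paper leaves implicit, but the argument is the same.
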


\begin{proof}
Clearly $0 \in \partial f(x)$ if and only if $x$ is a global minimum of $f$. Now assume that $x$ is local minimum of $f$. Then for $\gamma$ small enough one has for any $y$,
$$f(x) \leq f((1-\gamma) x + \gamma y) \leq (1-\gamma) f(x) + \gamma f(y) ,$$
which implies $f(x) \leq f(y)$ and thus $x$ is a global minimum of $f$.
\end{proof}

The nice behavior of convex functions will allow for very fast algorithms to optimize them. This alone would not be sufficient to justify the importance of this class of functions (after all constant functions are pretty easy to optimize). However it turns out that surprisingly many optimization problems admit a convex (re)formulation. The excellent book \cite{BV04} describes in great details the various methods that one can employ to uncover the convex aspects of an optimization problem. We will not repeat these arguments here, but we have already seen that many famous machine learning problems (SVM, ridge regression, logistic regression, LASSO, sparse covariance estimation, and matrix completion) are formulated as convex problems.

We conclude this section with a simple extension of the optimality condition ``$0 \in \partial f(x)$'' to the case of constrained optimization. We state this result in the case of a differentiable function for sake of simplicity.
\begin{proposition}[First order optimality condition] \label{prop:firstorder}
Let $f$ be convex and $\cX$ a closed convex set on which $f$ is differentiable. Then
$$x^* \in \argmin_{x \in \cX} f(x) ,$$
if and only if one has
$$\nabla f(x^*)^{\top}(x^*-y) \leq 0, \forall y \in \cX .$$
\end{proposition}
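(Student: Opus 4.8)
The plan is to prove both implications, using in an essential way that convexity plus differentiability gives the first-order lower bound $f(y) \geq f(x^*) + \nabla f(x^*)^{\top}(y - x^*)$ (this is exactly the content of $\nabla f(x^*) \in \partial f(x^*)$ from Proposition~\ref{prop:existencesubgradients}), together with the fact that $\cX$ being convex lets us stay inside $\cX$ along any segment emanating from $x^*$.

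For the direction ``$\Leftarrow$'', I would assume $\nabla f(x^*)^{\top}(x^*-y) \leq 0$ for all $y \in \cX$ and show $x^*$ is a global minimizer on $\cX$. Fix an arbitrary $y \in \cX$. By the subgradient inequality just mentioned, $f(y) \geq f(x^*) + \nabla f(x^*)^{\top}(y - x^*)$. Since $\nabla f(x^*)^{\top}(y-x^*) = -\nabla f(x^*)^{\top}(x^*-y) \geq 0$ by hypothesis, we get $f(y) \geq f(x^*)$. As $y$ was arbitrary, $x^* \in \argmin_{x \in \cX} f(x)$.

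For the direction ``$\Rightarrow$'', I would assume $x^* \in \argmin_{x\in\cX} f(x)$ and fix $y \in \cX$. Because $\cX$ is convex, the point $x_t := (1-t) x^* + t y = x^* + t(y - x^*)$ lies in $\cX$ for every $t \in [0,1]$. Consider the one-variable function $\phi(t) := f(x_t)$ on $[0,1]$; it is differentiable (composition of $f$, differentiable on $\cX$, with an affine map) and attains its minimum over $[0,1]$ at $t = 0$ since $x^*$ is a global minimizer. Hence the right derivative at $0$ is nonnegative: $\phi'(0) \geq 0$. By the chain rule $\phi'(0) = \nabla f(x^*)^{\top}(y - x^*)$, so $\nabla f(x^*)^{\top}(y-x^*) \geq 0$, i.e. $\nabla f(x^*)^{\top}(x^*-y) \leq 0$; since $y \in \cX$ was arbitrary, this is the claimed condition.

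There is essentially no hard step here; the only point requiring a little care is the ``$\Rightarrow$'' direction, where one must note that $\phi$ is only being minimized over the one-sided interval $[0,1]$, so the correct conclusion is $\phi'(0) \geq 0$ (a statement about the one-sided derivative) rather than $\phi'(0) = 0$ — which is exactly why an inequality, and not an equality, appears in the optimality condition. Everything else is a direct application of the first-order characterization of convexity and the convexity of $\cX$.
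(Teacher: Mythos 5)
Your proof is correct and takes essentially the same approach as the paper: the ``if'' direction uses that $\nabla f(x^*)$ is a subgradient, and the ``only if'' direction restricts $f$ to the segment from $x^*$ to $y$ and examines the (one-sided) derivative at $t=0$. Your version is slightly more explicit about why the conclusion is an inequality rather than an equality (one-sided minimization on $[0,1]$), whereas the paper phrases the same argument contrapositively (``if $\nabla f(x)^{\top}(y-x)<0$ then $f$ is locally decreasing''); these are the same reasoning.
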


\begin{proof}
The ``if" direction is trivial by using that a gradient is also a subgradient. For the ``only if" direction it suffices to note that if $\nabla f(x)^{\top} (y-x) < 0$, then $f$ is locally decreasing around $x$ on the line to $y$ (simply consider $h(t) = f(x + t (y-x))$ and note that $h'(0) = \nabla f(x)^{\top} (y-x)$).
\end{proof}

\section{Black-box model} \label{sec:blackbox}
We now describe our first model of ``input" for the objective function and the set of constraints. In the black-box model we assume that we have unlimited computational resources, the set of constraint $\cX$ is known, and the objective function $f: \cX \rightarrow \R$ is unknown but can be accessed through queries to {\em oracles}:
\begin{itemize}
\item A zeroth order oracle takes as input a point $x \in \cX$ and outputs the value of $f$ at $x$.
\item A first order oracle takes as input a point $x \in \cX$ and outputs a subgradient of $f$ at $x$.
\end{itemize}
In this context we are interested in understanding the {\em oracle complexity} of convex optimization, that is how many queries to the oracles are necessary and sufficient to find an $\epsilon$-approximate minima of a convex function. To show an upper bound on the sample complexity we need to propose an algorithm, while lower bounds are obtained by information theoretic reasoning (we need to argue that if the number of queries is ``too small" then we don't have enough information about the function to identify an $\epsilon$-approximate solution).

From a mathematical point of view, the strength of the black-box model is that it will allow us to derive a {\em complete} theory of convex optimization, in the sense that we will obtain matching upper and lower bounds on the oracle complexity for various subclasses of interesting convex functions. While the model by itself does not limit our computational resources (for instance any operation on the constraint set $\cX$ is allowed) we will of course pay special attention to the algorithms' {\em computational complexity} (i.e., the number of elementary operations that the algorithm needs to do). We will also be interested in the situation where the set of constraint $\cX$ is unknown and can only be accessed through a {\em separation oracle}: given $x \in \R^n$, it outputs either that $x$ is in $\mathcal{X}$, or if $x \not\in \mathcal{X}$ then it outputs a separating hyperplane between $x$ and $\mathcal{X}$. 

The black-box model was essentially developed in the early days of convex optimization (in the Seventies) with \cite{NY83} being still an important reference for this theory (see also \cite{Nem95}). In the recent years this model and the corresponding algorithms have regained a lot of popularity, essentially for two reasons:
\begin{itemize}
\item It is possible to develop algorithms with dimension-free oracle complexity which is quite attractive for optimization problems in very high dimension.
\item Many algorithms developed in this model are robust to noise in the output of the oracles. This is especially interesting for stochastic optimization, and very relevant to machine learning applications. We will explore this in details in Chapter \ref{rand}.
\end{itemize}
Chapter \ref{finitedim}, Chapter \ref{dimfree} and Chapter \ref{mirror} are dedicated to the study of the black-box model (noisy oracles are discussed in Chapter \ref{rand}). We do not cover the setting where only a zeroth order oracle is available, also called derivative free optimization, and we refer to \cite{CSV09, ABM11} for further references on this.

\section{Structured optimization} \label{sec:structured}
The black-box model described in the previous section seems extremely wasteful for the applications we discussed in Section \ref{sec:mlapps}. Consider for instance the LASSO objective: $x \mapsto \|W x - y\|_2^2 + \|x\|_1$. We know this function {\em globally}, and assuming that we can only make local queries through oracles seem like an artificial constraint for the design of algorithms. Structured optimization tries to address this observation. Ultimately one would like to take into account the global structure of both $f$ and $\cX$ in order to propose the most efficient optimization procedure. An extremely powerful hammer for this task are the Interior Point Methods. We will describe this technique in Chapter \ref{beyond} alongside with other more recent techniques such as FISTA or Mirror Prox. 

We briefly describe now two classes of optimization problems for which we will be able to exploit the structure very efficiently, these are the LPs (Linear Programs) and SDPs (Semi-Definite Programs). \cite{BN01} describe a more general class of Conic Programs but we will not go in that direction here.

The class LP consists of problems where $f(x) = c^{\top} x$ for some $c \in \R^n$, and $\mathcal{X} = \{x \in \R^n : A x \leq b \}$ for some $A \in \R^{m \times n}$ and $b \in \R^m$.

The class SDP consists of problems where the optimization variable is a symmetric matrix $X \in \R^{n \times n}$. Let $\mathbb{S}^n$ be the space of $n\times n$ symmetric matrices (respectively $\mathbb{S}^n_+$ is the space of positive semi-definite matrices), and let $\langle \cdot, \cdot \rangle$ be the Frobenius inner product (recall that it can be written as $\langle A, B \rangle = \tr(A^{\top} B)$). In the class SDP the problems are of the following form: $f(x) = \langle X, C \rangle$ for some $C \in \R^{n \times n}$, and $\mathcal{X} = \{X \in \mathbb{S}^n_+ : \langle X, A_i \rangle \leq b_i, i \in \{1, \hdots, m\} \}$ for some $A_1, \hdots, A_m \in \R^{n \times n}$ and $b \in \R^m$. Note that the matrix completion problem described in Section \ref{sec:mlapps} is an example of an SDP.

\section{Overview of the results and disclaimer}
The overarching aim of this monograph is to present the main complexity theorems in convex optimization and the corresponding algorithms. We focus on five major results in convex optimization which give the overall structure of the text: the existence of efficient cutting-plane methods with optimal oracle complexity (Chapter \ref{finitedim}), a complete characterization of the relation between first order oracle complexity and curvature in the objective function (Chapter \ref{dimfree}), first order methods beyond Euclidean spaces (Chapter \ref{mirror}), non-black box methods (such as interior point methods) can give a quadratic improvement in the number of iterations with respect to optimal black-box methods (Chapter \ref{beyond}), and finally noise robustness of first order methods (Chapter \ref{rand}). Table \ref{table} can be used as a quick reference to the results proved in Chapter \ref{finitedim} to Chapter \ref{beyond}, as well as some of the results of Chapter \ref{rand} (this last chapter is the most relevant to machine learning but the results are also slightly more specific which make them harder to summarize).

An important disclaimer is that the above selection leaves out methods derived from duality arguments, as well as the two most popular research avenues in convex optimization: (i) using convex optimization in non-convex settings, and (ii) practical large-scale algorithms. Entire books have been written on these topics, and new books have yet to be written on the impressive collection of new results obtained for both (i) and (ii) in the past five years. 

A few of the blatant omissions regarding (i) include (a) the theory of submodular optimization (see \cite{Bac13}), (b) convex relaxations of combinatorial problems (a short example is given in Section \ref{sec:convexrelaxation}), and (c) methods inspired from convex optimization for non-convex problems such as low-rank matrix factorization (see e.g. \cite{JNS13} and references therein), neural networks optimization, etc. 

With respect to (ii) the most glaring omissions include (a) heuristics (the only heuristic briefly discussed here is the non-linear conjugate gradient in Section \ref{sec:CG}), (b) methods for distributed systems, and (c) adaptivity to unknown parameters. Regarding (a) we refer to \cite{NW06} where the most practical algorithms are discussed in great details (e.g., quasi-newton methods such as BFGS and L-BFGS, primal-dual interior point methods, etc.). The recent survey \cite{BPCPE11} discusses the alternating direction method of multipliers (ADMM) which is a popular method to address (b). Finally (c) is a subtle and important issue. In the entire monograph the emphasis is on presenting the algorithms and proofs in the simplest way, and thus for sake of convenience we assume that the relevant parameters describing the regularity and curvature of the objective function (Lipschitz constant, smoothness constant, strong convexity parameter) are known and can be used to tune the algorithm's own parameters. Line search is a powerful technique to replace the knowledge of these parameters and it is heavily used in practice, see again \cite{NW06}. We observe however that from a theoretical point of view (c) is only a matter of logarithmic factors as one can always run in parallel several copies of the algorithm with different guesses for the values of the parameters\footnote{Note that this trick does not work in the context of Chapter \ref{rand}.}. Overall the attitude of this text with respect to (ii) is best summarized by a quote of Thomas Cover: ``theory is the first term in the Taylor series of practice'', \cite{Cov92}.
\newline

\textbf{Notation.} We always denote by $x^*$ a point in $\cX$ such that $f(x^*) = \min_{x \in \cX} f(x)$ (note that the optimization problem under consideration will always be clear from the context). In particular we always assume that $x^*$ exists. For a vector $x \in \R^n$ we denote by $x(i)$ its $i^{th}$ coordinate. The dual of a norm $\|\cdot\|$ (defined later) will be denoted either $\|\cdot\|_*$ or $\|\cdot\|^*$ (depending on whether the norm already comes with a subscript). Other notation are standard (e.g., $\mI_n$ for the $n \times n$ identity matrix, $\succeq$ for the positive semi-definite order on matrices, etc).

\begin{center}
\begin{table}
{\footnotesize
\begin{tabular}{c|c|c|c|c}
$f$ & {Algorithm} & {Rate} & {\# Iter} & {Cost/iter} 
\\  \hline 
  {\begin{tabular}{c} non-smooth \end{tabular}} &  {\begin{tabular}{c} center of\\ gravity \end{tabular}} & $\exp\left( - \frac{t}{n} \right)$ & $n \log \left(\frac{1}{\epsilon}\right)$ &  {\begin{tabular}{c} 1 $\nabla$, \\ 1 $n$-dim $\int$ \end{tabular}}
\\  \hline 
  {\begin{tabular}{c} non-smooth \end{tabular}} &  {\begin{tabular}{c} ellipsoid \\ method \end{tabular}} & $\frac{R}{r} \exp\left( - \frac{t}{n^2}\right)$ & $n^2 \log \left(\frac{R}{r \epsilon}\right)$ &  {\begin{tabular}{c} 1 $\nabla$, \\mat-vec $\times$ \end{tabular}}
\\  \hline 
  {\begin{tabular}{c} non-smooth \end{tabular}} &  {\begin{tabular}{c} Vaidya \end{tabular}} & $\frac{R n}{r} \exp\left( - \frac{t}{n}\right)$ & $n \log \left(\frac{R n}{r \epsilon}\right)$ &  {\begin{tabular}{c} 1 $\nabla$, \\mat-mat $\times$ \end{tabular}}
\\  \hline 
  {\begin{tabular}{c} quadratic \end{tabular}} &  {\begin{tabular}{c} CG \end{tabular}} & {\begin{tabular}{c} exact \\ $\exp\left( - \frac{t}{\kappa}\right)$ \end{tabular}} & {\begin{tabular}{c} $n$ \\ $\kappa \log\left(\frac1{\epsilon}\right)$ \end{tabular}} &  {\begin{tabular}{c} 1 $\nabla$ \end{tabular}}
\\  \hline 
  {\begin{tabular}{c} non-smooth, \\ Lipschitz \end{tabular}} & {PGD} & $R L /\sqrt{t}$ & $R^2 L^2 /\epsilon^2$ &  {\begin{tabular}{c} 1 $\nabla$, \\ 1 proj. \end{tabular}}
\\  \hline 
  {smooth} & {PGD} & $\beta R^2 / t$ & $\beta R^2 /\epsilon$ &  {\begin{tabular}{c} 1 $\nabla$, \\ 1 proj. \end{tabular}}
\\  \hline 
  {smooth} & {\begin{tabular}{c} AGD \end{tabular}} & $\beta R^2 / t^2$ & $R \sqrt{\beta / \epsilon}$ & 1 $\nabla$ 
\\  \hline 
  {\begin{tabular}{c} smooth \\ (any norm) \end{tabular}}& {FW} & $\beta R^2 / t$ & $\beta R^2 /\epsilon$ &  {\begin{tabular}{c} 1 $\nabla$, \\ 1 LP \end{tabular}}
\\  \hline 
  {\begin{tabular}{c} strong. conv., \\ Lipschitz \end{tabular}} & {PGD} & $L^2 / (\alpha t)$ & $L^2 / (\alpha \epsilon)$ & {\begin{tabular}{c} 1 $\nabla$ , \\ 1 proj. \end{tabular}}
\\  \hline 
  {\begin{tabular}{c} strong. conv., \\ smooth \end{tabular}} & {PGD} & $R^2 \exp\left(-\frac{t}{\kappa}\right)$ & $\kappa \log\left(\frac{R^2}{\epsilon}\right) $ & {\begin{tabular}{c} 1 $\nabla$ , \\ 1 proj. \end{tabular}}
\\  \hline 
  {\begin{tabular}{c} strong. conv., \\ smooth \end{tabular}} & {\begin{tabular}{c} AGD \end{tabular}} & $R^2 \exp\left(-\frac{t}{\sqrt{\kappa}}\right)$ & $\sqrt{\kappa} \log\left(\frac{R^2}{\epsilon}\right) $ & 1 $\nabla$ 
\\  \hline 
  {\begin{tabular}{c} $f+g$, \\ $f$ smooth, \\ $g$ simple \end{tabular}} & FISTA & $\beta R^2 / t^2$ & $R \sqrt{\beta / \epsilon}$ & {\begin{tabular}{c} 1 $\nabla$  of $f$ \\ Prox of $g$ \end{tabular}} 
\\  \hline 
  {\begin{tabular}{c} $\underset{y \in \cY}{\max} \ \phi(x,y)$, \\ $\phi$ smooth\end{tabular}} & SP-MP & $\beta R^2 / t$ & $\beta R^2 /\epsilon$ & {\begin{tabular}{c} MD on $\cX$ \\ MD on $\cY$ \end{tabular}} 
\\  \hline 
  {\begin{tabular}{c} linear, \\ $\cX$ with $F$ \\$\nu$-self-conc. \end{tabular}} & IPM & $\nu \exp\left(- \frac{t}{\sqrt{\nu}}\right)$ & $\sqrt{\nu} \log\left(\frac{\nu}{\epsilon}\right)$ & {\begin{tabular}{c} Newton \\ step on $F$ \end{tabular}}
\\ \hline
  {\begin{tabular}{c} non-smooth \end{tabular}} & {SGD} & $B L /\sqrt{t}$ & $B^2 L^2 /\epsilon^2$ &  {\begin{tabular}{c} 1 stoch. ${\nabla}$, \\ 1 proj. \end{tabular}}
\\ \hline
  {\begin{tabular}{c} non-smooth, \\ strong. conv. \end{tabular}} & {SGD} & $B^2 / (\alpha t)$ & $B^2 / (\alpha \epsilon)$ &  {\begin{tabular}{c} 1 stoch. $\nabla$, \\ 1 proj. \end{tabular}}
\\ \hline
  {\begin{tabular}{c} $f=\frac1{m} \sum f_i$ \\ $f_i$ smooth \\ strong. conv. \end{tabular}} & {SVRG} & -- & $(m + \kappa) \log\left(\frac{1}{\epsilon}\right)$ &  {1 stoch. $\nabla$}
  \end{tabular}}
\caption{Summary of the results proved in Chapter \ref{finitedim} to Chapter \ref{beyond} and some of the results in Chapter \ref{rand}.}
\label{table}
\end{table}
\end{center}


\chapter{Convex optimization in finite dimension}
\label{finitedim}
Let $\mathcal{X} \subset \R^n$ be a convex body (that is a compact convex set with non-empty interior), and $f : \mathcal{X} \rightarrow [-B,B]$ be a continuous and convex function. Let $r, R>0$ be such that $\mathcal{X}$ is contained in an Euclidean ball of radius $R$ (respectively it contains an Euclidean ball of radius $r$). In this chapter we give several black-box algorithms to solve 
\begin{align*}
& \mathrm{min.} \; f(x) \\
& \text{s.t.} \; x \in \cX .
\end{align*}
As we will see these algorithms have an oracle complexity which is linear (or quadratic) in the dimension, hence the title of the chapter (in the next chapter the oracle complexity will be {\em independent} of the dimension). An interesting feature of the methods discussed here is that they only need a separation oracle for the constraint set $\cX$. In the literature such algorithms are often referred to as {\em cutting plane methods}. In particular these methods can be used to {\em find} a point $x \in \cX$ given only a separating oracle for $\cX$ (this is also known as the {\em feasibility problem}). 

\section{The center of gravity method} \label{sec:gravity}
We consider the following simple iterative algorithm\footnote{As a warm-up we assume in this section that $\cX$ is known. It should be clear from the arguments in the next section that in fact the same algorithm would work if initialized with $\cS_1 \supset \cX$.}: let $\cS_1= \cX$, and for $t \geq 1$ do the following:
\begin{enumerate}
\item Compute
\begin{equation}
c_t = \frac{1}{\mathrm{vol}(\cS_t)} \int_{x \in \cS_t} x dx .
\end{equation}
\item Query the first order oracle at $c_t$ and obtain $w_t \in \partial f (c_t)$. Let
$$\cS_{t+1} = \cS_t \cap \{x \in \R^n : (x-c_t)^{\top} w_t \leq 0\} .$$
\end{enumerate}
If stopped after $t$ queries to the first order oracle then we use $t$ queries to a zeroth order oracle to output
$$x_t\in \argmin_{1 \leq r \leq t} f(c_r) .$$
This procedure is known as the {\em center of gravity method}, it was discovered independently on both sides of the Wall by \cite{Lev65} and \cite{New65}.

\begin{theorem}
\label{th:centerofgravity}
The center of gravity method satisfies
$$f(x_t) - \min_{x \in \mathcal{X}} f(x) \leq 2 B \left(1 - \frac1{e} \right)^{t/n} .$$
\end{theorem}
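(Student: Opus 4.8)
The plan is to track how quickly the volume of the localizing sets $\cS_t$ shrinks, and then argue that after enough volume has been removed, the best queried point must be close to optimal. The key geometric input is the classical fact (Grünbaum's theorem) that any hyperplane through the centroid of a convex body cuts off at least a $1/e$ fraction of the volume: if $c$ is the center of gravity of a convex body $\cK \subset \R^n$ and $w \neq 0$, then $\mathrm{vol}(\cK \cap \{x : (x-c)^\top w \leq 0\}) \geq \tfrac{1}{e} \mathrm{vol}(\cK)$. Applying this at each step with $\cK = \cS_t$, $c = c_t$, $w = w_t$ gives $\mathrm{vol}(\cS_{t+1}) \geq \tfrac1e \mathrm{vol}(\cS_t)$... wait, we want an upper bound: the cut \emph{keeps} the halfspace where the subgradient predicts smaller values, so actually $\mathrm{vol}(\cS_{t+1}) \leq (1 - \tfrac1e)\mathrm{vol}(\cS_t)$, hence $\mathrm{vol}(\cS_{t+1}) \leq (1-\tfrac1e)^t \mathrm{vol}(\cX)$.

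Next I would connect this volume decay to the optimization error. Fix a small $\delta \in (0,1)$ and set $x^\delta = (1-\delta)x^* + \delta c^*$ where $c^*$ is, say, the center of gravity of $\cX$ — actually, more simply, consider the shrunk set $\cX^\delta = x^* + \delta(\cX - x^*) = \{(1-\delta)x^* + \delta x : x \in \cX\}$, which has volume $\delta^n \mathrm{vol}(\cX)$. Once $(1-\tfrac1e)^t \mathrm{vol}(\cX) < \delta^n \mathrm{vol}(\cX)$, i.e. once $\mathrm{vol}(\cS_{t+1}) < \mathrm{vol}(\cX^\delta)$, the set $\cX^\delta$ cannot be contained in $\cS_{t+1}$, so there is a point $y \in \cX^\delta$ that was cut off at some step $r \leq t$, meaning $(y - c_r)^\top w_r > 0$. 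Since $w_r \in \partial f(c_r)$, the subgradient inequality gives $f(c_r) \leq f(y) + w_r^\top(c_r - y) \leq f(y)$. Now I bound $f(y)$: writing $y = (1-\delta)x^* + \delta x$ for some $x \in \cX$, convexity yields $f(y) \leq (1-\delta)f(x^*) + \delta f(x) \leq f(x^*) + \delta \cdot 2B$ using the range bound $f \in [-B,B]$. Therefore $f(x_t) \leq f(c_r) \leq f(x^*) + 2B\delta$.

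It remains to optimize $\delta$. The condition needed was $(1-\tfrac1e)^t < \delta^n$, i.e. $\delta > (1-\tfrac1e)^{t/n}$; taking $\delta \to (1-\tfrac1e)^{t/n}$ (or any $\delta$ slightly above it, then passing to the limit by continuity of the bound) gives $f(x_t) - f(x^*) \leq 2B(1-\tfrac1e)^{t/n}$, which is exactly the claimed bound. The main obstacle is invoking Grünbaum's inequality correctly — one must be careful that it applies to \emph{any} halfspace boundary through the centroid, with the $1/e$ constant being dimension-independent and tight — and a secondary subtlety is the degenerate edge case where $\cX^\delta \subseteq \cS_{t+1}$ never fails for small $t$; this is handled automatically since the inequality is vacuous (right side $\geq 2B$) when $t < n$, so one only cares about $t \geq n$ where the argument bites. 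I should also double-check that $x^*$ itself being possibly on the boundary causes no issue — it does not, since we only use points of $\cX^\delta$, and when $\delta < 1$ these are genuinely interior-ish translates that stay inside $\cX$ by convexity.
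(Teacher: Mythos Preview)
Your proposal is correct and follows essentially the same route as the paper: shrink the volume of $\cS_t$ via Gr\"unbaum's inequality, compare with the scaled copy $\cX^\delta = (1-\delta)x^* + \delta\,\cX$ of volume $\delta^n\mathrm{vol}(\cX)$, locate a point of $\cX^\delta$ that was cut at some step $r\le t$, and then use the subgradient inequality plus convexity to bound $f(c_r)-f(x^*)$ by $2B\delta$. Two small remarks: the paper explicitly notes that if some $w_r=0$ then $c_r$ is already optimal (so one may assume $w_r\neq 0$ throughout, which is needed to invoke Gr\"unbaum); and your aside that the bound is ``vacuous for $t<n$'' is not quite right (the right-hand side is strictly below $2B$ already for $t=1$), but this side comment is irrelevant to the argument, which works for every $t\ge 1$.
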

Before proving this result a few comments are in order. 

To attain an $\epsilon$-optimal point the center of gravity method requires $O( n \log (2 B / \epsilon))$ queries to both the first and zeroth order oracles. It can be shown that this is the best one can hope for, in the sense that for $\epsilon$ small enough one needs $\Omega(n \log(1/ \epsilon))$ calls to the oracle in order to find an $\epsilon$-optimal point, see \cite{NY83} for a formal proof.

The rate of convergence given by Theorem \ref{th:centerofgravity} is exponentially fast. In the optimization literature this is called a {\em linear rate} as the (estimated) error at iteration $t+1$ is linearly related to the error at iteration $t$.

The last and most important comment concerns the computational complexity of the method. It turns out that finding the center of gravity $c_t$ is a very difficult problem by itself, and we do not have computationally efficient procedure to carry out this computation in general. In Section \ref{sec:rwmethod} we will discuss a relatively recent (compared to the 50 years old center of gravity method!) randomized algorithm to approximately compute the center of gravity. This will in turn give a randomized center of gravity method which we will describe in detail.

We now turn to the proof of Theorem \ref{th:centerofgravity}. We will use the following elementary result from convex geometry:

\begin{lemma}[\cite{Gru60}] \label{lem:Gru60}
Let $\cK$ be a centered convex set, i.e., $\int_{x \in \cK} x dx = 0$, then for any $w \in \R^n, w \neq 0$, one has
$$\mathrm{Vol} \left( \cK \cap \{x \in \R^n : x^{\top} w \geq 0\} \right) \geq \frac{1}{e} \mathrm{Vol} (\cK) .$$
\end{lemma}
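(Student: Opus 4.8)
The plan is to reduce the $n$-dimensional statement to a one-dimensional one by slicing, using the Brunn–Minkowski inequality, and then to identify the worst-case one-dimensional profile as that of a cone.

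\emph{Reduction to one dimension.} After a rotation I would take $w = e_1$. Let $[a,b]$ be the image of $\cK$ under $x \mapsto x^\top e_1$ (so $a < 0 < b$, since the centroid is at the origin and $\cK$ has nonempty interior), and for $t \in [a,b]$ set $h(t) = \mathrm{Vol}_{n-1}\bigl(\cK \cap \{x : x^\top e_1 = t\}\bigr)$. The hypothesis $\int_{x\in\cK} x\, dx = 0$ gives $\int_a^b t\, h(t)\, dt = 0$. Moreover, convexity of $\cK$ shows that the slice at $\lambda t_1 + (1-\lambda) t_2$ contains $\lambda$ times the slice at $t_1$ plus $(1-\lambda)$ times the slice at $t_2$, so the Brunn–Minkowski inequality in $\R^{n-1}$ yields that $h^{1/(n-1)}$ is concave on $[a,b]$. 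The lemma is then precisely the claim that $\int_0^b h \ge \tfrac1e \int_a^b h$.

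\emph{Reduction to a cone.} This is the crux. I claim that, among all nonnegative $h$ on an interval $[a,b]$ with $a<0<b$ such that $h^{1/(n-1)}$ is concave and $\int_a^b t\,h = 0$, the ratio $\int_0^b h \,/\, \int_a^b h$ is minimized when $h(t)^{1/(n-1)}$ is affine, i.e. when $\cK$ is a cone whose apex lies on the positive side of the hyperplane. My approach here would be a single-crossing argument: starting from $h$, replace $h^{1/(n-1)}$ by a suitable affine function $g$, producing $\tilde h = (g)_+^{n-1}$ with the same total mass and the same centroid as $h$; since $g$ is affine and $h^{1/(n-1)}$ is concave, $h^{1/(n-1)} - g$ is concave and hence changes sign at most twice on $[a,b]$, and the same then holds (in the appropriate sense) for $h - \tilde h$, so that integrating $h - \tilde h$ against a carefully chosen affine or quadratic test function sharing this sign pattern gives $\int_0^b \tilde h \le \int_0^b h$. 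The main obstacle I anticipate is arranging the matching conditions defining $g$ so that the sign changes of $h - \tilde h$ are positioned consistently with the test function (in particular, controlling whether both sign changes straddle the origin, and handling the truncation that occurs when the apex of the replacement cone falls outside $[a,b]$).

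\emph{The extremal cone and the numerics.} For a cone of height $H$ in $\R^n$ with centroid at the origin on the axis $e_1$, write $h(t) = c_0 (p - t)^{n-1}$ on $[-q, p]$ with $H = p + q$. The condition $\int t\,h = 0$ forces $p/n = H/(n+1)$, i.e. $p = \tfrac{n}{n+1}H$ and $q = \tfrac{1}{n+1}H$, whence $\int_0^p h \,\big/\, \int_{-q}^p h = (p/H)^n = \bigl(\tfrac{n}{n+1}\bigr)^n$. By the previous step this is the worst case, so $\mathrm{Vol}\bigl(\cK \cap \{x : x^\top w \ge 0\}\bigr) \ge \bigl(\tfrac{n}{n+1}\bigr)^n \mathrm{Vol}(\cK)$. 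Finally $\bigl(\tfrac{n}{n+1}\bigr)^n = \bigl(1 + \tfrac1n\bigr)^{-n} \ge e^{-1}$, since $1 + \tfrac1n \le e^{1/n}$, which gives the stated bound (with the sharper constant $(n/(n+1))^n$ as a byproduct); the case $n = 1$ is immediate because then $[a,b] = [-b,b]$ and exactly half the mass is kept.
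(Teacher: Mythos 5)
The paper does not prove this lemma: it is stated with the citation \cite{Gru60} and used as a black box, so there is no internal proof to compare against. Your outline is the classical one-- slice along $w$, use Brunn--Minkowski to get concavity of $h^{1/(n-1)}$, identify the cone as the extremal profile, and compute $(n/(n+1))^n \ge 1/e$. The one-dimensional reduction, the extremal computation ($p = \tfrac{n}{n+1}H$ from $\int t\,h = 0$, ratio $(p/H)^n$), and the elementary bound $\bigl(1+\tfrac1n\bigr)^n \le e$ are all correct, as is the observation that the sharper constant $(n/(n+1))^n$ is what actually comes out.

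The step you flag as the crux is where the real work is, and as written the single-crossing argument has a genuine gap. If $g$ is the affine function chosen so that $\tilde h = (g)_+^{n-1}$ matches both the total mass and the first moment of $h$, then $h-\tilde h$ indeed changes sign at most twice, say at $t_1 < t_2$, being $\ge 0$ in between and $\le 0$ outside. But the desired inequality $\int_0^b \tilde h \le \int_0^b h$ is $\int \mathbf{1}_{\{t\ge 0\}}(h-\tilde h)\ge 0$, and the only test functions you can subtract off for free are affine ones (those are exactly the moments you have matched). One checks directly that when $t_1 < 0 < t_2$ there is \emph{no} affine $\alpha+\beta t$ for which $\mathbf{1}_{\{t\ge 0\}}-\alpha-\beta t$ has the sign pattern $(-,+,-)$ with crossings at $t_1,t_2$: matching the crossing at $t_1$ forces $\beta<0$, matching the crossing at $t_2$ forces $\beta = 1/(t_2-t_1)>0$. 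So the argument simply does not close in the straddling case, and nothing in your two matching conditions prevents that case from occurring. The standard fix is to use a different normalization: replace (say) the half $K\cap\{x^\top w\ge 0\}$ by a cone with the \emph{same base} $K\cap\{x^\top w=0\}$ and the same volume. Then $h^{1/(n-1)}-g$ vanishes at $0$, its second zero lies on the positive axis, the one-crossing (Chebyshev) comparison goes through cleanly and shows the cone's first moment on $\{t\ge 0\}$ is at least that of the original half; but now the modified body's centroid has shifted to $\bar t\ge 0$, and one must re-center and argue that the halfspace cut through the new centroid has lost volume. Iterating or passing to the limit to land on the double cone is what makes this a page of bookkeeping rather than a one-line single-crossing argument. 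Your sketch correctly anticipates the obstacle, but the ``carefully chosen affine or quadratic test function'' does not exist for the matching you propose, so the reduction to the cone is not yet a proof.
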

We now prove Theorem \ref{th:centerofgravity}.

\begin{proof}
Let $x^*$ be such that $f(x^*) = \min_{x \in \mathcal{X}} f(x)$. Since $w_t \in \partial f(c_t)$ one has
$$f(c_t) - f(x) \leq w_t^{\top} (c_t - x) .$$
and thus
\begin{equation} \label{eq:centerofgravity1}
\cS_{t} \setminus \cS_{t+1} \subset \{x \in \cX : (x-c_t)^{\top} w_t > 0\} \subset \{x \in \cX : f(x) > f(c_t)\} ,
\end{equation}
which clearly implies that one can never remove the optimal point from our sets in consideration, that is $x^* \in \cS_t$ for any $t$.
Without loss of generality we can assume that we always have $w_t \neq 0$, for otherwise one would have $f(c_t) = f(x^*)$ which immediately conludes the proof. Now using that $w_t \neq 0$ for any $t$ and Lemma \ref{lem:Gru60} one clearly obtains
$$\mathrm{vol}(\cS_{t+1}) \leq \left(1 - \frac1{e} \right)^t \mathrm{vol}(\cX) .$$
For $\epsilon \in [0,1]$, let $\mathcal{X}_{\epsilon} = \{(1-\epsilon) x^* + \epsilon x, x \in \mathcal{X}\}$. Note that $\mathrm{vol}(\mathcal{X}_{\epsilon}) = \epsilon^n \mathrm{vol}(\mathcal{X})$. These volume computations show that for $\epsilon > \left(1 - \frac1{e} \right)^{t/n}$ one has $\mathrm{vol}(\mathcal{X}_{\epsilon}) > \mathrm{vol}(\cS_{t+1})$. In particular this implies that for $\epsilon > \left(1 - \frac1{e} \right)^{t/n}$, there must exist a time $r \in \{1,\hdots, t\}$, and $x_{\epsilon} \in \mathcal{X}_{\epsilon}$, such that $x_{\epsilon} \in \cS_{r}$ and $x_{\epsilon} \not\in \cS_{r+1}$. In particular by \eqref{eq:centerofgravity1} one has $f(c_r) < f(x_{\epsilon})$. On the other hand by convexity of $f$ one clearly has $f(x_{\epsilon}) \leq f(x^*) + 2 \epsilon B$. This concludes the proof.
\end{proof}

\section{The ellipsoid method} \label{sec:ellipsoid}
Recall that an ellipsoid is a convex set of the form
$$\mathcal{E} = \{x \in \R^n : (x - c)^{\top} H^{-1} (x-c) \leq 1 \} ,$$
where $c \in \R^n$, and $H$ is a symmetric positive definite matrix. Geometrically $c$ is the center of the ellipsoid, and the semi-axes of $\cE$ are given by the eigenvectors of $H$, with lengths given by the square root of the corresponding eigenvalues.

We give now a simple geometric lemma, which is at the heart of the ellipsoid method.
\begin{lemma} \label{lem:geomellipsoid}
Let $\mathcal{E}_0 = \{x \in \R^n : (x - c_0)^{\top} H_0^{-1} (x-c_0) \leq 1 \}$. For any $w \in \R^n$, $w \neq 0$, there exists an ellipsoid $\mathcal{E}$ such that
\begin{equation}
\mathcal{E} \supset \{x \in \mathcal{E}_0 : w^{\top} (x-c_0) \leq 0\} , \label{eq:ellipsoidlemma1}
\end{equation}
and 
\begin{equation}
\mathrm{vol}(\mathcal{E}) \leq \exp \left(- \frac{1}{2 n} \right) \mathrm{vol}(\mathcal{E}_0) . \label{eq:ellipsoidlemma2}
\end{equation}
Furthermore for $n \geq 2$ one can take $\cE = \{x \in \R^n : (x - c)^{\top} H^{-1} (x-c) \leq 1 \}$ where
\begin{align}
& c = c_0 - \frac{1}{n+1} \frac{H_0 w}{\sqrt{w^{\top} H_0 w}} , \label{eq:ellipsoidlemma3}\\
& H = \frac{n^2}{n^2-1} \left(H_0 - \frac{2}{n+1} \frac{H_0 w w^{\top} H_0}{w^{\top} H_0 w} \right) . \label{eq:ellipsoidlemma4}
\end{align}
\end{lemma}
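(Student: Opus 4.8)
The plan is to reduce to the case where $\mathcal{E}_0$ is the unit Euclidean ball $\mathbb{B}^n$ by an affine change of variables, solve that special case by an explicit one-parameter optimization over the shape of the covering ellipsoid, and finally push the answer back through the change of variables to recover \eqref{eq:ellipsoidlemma3}--\eqref{eq:ellipsoidlemma4}. (The case $n=1$ is trivial: the half-ellipsoid is a half-interval and the volume bound holds since $1/2 < e^{-1/2}$, so we may assume $n\ge 2$.)

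\textbf{Reduction to the ball.} First I would write $H_0 = H_0^{1/2}H_0^{1/2}$ and apply the affine map $y = H_0^{-1/2}(x-c_0)$, which sends $\mathcal{E}_0$ onto $\mathbb{B}^n$, multiplies every volume by the same positive constant $\det(H_0^{-1/2})$, and sends the half-ellipsoid $\{x\in\mathcal{E}_0: w^\top(x-c_0)\le 0\}$ onto the half-ball $\{y\in\mathbb{B}^n: p^\top y\le 0\}$ with $p = H_0^{1/2}w$. Composing with a rotation taking $-p/\|p\|$ to $e_1$, it then suffices to exhibit an ellipsoid $\mathcal{E}'$ with $\mathcal{E}'\supset\{y\in\mathbb{B}^n: y_1\ge 0\}$ and $\mathrm{vol}(\mathcal{E}')\le e^{-1/(2n)}\,\mathrm{vol}(\mathbb{B}^n)$; undoing the rotation and then $x = c_0 + H_0^{1/2}y$ will produce the desired $\mathcal{E}$.

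\textbf{The ball case and the explicit formulas.} Using the symmetry of $\{y\in\mathbb{B}^n:y_1\ge 0\}$ about the $y_1$-axis, I look for $\mathcal{E}'$ of the form $\{y:(y_1-a)^2/\alpha^2 + \|y_{2:n}\|^2/\beta^2\le 1\}$, and impose that it contain the pole $e_1$ (which forces $\alpha = 1-a$) and the equatorial sphere $\{y_1=0,\ \|y_{2:n}\|=1\}$ (which forces $1/\beta^2 = 1-a^2/\alpha^2$). A short check shows these two conditions already suffice: for a point of the unit sphere with first coordinate $t\in[0,1]$, the left-hand side of the defining inequality is a quadratic in $t$ with positive leading coefficient (because $\alpha<\beta$), hence bounded by its value at $t\in\{0,1\}$, namely $1$. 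Since $\mathrm{vol}(\mathcal{E}') = \alpha\beta^{n-1}\,\mathrm{vol}(\mathbb{B}^n) = (1-a)^n(1-2a)^{-(n-1)/2}\,\mathrm{vol}(\mathbb{B}^n)$, minimizing over $a\in(0,1/2)$ gives $a = 1/(n+1)$, $\alpha = n/(n+1)$, $\beta = n/\sqrt{n^2-1}$. Transporting $\mathcal{E}'$ back (rotation sends the center $\tfrac{1}{n+1}e_1$ to $-\tfrac{1}{n+1}\tfrac{p}{\|p\|}$ and the shape matrix $\beta^2 I + (\alpha^2-\beta^2)e_1e_1^\top$ to $\beta^2 I + (\alpha^2-\beta^2)\tfrac{pp^\top}{\|p\|^2}$; then $x=c_0+H_0^{1/2}y$, using $H_0^{1/2}p = H_0 w$, $\|p\|^2 = w^\top H_0 w$, and $\alpha^2-\beta^2 = -\tfrac{2n^2}{(n+1)(n^2-1)}$) yields exactly \eqref{eq:ellipsoidlemma3}--\eqref{eq:ellipsoidlemma4}; this last part is routine bookkeeping.

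\textbf{The volume bound — the delicate point.} It remains to prove $\big(\tfrac{n}{n+1}\big)\big(\tfrac{n^2}{n^2-1}\big)^{(n-1)/2}\le e^{-1/(2n)}$, equivalently, after squaring, $\frac{n^{2n}}{(n+1)^{n+1}(n-1)^{n-1}}\le e^{-1/n}$. Writing $\phi(x)=x\ln x$, the logarithm of the left side equals $-\big(\phi(n+1)-2\phi(n)+\phi(n-1)\big)$, and I would estimate this second difference via $\phi(n+1)-2\phi(n)+\phi(n-1) = \int_0^1\!\!\int_0^1 \phi''(n-1+s+t)\,ds\,dt = \int_0^1\!\!\int_0^1 \frac{ds\,dt}{n-1+s+t}\ge \frac{1}{n}$, the final inequality by convexity of $x\mapsto 1/x$ (Jensen: $\mathbb{E}[1/X]\ge 1/\mathbb{E}[X]$, with $\mathbb{E}[X]=n$). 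This is the one genuinely delicate step: the constant $e^{-1/(2n)}$ is asymptotically tight, so term-by-term use of $1+u\le e^u$ only delivers the weaker $e^{-1/(2(n+1))}$; the second-difference/Jensen argument is what extracts the stated constant.
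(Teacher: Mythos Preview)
Your proof is correct and follows essentially the same route as the paper: reduce to the unit ball by the affine map $y=H_0^{-1/2}(x-c_0)$, exploit rotational symmetry to parametrize the covering ellipsoid by a single center-shift, impose the pole/equator touching conditions, optimize over the shift to get $a=1/(n+1)$, and push back through the affine map (and Sherman--Morrison) to recover \eqref{eq:ellipsoidlemma3}--\eqref{eq:ellipsoidlemma4}. Your treatment is in fact slightly more complete than the paper's in two places: you verify the half-ball containment via the quadratic-in-$t$ argument (the paper only motivates the touching conditions and then says ``one can simply verify''), and your second-difference/Jensen proof of $(n/(n{+}1))^2(n^2/(n^2{-}1))^{n-1}\le e^{-1/n}$ is a clean justification of what the paper dismisses as ``elementary computations.''
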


\begin{proof}
For $n=1$ the result is obvious, in fact we even have $\mathrm{vol}(\mathcal{E}) \leq \frac12 \mathrm{vol}(\mathcal{E}_0) .$

For $n \geq 2$ one can simply verify that the ellipsoid given by \eqref{eq:ellipsoidlemma3} and \eqref{eq:ellipsoidlemma4} satisfy the required properties \eqref{eq:ellipsoidlemma1} and \eqref{eq:ellipsoidlemma2}. Rather than bluntly doing these computations we will show how to derive \eqref{eq:ellipsoidlemma3} and \eqref{eq:ellipsoidlemma4}. As a by-product this will also show that the ellipsoid defined by \eqref{eq:ellipsoidlemma3} and \eqref{eq:ellipsoidlemma4} is the unique ellipsoid of minimal volume that satisfy \eqref{eq:ellipsoidlemma1}. Let us first focus on the case where $\mathcal{E}_0$ is the Euclidean ball $\cB = \{x \in \R^n : x^{\top} x \leq 1\}$. We momentarily assume that $w$ is a unit norm vector. 

By doing a quick picture, one can see that it makes sense to look for an ellipsoid $\mathcal{E}$ that would be centered at $c= - t w$, with $t \in [0,1]$ (presumably $t$ will be small), and such that one principal direction is $w$ (with inverse squared semi-axis $a>0$), and the other principal directions are all orthogonal to $w$ (with the same inverse squared semi-axes $b>0$). In other words we are looking for $\mathcal{E}= \{x: (x - c)^{\top} H^{-1} (x-c) \leq 1 \}$ with
$$c = - t w, \; \text{and} \; H^{-1} = a w w^{\top} + b (\mI_n - w w^{\top} ) .$$
Now we have to express our constraints on the fact that $\mathcal{E}$ should contain the half Euclidean ball $\{x \in \cB : x^{\top} w \leq 0\}$. Since we are also looking for $\mathcal{E}$ to be as small as possible, it makes sense to ask for $\mathcal{E}$ to "touch" the Euclidean ball, both at $x = - w$, and at the equator $\partial \cB \cap w^{\perp}$. The former condition can be written as:
$$(- w - c)^{\top} H^{-1} (- w - c) = 1 \Leftrightarrow (t-1)^2 a = 1 ,$$
while the latter is expressed as:
$$\forall y \in \partial \cB \cap w^{\perp}, (y - c)^{\top} H^{-1} (y - c) = 1 \Leftrightarrow b + t^2 a = 1 .$$
As one can see from the above two equations, we are still free to choose any value for $t \in [0,1/2)$ (the fact that we need $t<1/2$ comes from $b=1 - \left(\frac{t}{t-1}\right)^2>0$). Quite naturally we take the value that minimizes the volume of the resulting ellipsoid. Note that
$$\frac{\mathrm{vol}(\mathcal{E})}{\mathrm{vol}(\cB)} = \frac{1}{\sqrt{a}} \left(\frac{1}{\sqrt{b}}\right)^{n-1} 
= \frac{1}{\sqrt{\frac{1}{(1-t)^2}\left (1 - \left(\frac{t}{1-t}\right)^2\right)^{n-1}}} \\= \frac{1}{\sqrt{f\left(\frac{1}{1-t}\right)}} ,$$
where $f(h) = h^2 (2 h - h^2)^{n-1}$. Elementary computations show that the maximum of $f$ (on $[1,2]$) is attained at $h = 1+ \frac{1}{n}$ (which corresponds to $t=\frac{1}{n+1}$), and the value is 
$$\left(1+\frac{1}{n}\right)^2 \left(1 - \frac{1}{n^2} \right)^{n-1} \geq \exp \left(\frac{1}{n} \right),$$
where the lower bound follows again from elementary computations. Thus we showed that, for $\cE_0 = \cB$, \eqref{eq:ellipsoidlemma1} and \eqref{eq:ellipsoidlemma2} are satisfied with the ellipsoid given by the set of points $x$ satisfying:
\begin{equation} \label{eq:ellipsoidlemma5}
\left(x + \frac{w/\|w\|_2}{n+1}\right)^{\top} \left(\frac{n^2-1}{n^2} \mI_n + \frac{2(n+1)}{n^2} \frac{w w^{\top}}{\|w\|_2^2} \right) \left(x + \frac{w/\|w\|_2}{n+1} \right) \leq 1 .
\end{equation}

We consider now an arbitrary ellipsoid $\cE_0 = \{x \in \R^n : (x - c_0)^{\top} H_0^{-1} (x-c_0) \leq 1 \}$. Let $\Phi(x) = c_0 + H_0^{1/2} x$, then clearly $\cE_0 = \Phi(\cB)$ and $\{x : w^{\top}(x - c_0) \leq 0\} = \Phi(\{x : (H_0^{1/2} w)^{\top} x \leq 0\})$. Thus in this case the image by $\Phi$ of the ellipsoid given in \eqref{eq:ellipsoidlemma5} with $w$ replaced by $H_0^{1/2} w$ will satisfy \eqref{eq:ellipsoidlemma1} and \eqref{eq:ellipsoidlemma2}. It is easy to see that this corresponds to an ellipsoid defined by
\begin{align}
& c = c_0 - \frac{1}{n+1} \frac{H_0 w}{\sqrt{w^{\top} H_0 w}} , \notag \\
& H^{-1} = \left(1 - \frac{1}{n^2}\right) H_0^{-1} + \frac{2(n+1)}{n^2} \frac{w w^{\top}}{w^{\top} H_0 w} . \label{eq:ellipsoidlemma6}
\end{align}
Applying Sherman-Morrison formula to \eqref{eq:ellipsoidlemma6} one can recover \eqref{eq:ellipsoidlemma4} which concludes the proof.
\end{proof}

We describe now the ellipsoid method, which only assumes a separation oracle for the constraint set $\cX$ (in particular it can be used to solve the feasibility problem mentioned at the beginning of the chapter). 
Let $\cE_0$ be the Euclidean ball of radius $R$ that contains $\cX$, and let $c_0$ be its center. Denote also $H_0=R^2 \mI_n$. For $t \geq 0$ do the following:
\begin{enumerate}
\item If $c_t \not\in \cX$ then call the separation oracle to obtain a separating hyperplane $w_t \in \R^n$ such that $\cX \subset \{x : (x- c_t)^{\top} w_t \leq 0\}$, otherwise call the first order oracle at $c_t$ to obtain $w_t \in \partial f (c_t)$. 
\item Let $\cE_{t+1} = \{x : (x - c_{t+1})^{\top} H_{t+1}^{-1} (x-c_{t+1}) \leq 1 \}$ be the ellipsoid given in Lemma \ref{lem:geomellipsoid} that contains $\{x \in \mathcal{E}_t : (x- c_t)^{\top} w_t \leq 0\}$, that is
\begin{align*}
& c_{t+1} = c_{t} - \frac{1}{n+1} \frac{H_t w}{\sqrt{w^{\top} H_t w}} ,\\
& H_{t+1} = \frac{n^2}{n^2-1} \left(H_t - \frac{2}{n+1} \frac{H_t w w^{\top} H_t}{w^{\top} H_t w} \right) .
\end{align*}
\end{enumerate}
If stopped after $t$ iterations and if $\{c_1, \hdots, c_t\} \cap \cX \neq \emptyset$, then we use the zeroth order oracle to output
$$x_t\in \argmin_{c \in \{c_1, \hdots, c_t\} \cap \cX} f(c_r) .$$
The following rate of convergence can be proved with the exact same argument than for Theorem \ref{th:centerofgravity} (observe that at step $t$ one can remove a point in $\cX$ from the current ellipsoid only if $c_t \in \cX$).
\begin{theorem}
For $t \geq 2n^2 \log(R/r)$ the ellipsoid method satisfies $\{c_1, \hdots, c_t\} \cap \cX \neq \emptyset$ and
$$f(x_t) - \min_{x \in \mathcal{X}} f(x) \leq \frac{2 B R}{r} \exp\left( - \frac{t}{2 n^2}\right) .$$
\end{theorem}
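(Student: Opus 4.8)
The plan is to mirror the proof of Theorem~\ref{th:centerofgravity} almost verbatim, with volumes of ellipsoids playing the role that volumes of the sets $\cS_t$ played there. The key structural facts are that (i) the ellipsoids are nested, $\cE_{t+1} \subset \cE_t$, and each new ellipsoid contains the half-ellipsoid $\{x \in \cE_t : (x-c_t)^\top w_t \le 0\}$, by Lemma~\ref{lem:geomellipsoid}; (ii) whenever $w_t$ is a separating hyperplane for $\cX$ we have $\cX \subset \{x : (x-c_t)^\top w_t \le 0\}$, while whenever $w_t$ is a subgradient of $f$ at $c_t \in \cX$, the point $x^*$ still satisfies $(x^*-c_t)^\top w_t \le 0$ since $f(x^*) \le f(c_t)$. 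In either case $\cX$ (and in particular $x^*$) is never discarded: $x^* \in \cE_t$ for all $t$. This is the analogue of \eqref{eq:centerofgravity1}.

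Next I would iterate the volume bound \eqref{eq:ellipsoidlemma2}: after $t$ steps, $\mathrm{vol}(\cE_t) \le \exp(-t/(2n)) \,\mathrm{vol}(\cE_0) = \exp(-t/(2n)) \, R^n \mathrm{vol}(\mB)$ where $\mB$ is the unit Euclidean ball. On the other hand, since $\cX$ contains a Euclidean ball of radius $r$, so does $\cE_t$ (as $\cX \subset \cE_t$), hence $\mathrm{vol}(\cE_t) \ge r^n \mathrm{vol}(\mB)$. Combining the two, $r^n \le R^n \exp(-t/(2n))$, which is impossible once $t > 2n^2 \log(R/r)$; the contradiction is resolved by noting that the volume shrinkage only happens at steps where we genuinely cut, i.e.\ where either the oracle was a separation oracle or $c_t \in \cX$ — but actually the ellipsoid shrinks at every step regardless, so the correct reading is: after $2n^2\log(R/r)$ steps the inclusion $\cX \subset \cE_t$ would force $\mathrm{vol}(\cE_t) \ge r^n\mathrm{vol}(\mB)$, contradicting the shrinkage unless at some earlier step we had $c_t \in \cX$ — wait, this needs care. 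The clean statement is that $\cX \subseteq \cE_t$ always holds, so the volume lower bound $r^n \mathrm{vol}(\mB) \le \mathrm{vol}(\cE_t)$ always holds, and this already forces $t \le 2n^2\log(R/r)$ to be the only regime before a contradiction — but the theorem claims something at $t \ge 2n^2\log(R/r)$. The resolution is the parenthetical hint: volume is removed from $\cX$ only when $c_t \in \cX$, so once $t$ is large enough that pure separation steps alone cannot account for all the shrinkage, at least one $c_t$ must have landed in $\cX$; and then the same $\cX_\epsilon := \{(1-\epsilon)x^* + \epsilon x : x \in \cX\}$ argument as in Theorem~\ref{th:centerofgravity}, with $\mathrm{vol}(\cX_\epsilon) = \epsilon^n \mathrm{vol}(\cX) \ge \epsilon^n r^n \mathrm{vol}(\mB)$ compared against $\mathrm{vol}(\cE_{t+1}) \le \exp(-t/(2n)) R^n \mathrm{vol}(\mB)$, shows that for $\epsilon > (R/r)\exp(-t/(2n))$ there is a set $\cX_\epsilon$ too large to fit in $\cE_{t+1}$, hence some $x_\epsilon \in \cX_\epsilon$ is cut off at some step $r$ with $c_r \in \cX$, giving $f(c_r) < f(x_\epsilon) \le f(x^*) + 2\epsilon B$ by convexity.

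Putting $\epsilon = (R/r)\exp(-t/(2n))$ yields $f(x_t) - f(x^*) \le 2B(R/r)\exp(-t/(2n))$, which is the claimed bound. The one point needing the most care — and the main obstacle — is precisely the bookkeeping of \emph{which} steps remove points of $\cX$: a separation step removes part of the ambient ellipsoid but nothing of $\cX$, whereas a subgradient step at $c_t \in \cX$ genuinely shrinks the region of $\cX$ still in play. One must argue that the ellipsoid volume contracts by the factor $\exp(-1/(2n))$ at \emph{every} step (true, by Lemma~\ref{lem:geomellipsoid}, regardless of the type of cut), so after $t \ge 2n^2\log(R/r)$ steps the ellipsoid is strictly smaller than any radius-$r$ ball; since $\cX$ (radius $\ge r$) sits inside every $\cE_t$, the only way out is that the accounting ``$x^* \in \cX_\epsilon \subset \cE_{t+1}$ always, and $\mathrm{vol}\,\cX_\epsilon > \mathrm{vol}\,\cE_{t+1}$'' forces a cut at a step with $c_r \in \cX$, which in particular shows $\{c_1,\dots,c_t\}\cap\cX \ne \emptyset$. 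The rest is identical to the center-of-gravity proof, with $\mathrm{vol}(\cX) \ge r^n \mathrm{vol}(\mB)$ replacing the unconditioned volume and the extra factor $R^n/(r^n \cdot)$ accounting for the looser $\exp(-1/(2n))$ contraction versus $1-1/e$.
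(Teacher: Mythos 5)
Your plan is the right one — and it is exactly what the paper indicates, namely to rerun the center-of-gravity argument while tracking which cuts can actually remove points of $\cX$ — but the execution has two concrete problems.

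The first is an arithmetic slip that changes the final bound. From $\mathrm{vol}(\cX_\epsilon) > \mathrm{vol}(\cE_t)$, i.e.\ from
\[
\epsilon^n r^n\,\mathrm{vol}(\mB) \;>\; \exp\!\left(-\tfrac{t}{2n}\right) R^n\,\mathrm{vol}(\mB),
\]
taking the $n$-th root gives $\epsilon > (R/r)\exp(-t/(2n^2))$, not $\epsilon > (R/r)\exp(-t/(2n))$ as you wrote. Dropping that extra $1/n$ in the exponent is precisely the difference between the $\Theta(n^2\log(1/\epsilon))$ oracle complexity of the ellipsoid method and the $\Theta(n\log(1/\epsilon))$ complexity of center of gravity. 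As written, your final bound $2B(R/r)\exp(-t/(2n))$ is strictly stronger than the statement of the theorem and does not follow from the volume estimate.

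The second problem is the assertion that ``$\cX$ (radius $\ge r$) sits inside every $\cE_t$,'' which is false and is the source of the back-and-forth in the middle of your write-up. Once a subgradient cut is made at some $c_r \in \cX$, the portion of $\cX$ where $f$ exceeds $f(c_r)$ is genuinely discarded; only $x^*$ is guaranteed to survive every cut. What is true — and this is exactly the paper's parenthetical hint — is that a separation cut preserves all of $\cX$. The clean one-directional chain, which you do eventually reach, is: $\cX_\epsilon \subset \cX \subset \cE_0$; for $\epsilon > (R/r)\exp(-t/(2n^2))$ one has $\mathrm{vol}(\cX_\epsilon) > \mathrm{vol}(\cE_t)$, so some $x_\epsilon \in \cX_\epsilon$ leaves the ellipsoid at some step $r < t$, giving $(x_\epsilon - c_r)^\top w_r > 0$; a separation cut cannot do this (it keeps all of $\cX$), hence $c_r \in \cX$ and $w_r$ is a subgradient, so $f(c_r) < f(x_\epsilon) \le f(x^*) + 2\epsilon B$. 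For $t \ge 2n^2\log(R/r)$ the threshold on $\epsilon$ is at most $1$, so the argument applies with some $\epsilon < 1$, which simultaneously yields $\{c_1,\dots,c_t\} \cap \cX \neq \emptyset$. Replacing the false containment $\cX \subset \cE_t$ with this forward chain, and fixing the exponent, completes the proof.
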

We observe that the oracle complexity of the ellipsoid method is much worse than the one of the center gravity method, indeed the former needs $O(n^2 \log(1/\epsilon))$ calls to the oracles while the latter requires only $O(n \log(1/\epsilon))$ calls. However from a computational point of view the situation is much better: in many cases one can derive an efficient separation oracle, while the center of gravity method is basically always intractable. This is for instance the case in the context of LPs and SDPs: with the notation of Section \ref{sec:structured} the computational complexity of the separation oracle for LPs is $O(m n)$ while for SDPs it is $O(\max(m,n) n^2)$ (we use the fact that the spectral decomposition of a matrix can be done in $O(n^3)$ operations). This gives an overall complexity of $O(\max(m,n) n^3 \log(1/\epsilon))$ for LPs and $O(\max(m,n^2) n^6 \log(1/\epsilon))$ for SDPs. We note however that the ellipsoid method is almost never used in practice, essentially because the method is too rigid to exploit the potential easiness of real problems (e.g., the volume decrease given by \eqref{eq:ellipsoidlemma2} is essentially always tight).


\section{Vaidya's cutting plane method}
We focus here on the feasibility problem (it should be clear from the previous sections how to adapt the argument for optimization). We have seen that for the feasibility problem the center of gravity has a $O(n)$ oracle complexity and unclear computational complexity (see Section \ref{sec:rwmethod} for more on this), while the ellipsoid method has oracle complexity $O(n^2)$ and computational complexity $O(n^4)$. We describe here the beautiful algorithm of \cite{Vai89, Vai96} which has oracle complexity $O(n \log(n))$ and computational complexity $O(n^4)$, thus getting the best of both the center of gravity and the ellipsoid method. In fact the computational complexity can even be improved further, and the recent breakthrough \cite{LSW15} shows that it can essentially (up to logarithmic factors) be brought down to $O(n^3)$.

This section, while giving a fundamental algorithm, should probably be skipped on a first reading. In particular we use several concepts from the theory of interior point methods which are described in Section \ref{sec:IPM}.

\subsection{The volumetric barrier}
Let $A \in \mathbb{R}^{m \times n}$ where the $i^{th}$ row is $a_i \in \mathbb{R}^n$, and let $b \in \mathbb{R}^m$. We consider the logarithmic barrier $F$ for the polytope $\{x \in \mathbb{R}^n : A x > b\}$ defined by
$$F(x) = - \sum_{i=1}^m \log(a_i^{\top} x - b_i) .$$
We also consider the volumetric barrier $v$ defined by
$$v(x) = \frac{1}{2} \mathrm{logdet}(\nabla^2 F(x) ) .$$
The intuition is clear: $v(x)$ is equal to the logarithm of the inverse volume of the Dikin ellipsoid (for the logarithmic barrier) at $x$. It will be useful to spell out the hessian of the logarithmic barrier:
$$\nabla^2 F(x) = \sum_{i=1}^m \frac{a_i a_i^{\top}}{(a_i^{\top} x - b_i)^2} .$$
Introducing the leverage score
$$\sigma_i(x) = \frac{(\nabla^2 F(x) )^{-1}[a_i, a_i]}{(a_i^{\top} x - b_i)^2} ,$$
one can easily verify that
\begin{equation} \label{eq:gradvol}
\nabla v(x) = - \sum_{i=1}^m \sigma_i(x) \frac{a_i}{a_i^{\top} x - b_i} ,
\end{equation}
and 
\begin{equation} \label{eq:hessianvol}
\nabla^2 v(x) \succeq \sum_{i=1}^m \sigma_i(x) \frac{a_i a_i^{\top}}{(a_i^{\top} x - b_i)^2} =: Q(x) .
\end{equation}

\subsection{Vaidya's algorithm}
We fix $\epsilon \leq 0.006$ a small constant to be specified later. Vaidya's algorithm produces a sequence of pairs $(A^{(t)}, b^{(t)}) \in \mathbb{R}^{m_t \times n} \times \mathbb{R}^{m_t}$ such that the corresponding polytope contains the convex set of interest. The initial polytope defined by $(A^{(0)},b^{(0)})$ is a simplex (in particular $m_0=n+1$). For $t\geq0$ we let $x_t$ be the minimizer of the volumetric barrier $v_t$ of the polytope given by $(A^{(t)}, b^{(t)})$, and $(\sigma_i^{(t)})_{i \in [m_t]}$ the leverage scores (associated to $v_t$) at the point $x_t$. We also denote $F_t$ for the logarithmic barrier given by $(A^{(t)}, b^{(t)})$. The next polytope $(A^{(t+1)}, b^{(t+1)})$ is defined by either adding or removing a constraint to the current polytope:
\begin{enumerate}
\item If for some $i \in [m_t]$ one has $\sigma_i^{(t)} = \min_{j \in [m_t]} \sigma_j^{(t)} < \epsilon$, then $(A^{(t+1)}, b^{(t+1)})$ is defined by removing the $i^{th}$ row in $(A^{(t)}, b^{(t)})$ (in particular $m_{t+1} = m_t - 1$).
\item Otherwise let $c^{(t)}$ be the vector given by the separation oracle queried at $x_t$, and $\beta^{(t)} \in \mathbb{R}$ be chosen so that 
$$\frac{(\nabla^2 F_t(x_t) )^{-1}[c^{(t)}, c^{(t)}]}{(x_t^{\top} c^{(t)} - \beta^{(t)})^2} = \frac{1}{5} \sqrt{\epsilon} .$$
Then we define $(A^{(t+1)}, b^{(t+1)})$ by adding to $(A^{(t)}, b^{(t)})$ the row given by $(c^{(t)}, \beta^{(t)})$ (in particular $m_{t+1} = m_t + 1$).
\end{enumerate}
It can be shown that the volumetric barrier is a self-concordant barrier, and thus it can be efficiently minimized with Newton's method. In fact it is enough to do {\em one step} of Newton's method on $v_t$ initialized at $x_{t-1}$, see \cite{Vai89, Vai96} for more details on this.

\subsection{Analysis of Vaidya's method} \label{sec:analysis}
The construction of Vaidya's method is based on a precise understanding of how the volumetric barrier changes when one adds or removes a constraint to the polytope. This understanding is derived in Section \ref{sec:constraintsvolumetric}. In particular we obtain the following two key inequalities: If case 1 happens at iteration $t$ then
\begin{equation} \label{eq:analysis1}
v_{t+1}(x_{t+1}) - v_t(x_t) \geq - \epsilon ,
\end{equation}
while if case 2 happens then 
\begin{equation} \label{eq:analysis2}
v_{t+1}(x_{t+1}) - v_t(x_t) \geq \frac{1}{20} \sqrt{\epsilon} .
\end{equation}
We show now how these inequalities imply that Vaidya's method stops after $O(n \log(n R/r))$ steps. First we claim that after $2t$ iterations, case 2 must have happened at least $t-1$ times. Indeed suppose that at iteration $2t-1$, case 2 has happened $t-2$ times; then $\nabla^2 F(x)$ is singular and the leverage scores are infinite, so case 2 must happen at iteration $2t$. Combining this claim with the two inequalities above we obtain:
$$v_{2t}(x_{2t}) \geq v_0(x_0) + \frac{t-1}{20} \sqrt{\epsilon} - (t+1) \epsilon \geq \frac{t}{50} \epsilon - 1 +v_0(x_0) . $$
The key point now is to recall that by definition one has $v(x) = - \log \mathrm{vol}(\cE(x,1))$ where $\cE(x,r) = \{y : \nabla F^2(x)[y-x,y-x] \leq r^2\}$ is the Dikin ellipsoid centered at $x$ and of radius $r$. Moreover the logarithmic barrier $F$ of a polytope with $m$ constraints is $m$-self-concordant, which implies that the polytope is included in the Dikin ellipsoid $\cE(z, 2m)$ where $z$ is the minimizer of $F$ (see [Theorem 4.2.6., \cite{Nes04}]). The volume of $\cE(z, 2m)$ is equal to $(2m)^n \exp(-v(z))$, which is thus always an upper bound on the volume of the polytope. Combining this with the above display we just proved that at iteration $2k$ the volume of the current polytope is at most
$$\exp \left(n \log(2m_{2t}) + 1 - v_0(x_0) - \frac{t}{50} \epsilon \right) .$$
Since $\cE(x,1)$ is always included in the polytope we have that $- v_0(x_0)$ is at most the logarithm of the volume of the initial polytope which is $O(n \log(R))$. This clearly concludes the proof as the procedure will necessarily stop when the volume is below $\exp(n \log(r))$ (we also used the trivial bound $m_t \leq n+1+t$).

\subsection{Constraints and the volumetric barrier} \label{sec:constraintsvolumetric}
We want to understand the effect on the volumetric barrier of addition/deletion of constraints to the polytope. Let $c \in \mathbb{R}^n$, $\beta \in \mathbb{R}$, and consider the logarithmic barrier $\tilde{F}$ and the volumetric barrier $\tilde{v}$ corresponding to the matrix $\tilde{A}\in \mathbb{R}^{(m+1) \times n}$ and the vector $\tilde{b} \in \mathbb{R}^{m+1}$ which are respectively the concatenation of $A$ and $c$, and the concatenation of $b$ and $\beta$. Let $x^*$ and $\tilde{x}^*$ be the minimizer of respectively $v$ and $\tilde{v}$. We recall the definition of leverage scores, for $i \in [m+1]$, where $a_{m+1}=c$ and $b_{m+1}=\beta$,
$$\sigma_i(x) = \frac{(\nabla^2 F(x) )^{-1}[a_i, a_i]}{(a_i^{\top} x - b_i)^2}, \ \text{and} \ \tilde{\sigma}_i(x) = \frac{(\nabla^2 \tilde{F}(x) )^{-1}[a_i, a_i]}{(a_i^{\top} x - b_i)^2}.$$
The leverage scores $\sigma_i$ and $\tilde{\sigma}_i$ are closely related:
\begin{lemma} \label{lem:V1}
One has for any $i \in [m+1]$,
$$\frac{\tilde{\sigma}_{m+1}(x)}{1 - \tilde{\sigma}_{m+1}(x)} \geq \sigma_i(x) \geq \tilde{\sigma}_i(x) \geq (1-\sigma_{m+1}(x)) \sigma_i(x) .$$
\end{lemma}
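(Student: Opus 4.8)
The plan is to reduce the lemma to a single rank-one update of the Hessian of the logarithmic barrier and then unwind everything with the Sherman–Morrison formula, in the same spirit as the proof of Lemma~\ref{lem:geomellipsoid}. Fix $x$ in the interior of $\{y : Ay > b\}$ and abbreviate $H = \nabla^2 F(x)$ and $v_i = a_i/(a_i^{\top}x - b_i)$ for $i \in [m+1]$ (with $a_{m+1}=c$, $b_{m+1}=\beta$). Then $H = \sum_{i=1}^m v_i v_i^{\top}$, adding the single new constraint contributes exactly $v_{m+1}v_{m+1}^{\top}$, so $\nabla^2\tilde F(x) = H + v_{m+1}v_{m+1}^{\top}$, and the leverage scores read $\sigma_i(x) = v_i^{\top}H^{-1}v_i$ and $\tilde\sigma_i(x) = v_i^{\top}(\nabla^2\tilde F(x))^{-1}v_i$ (note that $\sigma_{m+1}(x)$ makes sense even though $a_{m+1}$ is not a row of $A$). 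I would begin by recording the elementary facts $v_iv_i^{\top}\preceq H$ for $i\le m$ and $v_iv_i^{\top}\preceq\nabla^2\tilde F(x)$ for all $i\in[m+1]$, so that $\sigma_i(x)\in[0,1)$ for $i\le m$ and $\tilde\sigma_i(x)\in[0,1)$ for every $i$; in particular $1-\tilde\sigma_{m+1}(x)>0$, and $1+\sigma_{m+1}(x)\ge1$ trivially, which legitimizes all the algebra below.

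The heart of the argument is one identity. Sherman–Morrison applied to $\nabla^2\tilde F(x)=H+v_{m+1}v_{m+1}^{\top}$ gives $(\nabla^2\tilde F(x))^{-1}=H^{-1}-\frac{H^{-1}v_{m+1}v_{m+1}^{\top}H^{-1}}{1+\sigma_{m+1}(x)}$, hence, for every $i\in[m+1]$,
\[
\tilde\sigma_i(x) \;=\; \sigma_i(x)\;-\;\frac{\bigl(v_i^{\top}H^{-1}v_{m+1}\bigr)^2}{1+\sigma_{m+1}(x)} .
\]
All three claimed inequalities are read off from this. The middle one, $\sigma_i(x)\ge\tilde\sigma_i(x)$, is immediate since the subtracted term is a square over a positive number. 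For the rightmost one, apply Cauchy–Schwarz in the inner product $\langle a,b\rangle:=a^{\top}H^{-1}b$ (valid since $H\succ0$) to bound $\bigl(v_i^{\top}H^{-1}v_{m+1}\bigr)^2\le\sigma_i(x)\,\sigma_{m+1}(x)$, which yields
\[
\tilde\sigma_i(x)\;\ge\;\sigma_i(x)\Bigl(1-\tfrac{\sigma_{m+1}(x)}{1+\sigma_{m+1}(x)}\Bigr)\;=\;\frac{\sigma_i(x)}{1+\sigma_{m+1}(x)}\;\ge\;\bigl(1-\sigma_{m+1}(x)\bigr)\sigma_i(x),
\]
the last step using $1/(1+t)\ge1-t$ for $t\ge0$. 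For the leftmost relation, specialize the identity to $i=m+1$: since $v_{m+1}^{\top}H^{-1}v_{m+1}=\sigma_{m+1}(x)$ it collapses to $\tilde\sigma_{m+1}(x)=\sigma_{m+1}(x)-\frac{\sigma_{m+1}(x)^2}{1+\sigma_{m+1}(x)}=\frac{\sigma_{m+1}(x)}{1+\sigma_{m+1}(x)}$, i.e., the exact identity $\sigma_{m+1}(x)=\frac{\tilde\sigma_{m+1}(x)}{1-\tilde\sigma_{m+1}(x)}$, which is the $i=m+1$ instance of the first inequality and from which the general case follows.

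I do not expect a conceptual obstacle — the whole proof is "rank-one update plus Cauchy–Schwarz" — so the main risk is purely in the bookkeeping: carrying the scalar factors $a_i^{\top}x-b_i$ correctly through the Sherman–Morrison step, and working consistently with the vectors $v_i$ and the $H^{-1}$-inner product rather than expanding $H$ and $\nabla^2\tilde F(x)$ coordinatewise (which would bury this three-line computation under indices and hide the Cauchy–Schwarz). The single load-bearing preliminary, worth stating explicitly first, is that the leverage scores lie in $[0,1)$, since that is precisely what makes $1+\sigma_{m+1}(x)$ and $1-\tilde\sigma_{m+1}(x)$ strictly positive and all the rearrangements above valid.
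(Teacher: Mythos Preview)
Your approach is essentially the paper's: a single Sherman--Morrison update plus a rank-one bound. Where you use Cauchy--Schwarz in the $H^{-1}$-inner product, the paper invokes the equivalent operator inequality $A-\tfrac{Auu^\top A}{1+A[u,u]}\succeq(1-A[u,u])A$; these are the same fact. The middle and rightmost inequalities are fine, and your derivation of the exact identity $\sigma_{m+1}(x)=\tfrac{\tilde\sigma_{m+1}(x)}{1-\tilde\sigma_{m+1}(x)}$ is correct.

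The gap is your treatment of the leftmost inequality for general $i$. You prove the $i=m+1$ case as an identity and then assert ``the general case follows.'' It does not: by your own identity the left-hand side equals $\sigma_{m+1}(x)$, so the claim $\tfrac{\tilde\sigma_{m+1}(x)}{1-\tilde\sigma_{m+1}(x)}\ge\sigma_i(x)$ for all $i$ would force $\sigma_{m+1}(x)\ge\sigma_i(x)$ for every $i$, which is plainly false (take the added constraint far from $x$). In fact the lemma as printed has a typo: the numerator should be $\tilde\sigma_i(x)$, not $\tilde\sigma_{m+1}(x)$. The paper's own proof establishes exactly this corrected version, via the reverse Sherman--Morrison identity and the bound $A+\tfrac{Auu^\top A}{1-A[u,u]}\preceq\tfrac{1}{1-A[u,u]}A$, which yields $(\nabla^2 F(x))^{-1}\preceq\tfrac{1}{1-\tilde\sigma_{m+1}(x)}(\nabla^2\tilde F(x))^{-1}$ and hence $\sigma_i(x)\le\tfrac{\tilde\sigma_i(x)}{1-\tilde\sigma_{m+1}(x)}$. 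You can recover the same conclusion directly from what you already have: your Cauchy--Schwarz step gives $\tilde\sigma_i(x)\ge\sigma_i(x)/(1+\sigma_{m+1}(x))$, and your identity gives $1+\sigma_{m+1}(x)=1/(1-\tilde\sigma_{m+1}(x))$; combining these yields $\sigma_i(x)\le\tilde\sigma_i(x)/(1-\tilde\sigma_{m+1}(x))$, which is the intended statement. So the fix is short, but ``the general case follows'' is not a substitute for it.
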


\begin{proof}
First we observe that by Sherman-Morrison's formula $(A+uv^{\top})^{-1} = A^{-1} - \frac{A^{-1} u v^{\top} A^{-1}}{1+A^{-1}[u,v]}$ one has
\begin{equation} \label{eq:SM}
(\nabla^2 \tilde{F}(x))^{-1} = (\nabla^2 F(x))^{-1} - \frac{(\nabla^2 F(x))^{-1} c c^{\top} (\nabla^2 F(x))^{-1}}{(c^{\top} x - \beta)^2 + (\nabla^2 F(x))^{-1}[c,c]} ,
\end{equation}
This immediately proves $\tilde{\sigma}_i(x) \leq \sigma_i(x)$. It also implies the inequality $\tilde{\sigma}_i(x) \geq (1-\sigma_{m+1}(x)) \sigma_i(x)$ thanks the following fact: $A - \frac{A u u^{\top} A}{1+A[u,u]} \succeq (1-A[u,u]) A$. For the last inequality we use that $A + \frac{A u u^{\top} A}{1+A[u,u]} \preceq \frac{1}{1-A[u,u]} A$ together with
$$(\nabla^2 {F}(x))^{-1} = (\nabla^2 \tilde{F}(x))^{-1} + \frac{(\nabla^2 \tilde{F}(x))^{-1} c c^{\top} (\nabla^2 \tilde{F}(x))^{-1}}{(c^{\top} x - \beta)^2 - (\nabla^2 \tilde{F}(x))^{-1}[c,c]} .$$
\end{proof}

We now assume the following key result, which was first proven by Vaidya. To put the statement in context recall that for a self-concordant barrier $f$ the suboptimality gap $f(x) - \min f$ is intimately related to the Newton decrement $\|\nabla f(x) \|_{(\nabla^2 f(x))^{-1}}$. Vaidya's inequality gives a similar claim for the volumetric barrier. We use the version given in [Theorem 2.6, \cite{Ans98}] which has slightly better numerical constants than the original bound. Recall also the definition of $Q$ from \eqref{eq:hessianvol}.

\begin{theorem} \label{th:V0}
Let $\lambda(x) = \|\nabla v(x) \|_{Q(x)^{-1}}$ be an approximate Newton decrement, $\epsilon = \min_{i \in [m]} \sigma_i(x)$, and assume that $\lambda(x)^2 \leq \frac{2 \sqrt{\epsilon} - \epsilon}{36}$. Then
$$v(x) - v(x^*) \leq 2 \lambda(x)^2 . $$
\end{theorem}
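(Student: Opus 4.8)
The plan is to exploit the fact that $v$ behaves, near its minimizer $x^*$, like a self-concordant-type function with respect to the metric $Q(x)$, even though $Q(x)$ is only a lower bound on $\nabla^2 v(x)$ (see \eqref{eq:hessianvol}). The target inequality $v(x) - v(x^*) \leq 2\lambda(x)^2$ with $\lambda(x)^2 = \|\nabla v(x)\|_{Q(x)^{-1}}^2$ has exactly the shape of the standard ``region of quadratic convergence'' bound for Newton's method on a self-concordant barrier: whenever the Newton decrement is small enough, suboptimality is controlled by the square of the decrement. So the proof should parallel the classical argument (as in [Chapter 4, \cite{Nes04}]), with the extra bookkeeping needed because the natural local norm is $Q$ rather than $\nabla^2 v$. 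I would set $x_s = x^* + s(x - x^*)$ and study $\varphi(s) = v(x_s)$, using convexity of $v$ and the key structural identities \eqref{eq:gradvol} and \eqref{eq:hessianvol} together with Lemma \ref{lem:V1} (which controls how leverage scores, hence $Q$, vary) to get a differential inequality along this segment.

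First I would record the two facts that drive everything: $\nabla^2 v(x) \succeq Q(x)$ from \eqref{eq:hessianvol}, and a quantitative statement that $Q$ (equivalently, $\nabla^2 F$ weighted by leverage scores) does not change too fast in the $Q$-metric over the unit Dikin-type ball — this is where the hypothesis $\lambda(x)^2 \leq (2\sqrt{\epsilon} - \epsilon)/36$ and the leverage-score lower bound $\sigma_i(x) \geq \epsilon$ enter, since a small $\epsilon$ is precisely what could make $Q$ degenerate or rapidly varying. Second, using $\nabla v(x^*) = 0$ and $\nabla^2 v \succeq Q$, I would write
$$v(x) - v(x^*) = \int_0^1 (1-s)\, \nabla^2 v(x_s)[x - x^*, x - x^*]\, ds,$$
and separately bound $\lambda(x)^2 = \|\nabla v(x)\|_{Q(x)^{-1}}^2$ from below in terms of the same quadratic form $\|x - x^*\|$ measured in an appropriate metric, by integrating $\nabla v(x) = \int_0^1 \nabla^2 v(x_s)(x-x^*)\, ds$. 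Matching these two expressions, with the self-concordance-style control on how the metric distorts along the segment, yields the factor $2$.

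The main obstacle is the mismatch between $\nabla^2 v$ and $Q$: $Q$ appears in the definition of $\lambda$, but the Taylor expansion of $v$ naturally produces $\nabla^2 v$, which is only $\succeq Q$. Getting a clean constant requires a two-sided control — I need $\nabla^2 v(x_s)$ to be comparable to $Q(x)$ (not just $\succeq$) on the relevant segment, and the reverse inequality $\nabla^2 v \preceq c\, Q$ for some explicit $c$ is exactly the delicate estimate (it is false in general and only holds in the regime carved out by the hypotheses on $\lambda$ and $\epsilon$). This comparison, together with the Lipschitz-type variation of $Q$ captured through Lemma \ref{lem:V1}, is the technical heart; once it is in hand the rest is the routine self-concordant calculation. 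Since the excerpt explicitly attributes the bound to Vaidya and cites [Theorem 2.6, \cite{Ans98}] for the sharp constants, I would in fact just invoke that reference for the precise numerical constant $2$ and the threshold $(2\sqrt{\epsilon}-\epsilon)/36$, presenting the above only as the conceptual skeleton of why such a bound must hold.
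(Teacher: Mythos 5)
The paper does not itself prove Theorem \ref{th:V0}: immediately before stating it, the text says ``We now assume the following key result, which was first proven by Vaidya,'' and cites [Theorem 2.6, \cite{Ans98}] for the sharp constants. Your proposal correctly diagnoses the technical heart (the need for a two-sided comparison between $\nabla^2 v$ and $Q$ in the regime carved out by the hypotheses on $\lambda$ and the leverage scores) and, like the paper, ultimately defers to that same reference rather than reproving it, so the two treatments coincide.
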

We also denote $\tilde{\lambda}$ for the approximate Newton decrement of $\tilde{v}$. The goal for the rest of the section is to prove the following theorem which gives the precise understanding of the volumetric barrier we were looking for.

\begin{theorem} \label{th:V1}
Let $\epsilon := \min_{i \in [m]} \sigma_i(x^*)$, $\delta := \sigma_{m+1}(x^*) / \sqrt{\epsilon}$ and assume that $\frac{\left(\delta \sqrt{\epsilon} + \sqrt{\delta^{3} \sqrt{\epsilon}}\right)^2}{1- \delta \sqrt{\epsilon}} < \frac{2 \sqrt{\epsilon} - \epsilon}{36}$. Then one has
\begin{equation} \label{eq:thV11}
\tilde{v}(\tilde{x}^*) - v(x^*) \geq \frac{1}{2} \log(1+\delta \sqrt{\epsilon}) - 2  \frac{\left(\delta \sqrt{\epsilon} + \sqrt{\delta^{3} \sqrt{\epsilon}}\right)^2}{1- \delta \sqrt{\epsilon}}  .
\end{equation}
On the other hand assuming that $\tilde{\sigma}_{m+1}(\tilde{x}^*) = \min_{i \in [m+1]} \tilde{\sigma}_{i}(\tilde{x}^*) =: \epsilon$ and that $\epsilon \leq 1/4$, one has 
\begin{equation} \label{eq:thV12}
\tilde{v}(\tilde{x}^*) - v(x^*) \leq - \frac{1}{2} \log(1 - \epsilon) + \frac{8 \epsilon^2}{(1-\epsilon)^2}.
\end{equation}
\end{theorem}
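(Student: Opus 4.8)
The plan is to analyze how the volumetric barrier changes when a single constraint is added, by relating the minimizers $x^*$ of $v$ and $\tilde{x}^*$ of $\tilde{v}$ through Vaidya's inequality (Theorem \ref{th:V0}) and the leverage-score comparison (Lemma \ref{lem:V1}). The two bounds \eqref{eq:thV11} and \eqref{eq:thV12} are proved by essentially the same strategy in opposite directions: first compute the gap $\tilde{v}(x^*) - v(x^*)$ exactly at the \emph{old} minimizer $x^*$ (where things are explicit), then use Vaidya's inequality to control the further change $\tilde{v}(\tilde{x}^*) - \tilde{v}(x^*)$ in moving to the new minimizer.

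For the first bound, I would start from the identity $\nabla^2 \tilde{F}(x) = \nabla^2 F(x) + \frac{cc^\top}{(c^\top x - \beta)^2}$, so that by the matrix determinant lemma
$$\tilde{v}(x^*) - v(x^*) = \tfrac{1}{2}\log\det\!\big(\mI_n + (\nabla^2 F(x^*))^{-1} \tfrac{cc^\top}{(c^\top x^* - \beta)^2}\big) = \tfrac{1}{2}\log\big(1 + \sigma_{m+1}(x^*)\big) = \tfrac{1}{2}\log(1 + \delta\sqrt{\epsilon}).$$
This is the positive "main term". Then I need a lower bound on $\tilde{v}(\tilde{x}^*) - \tilde{v}(x^*)$, i.e. an upper bound on $\tilde{v}(x^*) - \tilde{v}(\tilde{x}^*)$; this is exactly what Theorem \ref{th:V0} provides, bounding it by $2\tilde{\lambda}(x^*)^2$ once the hypothesis $\tilde{\lambda}(x^*)^2 \le \frac{2\sqrt{\epsilon}-\epsilon}{36}$ is verified. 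So the crux is estimating the approximate Newton decrement $\tilde{\lambda}(x^*) = \|\nabla \tilde{v}(x^*)\|_{\tilde{Q}(x^*)^{-1}}$. Since $x^*$ minimizes $v$, we have $\nabla v(x^*) = 0$, so $\nabla \tilde{v}(x^*)$ is entirely the contribution of the new constraint plus the perturbation of the leverage scores of the old constraints; using \eqref{eq:gradvol}, Lemma \ref{lem:V1} to compare $\tilde{\sigma}_i$ with $\sigma_i$, and the bound $\tilde{Q}(x^*) \succeq Q(x^*) \succeq (1-\sigma_{m+1})\,\tilde{Q}$-type estimates, one gets $\tilde{\lambda}(x^*) \lesssim \frac{\delta\sqrt{\epsilon} + \sqrt{\delta^3\sqrt{\epsilon}}}{\sqrt{1-\delta\sqrt{\epsilon}}}$, which plugs in to give the subtracted term $2\frac{(\delta\sqrt{\epsilon}+\sqrt{\delta^3\sqrt{\epsilon}})^2}{1-\delta\sqrt{\epsilon}}$ and simultaneously matches the stated hypothesis.

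For the second bound I would run the same computation backwards. Now $\tilde{x}^*$ minimizes $\tilde{v}$, and the hypothesis is that its smallest leverage score is $\tilde{\sigma}_{m+1}(\tilde{x}^*) = \epsilon \le 1/4$. Using $\nabla^2 F(\tilde{x}^*) = \nabla^2 \tilde{F}(\tilde{x}^*) - \frac{cc^\top}{(c^\top\tilde{x}^*-\beta)^2}$ and the determinant lemma again,
$$v(\tilde{x}^*) - \tilde{v}(\tilde{x}^*) = -\tfrac{1}{2}\log\big(1 - \tilde{\sigma}_{m+1}(\tilde{x}^*)\big) = -\tfrac{1}{2}\log(1-\epsilon),$$
giving $\tilde{v}(\tilde{x}^*) - v(\tilde{x}^*) = \tfrac{1}{2}\log(1-\epsilon)$. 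Then $v(x^*) \le v(\tilde{x}^*)$, and to turn this around we need $v(\tilde{x}^*) - v(x^*) \le \frac{8\epsilon^2}{(1-\epsilon)^2}$, which is again Vaidya's inequality applied to $v$ at the point $\tilde{x}^*$: bound the approximate Newton decrement $\lambda(\tilde{x}^*)$ of $v$. Since $\nabla\tilde{v}(\tilde{x}^*)=0$, $\nabla v(\tilde{x}^*)$ is a controlled perturbation involving $\sigma_{m+1}$ and the differences $\sigma_i - \tilde\sigma_i$, all of which Lemma \ref{lem:V1} bounds in terms of $\sigma_{m+1}(\tilde{x}^*) \le \tilde\sigma_{m+1}(\tilde{x}^*)/(1-\cdots)$, hence in terms of $\epsilon$; one arrives at $\lambda(\tilde{x}^*)^2 \le \frac{4\epsilon^2}{(1-\epsilon)^2}$, and the hypothesis needed for Theorem \ref{th:V0} is checked using $\epsilon \le 1/4$. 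Assembling the two contributions gives \eqref{eq:thV12}.

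The main obstacle I anticipate is the bookkeeping in the Newton-decrement estimates: one must carefully combine the Sherman--Morrison expansion \eqref{eq:SM}, the two-sided leverage-score sandwich of Lemma \ref{lem:V1}, and operator-inequality manipulations of $Q$, $\tilde{Q}$, $\nabla^2 F$, $\nabla^2 \tilde{F}$ to squeeze $\tilde\lambda(x^*)$ and $\lambda(\tilde{x}^*)$ into exactly the forms $\frac{(\delta\sqrt\epsilon+\sqrt{\delta^3\sqrt\epsilon})^2}{1-\delta\sqrt\epsilon}$ and $\frac{4\epsilon^2}{(1-\epsilon)^2}$ respectively, so that the hypotheses of Theorem \ref{th:V0} line up precisely with the hypotheses stated in Theorem \ref{th:V1}. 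The determinant-lemma identities for the "explicit" part at the old/new minimizer are easy; it is propagating the perturbation through all the old leverage scores at once — each changes by a factor in $[1-\sigma_{m+1}, 1]$ — that requires care, in particular showing these accumulated changes contribute only at the order $\delta^{3/2}\epsilon^{1/4}$ (resp. $\epsilon$) claimed, rather than something larger.
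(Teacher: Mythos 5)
Your decomposition is exactly the paper's: split $\tilde{v}(\tilde{x}^*) - v(x^*)$ into an explicit determinant-lemma term evaluated at the known minimizer plus a suboptimality gap controlled by Theorem~\ref{th:V0}, and estimate the approximate Newton decrement using Lemma~\ref{lem:V1}. The precise decrement bounds you propose, $\tilde{\lambda}(x^*)^2 \le \frac{(\delta\sqrt{\epsilon}+\sqrt{\delta^{3}\sqrt{\epsilon}})^2}{1-\delta\sqrt{\epsilon}}$ and $\lambda(\tilde{x}^*)^2 \le \frac{4\epsilon^2}{(1-\epsilon)^2}$, are what the paper isolates as Lemma~\ref{lem:V2} (whose proof of the $\sum_{i=1}^m(\sigma_i - \tilde{\sigma}_i)$ contribution invokes the Vaidya computation [Lemma 12, \cite{Vai96}] you correctly flag as the delicate part).

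One sign slip to fix in your second part: the matrix determinant lemma gives $v(x) = \tilde{v}(x) + \frac{1}{2}\log\bigl(1-\tilde{\sigma}_{m+1}(x)\bigr)$, so $\tilde{v}(\tilde{x}^*) - v(\tilde{x}^*) = -\frac{1}{2}\log(1-\epsilon)$, which is positive as it must be since $\tilde{v} \ge v$ pointwise; you wrote $+\frac{1}{2}\log(1-\epsilon)$. With the sign corrected, the assembly $\tilde{v}(\tilde{x}^*) - v(x^*) = -\frac{1}{2}\log(1-\epsilon) + \bigl(v(\tilde{x}^*) - v(x^*)\bigr)$ together with $v(\tilde{x}^*) - v(x^*) \le 2\lambda(\tilde{x}^*)^2 \le \frac{8\epsilon^2}{(1-\epsilon)^2}$ yields \eqref{eq:thV12} exactly as you intended.
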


Before going into the proof let us see briefly how Theorem \ref{th:V1} give the two inequalities stated at the beginning of Section \ref{sec:analysis}. To prove \eqref{eq:analysis2} we use \eqref{eq:thV11} with $\delta=1/5$ and $\epsilon \leq 0.006$, and we observe that in this case the right hand side of \eqref{eq:thV11} is lower bounded by $\frac{1}{20} \sqrt{\epsilon}$. On the other hand to prove \eqref{eq:analysis1} we use \eqref{eq:thV12}, and we observe that for $\epsilon \leq 0.006$ the right hand side of \eqref{eq:thV12} is upper bounded by $\epsilon$.

\begin{proof}
We start with the proof of \eqref{eq:thV11}. First observe that by factoring $(\nabla^2 F(x))^{1/2}$ on the left and on the right of $\nabla^2 \tilde{F}(x)$ one obtains
\begin{align*}
& \mathrm{det}(\nabla^2 \tilde{F}(x)) \\
& = \mathrm{det}\left(\nabla^2 {F}(x) + \frac{cc^{\top}}{(c^{\top} x- \beta)^2} \right) \\
& = \mathrm{det}(\nabla^2 {F}(x)) \mathrm{det}\left(\mathrm{I}_n + \frac{(\nabla^2 {F}(x))^{-1/2} c c^{\top} (\nabla^2 {F}(x))^{-1/2}}{(c^{\top} x- \beta)^2}\right) \\
& = \mathrm{det}(\nabla^2 {F}(x)) (1+\sigma_{m+1}(x)) ,
\end{align*}
and thus
$$\tilde{v}(x) = v(x) + \frac{1}{2} \log(1+ \sigma_{m+1}(x)) .$$
In particular we have
$$\tilde{v}(\tilde{x}^*) - v(x^*) = \frac{1}{2} \log(1+ \sigma_{m+1}(x^*)) - (\tilde{v}(x^*) - \tilde{v}(\tilde{x}^*)) .$$
To bound the suboptimality gap of $x^*$ in $\tilde{v}$ we will invoke Theorem \ref{th:V0} and thus we have to upper bound the approximate Newton decrement $\tilde{\lambda}$.
Using [\eqref{eq:V21}, Lemma \ref{lem:V2}] below one has 
$$\tilde{\lambda} (x^*)^2 \leq \frac{\left(\sigma_{m+1}(x^*) + \sqrt{\frac{\sigma_{m+1}^3(x^*)}{\min_{i \in [m]} \sigma_i(x^*)}}\right)^2}{1-\sigma_{m+1}(x^*)} = \frac{\left(\delta \sqrt{\epsilon} + \sqrt{\delta^{3} \sqrt{\epsilon}}\right)^2}{1- \delta \sqrt{\epsilon}}  .$$
This concludes the proof of \eqref{eq:thV11}.
\newline

We now turn to the proof of \eqref{eq:thV12}. Following the same steps as above we immediately obtain
\begin{eqnarray*}
\tilde{v}(\tilde{x}^*) - v(x^*) & = & \tilde{v}(\tilde{x}^*) - v(\tilde{x}^*)+v(\tilde{x}^*)- v(x^*)  \\
& = & - \frac{1}{2} \log(1 - \tilde{\sigma}_{m+1}(\tilde{x}^*)) + v(\tilde{x}^*)- v(x^*).
\end{eqnarray*}
To invoke Theorem \ref{th:V0} it remains to upper bound $\lambda(\tilde{x}^*)$. Using [\eqref{eq:V22}, Lemma \ref{lem:V2}] below one has
$$ \lambda(\tilde{x}^*) \leq \frac{2 \ \tilde{\sigma}_{m+1}(\tilde{x}^*)}{1 - \tilde{\sigma}_{m+1}(\tilde{x}^*)} .$$
We can apply Theorem \ref{th:V0} since the assumption $\epsilon \leq 1/4$ implies that $\left(\frac{2 \epsilon}{1-\epsilon}\right)^2 \leq \frac{2 \sqrt{\epsilon} - \epsilon}{36}$. This concludes the proof of \eqref{eq:thV12}.
\end{proof}

\begin{lemma} \label{lem:V2}
One has
\begin{equation} \label{eq:V21}
\sqrt{1- \sigma_{m+1}(x)} \ \tilde{\lambda} (x) \leq \|\nabla {v}(x)\|_{Q(x)^{-1}} + \sigma_{m+1}(x) + \sqrt{\frac{\sigma_{m+1}^3(x)}{\min_{i \in [m]} \sigma_i(x)}} .
\end{equation}
Furthermore if $\tilde{\sigma}_{m+1}(x) = \min_{i \in [m+1]} \tilde{\sigma}_{i}(x)$ then one also has
\begin{equation} \label{eq:V22}
\lambda(x) \leq  \|\nabla \tilde{v}(x)\|_{Q(x)^{-1}} + \frac{2 \ \tilde{\sigma}_{m+1}(x)}{1 - \tilde{\sigma}_{m+1}(x)} .
\end{equation}
\end{lemma}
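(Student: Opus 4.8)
The plan is to derive both inequalities from a single decomposition of $\nabla\tilde{v}-\nabla v$ together with three elementary estimates in the semi-definite order. Write $\overline{a}_i := a_i/(a_i^\top x-b_i)$ (so $\overline{a}_{m+1}=c/(c^\top x-\beta)$), whence $\nabla^2 F(x)=\sum_{i=1}^m\overline{a}_i\overline{a}_i^\top$, $\nabla^2\tilde{F}(x)=\nabla^2 F(x)+\overline{a}_{m+1}\overline{a}_{m+1}^\top$, $\sigma_i(x)=\overline{a}_i^\top(\nabla^2 F(x))^{-1}\overline{a}_i$ and $\tilde{\sigma}_i(x)=\overline{a}_i^\top(\nabla^2\tilde{F}(x))^{-1}\overline{a}_i$. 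Recall $Q=\sum_{i=1}^m\sigma_i\overline{a}_i\overline{a}_i^\top$ from \eqref{eq:hessianvol} and let $\tilde{Q}:=\sum_{i=1}^{m+1}\tilde{\sigma}_i\overline{a}_i\overline{a}_i^\top$ be its analogue, so that $\lambda(x)=\|\nabla v(x)\|_{Q^{-1}}$ and $\tilde{\lambda}(x)=\|\nabla\tilde{v}(x)\|_{\tilde{Q}^{-1}}$. By \eqref{eq:gradvol} one has
$$ \nabla\tilde{v}(x)-\nabla v(x)=u-\tilde{\sigma}_{m+1}\,\overline{a}_{m+1},\qquad u:=\sum_{i=1}^m\mu_i\,\overline{a}_i,\quad \mu_i:=\sigma_i-\tilde{\sigma}_i\geq 0, $$
the nonnegativity of the $\mu_i$ being part of Lemma \ref{lem:V1}.

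The three estimates I would establish are: \emph{(i)} $\tilde{Q}\succeq(1-\sigma_{m+1})Q$, which is immediate from $\tilde{\sigma}_i\geq(1-\sigma_{m+1})\sigma_i$ (Lemma \ref{lem:V1}) after discarding the nonnegative $(m+1)$-st rank-one term, and which gives $\|y\|_{\tilde{Q}^{-1}}\leq(1-\sigma_{m+1})^{-1/2}\|y\|_{Q^{-1}}$ for every $y$; \emph{(ii)} $\|\overline{a}_{m+1}\|_{Q^{-1}}^2\leq\sigma_{m+1}/\min_{i\in[m]}\sigma_i$, since $Q\succeq(\min_{i\in[m]}\sigma_i)\nabla^2 F$ yields $Q^{-1}\preceq(\min_{i\in[m]}\sigma_i)^{-1}(\nabla^2 F)^{-1}$ while $\overline{a}_{m+1}^\top(\nabla^2 F)^{-1}\overline{a}_{m+1}=\sigma_{m+1}$; and \emph{(iii)}, the key bound $\|u\|_{Q^{-1}}\leq\sigma_{m+1}$. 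For (iii) I would use the variational formula $\|u\|_{Q^{-1}}=\max_{y^\top Q y\leq 1}y^\top u$ and Cauchy--Schwarz with weights $\sigma_i$:
$$ y^\top u=\sum_{i=1}^m\frac{\mu_i}{\sqrt{\sigma_i}}\cdot\sqrt{\sigma_i}\,(\overline{a}_i^\top y)\;\leq\;\Big(\sum_{i=1}^m\frac{\mu_i^2}{\sigma_i}\Big)^{1/2}\|y\|_Q, $$
so that $\|u\|_{Q^{-1}}^2\leq\sum_{i=1}^m\mu_i^2/\sigma_i$. Now $\mu_i/\sigma_i=1-\tilde{\sigma}_i/\sigma_i\leq\sigma_{m+1}$ (again Lemma \ref{lem:V1}), while $\sum_{i=1}^m\mu_i=\big(\sum_{i=1}^m\sigma_i\big)-\big(\sum_{i=1}^m\tilde{\sigma}_i\big)=n-(n-\tilde{\sigma}_{m+1})=\tilde{\sigma}_{m+1}$ by the trace identities $\sum_{i=1}^m\sigma_i=\tr\big((\nabla^2 F)^{-1}\nabla^2 F\big)=n$ and $\sum_{i=1}^{m+1}\tilde{\sigma}_i=n$; hence $\sum_i\mu_i^2/\sigma_i\leq\sigma_{m+1}\tilde{\sigma}_{m+1}\leq\sigma_{m+1}^2$.

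Granting (i)--(iii), \eqref{eq:V21} follows by writing $\nabla\tilde{v}=\nabla v+u-\tilde{\sigma}_{m+1}\overline{a}_{m+1}$, applying the triangle inequality in $\|\cdot\|_{\tilde{Q}^{-1}}$, and multiplying through by $\sqrt{1-\sigma_{m+1}}$: by (i) the $\nabla v$-term becomes $\leq\|\nabla v\|_{Q^{-1}}$, by (i) and (iii) the $u$-term becomes $\leq\|u\|_{Q^{-1}}\leq\sigma_{m+1}$, and by (i), (ii) and $\tilde{\sigma}_{m+1}\leq\sigma_{m+1}$ the last term becomes $\leq\tilde{\sigma}_{m+1}\|\overline{a}_{m+1}\|_{Q^{-1}}\leq\sqrt{\sigma_{m+1}^3/\min_i\sigma_i}$. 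For \eqref{eq:V22} I would instead write $\nabla v=\nabla\tilde{v}-u+\tilde{\sigma}_{m+1}\overline{a}_{m+1}$ and use the triangle inequality directly in $\|\cdot\|_{Q^{-1}}$; here $\|u\|_{Q^{-1}}\leq\sigma_{m+1}$ by (iii), and the hypothesis $\tilde{\sigma}_{m+1}=\min_{i\in[m+1]}\tilde{\sigma}_i$ together with $\sigma_i\geq\tilde{\sigma}_i$ gives $\min_{i\in[m]}\sigma_i\geq\tilde{\sigma}_{m+1}$, so by (ii) $\tilde{\sigma}_{m+1}\|\overline{a}_{m+1}\|_{Q^{-1}}\leq\sqrt{\sigma_{m+1}\tilde{\sigma}_{m+1}}$; finally the Sherman--Morrison relation $\tilde{\sigma}_{m+1}=\sigma_{m+1}/(1+\sigma_{m+1})$, equivalently $\sigma_{m+1}=\tilde{\sigma}_{m+1}/(1-\tilde{\sigma}_{m+1})$ (cf.\ \eqref{eq:SM}), turns the last two contributions into $\tilde{\sigma}_{m+1}/(1-\tilde{\sigma}_{m+1})+\tilde{\sigma}_{m+1}/\sqrt{1-\tilde{\sigma}_{m+1}}\leq 2\tilde{\sigma}_{m+1}/(1-\tilde{\sigma}_{m+1})$, using $1-\tilde{\sigma}_{m+1}\leq 1$.

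The main obstacle is estimate (iii). The crude bound $\mu_i\leq\sigma_i$ only yields $\|u\|_{Q^{-1}}=O(\sqrt{\sigma_{m+1}})$, which is too weak for the algorithm's analysis; one genuinely needs both the sharper ``second-order'' control $\mu_i/\sigma_i\leq\sigma_{m+1}$ coming from Lemma \ref{lem:V1} and the exact total mass $\sum_i\mu_i=\tilde{\sigma}_{m+1}$ coming from the trace identities, and these two facts must be combined through the weighted Cauchy--Schwarz above rather than estimated termwise. Everything else is routine bookkeeping with the semi-definite order and the triangle inequality.
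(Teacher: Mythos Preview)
Your proof is correct, and the overall architecture---the decomposition $\nabla\tilde v-\nabla v=u-\tilde\sigma_{m+1}\overline a_{m+1}$, the comparison $\tilde Q\succeq(1-\sigma_{m+1})Q$, and the bound $Q\succeq(\min_i\sigma_i)\nabla^2 F$---matches the paper exactly. The genuine difference is in your estimate (iii), i.e.\ the bound $\|u\|_{Q^{-1}}\leq\sigma_{m+1}$. The paper does not prove this in-line; it cites an external ``beautiful calculation of Vaidya'' based on the explicit Sherman--Morrison identity
\[
\sigma_i-\tilde\sigma_i=\frac{\big((\nabla^2 F)^{-1}[a_i,c]\big)^2}{\big((c^\top x-\beta)^2+(\nabla^2 F)^{-1}[c,c]\big)(a_i^\top x-b_i)^2},
\]
and then manipulates this expression directly. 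Your route is different and more self-contained: you combine the weighted Cauchy--Schwarz $\|u\|_{Q^{-1}}^2\leq\sum_i\mu_i^2/\sigma_i$ with the pointwise control $\mu_i/\sigma_i\leq\sigma_{m+1}$ from Lemma~\ref{lem:V1} and the exact trace identity $\sum_i\mu_i=\tilde\sigma_{m+1}$, yielding $\sum_i\mu_i^2/\sigma_i\leq\sigma_{m+1}\tilde\sigma_{m+1}\leq\sigma_{m+1}^2$. This buys you a short, fully elementary argument that never unpacks the Sherman--Morrison expression for $\mu_i$; the price is that you need the trace identities $\sum_{i\leq m}\sigma_i=\sum_{i\leq m+1}\tilde\sigma_i=n$, which the paper does not invoke. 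Both arguments land on the same sharp bound, but yours is arguably the cleaner packaging.
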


\begin{proof}
We start with the proof of \eqref{eq:V21}. First observe that by Lemma \ref{lem:V1} one has $\tilde{Q}(x) \succeq (1-\sigma_{m+1}(x)) Q(x)$ and thus by definition of the Newton decrement
$$\tilde{\lambda} (x) = \|\nabla \tilde{v}(x)\|_{\tilde{Q}(x)^{-1}} \leq \frac{\|\nabla \tilde{v}(x)\|_{Q(x)^{-1}}}{\sqrt{1-\sigma_{m+1}(x)}} .$$
Next observe that (recall \eqref{eq:gradvol})
$$ \nabla \tilde{v}(x) = \nabla v(x) + \sum_{i=1}^m ({\sigma}_i(x) - \tilde{\sigma}_i(x)) \frac{a_i}{a_i^{\top} x - b_i} - \tilde{\sigma}_{m+1}(x) \frac{c}{c^{\top} x - \beta} .$$
We now use that $Q(x) \succeq (\min_{i \in [m]} \sigma_i(x)) \nabla^2 F(x)$ to obtain 
$$\left \| \tilde{\sigma}_{m+1}(x) \frac{c}{c^{\top} x - \beta} \right\|_{Q(x)^{-1}}^2 \leq \frac{\tilde{\sigma}_{m+1}^2(x) \sigma_{m+1}(x)}{\min_{i \in [m]} \sigma_i(x)} .$$
By Lemma \ref{lem:V1} one has $\tilde{\sigma}_{m+1}(x) \leq {\sigma}_{m+1}(x)$ and thus we see that it only remains to prove 
$$\left\|\sum_{i=1}^m ({\sigma}_i(x) - \tilde{\sigma}_i(x)) \frac{a_i}{a_i^{\top}x - b_i} \right\|_{Q(x)^{-1}}^2 \leq \sigma_{m+1}^2(x) .$$
The above inequality follows from a beautiful calculation of Vaidya (see [Lemma 12, \cite{Vai96}]), starting from the identity
$$\sigma_i(x) - \tilde{\sigma}_i(x) = \frac{((\nabla^2 F(x))^{-1}[a_i,c])^2}{((c^{\top} x - \beta)^2 + (\nabla^2 F(x))^{-1}[c,c])(a_i^{\top} x - b_i)^2} ,$$
which itself follows from \eqref{eq:SM}.
\newline

We now turn to the proof of \eqref{eq:V22}. Following the same steps as above we immediately obtain
$$\lambda(x) = \|\nabla v(x)\|_{Q(x)^{-1}} \leq \|\nabla \tilde{v}(x)\|_{Q(x)^{-1}} + \sigma_{m+1}(x) + \sqrt{\frac{\tilde{\sigma}_{m+1}^2(x) \sigma_{m+1}(x)}{\min_{i \in [m]} \sigma_i(x)}} .$$
Using Lemma \ref{lem:V1} together with the assumption $\tilde{\sigma}_{m+1}(x) = \min_{i \in [m+1]} \tilde{\sigma}_{i}(x)$ yields \eqref{eq:V22}, thus concluding the proof.
\end{proof}

\section{Conjugate gradient} \label{sec:CG}
We conclude this chapter with the special case of unconstrained optimization of a convex quadratic function $f(x) = \frac12 x^{\top} A x - b^{\top} x$, where $A \in \R^{n \times n}$ is a positive definite matrix and $b \in \R^n$. This problem, of paramount importance in practice (it is equivalent to solving the linear system $Ax = b$), admits a simple first-order black-box procedure which attains the {\em exact} optimum $x^*$ in at most $n$ steps. This method, called the {\em conjugate gradient}, is described and analyzed below. What is written below is taken from [Chapter 5, \cite{NW06}].

Let $\langle \cdot , \cdot\rangle_A$ be the inner product on $\R^n$ defined by the positive definite matrix $A$, that is $\langle x, y\rangle_A = x^{\top} A y$ (we also denote by $\|\cdot\|_A$ the corresponding norm). For sake of clarity we denote here $\langle \cdot , \cdot\rangle$ for the standard inner product in $\R^n$. Given an orthogonal set $\{p_0, \hdots, p_{n-1}\}$ for $\langle \cdot , \cdot \rangle_A$ we will minimize $f$ by sequentially minimizing it along the directions given by this orthogonal set. That is, given $x_0 \in \R^n$, for $t \geq 0$ let
\begin{equation} \label{eq:CG1}
x_{t+1} := \argmin_{x \in \{x_t + \lambda p_t, \ \lambda \in \R\}} f(x) .
\end{equation}
Equivalently one can write
\begin{equation} \label{eq:CG2}
x_{t+1} = x_t - \langle \nabla f(x_t) , p_t \rangle \frac{p_t}{\|p_t\|_A^2} .
\end{equation}
The latter identity follows by differentiating $\lambda \mapsto f(x + \lambda p_t)$, and using that $\nabla f(x) = A x - b$. We also make an observation that will be useful later, namely that $x_{t+1}$ is the minimizer of $f$ on $x_0 + \mathrm{span}\{p_0, \hdots, p_t\}$, or equivalently
\begin{equation} \label{eq:CG3prime}
\langle \nabla f(x_{t+1}), p_i \rangle = 0, \forall \ 0 \leq i \leq t.
\end{equation}
Equation \eqref{eq:CG3prime} is true by construction for $i=t$, and for $i \leq t-1$ it follows by induction, assuming \eqref{eq:CG3prime} at $t=1$ and using the following formula:
\begin{equation} \label{eq:CG3}
\nabla f(x_{t+1}) = \nabla f(x_{t}) - \langle \nabla f(x_{t}) , p_{t} \rangle \frac{A p_{t}}{\|p_t\|_A^2} .
\end{equation}

We now claim that $x_n = x^* = \argmin_{x \in \R^n} f(x)$. It suffices to show that $\langle x_n -x_0 , p_t \rangle_A = \langle x^*-x_0 , p_t \rangle_A$ for any $t\in \{0,\hdots,n-1\}$. Note that $x_n - x_0 = - \sum_{t=0}^{n-1} \langle \nabla f(x_t), p_t \rangle \frac{p_t}{\|p_t\|_A^2}$, and thus using that $x^* = A^{-1} b$,
\begin{eqnarray*}
\langle x_n -x_0 , p_t \rangle_A = - \langle \nabla f(x_t) , p_t \rangle = \langle b - A x_t , p_t \rangle & = & \langle x^* - x_t, p_t \rangle_A \\
& = &  \langle x^* - x_0, p_t \rangle_A ,
\end{eqnarray*}
which concludes the proof of $x_n = x^*$.
\newline

In order to have a proper black-box method it remains to describe how to build iteratively the orthogonal set $\{p_0, \hdots, p_{n-1}\}$ based only on gradient evaluations of $f$. A natural guess to obtain a set of orthogonal directions (w.r.t. $\langle \cdot , \cdot \rangle_A$) is to take $p_0 = \nabla f(x_0)$ and for $t \geq 1$,
\begin{equation} \label{eq:CG4}
p_t = \nabla f(x_t) - \langle \nabla f(x_t), p_{t-1} \rangle_A \ \frac{p_{t-1}}{\|p_{t-1}\|^2_A} .
\end{equation}
Let us first verify by induction on $t \in [n-1]$ that for any $i \in \{0,\hdots,t-2\}$, $\langle p_t, p_{i}\rangle_A = 0$ (observe that for $i=t-1$ this is true by construction of $p_t$). Using the induction hypothesis one can see that it is enough to show $\langle \nabla f(x_t), p_i \rangle_A = 0$ for any $i \in \{0, \hdots, t-2\}$, which we prove now. First observe that by induction one easily obtains $A p_i \in \mathrm{span}\{p_0, \hdots, p_{i+1}\}$ from \eqref{eq:CG3} and \eqref{eq:CG4}. Using this fact together with $\langle \nabla f(x_t), p_i \rangle_A = \langle \nabla f(x_t), A p_i \rangle$ and \eqref{eq:CG3prime} thus concludes the proof of orthogonality of the set $\{p_0, \hdots, p_{n-1}\}$.

We still have to show that \eqref{eq:CG4} can be written by making only reference to the gradients of $f$ at previous points. Recall that $x_{t+1}$ is the minimizer of $f$ on $x_0 + \mathrm{span}\{p_0, \hdots, p_t\}$, and thus given the form of $p_t$ we also have that $x_{t+1}$ is the minimizer of $f$ on $x_0 + \mathrm{span}\{\nabla f(x_0), \hdots, \nabla f(x_t)\}$ (in some sense the conjugate gradient is the {\em optimal} first order method for convex quadratic functions). In particular one has $\langle \nabla f(x_{t+1}) , \nabla f(x_t) \rangle = 0$. This fact, together with the orthogonality of the set $\{p_t\}$ and \eqref{eq:CG3}, imply that
$$\frac{\langle \nabla f(x_{t+1}) , p_{t} \rangle_A}{\|p_t\|_A^2} = \langle \nabla f(x_{t+1}) , \frac{A p_{t}}{\|p_t\|_A^2}  \rangle = - \frac{\langle \nabla f(x_{t+1})  , \nabla f(x_{t+1}) \rangle}{\langle \nabla f(x_{t})  , p_t \rangle} .$$
Furthermore using the definition \eqref{eq:CG4} and $\langle \nabla f(x_t) , p_{t-1} \rangle = 0$ one also has
$$\langle \nabla f(x_t), p_t \rangle = \langle \nabla f(x_t) , \nabla f(x_t) \rangle .$$
Thus we arrive at the following rewriting of the (linear) conjugate gradient algorithm, where we recall that $x_0$ is some fixed starting point and $p_0 = \nabla f(x_0)$,
\begin{eqnarray}
x_{t+1} & = & \argmin_{x \in \left\{x_t + \lambda p_t, \ \lambda \in \R \right\}} f(x) , \label{eq:CG5} \\
p_{t+1} & = & \nabla f(x_{t+1}) + \frac{\langle \nabla f(x_{t+1})  , \nabla f(x_{t+1}) \rangle}{\langle \nabla f(x_{t})  , \nabla f(x_t) \rangle} p_t . \label{eq:CG6}
\end{eqnarray}
Observe that the algorithm defined by \eqref{eq:CG5} and \eqref{eq:CG6} makes sense for an arbitary convex function, in which case it is called the {\em non-linear conjugate gradient}. There are many variants of the non-linear conjugate gradient, and the above form is known as the Fletcher-–Reeves method. Another popular version in practice is the Polak-Ribi{\`e}re method which is based on the fact that for the general non-quadratic case one does not necessarily have $\langle \nabla f(x_{t+1}) , \nabla f(x_t) \rangle = 0$, and thus one replaces \eqref{eq:CG6} by
$$p_{t+1} = \nabla f(x_{t+1}) + \frac{\langle \nabla f(x_{t+1})  - \nabla f(x_t), \nabla f(x_{t+1}) \rangle}{\langle \nabla f(x_{t})  , \nabla f(x_t) \rangle} p_t .$$
We refer to \cite{NW06} for more details about these algorithms, as well as for advices on how to deal with the line search in \eqref{eq:CG5}.

Finally we also note that the linear conjugate gradient method can often attain an approximate solution in much fewer than $n$ steps. More precisely, denoting $\kappa$ for the condition number of $A$ (that is the ratio of the largest eigenvalue to the smallest eigenvalue of $A$), one can show that linear conjugate gradient attains an $\epsilon$ optimal point in a number of iterations of order $\sqrt{\kappa} \log(1/\epsilon)$. The next chapter will demistify this convergence rate, and in particular we will see that (i) this is the optimal rate among first order methods, and (ii) there is a way to generalize this rate to non-quadratic convex functions (though the algorithm will have to be modified).

\chapter{Dimension-free convex optimization}
\label{dimfree}
We investigate here variants of the {\em gradient descent} scheme. This iterative algorithm, which can be traced back to \cite{Cau47}, is the simplest strategy to minimize a differentiable function $f$ on $\R^n$. Starting at some initial point $x_1 \in \R^n$ it iterates the following equation:
\begin{equation} \label{eq:Cau47}
x_{t+1} = x_t - \eta \nabla f(x_t) ,
\end{equation}
where $\eta > 0$ is a fixed step-size parameter. The rationale behind \eqref{eq:Cau47} is to make a small step in the direction that minimizes the local first order Taylor approximation of $f$ (also known as the steepest descent direction). 

As we shall see, methods of the type \eqref{eq:Cau47} can obtain an oracle complexity {\em independent of the dimension}\footnote{Of course the computational complexity remains at least linear in the dimension since one needs to manipulate gradients.}. This feature makes them particularly attractive for optimization in very high dimension.

Apart from Section \ref{sec:FW}, in this chapter $\|\cdot\|$ denotes the Euclidean norm. The set of constraints $\cX \subset \R^n$ is assumed to be compact and convex. 
We define the projection operator $\Pi_{\cX}$ on $\cX$ by
$$\Pi_{\cX}(x) = \argmin_{y \in \mathcal{X}} \|x - y\| .$$
The following lemma will prove to be useful in our study. It is an easy corollary of Proposition \ref{prop:firstorder}, see also Figure \ref{fig:pythagore}.

\begin{lemma} \label{lem:todonow}
Let $x \in \cX$ and $y \in \R^n$, then
$$(\Pi_{\cX}(y) - x)^{\top} (\Pi_{\cX}(y) - y) \leq 0 ,$$
which also implies $\|\Pi_{\cX}(y) - x\|^2 + \|y - \Pi_{\cX}(y)\|^2 \leq \|y - x\|^2$.
\end{lemma}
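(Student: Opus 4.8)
The plan is to derive the first inequality from the first-order optimality condition (Proposition~\ref{prop:firstorder}) applied to the convex function $z \mapsto \|z - y\|^2$ on the compact convex set $\cX$, and then obtain the second inequality by a purely algebraic manipulation (a Pythagorean-type expansion). First I would observe that $\Pi_{\cX}(y)$ is by definition the minimizer over $\cX$ of $g(z) := \frac{1}{2}\|z - y\|^2$, which is convex and differentiable with $\nabla g(z) = z - y$. Proposition~\ref{prop:firstorder} then tells us that $x^* = \Pi_{\cX}(y)$ satisfies $\nabla g(x^*)^{\top}(x^* - x) \leq 0$ for all $x \in \cX$, i.e.\ $(\Pi_{\cX}(y) - y)^{\top}(\Pi_{\cX}(y) - x) \leq 0$, which is exactly the claimed first inequality (note the roles of $x$ and $y$ in Proposition~\ref{prop:firstorder} are swapped relative to the lemma's notation, but the statement is symmetric in that respect once the sign is tracked).

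For the second inequality, I would write $y - x = (y - \Pi_{\cX}(y)) + (\Pi_{\cX}(y) - x)$ and expand the squared Euclidean norm:
$$\|y - x\|^2 = \|y - \Pi_{\cX}(y)\|^2 + \|\Pi_{\cX}(y) - x\|^2 + 2 (y - \Pi_{\cX}(y))^{\top}(\Pi_{\cX}(y) - x).$$
The cross term equals $-2(\Pi_{\cX}(y) - y)^{\top}(\Pi_{\cX}(y) - x)$, which is $\geq 0$ by the first inequality. Dropping this nonnegative cross term yields $\|y - x\|^2 \geq \|\Pi_{\cX}(y) - x\|^2 + \|y - \Pi_{\cX}(y)\|^2$, as desired.

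There is essentially no serious obstacle here; the only point requiring a small amount of care is making sure the minimizer in the definition of $\Pi_{\cX}$ is well-defined and that Proposition~\ref{prop:firstorder} applies --- this is fine since $\cX$ is compact and convex (so the minimum is attained) and $g$ is convex and differentiable on all of $\R^n$, hence on $\cX$. One could alternatively bypass Proposition~\ref{prop:firstorder} and argue directly: for any $x \in \cX$ and $t \in [0,1]$, the point $(1-t)\Pi_{\cX}(y) + t x \in \cX$, so $\|(1-t)\Pi_{\cX}(y) + t x - y\|^2 \geq \|\Pi_{\cX}(y) - y\|^2$; expanding in $t$ and letting $t \to 0^+$ gives the first inequality. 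I would use the Proposition-based route since it is cleaner and the Proposition is already available.
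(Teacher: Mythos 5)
Your proof is correct and matches the paper's intended argument: the paper itself only remarks that Lemma \ref{lem:todonow} "is an easy corollary of Proposition \ref{prop:firstorder}," and your derivation — applying the first-order optimality condition to $z \mapsto \frac{1}{2}\|z-y\|^2$ and then expanding $\|y-x\|^2$ via the decomposition $y - x = (y - \Pi_{\cX}(y)) + (\Pi_{\cX}(y) - x)$ — is exactly that corollary carried out in detail.
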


\begin{figure}
\begin{center}
\begin{tikzpicture}[scale=4]
\clip (0.4,-0.4) rectangle (-1.2,1.2);
\draw[rotate=30, very thick] (0,0) ellipse (0.5 and 0.7);
\node [tokens=1] (noeud1) at (-0.45,-0.1) [label=right:{$x$}] {};
\node [tokens=1] (noeud2) at (-0.6, 0.8) [label=above left:{$y$}] {};
\draw[<->, thick] (noeud1) -- (noeud2) node[midway, below left] {$\|y - x \|$};
\node [tokens=1] (noeud3) at (-60:-0.7) [label=below right:{$\Pi_{\cX}(y)$}] {};
\draw[<->, thick] (noeud2) -- (noeud3) node[midway, above right] {$\|y - \Pi_{\cX}(y) \|$};
\draw[<->, thick] (noeud1) -- (noeud3) node[midway, right] {$\|\Pi_{\cX}(y) - x\|$};
\node at (0.15,-0.2) {$\cX$};
\end{tikzpicture}
\end{center}
\caption{Illustration of Lemma \ref{lem:todonow}.}
\label{fig:pythagore}
\end{figure}
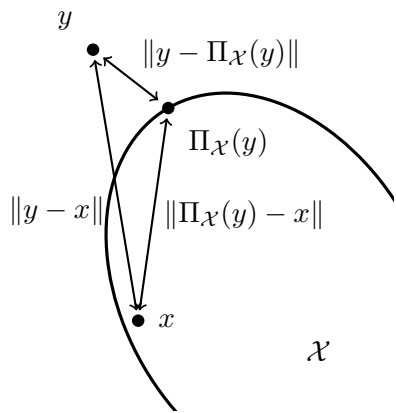

Unless specified otherwise all the proofs in this chapter are taken from \cite{Nes04} (with slight simplification in some cases).

\section{Projected subgradient descent for Lipschitz functions} \label{sec:psgd}
In this section we assume that $\cX$ is contained in an Euclidean ball centered at $x_1 \in \cX$ and of radius $R$. Furthermore we assume that $f$ is such that for any $x \in \cX$ and any $g \in \partial f(x)$ (we assume $\partial f(x) \neq \emptyset$), one has $\|g\| \leq L$. Note that by the subgradient inequality and Cauchy-Schwarz this implies that $f$ is $L$-Lipschitz on $\cX$, that is $|f(x) - f(y)| \leq L \|x-y\|$. 

In this context we make two modifications to the basic gradient descent \eqref{eq:Cau47}. First, obviously, we replace the gradient $\nabla f(x)$ (which may not exist) by a subgradient $g \in \partial f(x)$. Secondly, and more importantly, we make sure that the updated point lies in $\cX$ by projecting back (if necessary) onto it. This gives the {\em projected subgradient descent} algorithm\footnote{In the optimization literature the term ``descent" is reserved for methods such that $f(x_{t+1}) \leq f(x_t)$. In that sense the projected subgradient descent is not a descent method.} which iterates the following equations for $t \geq 1$:
\begin{align}
& y_{t+1} = x_t - \eta g_t , \ \text{where} \ g_t \in \partial f(x_t) , \label{eq:PGD1}\\
& x_{t+1} = \Pi_{\cX}(y_{t+1}) . \label{eq:PGD2}
\end{align}
This procedure is illustrated in Figure \ref{fig:pgd}. We prove now a rate of convergence for this method under the above assumptions.

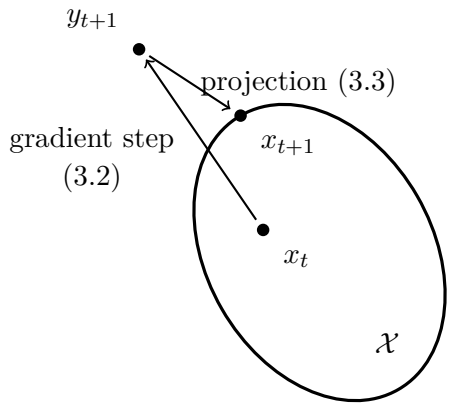
\begin{figure}
\begin{center}
\begin{tikzpicture}[scale=3]
\draw[rotate=30, very thick] (0,0) ellipse (0.5 and 0.7);
\node [tokens=1] (noeud1) at (-0.25,0.1) [label=below right:{$x_t$}] {};
\node [tokens=1] (noeud2) at (-0.8, 0.9) [label=above left:{$y_{t+1}$}] {};
\draw[->, thick] (noeud1) -- (noeud2) node[midway, left] {\begin{tabular}{c} \\ gradient step \\ \eqref{eq:PGD1} \end{tabular}};
\node [tokens=1] (noeud3) at (-60:-0.7) [label=below right:{$x_{t+1}$}] {};
\draw[->, thick] (noeud2) -- (noeud3) node[midway, right] {projection \eqref{eq:PGD2}};
\node at (0.3,-0.4) {$\cX$};
\end{tikzpicture}
\end{center}
\caption{Illustration of the projected subgradient descent method.}
\label{fig:pgd}
\end{figure}

\begin{theorem} \label{th:pgd}
The projected subgradient descent method with $\eta = \frac{R}{L \sqrt{t}}$ satisfies 
$$f\left(\frac{1}{t} \sum_{s=1}^t x_s\right) - f(x^*) \leq \frac{R L}{\sqrt{t}} .$$
\end{theorem}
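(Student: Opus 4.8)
The plan is to run the standard "one-step inequality plus telescoping" argument for subgradient descent. First I would fix $s \geq 1$ and bound how much closer to $x^*$ the iterate gets in one step. Using the update $y_{s+1} = x_s - \eta g_s$ together with the projection contraction property from Lemma \ref{lem:todonow} (which gives $\|x_{s+1} - x^*\|^2 \leq \|y_{s+1} - x^*\|^2$ since $x^* \in \cX$), I would expand
$$\|y_{s+1} - x^*\|^2 = \|x_s - x^*\|^2 - 2 \eta g_s^{\top}(x_s - x^*) + \eta^2 \|g_s\|^2 .$$
Rearranging and using the subgradient inequality $f(x_s) - f(x^*) \leq g_s^{\top}(x_s - x^*)$, together with the bound $\|g_s\| \leq L$, yields
$$f(x_s) - f(x^*) \leq \frac{\|x_s - x^*\|^2 - \|x_{s+1} - x^*\|^2}{2\eta} + \frac{\eta L^2}{2} .$$

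Next I would sum this inequality over $s = 1, \ldots, t$. The first term on the right telescopes to $\frac{1}{2\eta}\left(\|x_1 - x^*\|^2 - \|x_{t+1}-x^*\|^2\right) \leq \frac{1}{2\eta}\|x_1 - x^*\|^2 \leq \frac{R^2}{2\eta}$, using that $x_1$ and $x^*$ both lie in the Euclidean ball of radius $R$ centered at $x_1$ — so $\|x_1 - x^*\| \leq R$. This gives $\sum_{s=1}^t \big(f(x_s) - f(x^*)\big) \leq \frac{R^2}{2\eta} + \frac{t \eta L^2}{2}$. Dividing by $t$ and then invoking convexity of $f$ via Jensen's inequality, $f\big(\frac{1}{t}\sum_{s=1}^t x_s\big) \leq \frac{1}{t}\sum_{s=1}^t f(x_s)$, converts the average of function values into the function value at the average.

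Finally I would plug in the prescribed step size $\eta = \frac{R}{L\sqrt{t}}$, which balances the two terms: $\frac{R^2}{2\eta t} = \frac{RL}{2\sqrt{t}}$ and $\frac{\eta L^2}{2} = \frac{RL}{2\sqrt{t}}$, summing to $\frac{RL}{\sqrt{t}}$, exactly the claimed bound. There is no real obstacle here; the only points requiring a little care are the correct application of Lemma \ref{lem:todonow} to drop the projection term (it is crucial that $x^* \in \cX$), and noting that $\cX$ being contained in a ball of radius $R$ about $x_1$ is what controls $\|x_1 - x^*\|$. One could also note that the same bound holds with $\frac{1}{t}\sum x_s$ replaced by $\argmin_{1 \leq s \leq t} f(x_s)$, but the averaged form is cleanest and is what is stated.
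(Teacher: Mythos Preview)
Your proof is correct and is essentially the same as the paper's: both derive the one-step inequality $f(x_s) - f(x^*) \leq \frac{1}{2\eta}(\|x_s - x^*\|^2 - \|x_{s+1} - x^*\|^2) + \frac{\eta L^2}{2}$ via the subgradient inequality, the projection contraction from Lemma~\ref{lem:todonow}, and the bound $\|g_s\|\le L$, then telescope, apply Jensen, and substitute the step size. The only cosmetic difference is that the paper expands $g_s^{\top}(x_s - x^*)$ using the polarization identity $2a^{\top}b = \|a\|^2 + \|b\|^2 - \|a-b\|^2$, whereas you expand $\|y_{s+1}-x^*\|^2$ directly --- these are algebraically identical.
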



\begin{proof}
Using the definition of subgradients, the definition of the method, and the elementary identity $2 a^{\top} b = \|a\|^2 + \|b\|^2 - \|a-b\|^2$, one obtains
\begin{eqnarray*}
f(x_s) - f(x^*) & \leq & g_s^{\top} (x_s - x^*) \\
& = & \frac{1}{\eta} (x_s - y_{s+1})^{\top} (x_s - x^*) \\
& = & \frac{1}{2 \eta} \left(\|x_s - x^*\|^2 + \|x_s - y_{s+1}\|^2 - \|y_{s+1} - x^*\|^2\right) \\
& = & \frac{1}{2 \eta} \left(\|x_s - x^*\|^2 - \|y_{s+1} - x^*\|^2\right) + \frac{\eta}{2} \|g_s\|^2.
\end{eqnarray*}
Now note that $\|g_s\| \leq L$, and furthermore by Lemma \ref{lem:todonow}
$$\|y_{s+1} - x^*\| \geq \|x_{s+1} - x^*\| .$$
Summing the resulting inequality over $s$, and using that $\|x_1 - x^*\| \leq R$ yield
$$\sum_{s=1}^t \left( f(x_s) - f(x^*) \right) \leq \frac{R^2}{2 \eta} + \frac{\eta L^2 t}{2} .$$
Plugging in the value of $\eta$ directly gives the statement (recall that by convexity $f((1/t) \sum_{s=1}^t x_s) \leq \frac1{t} \sum_{s=1}^t f(x_s)$).
\end{proof}

We will show in Section \ref{sec:chap3LB} that the rate given in Theorem \ref{th:pgd} is unimprovable from a black-box perspective. Thus to reach an $\epsilon$-optimal point one needs $\Theta(1/\epsilon^2)$ calls to the oracle. In some sense this is an astonishing result as this complexity is independent\footnote{Observe however that the quantities $R$ and $L$ may dependent on the dimension, see Chapter \ref{mirror} for more on this.} of the ambient dimension $n$. On the other hand this is also quite disappointing compared to the scaling in $\log(1/\epsilon)$ of the center of gravity and ellipsoid method of Chapter \ref{finitedim}. To put it differently with gradient descent one could hope to reach a reasonable accuracy in very high dimension, while with the ellipsoid method one can reach very high accuracy in reasonably small dimension. A major task in the following sections will be to explore more restrictive assumptions on the function to be optimized in order to have the best of both worlds, that is an oracle complexity independent of the dimension and with a scaling in $\log(1/\epsilon)$.

The computational bottleneck of the projected subgradient descent is often the projection step \eqref{eq:PGD2} which is a convex optimization problem by itself. In some cases this problem may admit an analytical solution (think of $\cX$ being an Euclidean ball), or an easy and fast combinatorial algorithm to solve it (this is the case for $\cX$ being an $\ell_1$-ball, see \cite{MP89}). We will see in Section \ref{sec:FW} a projection-free algorithm which operates under an extra assumption of smoothness on the function to be optimized.

Finally we observe that the step-size recommended by Theorem \ref{th:pgd} depends on the number of iterations to be performed. In practice this may be an undesirable feature. However using a time-varying step size of the form $\eta_s = \frac{R}{L \sqrt{s}}$ one can prove the same rate up to a $\log t$ factor. In any case these step sizes are very small, which is the reason for the slow convergence. In the next section we will see that by assuming {\em smoothness} in the function $f$ one can afford to be much more aggressive. Indeed in this case, as one approaches the optimum the size of the gradients themselves will go to $0$, resulting in a sort of ``auto-tuning" of the step sizes which does not happen for an arbitrary convex function.

\section{Gradient descent for smooth functions} \label{sec:gdsmooth}
We say that a continuously differentiable function $f$ is $\beta$-smooth if the gradient $\nabla f$ is $\beta$-Lipschitz, that is 
$$\|\nabla f(x) - \nabla f(y) \| \leq \beta \|x-y\| .$$
Note that if $f$ is twice differentiable then this is equivalent to the eigenvalues of the Hessians being smaller than $\beta$.
In this section we explore potential improvements in the rate of convergence under such a smoothness assumption.
In order to avoid technicalities we consider first the unconstrained situation, where $f$ is a convex and $\beta$-smooth function on $\R^n$. 
The next theorem shows that {\em gradient descent}, which iterates $x_{t+1} = x_t - \eta \nabla f(x_t)$, attains a much faster rate in this situation than in the non-smooth case of the previous section.

\begin{theorem} \label{th:gdsmooth}
Let $f$ be convex and $\beta$-smooth on $\R^n$. 
Then gradient descent with $\eta = \frac{1}{\beta}$ satisfies
$$f(x_t) - f(x^*) \leq \frac{2 \beta \|x_1 - x^*\|^2}{t-1} .$$
\end{theorem}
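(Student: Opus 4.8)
The plan is to establish two basic consequences of $\beta$-smoothness and convexity and then combine them. First I would prove the \emph{descent lemma}: for any $x,y$, $f(y) \leq f(x) + \nabla f(x)^{\top}(y-x) + \frac{\beta}{2}\|y-x\|^2$, which follows by writing $f(y) - f(x) = \int_0^1 \nabla f(x + s(y-x))^{\top}(y-x)\, ds$ and bounding the integrand using $\|\nabla f(x+s(y-x)) - \nabla f(x)\| \leq \beta s \|y-x\|$. Applying this with $y = x - \frac{1}{\beta}\nabla f(x)$ gives the key per-step decrease
$$ f\left(x - \tfrac{1}{\beta}\nabla f(x)\right) \leq f(x) - \tfrac{1}{2\beta}\|\nabla f(x)\|^2 . $$
In particular, with $\eta = 1/\beta$, the sequence $f(x_t)$ is non-increasing, and moreover $\|x_{t+1}-x^*\| \leq \|x_t - x^*\|$ (this last fact I would check directly: expand $\|x_t - \frac1\beta \nabla f(x_t) - x^*\|^2$ and use convexity $\nabla f(x_t)^{\top}(x_t - x^*) \geq f(x_t) - f(x^*) \geq 0$ together with the descent bound to control the quadratic term). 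So all iterates stay within distance $R := \|x_1 - x^*\|$ of $x^*$.

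Next I would set $\delta_s := f(x_s) - f(x^*)$ and derive a recursion. By convexity $\delta_s \leq \nabla f(x_s)^{\top}(x_s - x^*) \leq \|\nabla f(x_s)\|\, \|x_s - x^*\| \leq R\,\|\nabla f(x_s)\|$, hence $\|\nabla f(x_s)\|^2 \geq \delta_s^2 / R^2$. Combining with the per-step decrease $\delta_{s+1} \leq \delta_s - \frac{1}{2\beta}\|\nabla f(x_s)\|^2$ yields
$$ \delta_{s+1} \leq \delta_s - \frac{1}{2\beta R^2}\, \delta_s^2 . $$
The standard trick now is to divide by $\delta_s \delta_{s+1}$ and use $\delta_{s+1} \leq \delta_s$: this gives $\frac{1}{\delta_{s+1}} \geq \frac{1}{\delta_s} + \frac{1}{2\beta R^2}$, so telescoping from $1$ to $t-1$ gives $\frac{1}{\delta_t} \geq \frac{t-1}{2\beta R^2}$, i.e. $\delta_t \leq \frac{2\beta R^2}{t-1}$, which is exactly the claimed bound.

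The only genuinely delicate point is justifying that the iterates remain in the ball of radius $R$ about $x^*$ (so that the bound $\delta_s \leq R\|\nabla f(x_s)\|$ is legitimate at every step); this needs the monotonicity $\|x_{s+1}-x^*\| \leq \|x_s - x^*\|$, which in turn relies on both convexity and the descent lemma rather than either alone. Everything else is routine: the descent lemma is a one-line integral estimate, and the recursion-to-rate passage is the familiar $1/\delta$ telescoping argument. I would expect the write-up to spend most of its length on the descent lemma and on the monotonicity of the distance to $x^*$.
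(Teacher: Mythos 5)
Your proposal is correct and its overall skeleton matches the paper's proof exactly: descent lemma for the per-step decrease, the recursion $\delta_{s+1}\le\delta_s-\frac{1}{2\beta R^2}\delta_s^2$ obtained from $\delta_s\le\|\nabla f(x_s)\|\,\|x_s-x^*\|$ together with boundedness of $\|x_s-x^*\|$, and the $1/\delta$ telescoping to finish.

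The one place you genuinely diverge from the paper is how you establish the monotonicity $\|x_{s+1}-x^*\|\le\|x_s-x^*\|$. The paper first proves the coercivity inequality $(\nabla f(x)-\nabla f(y))^{\top}(x-y)\ge\frac{1}{\beta}\|\nabla f(x)-\nabla f(y)\|^2$ (its Lemma \ref{lem:2}), sets $y=x^*$ to get $\nabla f(x_s)^{\top}(x_s-x^*)\ge\frac{1}{\beta}\|\nabla f(x_s)\|^2$, and then the cross term in the expansion of $\|x_s-\frac1\beta\nabla f(x_s)-x^*\|^2$ strictly dominates the quadratic term. Your route instead chains plain convexity, $\nabla f(x_s)^{\top}(x_s-x^*)\ge f(x_s)-f(x^*)$, with the per-step decrease and $f(x_{s+1})\ge f(x^*)$ to get the weaker $\nabla f(x_s)^{\top}(x_s-x^*)\ge\frac{1}{2\beta}\|\nabla f(x_s)\|^2$; that factor $\frac12$ is exactly what is needed for the cross term to cancel the quadratic term (with equality), so the conclusion still holds. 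Your version is a bit more self-contained (it does not require proving the coercivity lemma) at the price of a factor 2 in the intermediate bound; the paper's version gives the sharper constant, which it later reuses in the strongly convex analysis. Both are correct, and the rest of your argument tracks the paper step for step.
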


Before embarking on the proof we state a few properties of smooth convex functions.
\begin{lemma} \label{lem:sand}
Let $f$ be a $\beta$-smooth function on $\R^n$. Then for any $x, y \in \R^n$, one has
$$|f(x) - f(y) - \nabla f(y)^{\top} (x - y)| \leq \frac{\beta}{2} \|x - y\|^2 .$$
\end{lemma}

\begin{proof}
We represent $f(x) - f(y)$ as an integral, apply Cauchy-Schwarz and then $\beta$-smoothness:
\begin{align*}
& |f(x) - f(y) - \nabla f(y)^{\top} (x - y)| \\
& = \left|\int_0^1 \nabla f(y + t(x-y))^{\top} (x-y) dt -  \nabla f(y)^{\top} (x - y) \right| \\
& \leq \int_0^1 \|\nabla f(y + t(x-y)) -  \nabla f(y)\| \cdot \|x - y\| dt \\
& \leq \int_0^1 \beta t \|x-y\|^2 dt \\
& = \frac{\beta}{2} \|x-y\|^2 .
\end{align*}
\end{proof}

In particular this lemma shows that if $f$ is convex and $\beta$-smooth, then for any $x, y \in \R^n$, one has
\begin{equation} \label{eq:defaltsmooth}
0 \leq f(x) - f(y) - \nabla f(y)^{\top} (x - y) \leq \frac{\beta}{2} \|x - y\|^2 .
\end{equation}
This gives in particular the following important inequality to evaluate the improvement in one step of gradient descent:
\begin{equation} \label{eq:onestepofgd}
f\left(x - \frac{1}{\beta} \nabla f(x)\right) - f(x) \leq - \frac{1}{2 \beta} \|\nabla f(x)\|^2 .
\end{equation}
The next lemma, which improves the basic inequality for subgradients under the smoothness assumption, shows that in fact $f$ is convex and $\beta$-smooth if and only if \eqref{eq:defaltsmooth} holds true. In the literature \eqref{eq:defaltsmooth} is often used as a definition of smooth convex functions.

\begin{lemma} \label{lem:2}
Let $f$ be such that \eqref{eq:defaltsmooth} holds true. Then for any $x, y \in \R^n$, one has
$$f(x) - f(y) \leq \nabla f(x)^{\top} (x - y) - \frac{1}{2 \beta} \|\nabla f(x) - \nabla f(y)\|^2 .$$
\end{lemma}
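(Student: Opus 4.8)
The plan is to reduce the claimed inequality to the statement that a suitably shifted version of $f$ attains its global minimum at $x$, and then to estimate that minimum using the smoothness bound \eqref{eq:defaltsmooth}. First I would fix $x,y \in \R^n$ and introduce the auxiliary function
$$\phi(z) = f(z) - \nabla f(x)^{\top} z .$$
The key observation is that $\phi$ inherits property \eqref{eq:defaltsmooth}: since $\phi(z) - \phi(w) - \nabla \phi(w)^{\top}(z-w) = f(z) - f(w) - \nabla f(w)^{\top}(z-w)$ for all $z,w$, the function $\phi$ is convex and $\beta$-smooth in the sense of \eqref{eq:defaltsmooth}. Moreover $\nabla \phi(x) = 0$, so by convexity $x$ is a global minimum of $\phi$.

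Next I would exploit this minimality by comparing $\phi(x)$ with the value of $\phi$ at the point obtained by one gradient step from $y$, namely $y - \frac{1}{\beta}\nabla \phi(y)$. Applying the upper bound in \eqref{eq:defaltsmooth} to $\phi$ at this point gives
$$\phi(x) \le \phi\!\left(y - \tfrac1\beta \nabla\phi(y)\right) \le \phi(y) - \tfrac1\beta \|\nabla\phi(y)\|^2 + \tfrac{\beta}{2}\cdot\tfrac1{\beta^2}\|\nabla\phi(y)\|^2 = \phi(y) - \tfrac1{2\beta}\|\nabla\phi(y)\|^2 .$$
This is exactly the one-step improvement estimate \eqref{eq:onestepofgd} applied to $\phi$ (but here used as an \emph{upper} bound on the minimum value $\phi(x)$).

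Finally I would unwind the definition of $\phi$. Since $\nabla \phi(y) = \nabla f(y) - \nabla f(x)$, the last display reads
$$f(x) - \nabla f(x)^{\top} x \le f(y) - \nabla f(x)^{\top} y - \tfrac1{2\beta}\|\nabla f(x) - \nabla f(y)\|^2 ,$$
and rearranging yields the claimed inequality. I do not expect any real obstacle here; the only nonobvious step is the idea of passing to the tilted function $\phi$ and then plugging the specific test point $y - \frac1\beta \nabla\phi(y)$ into the quadratic upper bound — once that trick is in place the rest is bookkeeping. (One should also remember to note at the outset that the lower bound in \eqref{eq:defaltsmooth}, applied with the roles of $x$ and $y$ swapped, is precisely the subgradient inequality witnessing convexity of $f$, which is what makes $x$ a global — not merely local — minimizer of $\phi$.)
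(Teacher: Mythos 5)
Your proof is correct and is, at bottom, the same argument as the paper's: the test point $y-\tfrac1\beta\nabla\phi(y) = y-\tfrac1\beta(\nabla f(y)-\nabla f(x))$ is exactly the auxiliary point $z$ that the paper introduces, and in both proofs the lower half of \eqref{eq:defaltsmooth} is used once (to compare $f$ at $x$ and at $z$) and the upper half once (to compare $f$ at $z$ and at $y$). What you add is a cleaner conceptual wrapper -- tilting $f$ so that $x$ becomes the global minimizer and then invoking the descent estimate \eqref{eq:onestepofgd} -- which makes the choice of $z$ look canonical rather than pulled out of a hat, but the underlying inequalities are identical.
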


\begin{proof}
Let $z = y - \frac{1}{\beta} (\nabla f(y) - \nabla f(x))$. Then one has
\begin{align*}
& f(x) - f(y) \\
& = f(x) - f(z) + f(z) - f(y) \\
& \leq \nabla f(x)^{\top} (x-z) + \nabla f(y)^{\top} (z-y) + \frac{\beta}{2} \|z - y\|^2 \\
& = \nabla f(x)^{\top}(x-y) + (\nabla f(x) - \nabla f(y))^{\top} (y-z) + \frac{1}{2 \beta} \|\nabla f(x) - \nabla f(y)\|^2 \\
& = \nabla f(x)^{\top} (x - y) - \frac{1}{2 \beta} \|\nabla f(x) - \nabla f(y)\|^2 .
\end{align*}
\end{proof}

We can now prove Theorem \ref{th:gdsmooth}

\begin{proof}
Using \eqref{eq:onestepofgd} and the definition of the method one has
$$f(x_{s+1}) - f(x_s) \leq - \frac{1}{2 \beta} \|\nabla f(x_s)\|^2.$$
In particular, denoting $\delta_s = f(x_s) - f(x^*)$, this shows:
$$\delta_{s+1} \leq \delta_s  - \frac{1}{2 \beta} \|\nabla f(x_s)\|^2.$$
One also has by convexity
$$\delta_s \leq \nabla f(x_s)^{\top} (x_s - x^*) \leq \|x_s - x^*\| \cdot \|\nabla f(x_s)\| .$$
We will prove that $\|x_s - x^*\|$ is decreasing with $s$, which with the two above displays will imply
$$\delta_{s+1} \leq \delta_s  - \frac{1}{2 \beta \|x_1 - x^*\|^2} \delta_s^2.$$
Let us see how to use this last inequality to conclude the proof. Let $\omega = \frac{1}{2 \beta \|x_1 - x^*\|^2}$, then\footnote{The last step in the sequence of implications can be improved by taking $\delta_1$ into account. Indeed one can easily show with \eqref{eq:defaltsmooth} that $\delta_1 \leq \frac{1}{4 \omega}$. This improves the rate of Theorem \ref{th:gdsmooth} from $\frac{2 \beta \|x_1 - x^*\|^2}{t-1}$ to $\frac{2 \beta \|x_1 - x^*\|^2}{t+3}$.}
$$\omega \delta_s^2 + \delta_{s+1} \leq \delta_s \Leftrightarrow \omega \frac{\delta_s}{\delta_{s+1}} + \frac{1}{\delta_{s}} \leq \frac{1}{\delta_{s+1}} \Rightarrow \frac{1}{\delta_{s+1}} - \frac{1}{\delta_{s}} \geq \omega \Rightarrow \frac{1}{\delta_t} \geq \omega (t-1) .$$
Thus it only remains to show that $\|x_s - x^*\|$ is decreasing with $s$. Using Lemma \ref{lem:2} one immediately gets
\begin{equation} \label{eq:coercive1}
(\nabla f(x) - \nabla f(y))^{\top} (x - y) \geq \frac{1}{\beta} \|\nabla f(x) - \nabla f(y)\|^2 .
\end{equation}
We use this as follows (together with $\nabla f(x^*) = 0$)
\begin{eqnarray*}
\|x_{s+1} - x^*\|^2& = & \|x_{s} - \frac{1}{\beta} \nabla f(x_s) - x^*\|^2 \\
& = & \|x_{s} - x^*\|^2 - \frac{2}{\beta} \nabla f(x_s)^{\top} (x_s - x^*) + \frac{1}{\beta^2} \|\nabla f(x_s)\|^2 \\
& \leq & \|x_{s} - x^*\|^2 - \frac{1}{\beta^2} \|\nabla f(x_s)\|^2 \\
& \leq & \|x_{s} - x^*\|^2 ,
\end{eqnarray*}
which concludes the proof.
\end{proof}

\subsection*{The constrained case}
We now come back to the constrained problem
\begin{align*}
& \mathrm{min.} \; f(x) \\
& \text{s.t.} \; x \in \cX .
\end{align*}
Similarly to what we did in Section \ref{sec:psgd} we consider the projected gradient descent algorithm, which iterates $x_{t+1} = \Pi_{\cX}(x_t - \eta \nabla f(x_t))$. 

The key point in the analysis of gradient descent for unconstrained smooth optimization is that a step of gradient descent started at $x$ will decrease the function value by at least $\frac{1}{2\beta} \|\nabla f(x)\|^2$, see \eqref{eq:onestepofgd}. In the constrained case we cannot expect that this would still hold true as a step may be cut short by the projection. The next lemma defines the ``right" quantity to measure progress in the constrained case.

\begin{lemma} \label{lem:smoothconst}
Let $x, y \in \cX$, $x^+ = \Pi_{\cX}\left(x - \frac{1}{\beta} \nabla f(x)\right)$, and $g_{\cX}(x) = \beta(x - x^+)$. Then the following holds true:
$$f(x^+) - f(y) \leq g_{\cX}(x)^{\top}(x-y) - \frac{1}{2 \beta} \|g_{\cX}(x)\|^2 .$$
\end{lemma}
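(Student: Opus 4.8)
The statement we want is a "constrained analogue" of Lemma \ref{lem:2} applied to one step of projected gradient descent, where the true step direction $\frac{1}{\beta}\nabla f(x)$ is replaced by the effective step $\frac{1}{\beta}g_{\cX}(x) = x - x^+$. The natural strategy is to mimic the unconstrained argument: split $f(x^+) - f(y) = \bigl(f(x^+) - f(x)\bigr) + \bigl(f(x) - f(y)\bigr)$, use $\beta$-smoothness (via \eqref{eq:defaltsmooth}) to control the first term and convexity to control the second, and then glue the two estimates together using the first-order optimality condition for the projection.

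Concretely, first I would bound $f(x^+) - f(x)$ using the right inequality in \eqref{eq:defaltsmooth} with the pair $(x^+, x)$:
$$f(x^+) - f(x) \leq \nabla f(x)^{\top}(x^+ - x) + \frac{\beta}{2}\|x^+ - x\|^2 = \nabla f(x)^{\top}(x^+ - x) + \frac{1}{2\beta}\|g_{\cX}(x)\|^2 .$$
Next, by convexity of $f$, $f(x) - f(y) \leq \nabla f(x)^{\top}(x - y)$. Adding these two and rearranging gives
$$f(x^+) - f(y) \leq \nabla f(x)^{\top}(x^+ - y) + \frac{1}{2\beta}\|g_{\cX}(x)\|^2 .$$
Now I would like to replace $\nabla f(x)$ by $g_{\cX}(x)$ in the inner product. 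Write $\nabla f(x)^{\top}(x^+ - y) = g_{\cX}(x)^{\top}(x^+ - y) + \bigl(\nabla f(x) - g_{\cX}(x)\bigr)^{\top}(x^+ - y)$, and also $g_{\cX}(x)^{\top}(x^+ - y) = g_{\cX}(x)^{\top}(x - y) - g_{\cX}(x)^{\top}(x - x^+) = g_{\cX}(x)^{\top}(x - y) - \frac{1}{\beta}\|g_{\cX}(x)\|^2$. So it remains to show that the leftover term $\bigl(\nabla f(x) - g_{\cX}(x)\bigr)^{\top}(x^+ - y) - \frac{1}{2\beta}\|g_{\cX}(x)\|^2$ is $\leq -\frac{1}{2\beta}\|g_{\cX}(x)\|^2$, i.e. that $\bigl(\nabla f(x) - g_{\cX}(x)\bigr)^{\top}(x^+ - y) \leq 0$.

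This last inequality is exactly where the projection enters, and it is the main (though short) obstacle. By definition $x^+ = \Pi_{\cX}\bigl(x - \frac{1}{\beta}\nabla f(x)\bigr)$, so Lemma \ref{lem:todonow} with the point $y \in \cX$ and the vector $x - \frac{1}{\beta}\nabla f(x)$ gives
$$\Bigl(x^+ - y\Bigr)^{\top}\Bigl(x^+ - \bigl(x - \tfrac{1}{\beta}\nabla f(x)\bigr)\Bigr) \leq 0 .$$
But $x^+ - \bigl(x - \frac{1}{\beta}\nabla f(x)\bigr) = \frac{1}{\beta}\nabla f(x) - (x - x^+) = \frac{1}{\beta}\bigl(\nabla f(x) - g_{\cX}(x)\bigr)$, so this is precisely $\frac{1}{\beta}\bigl(\nabla f(x) - g_{\cX}(x)\bigr)^{\top}(x^+ - y) \leq 0$, as needed. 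Assembling the pieces yields $f(x^+) - f(y) \leq g_{\cX}(x)^{\top}(x - y) - \frac{1}{2\beta}\|g_{\cX}(x)\|^2$, which is the claim. The only thing to be careful about is bookkeeping the substitution $x - x^+ = \frac{1}{\beta}g_{\cX}(x)$ consistently; everything else is convexity, one application of \eqref{eq:defaltsmooth}, and one application of Lemma \ref{lem:todonow}.
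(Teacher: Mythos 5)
Your proof is correct and follows essentially the same route as the paper's: split $f(x^+)-f(y)$ into $(f(x^+)-f(x))+(f(x)-f(y))$, bound the first term by $\beta$-smoothness and the second by convexity to get $\nabla f(x)^{\top}(x^+-y)+\frac{1}{2\beta}\|g_{\cX}(x)\|^2$, and then use Lemma \ref{lem:todonow} to pass from $\nabla f(x)$ to $g_{\cX}(x)$ in the inner product. Your step of isolating $\bigl(\nabla f(x)-g_{\cX}(x)\bigr)^{\top}(x^+-y)\leq 0$ is exactly the paper's inequality \eqref{eq:chap3eq1}, just written with the difference subtracted out rather than as a direct comparison.
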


\begin{proof}
We first observe that 
\begin{equation} \label{eq:chap3eq1}
\nabla f(x)^{\top} (x^+ - y) \leq g_{\cX}(x)^{\top}(x^+ - y) .
\end{equation}
Indeed the above inequality is equivalent to
$$\left(x^+- \left(x - \frac{1}{\beta} \nabla f(x) \right)\right)^{\top} (x^+ - y) \leq 0, $$
which follows from Lemma \ref{lem:todonow}. Now we use \eqref{eq:chap3eq1} as follows to prove the lemma (we also use \eqref{eq:defaltsmooth} which still holds true in the constrained case)
\begin{align*}
& f(x^+) - f(y) \\
& = f(x^+) - f(x) + f(x) - f(y) \\
& \leq \nabla f(x)^{\top} (x^+-x) + \frac{\beta}{2} \|x^+-x\|^2 + \nabla f(x)^{\top} (x-y) \\
& = \nabla f(x)^{\top} (x^+ - y) + \frac{1}{2 \beta} \|g_{\cX}(x)\|^2 \\
& \leq g_{\cX}(x)^{\top}(x^+ - y) + \frac{1}{2 \beta} \|g_{\cX}(x)\|^2 \\
& = g_{\cX}(x)^{\top}(x - y) - \frac{1}{2 \beta} \|g_{\cX}(x)\|^2 .
\end{align*}
\end{proof}
We can now prove the following result.
\begin{theorem} \label{th:gdsmoothconstrained}
Let $f$ be convex and $\beta$-smooth on $\cX$. Then projected gradient descent with $\eta = \frac{1}{\beta}$ satisfies
$$f(x_t) - f(x^*) \leq \frac{3 \beta \|x_1 - x^*\|^2 + f(x_1) - f(x^*)}{t} .$$
\end{theorem}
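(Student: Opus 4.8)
The plan is to combine the per-step descent inequality from Lemma~\ref{lem:smoothconst} with the telescoping/summation trick already used in the non-smooth case, and then convert the resulting bound on an average into a bound on the last iterate using monotonicity of $f(x_s)$.

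First I would apply Lemma~\ref{lem:smoothconst} with $x = x_s$, $y = x_s$ (i.e.\ $y$ equal to the current point), which gives
\[
f(x_{s+1}) - f(x_s) \leq - \frac{1}{2\beta} \|g_{\cX}(x_s)\|^2 \leq 0,
\]
so the sequence $f(x_s)$ is non-increasing — this is the analogue of \eqref{eq:onestepofgd} in the constrained setting. Next I would apply Lemma~\ref{lem:smoothconst} again, this time with $y = x^*$, to get
\[
f(x_{s+1}) - f(x^*) \leq g_{\cX}(x_s)^{\top}(x_s - x^*) - \frac{1}{2\beta}\|g_{\cX}(x_s)\|^2 .
\]
Now I would recognize that $x_{s+1} = x_s - \frac{1}{\beta} g_{\cX}(x_s)$ by definition of $g_{\cX}$, so the right-hand side is exactly $\frac{\beta}{2}\left(\|x_s - x^*\|^2 - \|x_{s+1} - x^*\|^2\right)$ by the identity $2a^{\top}b = \|a\|^2 + \|b\|^2 - \|a-b\|^2$ applied with $a = x_s - x^*$ and $b = g_{\cX}(x_s)/\beta$ (scaled appropriately). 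This is the same telescoping structure as in the proof of Theorem~\ref{th:pgd}.

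Summing $f(x_{s+1}) - f(x^*) \leq \frac{\beta}{2}\left(\|x_s - x^*\|^2 - \|x_{s+1} - x^*\|^2\right)$ over $s = 1, \dots, t-1$ telescopes to
\[
\sum_{s=2}^{t} \left(f(x_s) - f(x^*)\right) \leq \frac{\beta}{2}\|x_1 - x^*\|^2 .
\]
Then, since $f(x_s)$ is non-increasing, the last term $f(x_t) - f(x^*)$ is the smallest among $s = 2, \dots, t$, so $(t-1)(f(x_t) - f(x^*)) \leq \sum_{s=2}^{t}(f(x_s) - f(x^*))$; combining and adding back the $s=1$ term (bounding $f(x_1) - f(x^*)$ trivially and absorbing it) yields $t\,(f(x_t) - f(x^*)) \leq \frac{\beta}{2}\|x_1 - x^*\|^2 + f(x_1) - f(x^*)$, but getting the constant $3\beta$ will require also controlling $\|x_s - x^*\|$. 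The main obstacle I anticipate is exactly this: unlike the unconstrained case, it is not immediately obvious that $\|x_s - x^*\|$ is non-increasing (the projection helps, but I would need to verify it carefully using Lemma~\ref{lem:todonow} with $y = x_s - \frac{1}{\beta}\nabla f(x_s)$ and the optimality of $x^*$), and bounding the initial distance terms cleanly is what produces the slightly lossy constant $3$ rather than $1$. Once $\|x_s - x^*\| \leq \|x_1 - x^*\|$ is in hand, the remaining arithmetic to assemble the stated bound is routine.
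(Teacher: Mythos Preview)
Your approach is correct, and in fact it proves a \emph{stronger} bound than the one stated in the paper: once you have
\[
t\,(f(x_t) - f(x^*)) \leq \tfrac{\beta}{2}\|x_1 - x^*\|^2 + f(x_1) - f(x^*),
\]
you are done, with constant $\beta/2$ rather than $3\beta$. The ``obstacle'' you anticipate --- needing $\|x_s - x^*\|$ non-increasing --- is a phantom in your argument. Your telescoping identity
\[
g_{\cX}(x_s)^{\top}(x_s - x^*) - \tfrac{1}{2\beta}\|g_{\cX}(x_s)\|^2 = \tfrac{\beta}{2}\bigl(\|x_s - x^*\|^2 - \|x_{s+1} - x^*\|^2\bigr)
\]
is exact (it follows from $x_{s+1} = x_s - \tfrac{1}{\beta}g_{\cX}(x_s)$ by expanding the square), so no distance monotonicity is ever invoked. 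You should simply stop after the line you already wrote.

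The paper takes a genuinely different route. Instead of your exact telescoping identity, it weakens Lemma~\ref{lem:smoothconst} via Cauchy--Schwarz to $f(x_{s+1}) - f(x^*) \leq \|g_{\cX}(x_s)\|\cdot\|x_s - x^*\|$, combines this with the descent estimate to obtain the recursive inequality $\delta_{s+1} \leq \delta_s - \tfrac{1}{2\beta\|x_1 - x^*\|^2}\,\delta_{s+1}^2$, and then closes by induction. That route \emph{does} require showing $\|x_s - x^*\|$ is non-increasing (which the paper gets from Lemma~\ref{lem:smoothconst} with $y = x^*$), and the losses in Cauchy--Schwarz and the induction are what produce the constant $3\beta$. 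Your argument avoids both the Cauchy--Schwarz step and the induction entirely, which is why you get the sharper constant; the paper's recursive-inequality template, on the other hand, is the one that generalizes more directly to the unconstrained analysis in Theorem~\ref{th:gdsmooth}.
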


\begin{proof}
Lemma \ref{lem:smoothconst} immediately gives
$$f(x_{s+1}) - f(x_s) \leq - \frac{1}{2 \beta} \|g_{\cX}(x_s)\|^2 ,$$
and
$$f(x_{s+1}) - f(x^*) \leq \|g_{\cX}(x_s)\| \cdot \|x_s - x^*\| .$$
We will prove that $\|x_s - x^*\|$ is decreasing with $s$, which with the two above displays will imply
$$\delta_{s+1} \leq \delta_s  - \frac{1}{2 \beta \|x_1 - x^*\|^2} \delta_{s+1}^2.$$
An easy induction shows that
$$\delta_s \leq \frac{3 \beta \|x_1 - x^*\|^2 + f(x_1) - f(x^*)}{s}.$$
Thus it only remains to show that $\|x_s - x^*\|$ is decreasing with $s$. Using Lemma \ref{lem:smoothconst} one can see that $g_{\cX}(x_s)^{\top} (x_s - x^*) \geq \frac{1}{2 \beta} \|g_{\cX}(x_s)\|^2$ which implies
\begin{eqnarray*}
\|x_{s+1} - x^*\|^2& = & \|x_{s} - \frac{1}{\beta} g_{\cX}(x_s) - x^*\|^2 \\
& = & \|x_{s} - x^*\|^2 - \frac{2}{\beta} g_{\cX}(x_s)^{\top} (x_s - x^*) + \frac{1}{\beta^2} \|g_{\cX}(x_s)\|^2 \\
& \leq & \|x_{s} - x^*\|^2 .
\end{eqnarray*}
\end{proof}

\section{Conditional gradient descent, aka Frank-Wolfe} \label{sec:FW}
%
We describe now an alternative algorithm to minimize a smooth convex function $f$ over a compact convex set $\mathcal{X}$. The {\em conditional gradient descent}, introduced in \cite{FW56}, performs the following update for $t \geq 1$, where $(\gamma_s)_{s \geq 1}$ is a fixed sequence,
\begin{align}
&y_{t} \in \mathrm{argmin}_{y \in \mathcal{X}} \nabla f(x_t)^{\top} y \label{eq:FW1} \\
& x_{t+1} = (1 - \gamma_t) x_t + \gamma_t y_t . \label{eq:FW2}
\end{align}
In words conditional gradient descent makes a step in the steepest descent direction {\em given the constraint set $\cX$}, see Figure \ref{fig:FW} for an illustration. From a computational perspective, a key property of this scheme is that it replaces the projection step of projected gradient descent by a linear optimization over $\cX$, which in some cases can be a much simpler problem. 

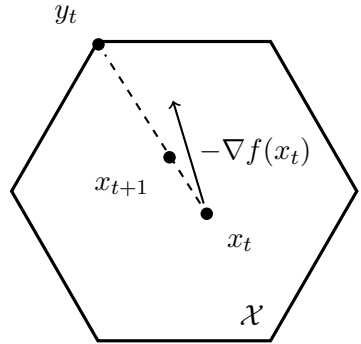
\begin{figure}
\begin{center}
\begin{tikzpicture}[scale=3]
\node [tokens=1] (noeud1) at (0.1,-0.1) [label=below right:{$x_t$}] {};
\node [tokens=1] (noeud2) at (-0.38,0.65) [label=above left:{$y_{t}$}] {};
\draw[->, thick] (noeud1) -- (-0.05, 0.4) node[midway, right] {$-\nabla f(x_t)$};
\node [tokens=1] (noeud3) at (-0.065,0.15) [label=below left:{$x_{t+1}$}] {};
\draw[thick, dashed] (noeud1) -- (noeud2) {};
\node at (0.3,-0.55) {$\cX$};
\node (S) [very thick, regular polygon, regular polygon sides=6, draw,
inner sep=40] at (0,0) {};
\end{tikzpicture}
\end{center}
\caption{Illustration of conditional gradient descent.}
\label{fig:FW}
\end{figure}

We now turn to the analysis of this method. A major advantage of conditional gradient descent over projected gradient descent is that the former can adapt to smoothness in an arbitrary norm. Precisely let $f$ be $\beta$-smooth in some norm $\|\cdot\|$, that is $\|\nabla f(x) - \nabla f(y) \|_* \leq \beta \|x-y\|$ where the dual norm $\|\cdot\|_*$ is defined as $\|g\|_* = \sup_{x \in \mathbb{R}^n : \|x\| \leq 1} g^{\top} x$. The following result is extracted from \cite{Jag13} (see also \cite{DH78}).
%

\begin{theorem}
Let $f$ be a convex and $\beta$-smooth function w.r.t. some norm $\|\cdot\|$, $R = \sup_{x, y \in \mathcal{X}} \|x - y\|$, and $\gamma_s = \frac{2}{s+1}$ for $s \geq 1$. Then for any $t \geq 2$, one has
$$f(x_t) - f(x^*) \leq \frac{2 \beta R^2}{t+1} .$$
\end{theorem}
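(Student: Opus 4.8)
The idea is to exploit the definition of the Frank-Wolfe update together with the smoothness inequality \eqref{eq:defaltsmooth} (which holds in an arbitrary norm, with $\|\cdot\|_*$ on the gradient side), and to set up a one-step recursion on the suboptimality gap $\delta_s = f(x_s) - f(x^*)$ that we can unroll with the step sizes $\gamma_s = 2/(s+1)$.

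First I would write down the smoothness bound for the step $x_{t+1} = x_t + \gamma_t (y_t - x_t)$: using \eqref{eq:defaltsmooth} with $x = x_{t+1}$ and $y = x_t$, together with $\|x_{t+1} - x_t\| = \gamma_t \|y_t - x_t\| \le \gamma_t R$,
\[
f(x_{t+1}) \le f(x_t) + \gamma_t \nabla f(x_t)^{\top}(y_t - x_t) + \frac{\beta \gamma_t^2}{2} R^2 .
\]
Next, the crucial point: by the defining property \eqref{eq:FW1} of $y_t$ we have $\nabla f(x_t)^{\top} y_t \le \nabla f(x_t)^{\top} x^*$, hence $\nabla f(x_t)^{\top}(y_t - x_t) \le \nabla f(x_t)^{\top}(x^* - x_t) \le -(f(x_t) - f(x^*)) = -\delta_t$ by convexity. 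Plugging this in gives the fundamental recursion
\[
\delta_{t+1} \le (1 - \gamma_t)\delta_t + \frac{\beta R^2}{2} \gamma_t^2 .
\]

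Finally I would solve this recursion with $\gamma_t = 2/(t+1)$. The standard trick is to prove $\delta_t \le \frac{2\beta R^2}{t+1}$ by induction on $t \ge 2$. For the base case $t=2$: from the recursion with $t=1$, $\gamma_1 = 1$, we get $\delta_2 \le \frac{\beta R^2}{2} \le \frac{2\beta R^2}{3}$. For the inductive step, assuming $\delta_t \le \frac{2\beta R^2}{t+1}$,
\[
\delta_{t+1} \le \left(1 - \frac{2}{t+1}\right)\frac{2\beta R^2}{t+1} + \frac{\beta R^2}{2}\cdot\frac{4}{(t+1)^2} = \frac{2\beta R^2}{(t+1)^2}\bigl((t+1) - 2 + 1\bigr) = \frac{2\beta R^2 \, t}{(t+1)^2},
\]
and since $\frac{t}{(t+1)^2} \le \frac{1}{t+2}$ (equivalent to $t(t+2) \le (t+1)^2$, i.e. $t^2 + 2t \le t^2 + 2t + 1$), this is at most $\frac{2\beta R^2}{t+2}$, completing the induction.

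**Main obstacle.** There is no serious obstacle here; the only place requiring a little care is getting the constants and the induction to close cleanly — in particular checking the base case at $t=2$ and the elementary inequality $t/(t+1)^2 \le 1/(t+2)$. One should also note that $R = \sup_{x,y \in \cX}\|x-y\|$ is exactly what is needed to bound $\|y_t - x_t\|$ since both $y_t, x_t \in \cX$ (the iterates stay in $\cX$ by convexity, as $x_{t+1}$ is a convex combination of $x_t$ and $y_t$).
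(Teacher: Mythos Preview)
Your proof is correct and follows exactly the same approach as the paper: derive the recursion $\delta_{s+1} \le (1-\gamma_s)\delta_s + \tfrac{\beta}{2}\gamma_s^2 R^2$ from smoothness, the definition of $y_s$, and convexity, then finish by induction with $\gamma_s = 2/(s+1)$. The paper simply leaves the induction as an exercise (noting the base case $\delta_2 \le \tfrac{\beta}{2}R^2$), whereas you carry it out explicitly.
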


\begin{proof}
The following inequalities hold true, using respectively $\beta$-smoothness (it can easily be seen that \eqref{eq:defaltsmooth} holds true for smoothness in an arbitrary norm), the definition of $x_{s+1}$, the definition of $y_s$, and the convexity of $f$:
\begin{eqnarray*}
f(x_{s+1}) - f(x_s) & \leq & \nabla f(x_s)^{\top} (x_{s+1} - x_s) + \frac{\beta}{2} \|x_{s+1} - x_s\|^2 \\
& \leq & \gamma_s \nabla f(x_s)^{\top} (y_{s} - x_s) + \frac{\beta}{2} \gamma_s^2 R^2 \\
& \leq & \gamma_s \nabla f(x_s)^{\top} (x^* - x_s) + \frac{\beta}{2} \gamma_s^2 R^2 \\
& \leq & \gamma_s (f(x^*) - f(x_s)) + \frac{\beta}{2} \gamma_s^2 R^2 .
\end{eqnarray*}
Rewriting this inequality in terms of $\delta_s = f(x_s) - f(x^*)$ one obtains
$$\delta_{s+1} \leq (1 - \gamma_s) \delta_s + \frac{\beta}{2} \gamma_s^2 R^2 .$$
A simple induction using that $\gamma_s = \frac{2}{s+1}$ finishes the proof (note that the initialization is done at step $2$ with the above inequality yielding $\delta_2 \leq \frac{\beta}{2} R^2$).
\end{proof}

In addition to being projection-free and ``norm-free", the conditional gradient descent satisfies a perhaps even more important property: it produces {\em sparse iterates}. More precisely consider the situation where $\cX \subset \R^n$ is a polytope, that is the convex hull of a finite set of points (these points are called the vertices of $\cX$). Then Carath\'eodory's theorem states that any point $x \in \cX$ can be written as a convex combination of at most $n+1$ vertices of $\cX$. On the other hand, by definition of the conditional gradient descent, one knows that the $t^{th}$ iterate $x_t$ can be written as a convex combination of $t$ vertices (assuming that $x_1$ is a vertex). Thanks to the dimension-free rate of convergence one is usually interested in the regime where $t \ll n$, and thus we see that the iterates of conditional gradient descent are very sparse in their vertex representation.

We note an interesting corollary of the sparsity property together with the rate of convergence we proved: smooth functions on the simplex $\{x \in \mathbb{R}_+^n : \sum_{i=1}^n x_i = 1\}$ always admit sparse approximate minimizers. More precisely there must exist a point $x$ with only $t$ non-zero coordinates and such that $f(x) - f(x^*) = O(1/t)$. Clearly this is the best one can hope for in general, as it can be seen with the function $f(x) = \|x\|^2_2$ since by Cauchy-Schwarz one has $\|x\|_1 \leq \sqrt{\|x\|_0} \|x\|_2$ which implies on the simplex $\|x\|_2^2 \geq 1 / \|x\|_0$.
%

Next we describe an application where the three properties of conditional gradient descent (projection-free, norm-free, and sparse iterates) are critical to develop a computationally efficient procedure.

\subsection*{An application of conditional gradient descent: Least-squares regression with structured sparsity}
This example is inspired by \cite{Lug10} (see also \cite{Jon92}). Consider the problem of approximating a signal $Y \in \mathbb{R}^n$ by a ``small" combination of dictionary elements $d_1, \hdots, d_N \in \mathbb{R}^n$. One way to do this is to consider a LASSO type problem in dimension $N$ of the following form (with $\lambda \in \R$ fixed)
$$\min_{x \in \mathbb{R}^N} \big\| Y - \sum_{i=1}^N x(i) d_i \big\|_2^2 + \lambda \|x\|_1 .$$
Let $D \in \mathbb{R}^{n \times N}$ be the dictionary matrix with $i^{th}$ column given by $d_i$. Instead of considering the penalized version of the problem one could look at the following constrained problem (with $s \in \R$ fixed) on which we will now focus, see e.g. \cite{FT07},
\begin{eqnarray}
\min_{x \in \mathbb{R}^N} \| Y - D x \|_2^2 
& \qquad \Leftrightarrow \qquad & \min_{x \in \mathbb{R}^N} \| Y / s - D x \|_2^2 \label{eq:structuredsparsity} \\
\text{subject to}  \; \|x\|_1 \leq s
& & \text{subject to} \; \|x\|_1 \leq 1 . \notag
\end{eqnarray}
We make some assumptions on the dictionary. We are interested in situations where the size of the dictionary $N$ can be very large, potentially exponential in the ambient dimension $n$. Nonetheless we want to restrict our attention to algorithms that run in reasonable time with respect to the ambient dimension $n$, that is we want polynomial time algorithms in $n$. Of course in general this is impossible, and we need to assume that the dictionary has some structure that can be exploited. Here we make the assumption that one can do {\em linear optimization} over the dictionary in polynomial time in $n$. More precisely we assume that one can solve in time $p(n)$ (where $p$ is polynomial) the following problem for any $y \in \mathbb{R}^n$:
$$\min_{1 \leq i \leq N} y^{\top} d_i .$$
This assumption is met for many {\em combinatorial} dictionaries. For instance the dic­tio­nary ele­ments could be vec­tor of inci­dence of span­ning trees in some fixed graph, in which case the lin­ear opti­miza­tion prob­lem can be solved with a greedy algorithm.

Finally, for normalization issues, we assume that the $\ell_2$-norm of the dictionary elements are controlled by some $m>0$, that is $\|d_i\|_2 \leq m, \forall i \in [N]$.

Our problem of interest \eqref{eq:structuredsparsity} corresponds to minimizing the function $f(x) = \frac{1}{2} \| Y - D x \|^2_2$ on the $\ell_1$-ball of $\mathbb{R}^N$ in polynomial time in $n$. At first sight this task may seem completely impossible, indeed one is not even allowed to write down entirely a vector $x \in \mathbb{R}^N$ (since this would take time linear in $N$). The key property that will save us is that this function admits {\em sparse minimizers} as we discussed in the previous section, and this will be exploited by the conditional gradient descent method. 

First let us study the computational complexity of the $t^{th}$ step of conditional gradient descent. Observe that
$$\nabla f(x) = D^{\top} (D x - Y).$$
Now assume that $z_t = D x_t - Y \in \mathbb{R}^n$ is already computed, then to compute \eqref{eq:FW1} one needs to find the coordinate $i_t \in [N]$ that maximizes $|[\nabla f(x_t)](i)|$ which can be done by maximizing $d_i^{\top} z_t$ and $- d_i^{\top} z_t$. Thus \eqref{eq:FW1} takes time $O(p(n))$. Computing $x_{t+1}$ from $x_t$ and $i_{t}$ takes time $O(t)$ since $\|x_t\|_0 \leq t$, and computing $z_{t+1}$ from $z_t$ and $i_t$ takes time $O(n)$. Thus the overall time complexity of running $t$ steps is (we assume $p(n) = \Omega(n)$) 
\begin{equation}
O(t p(n) + t^2). \label{eq:structuredsparsity2}
\end{equation} 

To derive a rate of convergence it remains to study the smoothness of $f$. This can be done as follows:
\begin{eqnarray*}
\| \nabla f(x) - \nabla f(y) \|_{\infty} & = & \|D^{\top} D (x-y) \|_{\infty} \\
& = & \max_{1 \leq i \leq N} \bigg| d_i^{\top} \left(\sum_{j=1}^N d_j (x(j) - y(j))\right) \bigg| \\
& \leq & m^2 \|x-y\|_1 ,
\end{eqnarray*}
which means that $f$ is $m^2$-smooth with respect to the $\ell_1$-norm. Thus we get the following rate of convergence:
\begin{equation}
f(x_t) - f(x^*) \leq \frac{8 m^2}{t+1} . \label{eq:structuredsparsity3}
\end{equation}
Putting together \eqref{eq:structuredsparsity2} and \eqref{eq:structuredsparsity3} we proved that one can get an $\epsilon$-optimal solution to \eqref{eq:structuredsparsity} with a computational effort of $O(m^2 p(n)/\epsilon + m^4/\epsilon^2)$ using the conditional gradient descent.

\section{Strong convexity}
We will now discuss another property of convex functions that can significantly speed-up the convergence of first order methods: strong convexity. We say that $f: \cX \rightarrow \mathbb{R}$ is $\alpha$-{\em strongly convex} if it satisfies the following improved subgradient inequality:
\begin{equation} \label{eq:defstrongconv}
f(x) - f(y) \leq \nabla f(x)^{\top} (x - y) - \frac{\alpha}{2} \|x - y \|^2 .
\end{equation}
Of course this definition does not require differentiability of the function $f$, and one can replace $\nabla f(x)$ in the inequality above by $g \in \partial f(x)$. It is immediate to verify that a function $f$ is $\alpha$-strongly convex if and only if $x \mapsto f(x) - \frac{\alpha}{2} \|x\|^2$ is convex (in particular if $f$ is twice differentiable then the eigenvalues of the Hessians of $f$ have to be larger than $\alpha$). The strong convexity parameter $\alpha$ is a measure of the {\em curvature} of $f$. For instance a linear function has no curvature and hence $\alpha = 0$. On the other hand one can clearly see why a large value of $\alpha$ would lead to a faster rate: in this case a point far from the optimum will have a large gradient, and thus gradient descent will make very big steps when far from the optimum. Of course if the function is non-smooth one still has to be careful and tune the step-sizes to be relatively small, but nonetheless we will be able to improve the oracle complexity from $O(1/\epsilon^2)$ to $O(1/(\alpha \epsilon))$. On the other hand with the additional assumption of $\beta$-smoothness we will prove that gradient descent with a constant step-size achieves a {\em linear rate of convergence}, precisely the oracle complexity will be $O(\frac{\beta}{\alpha} \log(1/\epsilon))$. This achieves the objective we had set after Theorem \ref{th:pgd}: strongly-convex and smooth functions can be optimized in very large dimension and up to very high accuracy.

Before going into the proofs let us discuss another interpretation of strong-convexity and its relation to smoothness. Equation \eqref{eq:defstrongconv} can be read as follows: at any point $x$ one can find a (convex) quadratic lower bound $q_x^-(y) = f(x) + \nabla f(x)^{\top} (y - x) + \frac{\alpha}{2} \|x - y \|^2$ to the function $f$, i.e. $q_x^-(y) \leq f(y), \forall y \in \cX$ (and $q_x^-(x) = f(x)$). On the other hand for $\beta$-smoothness \eqref{eq:defaltsmooth}
%
implies that at any point $y$ one can find a (convex) quadratic upper bound $q_y^+(x) = f(y) + \nabla f(y)^{\top} (x - y) + \frac{\beta}{2} \|x - y \|^2$ to the function $f$, i.e. $q_y^+(x) \geq f(x), \forall x \in \cX$ (and $q_y^+(y) = f(y)$). 
Thus in some sense strong convexity is a {\em dual} assumption to smoothness, and in fact this can be made precise within the framework of Fenchel duality. Also remark that clearly one always has $\beta \geq \alpha$.

\subsection{Strongly convex and Lipschitz functions}
We consider here the projected subgradient descent algorithm with time-varying step size $(\eta_t)_{t \geq 1}$, that is
\begin{align*}
& y_{t+1} = x_t - \eta_t g_t , \ \text{where} \ g_t \in \partial f(x_t) \\
& x_{t+1} = \Pi_{\cX}(y_{t+1}) .
\end{align*}
The following result is extracted from \cite{LJSB12}.

\begin{theorem} \label{th:LJSB12}
Let $f$ be $\alpha$-strongly convex and $L$-Lipschitz on $\cX$. Then projected subgradient descent with $\eta_s = \frac{2}{\alpha (s+1)}$ satisfies
$$f \left(\sum_{s=1}^t \frac{2 s}{t(t+1)} x_s \right) - f(x^*) \leq \frac{2 L^2}{\alpha (t+1)} .$$
\end{theorem}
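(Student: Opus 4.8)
The plan is to mimic the proof of Theorem \ref{th:pgd}, starting from the one-step analysis of projected subgradient descent but now using the \emph{strong convexity} inequality \eqref{eq:defstrongconv} in place of the plain subgradient inequality. Concretely, for each $s$ I would write
\[
f(x_s) - f(x^*) \leq g_s^{\top}(x_s - x^*) - \frac{\alpha}{2}\|x_s - x^*\|^2,
\]
and then use the definition $y_{s+1} = x_s - \eta_s g_s$ together with the identity $2a^{\top}b = \|a\|^2 + \|b\|^2 - \|a-b\|^2$ exactly as in Theorem \ref{th:pgd} to get
\[
g_s^{\top}(x_s - x^*) = \frac{1}{2\eta_s}\left(\|x_s - x^*\|^2 - \|y_{s+1} - x^*\|^2\right) + \frac{\eta_s}{2}\|g_s\|^2.
\]
Using $\|g_s\| \leq L$ and Lemma \ref{lem:todonow} (which gives $\|y_{s+1} - x^*\| \geq \|x_{s+1} - x^*\|$), I obtain
\[
f(x_s) - f(x^*) \leq \frac{1}{2\eta_s}\|x_s - x^*\|^2 - \frac{1}{2\eta_s}\|x_{s+1} - x^*\|^2 - \frac{\alpha}{2}\|x_s - x^*\|^2 + \frac{\eta_s L^2}{2}.
\]

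The next step is the key telescoping trick. With the choice $\eta_s = \frac{2}{\alpha(s+1)}$ one has $\frac{1}{\eta_s} = \frac{\alpha(s+1)}{2}$, so the coefficient of $\|x_s - x^*\|^2$ becomes $\frac{1}{2\eta_s} - \frac{\alpha}{2} = \frac{\alpha(s+1)}{4} - \frac{\alpha}{2} = \frac{\alpha(s-1)}{4}$, while the coefficient of $-\|x_{s+1}-x^*\|^2$ is $\frac{1}{2\eta_s} = \frac{\alpha(s+1)}{4}$. Thus after multiplying the inequality for index $s$ by the weight $s$, the terms $\frac{\alpha s(s-1)}{4}\|x_s-x^*\|^2$ and $-\frac{\alpha s(s+1)}{4}\|x_{s+1}-x^*\|^2 = -\frac{\alpha(s+1)((s+1)-1)}{4}\|x_{s+1}-x^*\|^2$ telescope perfectly when summed over $s = 1, \dots, t$. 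The residual from the $\|g_s\|^2$ term contributes $\sum_{s=1}^t s \cdot \frac{\eta_s L^2}{2} = \sum_{s=1}^t \frac{s L^2}{\alpha(s+1)} \leq \frac{t L^2}{\alpha}$. Hence
\[
\sum_{s=1}^t s\left(f(x_s) - f(x^*)\right) \leq \frac{t L^2}{\alpha}.
\]

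Finally I divide by $\sum_{s=1}^t s = \frac{t(t+1)}{2}$ and apply Jensen's inequality (convexity of $f$) to the weighted average $\sum_{s=1}^t \frac{2s}{t(t+1)} x_s$, which gives
\[
f\left(\sum_{s=1}^t \frac{2s}{t(t+1)} x_s\right) - f(x^*) \leq \frac{2}{t(t+1)} \sum_{s=1}^t s\left(f(x_s) - f(x^*)\right) \leq \frac{2L^2}{\alpha(t+1)},
\]
as claimed. The main obstacle—really the only non-routine point—is spotting that the linear weights $s$ are exactly what makes the strong-convexity penalty $-\frac{\alpha}{2}\|x_s-x^*\|^2$ combine with the $\frac{1}{2\eta_s}$ coefficients into a clean telescoping sum; once the weighting is guessed, everything else is bookkeeping, and one should double-check the boundary terms at $s=1$ (where the coefficient $\frac{\alpha s(s-1)}{4}$ vanishes, so no initialization constant like $R$ is needed) and the harmless bound $\frac{s}{s+1} \leq 1$ on the error term.
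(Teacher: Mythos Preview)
Your proof is correct and follows essentially the same approach as the paper: derive the one-step inequality $f(x_s) - f(x^*) \leq \frac{\eta_s}{2} L^2 + \left( \frac{1}{2 \eta_s} - \frac{\alpha}{2} \right) \|x_s - x^*\|^2 - \frac{1}{2 \eta_s} \|x_{s+1} - x^*\|^2$, multiply by $s$, telescope, and apply Jensen. The paper's version is slightly terser but the structure, the step-size substitution, and the weighting trick are identical.
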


\begin{proof}
Coming back to our original analysis of projected subgradient descent in Section \ref{sec:psgd} and using the strong convexity assumption one immediately obtains
$$f(x_s) - f(x^*) \leq \frac{\eta_s}{2} L^2 + \left( \frac{1}{2 \eta_s} - \frac{\alpha}{2} \right) \|x_s - x^*\|^2 - \frac{1}{2 \eta_s} \|x_{s+1} - x^*\|^2 .$$
Multiplying this inequality by $s$ yields
$$s( f(x_s) - f(x^*) ) \leq \frac{L^2}{\alpha} + \frac{\alpha}{4} \bigg( s(s-1) \|x_s - x^*\|^2 - s (s+1) \|x_{s+1} - x^*\|^2 \bigg),$$
Now sum the resulting inequality over $s=1$ to $s=t$, and apply Jensen's inequality to obtain the claimed statement.
\end{proof}

\subsection{Strongly convex and smooth functions}
As we will see now, having both strong convexity and smoothness allows for a drastic improvement in the convergence rate. We denote $\kappa= \frac{\beta}{\alpha}$ for the {\em condition number} of $f$. The key observation is that Lemma \ref{lem:smoothconst} can be improved to (with the notation of the lemma):
\begin{equation} \label{eq:improvedstrongsmooth}
f(x^+) - f(y) \leq g_{\cX}(x)^{\top}(x-y) - \frac{1}{2 \beta} \|g_{\cX}(x)\|^2 - \frac{\alpha}{2} \|x-y\|^2 .
\end{equation}

\begin{theorem} \label{th:gdssc}
Let $f$ be $\alpha$-strongly convex and $\beta$-smooth on $\cX$. Then projected gradient descent with $\eta = \frac{1}{\beta}$ satisfies for $t \geq 0$,
$$\|x_{t+1} - x^*\|^2 \leq \exp\left( - \frac{t}{\kappa} \right) \|x_1 - x^*\|^2 .$$
\end{theorem}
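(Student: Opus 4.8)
The plan is to mimic the analysis of Theorem \ref{th:gdsmoothconstrained}, but to feed in the sharpened one-step inequality \eqref{eq:improvedstrongsmooth} in place of Lemma \ref{lem:smoothconst}. Note that \eqref{eq:improvedstrongsmooth} is available here: its proof is identical to that of Lemma \ref{lem:smoothconst} — the bound \eqref{eq:chap3eq1} used only Lemma \ref{lem:todonow} and \eqref{eq:defaltsmooth}, both valid in the constrained case — except that the step $f(x) - f(y) \leq \nabla f(x)^{\top}(x-y)$ is replaced by the strong convexity inequality \eqref{eq:defstrongconv}, which contributes the extra term $-\frac{\alpha}{2}\|x-y\|^2$. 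The whole argument will take place at the level of the squared distance to the optimum, so no averaging of iterates is needed.

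First I would apply \eqref{eq:improvedstrongsmooth} at $x = x_s$ with $y = x^*$, so that $x^+ = x_{s+1}$, and drop the left-hand side using $f(x_{s+1}) - f(x^*) \geq 0$. This yields the co-coercivity-type bound
$$g_{\cX}(x_s)^{\top}(x_s - x^*) \geq \frac{1}{2\beta}\|g_{\cX}(x_s)\|^2 + \frac{\alpha}{2}\|x_s - x^*\|^2 .$$
Next, recalling that the choice $\eta = 1/\beta$ gives $x_{s+1} = x_s - \frac{1}{\beta} g_{\cX}(x_s)$, I would expand
$$\|x_{s+1} - x^*\|^2 = \|x_s - x^*\|^2 - \frac{2}{\beta} g_{\cX}(x_s)^{\top}(x_s - x^*) + \frac{1}{\beta^2}\|g_{\cX}(x_s)\|^2 ,$$
and substitute the previous display. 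The two terms involving $\|g_{\cX}(x_s)\|^2$ cancel exactly, leaving
$$\|x_{s+1} - x^*\|^2 \leq \left(1 - \frac{\alpha}{\beta}\right)\|x_s - x^*\|^2 = \left(1 - \frac{1}{\kappa}\right)\|x_s - x^*\|^2 .$$
Iterating this contraction from $s = 1$ and using $1 - u \leq e^{-u}$ gives $\|x_{t+1} - x^*\|^2 \leq (1-\tfrac1\kappa)^{t}\|x_1-x^*\|^2 \leq \exp(-t/\kappa)\|x_1 - x^*\|^2$, which is the claim.

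There is essentially no hard part: the only points requiring care are checking that \eqref{eq:improvedstrongsmooth} genuinely holds in the projected setting (as argued above it does), and verifying that the cancellation of the gradient-squared terms is exact — which is precisely what pins the rate constant to $1/\kappa$ and is the reason the step size $\eta = 1/\beta$ is the right one to use.
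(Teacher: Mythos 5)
Your proof is correct and follows the same route as the paper: apply \eqref{eq:improvedstrongsmooth} at $y = x^*$, drop the nonnegative left-hand side to get the coercivity bound on $g_{\cX}(x_s)^{\top}(x_s - x^*)$, expand $\|x_{s+1}-x^*\|^2$ and observe the exact cancellation of the $\|g_{\cX}(x_s)\|^2$ terms, then iterate. The only difference is that you spell out the cancellation and the justification of \eqref{eq:improvedstrongsmooth} more explicitly than the paper does, which is fine.
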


\begin{proof}
Using \eqref{eq:improvedstrongsmooth} with $y=x^*$ one directly obtains
\begin{eqnarray*}
\|x_{t+1} - x^*\|^2& = & \|x_{t} - \frac{1}{\beta} g_{\cX}(x_t) - x^*\|^2 \\
& = & \|x_{t} - x^*\|^2 - \frac{2}{\beta} g_{\cX}(x_t)^{\top} (x_t - x^*) + \frac{1}{\beta^2} \|g_{\cX}(x_t)\|^2 \\
& \leq & \left(1 - \frac{\alpha}{\beta} \right) \|x_{t} - x^*\|^2 \\
& \leq & \left(1 - \frac{\alpha}{\beta} \right)^t \|x_{1} - x^*\|^2 \\
& \leq & \exp\left( - \frac{t}{\kappa} \right) \|x_1 - x^*\|^2 ,
\end{eqnarray*}
which concludes the proof.
\end{proof}

We now show that in the unconstrained case one can improve the rate by a constant factor, precisely one can replace $\kappa$ by $(\kappa+1) / 4$ in the oracle complexity bound by using a larger step size. This is not a spectacular gain but the reasoning is based on an improvement of \eqref{eq:coercive1} which can be of interest by itself. Note that \eqref{eq:coercive1} and the lemma to follow are sometimes referred to as {\em coercivity} of the gradient.

\begin{lemma} \label{lem:coercive2}
Let $f$ be $\beta$-smooth and $\alpha$-strongly convex on $\R^n$. Then for all $x, y \in \mathbb{R}^n$, one has
$$(\nabla f(x) - \nabla f(y))^{\top} (x - y) \geq \frac{\alpha \beta}{\beta + \alpha} \|x-y\|^2 + \frac{1}{\beta + \alpha} \|\nabla f(x) - \nabla f(y)\|^2 .$$
\end{lemma}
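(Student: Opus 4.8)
The plan is to reduce this to the already-established coercivity inequality \eqref{eq:coercive1} by a standard ``shift'' trick: subtract the quadratic that saturates the strong convexity. Concretely, I would introduce $g(x) = f(x) - \frac{\alpha}{2}\|x\|^2$. Since $f$ is $\alpha$-strongly convex, $g$ is convex (this is the characterization noted right after \eqref{eq:defstrongconv}), and since $f$ is $\beta$-smooth, $g$ is $(\beta-\alpha)$-smooth: indeed $\nabla g(x) - \nabla g(y) = \nabla f(x) - \nabla f(y) - \alpha(x-y)$, and one checks $\|\nabla g(x)-\nabla g(y)\| \le (\beta-\alpha)\|x-y\|$ either from the Hessian viewpoint or directly from the two-sided bound \eqref{eq:defaltsmooth} applied to $f$. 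I will treat the generic case $\beta > \alpha$ and note that $\beta = \alpha$ follows by letting $\alpha \uparrow \beta$ (or is trivial since then $f$ is a quadratic and the inequality is an identity).

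Next I would apply \eqref{eq:coercive1} to the convex, $(\beta-\alpha)$-smooth function $g$, which gives
$$(\nabla g(x) - \nabla g(y))^{\top}(x-y) \geq \frac{1}{\beta-\alpha}\|\nabla g(x) - \nabla g(y)\|^2 .$$
Then I substitute $\nabla g(x) - \nabla g(y) = \nabla f(x) - \nabla f(y) - \alpha(x-y)$ and expand. Writing $u = x-y$ and $v = \nabla f(x) - \nabla f(y)$, the left-hand side becomes $v^{\top}u - \alpha\|u\|^2$ and the right-hand side becomes $\frac{1}{\beta-\alpha}\big(\|v\|^2 - 2\alpha\, v^{\top}u + \alpha^2\|u\|^2\big)$.

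Finally I would clear the denominator: multiplying through by $\beta-\alpha$ and regrouping the $v^{\top}u$ terms yields $(\beta+\alpha)\, v^{\top}u \geq \|v\|^2 + \alpha\beta\|u\|^2$, which after dividing by $\beta+\alpha$ is exactly the claimed inequality. I expect no real obstacle here; the only mild subtlety is verifying that $g$ is $(\beta-\alpha)$-smooth (cleanest via $\nabla^2 f \preceq \beta \mathrm{I}$ so $\nabla^2 g = \nabla^2 f - \alpha \mathrm{I} \preceq (\beta-\alpha)\mathrm{I}$, with the non-twice-differentiable case handled by \eqref{eq:defaltsmooth}) and dealing with the degenerate endpoint $\alpha=\beta$ by a limiting argument. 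Everything else is the routine algebraic expansion sketched above.
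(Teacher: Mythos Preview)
Your proposal is correct and follows essentially the same approach as the paper: define $\phi(x)=f(x)-\frac{\alpha}{2}\|x\|^2$, observe it is convex and $(\beta-\alpha)$-smooth, apply \eqref{eq:coercive1} to $\phi$, and then expand; the paper also treats the degenerate case $\alpha=\beta$ separately. Your write-up even spells out the algebra that the paper leaves as ``straightforward computations.''
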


\begin{proof}
Let $\phi(x) = f(x) - \frac{\alpha}{2} \|x\|^2$. By definition of $\alpha$-strong convexity one has that $\phi$ is convex. Furthermore one can show that $\phi$ is $(\beta-\alpha)$-smooth by proving \eqref{eq:defaltsmooth} (and using that it implies smoothness). Thus using \eqref{eq:coercive1} one gets
$$(\nabla \phi(x) - \nabla \phi(y))^{\top} (x - y) \geq \frac{1}{\beta - \alpha} \|\nabla \phi(x) - \nabla \phi(y)\|^2 ,$$
which gives the claimed result with straightforward computations. (Note that if $\alpha = \beta$ the smoothness of $\phi$ directly implies that $\nabla f(x) - \nabla f(y) = \alpha (x-y)$ which proves the lemma in this case.)
\end{proof}

\begin{theorem}
Let $f$ be $\beta$-smooth and $\alpha$-strongly convex on $\R^n$. Then gradient descent with $\eta = \frac{2}{\alpha + \beta}$ satisfies
$$f(x_{t+1}) - f(x^*) \leq \frac{\beta}{2} \exp\left( - \frac{4 t}{\kappa+1} \right) \|x_1 - x^*\|^2 .$$
\end{theorem}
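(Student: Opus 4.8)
The plan is to track the squared distance to the optimum $\|x_t - x^*\|^2$ and show it contracts geometrically, then convert this into a function-value bound via smoothness at the very end. The whole argument mirrors the proof of Theorem \ref{th:gdssc}, except that the improved coercivity estimate of Lemma \ref{lem:coercive2} lets us afford the larger step size $\eta = \frac{2}{\alpha+\beta}$.

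First I would expand one step of gradient descent: since $x_{t+1} = x_t - \eta \nabla f(x_t)$ and $\nabla f(x^*) = 0$,
\begin{equation*}
\|x_{t+1} - x^*\|^2 = \|x_t - x^*\|^2 - 2\eta\, \nabla f(x_t)^{\top}(x_t - x^*) + \eta^2 \|\nabla f(x_t)\|^2 .
\end{equation*}
Then I would apply Lemma \ref{lem:coercive2} with $y = x^*$ (using $\nabla f(x^*) = 0$) to lower bound $\nabla f(x_t)^{\top}(x_t-x^*)$ by $\frac{\alpha\beta}{\alpha+\beta}\|x_t-x^*\|^2 + \frac{1}{\alpha+\beta}\|\nabla f(x_t)\|^2$. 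Plugging this in with $\eta = \frac{2}{\alpha+\beta}$, the $\|\nabla f(x_t)\|^2$ terms cancel exactly, and one is left with
\begin{equation*}
\|x_{t+1} - x^*\|^2 \leq \left(1 - \frac{4\alpha\beta}{(\alpha+\beta)^2}\right)\|x_t - x^*\|^2 = \left(\frac{\beta-\alpha}{\beta+\alpha}\right)^2 \|x_t - x^*\|^2 = \left(\frac{\kappa-1}{\kappa+1}\right)^2 \|x_t - x^*\|^2 .
\end{equation*}
Iterating from $t=1$ gives $\|x_{t+1}-x^*\|^2 \leq \left(\frac{\kappa-1}{\kappa+1}\right)^{2t}\|x_1-x^*\|^2$, and the elementary bound $\left(1 - \frac{2}{\kappa+1}\right)^{2t} \leq \exp\left(-\frac{4t}{\kappa+1}\right)$ converts this into the stated exponential rate for the iterates.

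Finally I would upgrade the distance bound to a function-value bound: applying the right-hand inequality of \eqref{eq:defaltsmooth} at $y = x^*$ and using $\nabla f(x^*) = 0$ yields $f(x_{t+1}) - f(x^*) \leq \frac{\beta}{2}\|x_{t+1} - x^*\|^2$, and combining with the previous display finishes the proof. There is no real obstacle here; the only point that requires a moment's care is checking that the $\|\nabla f(x_t)\|^2$ coefficients indeed cancel for the specific choice $\eta = \frac{2}{\alpha+\beta}$ (one needs $-\frac{4}{(\alpha+\beta)^2} + \eta^2 = 0$), which is exactly why this step size is chosen, and the conversion of $\frac{\kappa-1}{\kappa+1}$ into the exponential, which is just $\log(1-u) \leq -u$.
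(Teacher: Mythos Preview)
Your proposal is correct and follows essentially the same route as the paper: expand $\|x_{t+1}-x^*\|^2$, invoke Lemma \ref{lem:coercive2} at $y=x^*$ so that with $\eta=\frac{2}{\alpha+\beta}$ the gradient-norm terms cancel and one obtains the contraction factor $\left(\frac{\kappa-1}{\kappa+1}\right)^2$, iterate, bound by the exponential, and finish with the smoothness inequality $f(x_{t+1})-f(x^*)\le \frac{\beta}{2}\|x_{t+1}-x^*\|^2$. The only cosmetic difference is that the paper states the smoothness-to-function-value conversion at the outset rather than at the end.
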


\begin{proof}
First note that by $\beta$-smoothness (since $\nabla f(x^*) = 0$) one has
$$f(x_t) - f(x^*) \leq \frac{\beta}{2} \|x_t - x^*\|^2 .$$
Now using Lemma \ref{lem:coercive2} one obtains
\begin{eqnarray*}
\|x_{t+1} - x^*\|^2& = & \|x_{t} - \eta \nabla f(x_{t}) - x^*\|^2 \\
& = & \|x_{t} - x^*\|^2 - 2 \eta \nabla f(x_{t})^{\top} (x_{t} - x^*) + \eta^2 \|\nabla f(x_{t})\|^2 \\
& \leq & \left(1 - 2 \frac{\eta \alpha \beta}{\beta + \alpha}\right)\|x_{t} - x^*\|^2 + \left(\eta^2 - 2 \frac{\eta}{\beta + \alpha}\right) \|\nabla f(x_{t})\|^2 \\
& = & \left(\frac{\kappa - 1}{\kappa+1}\right)^2 \|x_{t} - x^*\|^2 \\
& \leq & \exp\left( - \frac{4 t}{\kappa+1} \right) \|x_1 - x^*\|^2 ,
\end{eqnarray*}
which concludes the proof.
\end{proof}

\section{Lower bounds} \label{sec:chap3LB}
We prove here various oracle complexity lower bounds. These results first appeared in \cite{NY83} but we follow here the simplified presentation of \cite{Nes04}. In general a black-box procedure is a mapping from ``history" to the next query point, that is it maps $(x_1, g_1, \hdots, x_t, g_t)$ (with $g_s \in \partial f (x_s)$) to $x_{t+1}$. In order to simplify the notation and the argument, throughout the section we make the following assumption on the black-box procedure: $x_1=0$ and for any $t \geq 0$, $x_{t+1}$ is in the linear span of $g_1, \hdots, g_t$, that is
\begin{equation} \label{eq:ass1}
x_{t+1} \in \mathrm{Span}(g_1, \hdots, g_t) .
\end{equation}
Let $e_1, \hdots, e_n$ be the canonical basis of $\mathbb{R}^n$, and $\mB_2(R) = \{x \in \R^n : \|x\| \leq R\}$. We start with a theorem for the two non-smooth cases (convex and strongly convex).

\begin{theorem} \label{th:lb1}
Let $t \leq n$, $L, R >0$. There exists a convex and $L$-Lipschitz function $f$ such that for any black-box procedure satisfying \eqref{eq:ass1},
$$\min_{1 \leq s \leq t} f(x_s) - \min_{x \in \mB_2(R)} f(x) \geq  \frac{R L}{2 (1 + \sqrt{t})} .$$
There also exists an $\alpha$-strongly convex and $L$-lipschitz function $f$ such that for any black-box procedure satisfying \eqref{eq:ass1},
$$\min_{1 \leq s \leq t} f(x_s) - \min_{x \in \mB_2\left(\frac{L}{2 \alpha}\right)} f(x) \geq  \frac{L^2}{8 \alpha t} .$$
\end{theorem}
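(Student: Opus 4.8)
The plan is to use the classical ``resisting oracle'' construction of Nemirovski and Yudin, in the streamlined form where both hard instances come from a single family. For parameters $\gamma>0$ and $\mu\ge 0$ to be fixed later, I would consider
$$f(x) = \gamma \max_{1\le i\le t} x(i) + \frac{\mu}{2}\|x\|^2 ,$$
viewed on the Euclidean ball $\mB_2(\rho)$ of the radius relevant to the case at hand. Since $x\mapsto f(x)-\frac{\mu}{2}\|x\|^2$ is convex, $f$ is $\mu$-strongly convex; and since a subgradient of $\max_i x(i)$ lies in the convex hull of $\{e_1,\dots,e_t\}$ and thus has Euclidean norm at most $1$, on $\mB_2(\rho)$ the (sub)gradients of $f$ have norm at most $\gamma+\mu\rho$, so $f$ is $(\gamma+\mu\rho)$-Lipschitz there. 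As is standard in these lower bounds I would simply assume the black-box procedure only produces queries in this ball.

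The heart of the proof is an adversarial first-order oracle: queried at $x$, it returns $g = \gamma e_{i(x)} + \mu x \in \partial f(x)$, where $i(x)$ is the \emph{smallest} index in $\{1,\dots,t\}$ attaining $\max_{1\le i\le t}x(i)$. Using $x_1=0$ and the span assumption \eqref{eq:ass1}, I would show by induction that for every $s\le t$ the iterate $x_s$ is supported on coordinates $\{1,\dots,s-1\}$ and every oracle answer $g_r$ is supported on $\{1,\dots,r\}$. Indeed, if $x_s$ vanishes on coordinates $\ge s$ then $\max_{1\le i\le t}x_s(i)\ge x_s(s)=0$, so $i(x_s)\le s$ (it is $\le s-1$ when the max is positive and $=s$ otherwise), whence $g_s=\gamma e_{i(x_s)}+\mu x_s$ is supported on $\{1,\dots,s\}$ and, by \eqref{eq:ass1}, so is $x_{s+1}$. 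In particular $x_s(s)=0$ for all $s\le t$, which forces $\max_{1\le i\le t}x_s(i)\ge 0$ and hence
$$\min_{1\le s\le t} f(x_s) \geq 0 .$$
On the other hand, a short symmetric computation shows $f$ is minimized over $\R^n$ (and a fortiori over $\mB_2(\rho)$, provided the minimizer fits) at $x^*=-\frac{\gamma}{\mu t}\sum_{i=1}^t e_i$, with $\|x^*\|=\frac{\gamma}{\mu\sqrt t}$ and $f(x^*)=-\frac{\gamma^2}{2\mu t}$. Combining, the suboptimality of the best of the first $t$ iterates is at least $\frac{\gamma^2}{2\mu t}$.

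It remains to pick $\gamma,\mu$. For the strongly convex claim, take $\mu=\alpha$ and $\gamma=\frac{L}{2}$: then $\|x^*\|=\frac{L}{2\alpha\sqrt t}\le\frac{L}{2\alpha}=\rho$, and on $\mB_2(\rho)$ the Lipschitz constant is $\gamma+\mu\rho=\frac{L}{2}+\frac{L}{2}=L$, giving the bound $\frac{\gamma^2}{2\alpha t}=\frac{L^2}{8\alpha t}$. For the convex claim, take $\rho=R$ and choose $\gamma,\mu$ so that the minimizer sits exactly on $\partial\mB_2(R)$, i.e. $\frac{\gamma}{\mu\sqrt t}=R$, while saturating the Lipschitz bound $\gamma+\mu R=L$; this yields $\gamma=\frac{L\sqrt t}{1+\sqrt t}$ and $\mu=\frac{L}{R(1+\sqrt t)}$, and then $\frac{\gamma^2}{2\mu t}=\frac{\gamma R}{2\sqrt t}=\frac{RL}{2(1+\sqrt t)}$, as desired. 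I expect the main obstacle to be the inductive consistency check for the adversarial oracle: one must verify at each step that the returned vector is genuinely a subgradient (in particular, that $\max_{1\le i\le t}x_s(i)$ is attained at some index $\le s$) while never enlarging the spanned subspace by more than one dimension; once that is in place, the minimization of $f$ and the tuning of the two constants are routine.
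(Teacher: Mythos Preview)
Your proposal is correct and follows essentially the same approach as the paper: the same hard family $f(x)=\gamma\max_{1\le i\le t}x(i)+\frac{\mu}{2}\|x\|^2$, the same adversarial oracle returning $\mu x+\gamma e_{i(x)}$ with $i(x)$ the smallest maximizing index, the same span-induction forcing $x_s\in\mathrm{span}(e_1,\dots,e_{s-1})$ and hence $f(x_s)\ge 0$, the same computation of $x^*$ and $f(x^*)=-\gamma^2/(2\mu t)$, and the same parameter tunings for the two cases. If anything, your inductive step is stated a bit more carefully than the paper's (you explicitly allow $i(x_s)=s$ when the max is zero, which the paper glosses over with ``it is easy to see by induction'').
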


Note that the above result is restricted to a number of iterations smaller than the dimension, that is $t \leq n$. This restriction is of course necessary to obtain lower bounds polynomial in $1/t$: as we saw in Chapter \ref{finitedim} one can always obtain an exponential rate of convergence when the number of calls to the oracle is larger than the dimension. 

\begin{proof}
We consider the following $\alpha$-strongly convex function:
$$f(x) = \gamma \max_{1 \leq i \leq t} x(i) + \frac{\alpha}{2} \|x\|^2 .$$
It is easy to see that
$$\partial f(x) = \alpha x + \gamma \conv\left(e_i , i : x(i) = \max_{1 \leq j \leq t} x(j) \right).$$
In particular if $\|x\| \leq R$ then for any $g \in \partial f(x)$ one has $\|g\| \leq \alpha R + \gamma$. In other words $f$ is $(\alpha R + \gamma)$-Lipschitz on $\mB_2(R)$.

Next we describe the first order oracle for this function: when asked for a subgradient at $x$, it returns $\alpha x + \gamma e_{i}$ where $i$ is the {\em first} coordinate that satisfies $x(i) = \max_{1 \leq j \leq t} x(j)$. In particular when asked for a subgradient at $x_1=0$ it returns $e_1$. Thus $x_2$ must lie on the line generated by $e_1$. It is easy to see by induction that in fact $x_s$ must lie in the linear span of $e_1, \hdots, e_{s-1}$. In particular for $s \leq t$ we necessarily have $x_s(t) = 0$ and thus $f(x_s) \geq 0$.

It remains to compute the minimal value of $f$. Let $y$ be such that $y(i) = - \frac{\gamma}{\alpha t}$ for $1 \leq i \leq t$ and $y(i) = 0$ for $t+1 \leq i \leq n$. It is clear that $0 \in \partial f(y)$ and thus the minimal value of $f$ is
$$f(y) = - \frac{\gamma^2}{\alpha t} + \frac{\alpha}{2} \frac{\gamma^2}{\alpha^2 t} = - \frac{\gamma^2}{2 \alpha t} .$$ 

Wrapping up, we proved that for any $s \leq t$ one must have
$$f(x_s) - f(x^*) \geq \frac{\gamma^2}{2 \alpha t} .$$
Taking $\gamma = L/2$ and $R= \frac{L}{2 \alpha}$ we proved the lower bound for $\alpha$-strongly convex functions (note in particular that $\|y\|^2 = \frac{\gamma^2}{\alpha^2 t} = \frac{L^2}{4 \alpha^2 t} \leq R^2$ with these parameters). On the other taking $\alpha = \frac{L}{R} \frac{1}{1 + \sqrt{t}}$ and $\gamma = L \frac{\sqrt{t}}{1 + \sqrt{t}}$ concludes the proof for convex functions (note in particular that $\|y\|^2 = \frac{\gamma^2}{\alpha^2 t} = R^2$ with these parameters).
\end{proof}

We proceed now to the smooth case. As we will see in the following proofs we restrict our attention to quadratic functions, and it might be useful to recall that in this case one can attain the exact optimum in $n$ calls to the oracle (see Section \ref{sec:CG}). We also recall that for a twice differentiable function $f$, $\beta$-smoothness is equivalent to the largest eigenvalue of the Hessians of $f$ being smaller than $\beta$ at any point, which we write
$$\nabla^2 f(x) \preceq \beta \mI_n , \forall x .$$
Furthermore $\alpha$-strong convexity is equivalent to 
$$\nabla^2 f(x) \succeq \alpha \mI_n , \forall x .$$

\begin{theorem} \label{th:lb2}
Let $t \leq (n-1)/2$, $\beta >0$. There exists a $\beta$-smooth convex function $f$ such that for any black-box procedure satisfying \eqref{eq:ass1},
$$\min_{1 \leq s \leq t} f(x_s) - f(x^*) \geq  \frac{3 \beta}{32} \frac{\|x_1 - x^*\|^2}{(t+1)^2} .$$
\end{theorem}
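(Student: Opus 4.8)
The standard approach, due to Nesterov, is to exhibit a single "worst-case" quadratic function on $\R^n$ whose structure is a tridiagonal chain, so that the span condition \eqref{eq:ass1} forces the iterates to discover the coordinates one at a time. I would take
$$f(x) = \frac{\beta}{8}\left( x(1)^2 + \sum_{i=1}^{2t} (x(i) - x(i+1))^2 - 2 x(1) \right),$$
i.e. $f(x) = \frac{\beta}{8}\left( x^\top A x - 2 e_1^\top x\right)$ where $A$ is the tridiagonal matrix with $2$ on the diagonal (except possibly the last entry) and $-1$ on the off-diagonals, acting on the first $2t+1$ coordinates. First I would check that $0 \preceq A \preceq 4 \mI$, so that $f$ is convex and $\beta$-smooth; the upper bound on the eigenvalues follows from $\|A\|_{op} \le \max_i \sum_j |A_{ij}| \le 4$ (Gershgorin), and positivity is clear since $x^\top A x$ is a sum of squares.

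Next I would compute the minimizer $x^*$ by solving $A x^* = e_1$: the solution has $x^*(i) = 1 - \frac{i}{2t+2}$ for $1 \le i \le 2t+1$ and $x^*(i) = 0$ beyond, which can be verified directly by plugging into the tridiagonal recurrence. From this I get the optimal value $f(x^*) = -\frac{\beta}{8} e_1^\top x^* \cdot \tfrac{1}{1}$ — more precisely $f(x^*) = \frac{\beta}{8}(x^{*\top} A x^* - 2 e_1^\top x^*) = -\frac{\beta}{8} e_1^\top x^* = -\frac{\beta}{8}\left(1 - \frac{1}{2t+2}\right)$, and also $\|x^*\|^2 = \sum_{i=1}^{2t+1}\left(1 - \frac{i}{2t+2}\right)^2$, which I would bound above by an integral comparison, roughly $\|x^*\|^2 \le \frac{2t+2}{3}$.

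The crux of the argument is the span-tracking claim: with the oracle returning $\nabla f(x) = \frac{\beta}{4}(A x - e_1)$ and $x_1 = 0$, an easy induction shows $x_s \in \mathrm{Span}(e_1, \dots, e_{s-1})$ for all $s$, because multiplying a vector supported on the first $k$ coordinates by the tridiagonal $A$ produces a vector supported on the first $k+1$ coordinates. Hence after $t$ queries the iterate $x_{t+1}$ (and a fortiori $x_s$ for $s \le t$) has its last $2t+1 - t = t+1$ relevant coordinates equal to zero. The key observation is then that $\min_{x : x(i)=0, i > k} f(x)$ equals the optimal value of the analogous $k$-dimensional problem, which by the same formula is $-\frac{\beta}{8}(1 - \frac{1}{k+1})$; taking $k = t$ gives $f(x_s) \ge -\frac{\beta}{8}(1 - \frac{1}{t+1})$ for all $s \le t$. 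Subtracting, $f(x_s) - f(x^*) \ge \frac{\beta}{8}\left(\frac{1}{t+1} - \frac{1}{2t+2}\right) = \frac{\beta}{8}\cdot\frac{1}{2(t+1)}$, and dividing by $\|x_1 - x^*\|^2 = \|x^*\|^2 \le \frac{2t+2}{3}$ yields a bound of the form $\frac{3\beta}{32}\frac{\|x_1-x^*\|^2}{(t+1)^2}$ after collecting constants.

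The main obstacle is getting all the numerical constants to line up so that the stated constant $\frac{3}{32}$ comes out (or is a valid lower bound): this requires being a little careful about the exact scaling of $f$, a clean closed form for $x^*$, and a sharp enough upper bound on $\|x^*\|^2$. The conceptual steps — the eigenvalue bound, the solution of the tridiagonal system, and especially the span-preservation induction — are routine; the bookkeeping of constants is the only genuinely fiddly part, and I would organize it by first proving the bound with an unspecified constant $c>0$ and only at the end optimizing the choice of the multiplicative factor in front of $x^\top A x - 2 e_1^\top x$.
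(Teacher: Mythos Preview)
Your proposal is correct and follows essentially the same approach as the paper: the same tridiagonal quadratic $f(x)=\tfrac{\beta}{8}x^\top A_{2t+1}x-\tfrac{\beta}{4}x^\top e_1$, the same Gershgorin-type eigenvalue bound, the same span-tracking induction, the same closed form $x_k^*(i)=1-\tfrac{i}{k+1}$ with $f_k^*=-\tfrac{\beta}{8}(1-\tfrac{1}{k+1})$ and $\|x_k^*\|^2\le\tfrac{k+1}{3}$, and the same final arithmetic. The only slip is that your sum-of-squares expansion of $x^\top A_{2t+1}x$ is missing the boundary term $x(2t+1)^2$ (the paper's formula is $x(1)^2+x(k)^2+\sum_{i=1}^{k-1}(x(i)-x(i+1))^2$), but you already hedged on this and it does not affect the argument.
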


\begin{proof} In this proof for $h: \R^n \rightarrow \R$ we denote $h^* = \inf_{x \in \R^n} h(x)$.
For $k \leq n$ let $A_k \in \R^{n \times n}$ be the symmetric and tridiagonal matrix defined by
$$(A_k)_{i,j}  = \left\{\begin{array}{ll} 
2, & i = j, i \leq k \\
-1, & j \in \{i-1, i+1\}, i \leq k, j \neq k+1\\
0, & \text{otherwise}.
\end{array}\right.$$
It is easy to verify that $0 \preceq A_k \preceq 4 \mI_n$ since
$$x^{\top} A_k x = 2 \sum_{i=1}^k x(i)^2 - 2 \sum_{i=1}^{k-1} x(i) x(i+1) = x(1)^2 + x(k)^2 + \sum_{i=1}^{k-1} (x(i) - x(i+1))^2 .$$

We consider now the following $\beta$-smooth convex function:
$$f(x) = \frac{\beta}{8} x^{\top} A_{2 t + 1} x - \frac{\beta}{4} x^{\top} e_1 .$$
Similarly to what happened in the proof Theorem \ref{th:lb1}, one can see here too that $x_s$ must lie in the linear span of $e_1, \hdots, e_{s-1}$ (because of our assumption on the black-box procedure). In particular for $s \leq t$ we necessarily have $x_s(i) = 0$ for $i=s, \hdots, n$, which implies $x_s^{\top} A_{2 t+1} x_s = x_s^{\top} A_{s} x_s$. In other words, if we denote
$$f_k(x) = \frac{\beta}{8} x^{\top} A_{k} x - \frac{\beta}{4} x^{\top} e_1 ,$$
then we just proved that
$$f(x_s) - f^* = f_s(x_s) - f_{2t+1}^* \geq f_{s}^* - f_{2 t + 1}^* \geq f_{t}^* - f_{2 t + 1}^* .$$
Thus it simply remains to compute the minimizer $x^*_k$ of $f_k$, its norm, and the corresponding function value $f_k^*$.

The point $x^*_k$ is the unique solution in the span of $e_1, \hdots, e_k$ of $A_k x = e_1$. It is easy to verify that it is defined by $x^*_k(i) = 1 - \frac{i}{k+1}$ for $i=1, \hdots, k$. Thus we immediately have:
$$f^*_k = \frac{\beta}{8} (x^*_k)^{\top} A_{k} x^*_k - \frac{\beta}{4} (x^*_k)^{\top} e_1 = - \frac{\beta}{8} (x^*_k)^{\top} e_1 = - \frac{\beta}{8} \left(1 - \frac{1}{k+1}\right) .$$
Furthermore note that
$$\|x^*_k\|^2 = \sum_{i=1}^k \left(1 - \frac{i}{k+1}\right)^2 = \sum_{i=1}^k \left( \frac{i}{k+1}\right)^2 \leq \frac{k+1}{3} .$$
Thus one obtains:
$$f_{t}^* - f_{2 t+1}^* = \frac{\beta}{8} \left(\frac{1}{t+1} - \frac{1}{2 t + 2} \right) \geq \frac{3 \beta}{32} \frac{\|x^*_{2 t + 1}\|^2}{(t+1)^2},$$
which concludes the proof.
\end{proof}

To simplify the proof of the next theorem we will consider the limiting situation $n \to +\infty$. More precisely we assume now that we are working in $\ell_2 = \{ x = (x(n))_{n \in \mathbb{N}} : \sum_{i=1}^{+\infty} x(i)^2 < + \infty\}$ rather than in $\mathbb{R}^n$. Note that all the theorems we proved in this chapter are in fact valid in an arbitrary Hilbert space $\mathcal{H}$. We chose to work in $\mathbb{R}^n$ only for clarity of the exposition.

\begin{theorem} \label{th:lb3}
Let $\kappa > 1$. There exists a $\beta$-smooth and $\alpha$-strongly convex function $f: \ell_2 \rightarrow \mathbb{R}$ with $\kappa = \beta / \alpha$ such that for any $t \geq 1$ and any black-box procedure satisfying \eqref{eq:ass1} one has
$$f(x_t) - f(x^*) \geq  \frac{\alpha}{2}  \left(\frac{\sqrt{\kappa} - 1}{\sqrt{\kappa}+1}\right)^{2 (t-1)} \|x_1 - x^*\|^2 .$$
\end{theorem}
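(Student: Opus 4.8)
The plan is to adapt Nesterov's ``worst function'' construction for the smooth strongly convex class to the Hilbert space $\ell_2$, which is what lets us drop any restriction relating $t$ to the dimension. Let $A : \ell_2 \to \ell_2$ be the infinite-dimensional analogue of the tridiagonal matrices from the proof of Theorem \ref{th:lb2}: $(Ax)(1) = 2 x(1) - x(2)$ and $(Ax)(i) = - x(i-1) + 2 x(i) - x(i+1)$ for $i \geq 2$. Exactly as in that proof one writes $x^{\top} A x = x(1)^2 + \sum_{i \geq 1}(x(i)-x(i+1))^2$, so $0 \preceq A \preceq 4\,\mathrm{I}$. I would then define
\[
f(x) = \frac{\alpha(\kappa-1)}{8}\left(x^{\top} A x - 2 e_1^{\top} x\right) + \frac{\alpha}{2}\|x\|^2 ,
\]
so that $\nabla^2 f = \frac{\alpha(\kappa-1)}{4} A + \alpha\,\mathrm{I}$ has spectrum contained in $[\alpha, \alpha\kappa] = [\alpha, \beta]$; hence $f$ is $\alpha$-strongly convex and $\beta$-smooth, as required.

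The second step is the same ``band-structure'' argument used for Theorems \ref{th:lb1} and \ref{th:lb2}. Since $\nabla f(x) = \frac{\alpha(\kappa-1)}{4}(A x - e_1) + \alpha x$ and $A$ maps $\mathrm{Span}(e_1,\dots,e_k)$ into $\mathrm{Span}(e_1,\dots,e_{k+1})$, an induction from $x_1 = 0$ using \eqref{eq:ass1} shows $x_s \in \mathrm{Span}(e_1,\dots,e_{s-1})$ for all $s$ (the inductive step: $x_{s+1} \in \mathrm{Span}(g_1,\dots,g_s)$ and each $g_j = \nabla f(x_j) \in \mathrm{Span}(e_1,\dots,e_j)$). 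In particular $x_t(i) = 0$ for every $i \geq t$.

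It then remains to compute $x^*$ and cash in the ``uncovered tail''. The optimality condition $\nabla f(x^*) = 0$ reads $\left(\tfrac{\kappa-1}{4} A + \mathrm{I}\right) x^* = \tfrac{\kappa-1}{4} e_1$, i.e.\ the linear recurrence $-\tfrac{\kappa-1}{4} x^*(i+1) + \left(\tfrac{\kappa-1}{2}+1\right) x^*(i) - \tfrac{\kappa-1}{4} x^*(i-1) = 0$ for $i \geq 2$, the boundary row at $i=1$ being this same recurrence with the convention $x^*(0) = 1$. Its characteristic equation is $q^2 - \tfrac{2(\kappa+1)}{\kappa-1} q + 1 = 0$, whose two roots are reciprocals; the one in $(0,1)$ is $q = \frac{\sqrt{\kappa}-1}{\sqrt{\kappa}+1}$ (one checks $q + q^{-1} = \frac{2(\kappa+1)}{\kappa-1}$), and the unique $\ell_2$ solution is $x^*(i) = q^i$ for $i \geq 1$. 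Hence $\|x_1 - x^*\|^2 = \|x^*\|^2 = \sum_{i \geq 1} q^{2i} = \frac{q^2}{1-q^2}$, while using $x_t(i) = 0$ for $i \geq t$,
\[
\|x_t - x^*\|^2 \;\geq\; \sum_{i \geq t} x^*(i)^2 \;=\; \frac{q^{2t}}{1-q^2} \;=\; q^{2(t-1)} \, \|x_1 - x^*\|^2 .
\]
Finally, since $f$ is quadratic with $\nabla^2 f \succeq \alpha\,\mathrm{I}$ and $\nabla f(x^*) = 0$, one has $f(x_t) - f(x^*) \geq \frac{\alpha}{2}\|x_t - x^*\|^2$, and combining with the previous display gives the claimed bound.

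The only genuinely delicate points I anticipate are (i) handling the boundary at the first coordinate so that $x^*(i) = q^i$ actually solves the whole optimality system (this is the one place the infinite tridiagonal structure could misbehave, and the bookkeeping convention $x^*(0) = 1$ is what makes it clean), and (ii) correctly singling out the decaying root $q = \frac{\sqrt{\kappa}-1}{\sqrt{\kappa}+1}$; everything else is geometric-series arithmetic. It is also worth stating explicitly that $x^* \in \ell_2$ (immediate from $0 < q < 1$ when $\kappa > 1$), which is precisely why the construction must live in the infinite-dimensional $\ell_2$ rather than in a finite truncation.
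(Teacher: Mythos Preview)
Your proof is correct and follows essentially the same approach as the paper: the same infinite tridiagonal operator $A$, the same quadratic $f$, the same band-structure argument forcing $x_t(i)=0$ for $i\geq t$, the same recurrence yielding $x^*(i)=q^i$ with $q=\frac{\sqrt{\kappa}-1}{\sqrt{\kappa}+1}$, and the same use of strong convexity to pass from $\|x_t-x^*\|^2$ to $f(x_t)-f(x^*)$. In fact you spell out more detail than the paper does (the characteristic equation, the boundary convention $x^*(0)=1$, and the geometric-series identity $\sum_{i\geq t}q^{2i}=q^{2(t-1)}\|x^*\|^2$), where the paper simply says the conclusion ``follows by straightforward computations.''
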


Note that for large values of the condition number $\kappa$ one has 
$$\left(\frac{\sqrt{\kappa} - 1}{\sqrt{\kappa}+1}\right)^{2 (t-1)} \approx \exp\left(- \frac{4 (t-1)}{\sqrt{\kappa}} \right) .$$

\begin{proof}
The overall argument is similar to the proof of Theorem \ref{th:lb2}. Let $A : \ell_2 \rightarrow \ell_2$ be the linear operator that corresponds to the infinite tridiagonal matrix with $2$ on the diagonal and $-1$ on the upper and lower diagonals. We consider now the following function:
$$f(x) = \frac{\alpha (\kappa-1)}{8} \left(\langle Ax, x\rangle - 2 \langle e_1, x \rangle \right) + \frac{\alpha}{2} \|x\|^2 .$$
We already proved that $0 \preceq A \preceq 4 \mI$ which easily implies that $f$ is $\alpha$-strongly convex and $\beta$-smooth. Now as always the key observation is that for this function, thanks to our assumption on the black-box procedure, one necessarily has $x_t(i) = 0, \forall i \geq t$. This implies in particular:
$$\|x_t - x^*\|^2 \geq \sum_{i=t}^{+\infty} x^*(i)^2 .$$
Furthermore since $f$ is $\alpha$-strongly convex, one has
$$f(x_t) - f(x^*) \geq \frac{\alpha}{2} \|x_t - x^*\|^2 .$$
Thus it only remains to compute $x^*$. This can be done by differentiating $f$ and setting the gradient to $0$, which gives the following infinite set of equations
\begin{align*}
& 1 - 2 \frac{\kappa+1}{\kappa-1} x^*(1) + x^*(2) = 0 , \\
& x^*(k-1) - 2 \frac{\kappa+1}{\kappa-1} x^*(k) + x^*(k+1) = 0, \forall k \geq 2 .
\end{align*}
It is easy to verify that $x^*$ defined by $x^*(i) = \left(\frac{\sqrt{\kappa} - 1}{\sqrt{\kappa} + 1}\right)^i$ satisfy this infinite set of equations, and the conclusion of the theorem then follows by straightforward computations.
\end{proof}

\section{Geometric descent} \label{sec:GeoD}
So far our results leave a gap in the case of smooth optimization: gradient descent achieves an oracle complexity of $O(1/\epsilon)$ (respectively $O(\kappa \log(1/\epsilon))$ in the strongly convex case) while we proved a lower bound of $\Omega(1/\sqrt{\epsilon})$ (respectively $\Omega(\sqrt{\kappa} \log(1/\epsilon))$). In this section we close these gaps with the geometric descent method which was recently introduced in \cite{BLS15}. Historically the first method with optimal oracle complexity was proposed in \cite{NY83}. This method, inspired by the conjugate gradient (see Section \ref{sec:CG}), assumes an oracle to compute {\em plane searches}. In \cite{Nem82} this assumption was relaxed to a line search oracle (the geometric descent method also requires a line search oracle). Finally in \cite{Nes83} an optimal method requiring only a first order oracle was introduced. The latter algorithm, called Nesterov's accelerated gradient descent, has been the most influential optimal method for smooth optimization up to this day. We describe and analyze this method in Section \ref{sec:AGD}. As we shall see the intuition behind Nesterov's accelerated gradient descent (both for the derivation of the algorithm and its analysis) is not quite transparent, which motivates the present section as geometric descent has a simple geometric interpretation loosely inspired from the ellipsoid method (see Section \ref{sec:ellipsoid}).

We focus here on the unconstrained optimization of a smooth and strongly convex function, and we prove that geometric descent achieves the oracle complexity of $O(\sqrt{\kappa} \log(1/\epsilon))$, thus reducing the complexity of the basic gradient descent by a factor $\sqrt{\kappa}$. We note that this improvement is quite relevant for machine learning applications. Consider for example the logistic regression problem described in Section \ref{sec:mlapps}: this is a smooth and strongly convex problem, with a smoothness of order of a numerical constant, but with strong convexity equal to the regularization parameter whose inverse can be as large as the sample size. Thus in this case $\kappa$ can be of order of the sample size, and a faster rate by a factor of $\sqrt{\kappa}$ is quite significant. We also observe that this improved rate for smooth and strongly convex objectives also implies an almost optimal rate of $O(\log(1/\epsilon) / \sqrt{\epsilon})$ for the smooth case, as one can simply run geometric descent on the function $x \mapsto f(x) + \epsilon \|x\|^2$. 

In Section \ref{sec:warmup} we describe the basic idea of geometric descent, and we show how to obtain effortlessly a geometric method with an oracle complexity of $O(\kappa \log(1/\epsilon))$ (i.e., similar to gradient descent). Then we explain why one should expect to be able to accelerate this method in Section \ref{sec:accafterwarmup}. The geometric descent method is described precisely and analyzed in Section \ref{sec:GeoDmethod}.

\subsection{Warm-up: a geometric alternative to gradient descent} \label{sec:warmup}
\begin{figure}
\begin{center}
\begin{tikzpicture}[scale=0.7, every node/.style={transform shape}]

\draw  (0,0) ellipse (2 and 2);
\draw[dashed]  (4,0) ellipse (4 and 4);
\draw  (4,0) ellipse (3.85 and 3.85);
\draw [|-|] (4,0) -- (4,4) node[pos=0.5, right] {$|g|$};
\draw [|-|] (4,0) -- (7.85,0) node[pos=0.5, above] {$\sqrt{1-\epsilon}\ |g|$};

\begin{scope}
  \clip (0,0) ellipse (2 and 2);
  \fill[lightgray] (4,0) ellipse (3.85 and 3.85);
\end{scope}

\draw (0,0) node[cross=5pt] {};

\draw [|-|] (0,0) -- (-2,0) node[pos=0.4, above] {$1$};
\draw [|-|] (0.64719,0) -- (2.53958,0) node[pos=0.35, above] {$\sqrt{1-\epsilon}$};
\draw[thick]  (0.64719,0) ellipse (1.8924 and 1.8924);

\end{tikzpicture}
\end{center}
\caption{One ball shrinks.}
\label{fig:one_ball}
\end{figure}
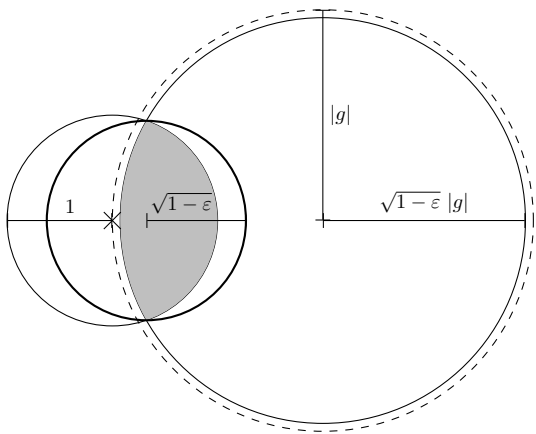

We start with some notation. Let $\mB(x,r^2) := \{y \in \R^n : \|y-x\|^2 \leq r^2 \}$ (note that the second argument is the radius squared), and
$$x^+ = x - \frac{1}{\beta} \nabla f(x), \ \text{and} \ x^{++} = x - \frac{1}{\alpha} \nabla f(x) . $$
Rewriting the definition of strong convexity \eqref{eq:defstrongconv} as
\begin{eqnarray*}
& f(y) \geq f(x) + \nabla f(x)^{\top} (y-x) + \frac{\alpha}{2} \|y-x\|^2 \\
& \Leftrightarrow \ \frac{\alpha}{2} \|y - x + \frac{1}{\alpha} \nabla f(x) \|^2 \leq \frac{\|\nabla f(x)\|^2}{2 \alpha} - (f(x) - f(y)),
\end{eqnarray*}
one obtains an enclosing ball for the minimizer of $f$ with the $0^{th}$ and $1^{st}$ order information at $x$:
$$x^* \in \mB\left(x^{++}, \frac{\|\nabla f(x)\|^2}{\alpha^2} - \frac{2}{\alpha} (f(x) - f(x^*)) \right) .$$
Furthermore recall that by smoothness (see \eqref{eq:onestepofgd}) one has $f(x^+) \leq f(x) - \frac{1}{2 \beta} \|\nabla f(x)\|^2$ which allows to \emph{shrink} the above ball by a factor of $1-\frac{1}{\kappa}$ and obtain the following:
\begin{equation} \label{eq:ball2}
x^* \in \mB\left(x^{++}, \frac{\|\nabla f(x)\|^2}{\alpha^2} \left(1 - \frac{1}{\kappa}\right) - \frac{2}{\alpha} (f(x^+) - f(x^*)) \right) 
\end{equation}
This suggests a natural strategy: assuming that one has an enclosing ball $A:=\mB(x,R^2)$ for $x^*$ (obtained from previous steps of the strategy), one can then enclose $x^*$ in a ball $B$ containing the intersection of $\mB(x,R^2)$ and the ball $\mB\left(x^{++}, \frac{\|\nabla f(x)\|^2}{\alpha^2} \left(1 - \frac{1}{\kappa}\right)\right)$ obtained by \eqref{eq:ball2}. Provided that the radius of $B$ is a fraction of the radius of $A$, one can then iterate the procedure by replacing $A$ by $B$, leading to a linear convergence rate. Evaluating  the rate at which the radius shrinks is an elementary calculation: for any $g \in \R^n$, $\epsilon \in (0,1)$, there exists $x \in \R^n$ such that
$$\mB(0,1) \cap \mB(g, \|g\|^2 (1- \epsilon)) \subset \mB(x, 1-\epsilon) . \quad \quad \text{(Figure \ref{fig:one_ball})}$$
Thus we see that in the strategy described above, the radius squared of the enclosing ball for $x^*$ shrinks by a factor $1 - \frac{1}{\kappa}$ at each iteration, thus matching the rate of convergence of gradient descent (see Theorem \ref{th:gdssc}).

\subsection{Acceleration} \label{sec:accafterwarmup}
\begin{figure}
\begin{center}
\begin{tikzpicture}[scale=0.7, every node/.style={transform shape}]

\draw[dashed]  (0,0) ellipse (2 and 2);
\draw  (0,0) ellipse (1.68 and 1.68);
\draw[dashed]  (4,0) ellipse (4 and 4);
\draw  (4,0) ellipse (3.85 and 3.85);
\draw [|-|] (4,0) -- (7.85,0) node[pos=0.5, above] {$\sqrt{1-\epsilon}\ |g|$};

\begin{scope}
  \clip (0,0) ellipse (1.68 and 1.68);
  \fill[lightgray] (4,0) ellipse (3.85 and 3.85);
\end{scope}

\draw (0,0) node[cross=4pt] {};

\draw [|-|] (0,-3) -- (-1.68,-3) node[pos=0.5, above] {$\sqrt{1-\epsilon |g|^2}$};
\draw [|-|] (0.5,0) -- (2.1044,0) node[pos=0.3, above] {\scriptsize $\sqrt{1-\sqrt{\epsilon}}$};

\draw[thick]  (0.5,0) ellipse (1.6044 and 1.6044);

\end{tikzpicture}
\end{center}
\caption{Two balls shrink.}
\label{fig:two_ball}

\end{figure}
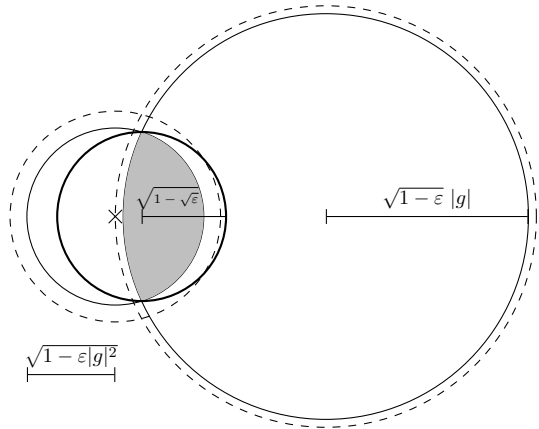

In the argument from the previous section we missed the following opportunity: observe that the ball $A=\mB(x,R^2)$ was obtained by intersections of previous balls of the form given by \eqref{eq:ball2}, and thus the new value $f(x)$ could be used to reduce the radius of those previous balls too (an important caveat is that the value $f(x)$ should be smaller than the values used to build those previous balls). Potentially this could show that the optimum is in fact contained in the ball $\mB\left(x, R^2 - \frac{1}{\kappa} \|\nabla f(x)\|^2\right)$. By taking the intersection with the ball $\mB\left(x^{++}, \frac{\|\nabla f(x)\|^2}{\alpha^2} \left(1 - \frac{1}{\kappa}\right)\right)$ this would allow to obtain a new ball with radius shrunk by a factor $1- \frac{1}{\sqrt{\kappa}}$ (instead of $1 - \frac{1}{\kappa}$): indeed for any $g \in \R^n$, $\epsilon \in (0,1)$, there exists $x \in \R^n$ such that
$$\mB(0,1 - \epsilon \|g\|^2) \cap \mB(g, \|g\|^2 (1- \epsilon)) \subset \mB(x, 1-\sqrt{\epsilon}) . \quad \quad \text{(Figure \ref{fig:two_ball})}$$
Thus it only remains to deal with the caveat noted above, which we do via a line search. In turns this line search might shift the new ball \eqref{eq:ball2}, and to deal with this we shall need the following strengthening of the above set inclusion (we refer to \cite{BLS15} for a simple proof of this result):
\begin{lemma} \label{lem:geom}
Let $a \in \R^n$ and $\epsilon \in (0,1), g \in \R_+$. Assume that $\|a\| \geq g$. Then there exists $c \in \R^n$ such that for any $\delta \geq 0$,
$$\mB(0,1 - \epsilon g^2 - \delta) \cap \mB(a, g^2(1-\epsilon) - \delta) \subset \mB\left(c, 1 - \sqrt{\epsilon} - \delta \right) .$$
\end{lemma}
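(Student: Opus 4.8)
The plan is to reduce the claim to an explicit computation with balls whose centers lie on a single line, since everything is rotationally symmetric about the segment joining $0$ and $a$. First I would place coordinates so that $a = \alpha e_1$ with $\alpha = \|a\| \geq g$, and look for the center $c$ on the $e_1$-axis, say $c = \gamma e_1$; the radius on the right-hand side is prescribed to be $\sqrt{1 - \sqrt{\epsilon} - \delta}$ (assuming this quantity is nonnegative — otherwise the intersection of the two balls on the left must be checked to be empty, which follows from the $\delta = 0$ case plus the fact that increasing $\delta$ shrinks both balls on the left faster than it shrinks a single ball). The key reduction is the standard fact that the intersection of two balls $\mB(p, \rho_1^2) \cap \mB(q, \rho_2^2)$ is contained in the ball centered at the point $c$ on the line through $p, q$ characterized by $\rho_1^2 - \|c - p\|^2 = \rho_2^2 - \|c-q\|^2$, with radius squared equal to that common value; this is because the intersection lies in the hyperplane $\{x : \|x-p\|^2 - \rho_1^2 = \|x-q\|^2 - \rho_2^2\}$ and the smallest enclosing ball of $\mB(p,\rho_1^2)$ intersected with a hyperplane through it is centered at the foot of the perpendicular from $p$.

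With $p = 0$, $\rho_1^2 = 1 - \epsilon g^2 - \delta$, $q = a$, $\rho_2^2 = g^2(1-\epsilon) - \delta$, this determines $\gamma$ by the linear equation $\rho_1^2 - \gamma^2 = \rho_2^2 - (\gamma - \alpha)^2$, i.e. $\gamma = \dfrac{\alpha^2 + \rho_1^2 - \rho_2^2}{2\alpha}$, and the resulting radius squared is $\rho_1^2 - \gamma^2$. So the entire lemma comes down to verifying the scalar inequality
\[
\rho_1^2 - \gamma^2 \leq 1 - \sqrt{\epsilon} - \delta .
\]
Substituting, $\rho_1^2 - \rho_2^2 = 1 - g^2$ (the $\delta$'s cancel), so $\gamma = \dfrac{\alpha^2 + 1 - g^2}{2\alpha}$ is independent of $\delta$, and $\rho_1^2 - \gamma^2 = (1 - \epsilon g^2 - \delta) - \gamma^2$. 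Thus the inequality to check is $(1 - \epsilon g^2 - \gamma^2) \leq 1 - \sqrt{\epsilon}$, i.e. $\gamma^2 \geq \sqrt{\epsilon} - \epsilon g^2$, and crucially the $\delta$ has dropped out entirely, so it suffices to treat $\delta = 0$. Now I would plug in $\gamma = \dfrac{\alpha^2 + 1 - g^2}{2\alpha}$ and use the hypothesis $\alpha \geq g$: one wants $\left(\dfrac{\alpha^2 + 1 - g^2}{2\alpha}\right)^2 \geq \sqrt{\epsilon} - \epsilon g^2$. Writing $u = \alpha^2 \geq g^2$, the left side is $\dfrac{(u + 1 - g^2)^2}{4u}$; as a function of $u \geq g^2$ its derivative shows it is minimized where $u = 1 - g^2$ if $1 - g^2 \geq g^2$ and otherwise at $u = g^2$, so the minimum over the relevant range is at least $\min\!\left(1 - g^2, \dfrac{1}{4g^2}\right)$ — and in fact a cleaner route is to note $\dfrac{(u+1-g^2)^2}{4u} \geq (1-g^2)$ for all $u>0$ by AM–GM (since $(u + (1-g^2))^2 \geq 4u(1-g^2)$), giving $\gamma^2 \geq 1 - g^2$. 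So it remains to check $1 - g^2 \geq \sqrt{\epsilon} - \epsilon g^2$, i.e. $(1 - \sqrt{\epsilon})(1 - (1+\sqrt{\epsilon}) \, \cdot \, \text{something})$... more directly $1 - g^2 - \sqrt{\epsilon} + \epsilon g^2 = (1 - \sqrt{\epsilon}) - g^2(1 - \epsilon) = (1-\sqrt{\epsilon})(1 - g^2(1+\sqrt{\epsilon}))$, which is nonnegative precisely when $g^2 \leq \dfrac{1}{1+\sqrt{\epsilon}}$; when this fails one is in the regime where $\sqrt{\epsilon} - \epsilon g^2 < 0 \leq \gamma^2$ is not automatic, so one instead uses the sharper bound $\gamma^2 \geq 1 - g^2$ against $\sqrt{\epsilon} - \epsilon g^2$ directly, noting that for $g^2$ large the right side $\sqrt{\epsilon} - \epsilon g^2$ becomes negative while $\gamma^2 \geq 0$.

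The main obstacle I anticipate is bookkeeping the edge cases: (i) ensuring $1 - \sqrt{\epsilon} - \delta \geq 0$, or else arguing the left-hand intersection is empty (handled by monotonicity in $\delta$ and the $\delta = 0$ analysis); (ii) the case $g^2 > 1$, where $\rho_1^2 = 1 - \epsilon g^2 - \delta$ could itself be negative, again forcing one of the left-hand balls to be empty; and (iii) making the scalar inequality $\gamma^2 \geq \sqrt{\epsilon} - \epsilon g^2$ airtight across the full range $g \leq \alpha$, $g^2 \leq 1$, which is where the hypothesis $\|a\| \geq g$ is actually used. None of these is deep — each is a one-line case split — but they are the only place the argument can go wrong, so I would state them carefully rather than sweeping them into "straightforward computations." Everything else is the rotational-symmetry reduction plus the two-spheres-meet-in-a-hyperplane identity, both entirely routine.
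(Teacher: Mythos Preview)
Your reduction to the axis and your computation of the radical center $c=\gamma e_1$ with $\gamma=\dfrac{\alpha^2+1-g^2}{2\alpha}$ are fine, and you correctly observe that $\delta$ drops out of $\gamma$. But the ``standard fact'' you invoke --- that $B_1\cap B_2\subset\mB(c,\rho_1^2-\gamma^2)$ with $c$ the radical center --- is \emph{only} true when $c$ lies on the segment between the two centers, i.e.\ when $0\le\gamma\le\alpha$. (Your one-line justification, ``the intersection lies in the hyperplane $\{\|x-p\|^2-\rho_1^2=\|x-q\|^2-\rho_2^2\}$'', is simply false: the intersection is full-dimensional. A correct argument splits on $x_1\gtrless\gamma$ and uses the $B_1$-constraint on one side and the $B_2$-constraint on the other, which is exactly what breaks when $\gamma\notin[0,\alpha]$.) Under the lemma's hypotheses, $\gamma>\alpha$ whenever $\alpha^2<1-g^2$, which is perfectly consistent with $\alpha\ge g$ as soon as $g^2<1/2$ (e.g.\ $g=\alpha=0.1$). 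In that regime one actually has $B_2\subset B_1$, and your $r^2=\rho_1^2-\gamma^2$ is \emph{negative}, so your ``enclosing ball'' is empty while $B_1\cap B_2=B_2$ is not. So your single choice of $c$ does not work across all admissible $(g,\alpha)$.

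The paper does not write out a proof (it points to \cite{BLS15}), but the explicit formulas for $c_{t+1}$ at the end of Section~\ref{sec:GeoDmethod} show exactly the missing ingredient: there is a case split. In the normalized variables, when $g^2<1/2$ one takes $c=a$ and uses the trivial inclusion $B_1\cap B_2\subset B_2=\mB(a,g^2(1-\epsilon)-\delta)$, after which one only needs $g^2(1-\epsilon)\le 1-\sqrt{\epsilon}$, i.e.\ $g^2\le 1/(1+\sqrt{\epsilon})$, which holds since $g^2<1/2$. When $g^2\ge 1/2$ one has $1-g^2\le g^2\le\alpha^2$, hence $\gamma\le\alpha$, and your radical-center ball is legitimate; the remaining scalar inequality $\gamma^2\ge\sqrt{\epsilon}-\epsilon g^2$ then follows from the bound you yourself wrote down and then abandoned, namely $\gamma^2\ge\min\bigl(1-g^2,\tfrac{1}{4g^2}\bigr)=\tfrac{1}{4g^2}$ for $g^2\ge 1/2$, since $\tfrac{1}{4g^2}\ge\sqrt{\epsilon}-\epsilon g^2$ is equivalent to $(2\sqrt{\epsilon}\,g^2-1)^2\ge 0$. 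Your fallback ``for $g^2$ large the right side becomes negative'' does not cover the range $\tfrac{1}{1+\sqrt{\epsilon}}<g^2\le\tfrac{1}{\sqrt{\epsilon}}$, so you should keep the sharper two-case minimum rather than the AM--GM bound alone.
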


\subsection{The geometric descent method} \label{sec:GeoDmethod}
Let $x_0 \in \R^n$, $c_0 = x_0^{++}$, and $R_0^2 = \left(1 - \frac{1}{\kappa}\right)\frac{\|\nabla f(x_0)\|^2}{\alpha^2}$. For any $t \geq 0$ let
$$x_{t+1} = \argmin_{x \in \left\{(1-\lambda) c_t + \lambda x_t^+, \ \lambda \in \R \right\}} f(x) ,$$
and $c_{t+1}$ (respectively $R^2_{t+1}$) be the center (respectively the squared radius) of the ball given by (the proof of) Lemma \ref{lem:geom} which contains
$$\mB\left(c_t, R_t^2 - \frac{\|\nabla f(x_{t+1})\|^2}{\alpha^2 \kappa}\right) \cap \mB\left(x_{t+1}^{++}, \frac{\|\nabla f(x_{t+1})\|^2}{\alpha^2} \left(1 - \frac{1}{\kappa}\right) \right).$$
Formulas for $c_{t+1}$ and $R^2_{t+1}$ are given at the end of this section.

\begin{theorem}\label{thm:main}
For any $t \geq 0$, one has $x^* \in \mB(c_t, R_t^2)$, $R_{t+1}^2 \leq \left(1 - \frac{1}{\sqrt{\kappa}}\right) R_t^2$, and thus
$$\|x^* - c_t\|^2 \leq \left(1 - \frac{1}{\sqrt{\kappa}}\right)^t R_0^2 .$$
\end{theorem}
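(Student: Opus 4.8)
The plan is a single induction on $t$ carrying the strengthened invariant
\[
x^* \in \mB\!\left(c_t,\; R_t^2 - \tfrac{2}{\alpha}\bigl(f(x_t^+) - f(x^*)\bigr)\right) ,
\]
which immediately yields $\|x^*-c_t\|^2 \le R_t^2$ and, crucially, leaves a ``reservoir'' of radius-slack $\tfrac2\alpha(f(x_t^+)-f(x^*))\ge 0$ to be spent at the next iteration. The base case $t=0$ is nothing but \eqref{eq:ball2} evaluated at $x=x_0$: its right-hand side is exactly $\mB\!\bigl(x_0^{++},\, \tfrac{\|\nabla f(x_0)\|^2}{\alpha^2}(1-\tfrac1\kappa) - \tfrac2\alpha(f(x_0^+)-f(x^*))\bigr) = \mB\!\bigl(c_0,\, R_0^2 - \tfrac2\alpha(f(x_0^+)-f(x^*))\bigr)$, using the definitions $c_0 = x_0^{++}$ and $R_0^2 = (1-\tfrac1\kappa)\|\nabla f(x_0)\|^2/\alpha^2$.

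For the inductive step I would exhibit the two balls whose intersection defines $(c_{t+1},R_{t+1}^2)$, arranging that they share a common slack $\delta := \tfrac2\alpha(f(x_{t+1}^+)-f(x^*))\ge 0$. The second ball is \eqref{eq:ball2} at $x=x_{t+1}$ verbatim: $x^*\in \mB\!\bigl(x_{t+1}^{++},\, \tfrac{\|\nabla f(x_{t+1})\|^2}{\alpha^2}(1-\tfrac1\kappa) - \delta\bigr)$. The first ball is obtained from the induction hypothesis by ``cashing in'' the reservoir at $c_t$: the line search gives $f(x_{t+1})\le f(x_t^+)$ (it is the value at $\lambda=1$), and writing $f(x_{t+1})-f(x^*) = \bigl(f(x_{t+1})-f(x_{t+1}^+)\bigr) + \bigl(f(x_{t+1}^+)-f(x^*)\bigr)$ and bounding the first summand below by $\tfrac1{2\beta}\|\nabla f(x_{t+1})\|^2$ via \eqref{eq:onestepofgd}, one gets $\tfrac2\alpha\bigl(f(x_t^+)-f(x^*)\bigr) \ge \tfrac{\|\nabla f(x_{t+1})\|^2}{\alpha^2\kappa} + \delta$, hence $x^*\in \mB\!\bigl(c_t,\, R_t^2 - \tfrac{\|\nabla f(x_{t+1})\|^2}{\alpha^2\kappa} - \delta\bigr)$.

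Next I would translate by $-c_t$ and rescale by $1/R_t$, so that the first ball becomes the origin-centred ball of squared radius $1 - \tfrac{\|\nabla f(x_{t+1})\|^2}{\alpha^2\kappa R_t^2} - \tfrac{\delta}{R_t^2}$ and the second becomes $\mB\!\bigl((x_{t+1}^{++}-c_t)/R_t,\, \tfrac{\|\nabla f(x_{t+1})\|^2}{\alpha^2 R_t^2}(1-\tfrac1\kappa) - \tfrac{\delta}{R_t^2}\bigr)$. With the choices $\epsilon := 1/\kappa$ and $g := \|\nabla f(x_{t+1})\|/(\alpha R_t)$ these are exactly the two balls appearing in Lemma \ref{lem:geom} (with its ``$\delta$'' equal to $\delta/R_t^2$), so the lemma produces a ball of squared radius $(1-\tfrac1{\sqrt\kappa})R_t^2 - \delta$, centred at a point I call $c_{t+1}$, that contains their intersection and hence contains $x^*$. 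Recording $R_{t+1}^2 := (1-\tfrac1{\sqrt\kappa})R_t^2$ then gives simultaneously $R_{t+1}^2 \le (1-\tfrac1{\sqrt\kappa})R_t^2$ and re-establishes the invariant at $t+1$, because $(1-\tfrac1{\sqrt\kappa})R_t^2 - \delta = R_{t+1}^2 - \tfrac2\alpha\bigl(f(x_{t+1}^+)-f(x^*)\bigr)$. Unrolling $R_{t+1}^2 \le (1-\tfrac1{\sqrt\kappa})R_t^2$ from $R_0^2$ and combining with $\|x^*-c_t\|^2\le R_t^2$ yields the announced rate.

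The one hypothesis of Lemma \ref{lem:geom} that is not automatic is $\|a\|\ge g$, i.e. $\|x_{t+1}^{++}-c_t\| \ge \tfrac1\alpha\|\nabla f(x_{t+1})\|$, and this is precisely where the line search is essential: since $x_{t+1}$ lies on the segment joining $c_t$ and $x_t^+$, the vector $x_{t+1}-c_t$ is parallel to $x_t^+-c_t$, while first-order optimality of the one-dimensional minimization forces $\nabla f(x_{t+1})\perp(x_t^+-c_t)$; thus $x_{t+1}^{++}-c_t = (x_{t+1}-c_t) - \tfrac1\alpha\nabla f(x_{t+1})$ is an orthogonal decomposition, and Pythagoras gives $\|x_{t+1}^{++}-c_t\|^2 = \|x_{t+1}-c_t\|^2 + \tfrac1{\alpha^2}\|\nabla f(x_{t+1})\|^2 \ge \tfrac1{\alpha^2}\|\nabla f(x_{t+1})\|^2$. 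I expect the real difficulty to be not any single inequality but the bookkeeping: threading the function-value slack $\tfrac2\alpha(f(x_t^+)-f(x^*))$ through both balls so that one common $\delta$ feeds Lemma \ref{lem:geom}, and choosing the recorded $R_{t+1}^2$ so that the invariant neither sacrifices the $\sqrt\kappa$ improvement nor runs the reservoir dry.
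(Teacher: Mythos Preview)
Your proposal is correct and follows essentially the same route as the paper: the same strengthened invariant $x^*\in\mB\bigl(c_t,\,R_t^2-\tfrac{2}{\alpha}(f(x_t^+)-f(x^*))\bigr)$, the same base case from \eqref{eq:ball2}, the same way of spending the reservoir via $f(x_{t+1})\le f(x_t^+)$ and \eqref{eq:onestepofgd}, the same application of Lemma~\ref{lem:geom} after translating and rescaling, and the same orthogonality argument from the line search to verify $\|a\|\ge g$. The only cosmetic slip is writing ``Recording $R_{t+1}^2:=(1-\tfrac{1}{\sqrt\kappa})R_t^2$'': in the algorithm $R_{t+1}^2$ is the squared radius output by (the proof of) Lemma~\ref{lem:geom} at $\delta=0$, which is \emph{at most} $(1-\tfrac{1}{\sqrt\kappa})R_t^2$, and since the lemma guarantees the \emph{same} center $c$ works for every $\delta\ge 0$ the invariant at $t+1$ follows with the algorithm's $R_{t+1}^2$ rather than the upper bound.
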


\begin{proof} 
We will prove a stronger claim by induction that for each $t\geq 0$, one has
$$x^* \in \mB\left(c_t, R_t^2 - \frac{2}{\alpha} \left(f(x_t^+) - f(x^*)\right)\right) .$$
The case $t=0$ follows immediately by \eqref{eq:ball2}. Let us assume that the above display is true for some $t \geq 0$. Then using $f(x_{t+1}^+) \leq f(x_{t+1}) - \frac{1}{2\beta} \|\nabla f(x_{t+1})\|^2 \leq f(x_t^+) - \frac{1}{2\beta} \|\nabla f(x_{t+1})\|^2 ,$
one gets
$$x^* \in \mB\left(c_t, R_t^2 - \frac{\|\nabla f(x_{t+1})\|^2}{\alpha^2 \kappa} - \frac{2}{\alpha} \left(f(x_{t+1}^+) - f(x^*)\right) \right) .$$
Furthermore by \eqref{eq:ball2} one also has
$$\mB\left(x_{t+1}^{++}, \frac{\|\nabla f(x_{t+1})\|^2}{\alpha^2} \left(1 - \frac{1}{\kappa}\right) - \frac{2}{\alpha} \left(f(x_{t+1}^+) - f(x^*)\right) \right).$$
Thus it only remains to observe that the squared radius of the ball given by Lemma \ref{lem:geom} which encloses the intersection of the two above balls is smaller than $\left(1 - \frac{1}{\sqrt{\kappa}}\right) R_t^2 - \frac{2}{\alpha} (f(x_{t+1}^+) - f(x^*))$.
We apply Lemma~\ref{lem:geom} after moving $c_t$ to the origin and scaling distances by $R_t$. We set $\epsilon =\frac{1}{\kappa}$, $g=\frac{\|\nabla f(x_{t+1})\|}{\alpha}$, $\delta=\frac{2}{\alpha}\left(f(x_{t+1}^+)-f(x^*)\right)$ and $a={x_{t+1}^{++}-c_t}$.  The line search step of the algorithm implies that $\nabla f(x_{t+1})^{\top} (x_{t+1} - c_t) = 0$ and therefore, $\|a\|=\|x_{t+1}^{++} - c_t\| \geq \|\nabla f(x_{t+1})\|/\alpha=g$ and Lemma~\ref{lem:geom} applies to give the result.
\end{proof}

One can use the following formulas for $c_{t+1}$ and $R^2_{t+1}$ (they are derived from the proof of Lemma \ref{lem:geom}). If $|\nabla f(x_{t+1})|^2 / \alpha^2 < R_t^2 / 2$ then one can tate $c_{t+1} = x_{t+1}^{++}$ and $R_{t+1}^2 = \frac{|\nabla f(x_{t+1})|^2}{\alpha^2} \left(1 - \frac{1}{\kappa}\right)$. On the other hand if $|\nabla f(x_{t+1})|^2 / \alpha^2 \geq R_t^2 / 2$ then one can tate
\begin{eqnarray*}
c_{t+1} & = & c_t + \frac{R_t^2 + |x_{t+1} - c_t|^2}{2 |x_{t+1}^{++} - c_t|^2} (x_{t+1}^{++} - c_t) , \\
R_{t+1}^2 & = & R_t^2 - \frac{|\nabla f(x_{t+1})|^2}{\alpha^2 \kappa} - \left( \frac{R_t^2 + \|x_{t+1} - c_t\|^2}{2 \|x_{t+1}^{++} - c_t\|}  \right)^2.
\end{eqnarray*}

\section{Nesterov's accelerated gradient descent} \label{sec:AGD}


We describe here the original Nesterov's method which attains the optimal oracle complexity for smooth convex optimization. We give the details of the method both for the strongly convex and non-strongly convex case. We refer to \cite{SBC14} for a recent interpretation of the method in terms of differential equations, and to \cite{AO14} for its relation to mirror descent (see Chapter \ref{mirror}).


\subsection{The smooth and strongly convex case}

Nesterov's accelerated gradient descent, illustrated in Figure \ref{fig:nesterovacc}, can be described as follows: Start at an arbitrary initial point 
$x_1 = y_1$ and then iterate the following equations for $t \geq 1$,
\begin{eqnarray*}
y_{t+1} & = & x_t  - \frac{1}{\beta} \nabla f(x_t) , \\
x_{t+1} & = & \left(1 + \frac{\sqrt{\kappa}-1}{\sqrt{\kappa}+1} \right) y_{t+1} - \frac{\sqrt{\kappa}-1}{\sqrt{\kappa}+1} y_t .
\end{eqnarray*}

\begin{figure}
\begin{center}
\begin{tikzpicture}[scale=1]
\node [tokens=1] (noeud1) at (0.5,1) [label=below right:{$x_s$}] {};
\node [tokens=1] (noeud2) at (1.5,-1) [label=below right:{$y_{s}$}] {};
\node [tokens=1] (noeud3) at (2.5,2) [label=below right:{$y_{s+1}$}] {};
\node [tokens=1] (noeud4) at (2.8,3) [label=above left:{$x_{s+1}$}] {};
\draw[->, thick] (noeud1) -- (noeud3) node[midway, left] {$-\frac{1}{\beta}\nabla f(x_s)$};
\draw[thick, dashed] (noeud2) -- (noeud4) {};
\node [tokens=1] (noeud5) at (4.5,3.3) [label=below right:{$y_{s+2}$}] {};
\node [tokens=1] (noeud6) at (5.3,3.8) [label=right:{$x_{s+2}$}] {};
\draw[->, thick] (noeud4) -- (noeud5) {};
\draw[thick, dashed] (noeud3) -- (noeud6) {};
\end{tikzpicture}
\end{center}
\caption{Illustration of Nesterov's accelerated gradient descent.}
\label{fig:nesterovacc}
\end{figure}
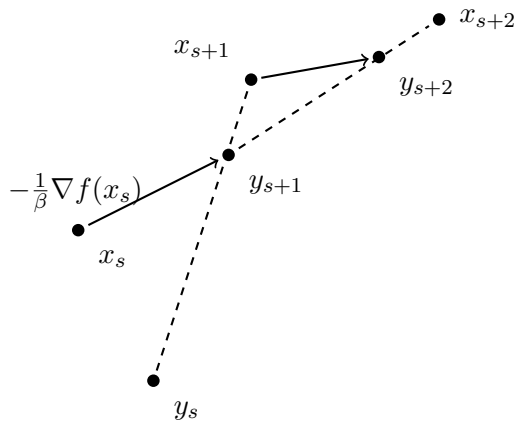

\begin{theorem}
Let $f$ be $\alpha$-strongly convex and $\beta$-smooth, then Nesterov's accelerated gradient descent satisfies
$$f(y_t) - f(x^*) \leq \frac{\alpha + \beta}{2} \|x_1 - x^*\|^2 \exp\left(- \frac{t-1}{\sqrt{\kappa}} \right).$$
\end{theorem}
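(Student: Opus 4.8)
The standard approach to analyzing Nesterov's accelerated gradient descent is through an \emph{estimate sequence} argument, which I would carry out as follows. The idea is to maintain a sequence of quadratic functions $\Phi_t : \R^n \to \R$ that are simultaneously a global lower-bound-in-expectation on $f$ and shrink toward $f$ at a controlled rate. Concretely, set $\Phi_1(x) = f(x_1) + \frac{\alpha}{2} \|x - x_1\|^2$, and define recursively a convex combination
$$\Phi_{t+1}(x) = (1 - \lambda) \Phi_t(x) + \lambda \left( f(x_t) + \nabla f(x_t)^{\top}(x - x_t) + \frac{\alpha}{2} \|x - x_t\|^2 \right),$$
with the mixing weight $\lambda = 1/\sqrt{\kappa}$. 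Since each $\Phi_t$ is a quadratic with leading term $\frac{\alpha}{2}\|x\|^2$ (the convex combination preserves this), it has a unique minimizer; one shows by induction that this minimizer is exactly $x_t$ (this is what forces the particular momentum coefficient $\frac{\sqrt\kappa - 1}{\sqrt\kappa + 1}$ in the algorithm). Writing $\Phi_t^* = \min_x \Phi_t(x) = \Phi_t(x_t)$, the two invariants to propagate by induction are
$$\Phi_t(x) \leq f(x) + (1-\lambda)^{t-1}\bigl(\Phi_1(x) - f(x)\bigr), \quad \text{and} \quad f(y_t) \leq \Phi_t^* .$$

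First I would establish the first invariant: it is immediate from the recursion and the fact that $f(x_t) + \nabla f(x_t)^\top(x - x_t) + \frac{\alpha}{2}\|x-x_t\|^2 \leq f(x)$, which is precisely the $\alpha$-strong convexity inequality \eqref{eq:defstrongconv} rearranged. Next, the heart of the argument is the second invariant, $f(y_{t+1}) \leq \Phi_{t+1}^*$. Here one expands $\Phi_{t+1}^* = \Phi_{t+1}(x_{t+1})$ using the recursion, uses $\Phi_t(x_{t+1}) = \Phi_t^* + \frac{\alpha}{2}\|x_{t+1} - x_t\|^2$ (since $\Phi_t$ is a quadratic minimized at $x_t$ with Hessian $\alpha \mathrm{I}_n$), and then substitutes the induction hypothesis $\Phi_t^* \geq f(y_t)$ together with the descent guarantee $f(y_{t+1}) \leq f(x_t) - \frac{1}{2\beta}\|\nabla f(x_t)\|^2$ from \eqref{eq:onestepofgd}, and the convexity bound $f(y_t) \geq f(x_t) + \nabla f(x_t)^\top(y_t - x_t)$. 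After these substitutions one is left with needing a single algebraic inequality between the coefficient of $\|\nabla f(x_t)\|^2$ produced by the Nesterov update and the coefficient $\frac{1}{2\beta}$ available from smoothness; this is where the value $\lambda = 1/\sqrt\kappa$ is exactly what makes the two match. The choice $x_1 = y_1$ gives the base case $f(y_1) = f(x_1) = \Phi_1^*$.

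Finally, combining the two invariants and evaluating the first one at $x = x^*$ yields
$$f(y_t) - f(x^*) \leq \Phi_t^* - f(x^*) \leq \Phi_t(x^*) - f(x^*) \leq (1-\lambda)^{t-1}\bigl(\Phi_1(x^*) - f(x^*)\bigr) = (1-\lambda)^{t-1} \frac{\alpha}{2}\|x_1 - x^*\|^2 ,$$
and then bounding $\Phi_1(x^*) - f(x^*) = \frac{\alpha}{2}\|x_1 - x^*\|^2 \leq \frac{\alpha + \beta}{2}\|x_1 - x^*\|^2$ and using $(1 - 1/\sqrt\kappa)^{t-1} \leq \exp(-(t-1)/\sqrt\kappa)$ gives the stated bound. (The slightly wasteful replacement of $\alpha$ by $\alpha + \beta$ in the prefactor is harmless and matches the theorem as stated; it could be avoided by a marginally more careful bookkeeping of $\Phi_1$.) The main obstacle is the bookkeeping in the inductive step for $f(y_{t+1}) \leq \Phi_{t+1}^*$: one must carefully track how the Nesterov update $x_{t+1} = (1+\mu)y_{t+1} - \mu y_t$ with $\mu = \frac{\sqrt\kappa - 1}{\sqrt\kappa + 1}$ produces $\|x_{t+1} - x_t\|^2$ and verify it telescopes against the gradient terms — this is the only place where genuine (though elementary) computation is required, and it is where the precise algorithm parameters are pinned down.
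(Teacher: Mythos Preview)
Your overall strategy---the estimate sequence $(\Phi_t)$ with mixing weight $\lambda = 1/\sqrt{\kappa}$ and the two invariants---is exactly the paper's approach. But there is a concrete error in your execution: the minimizer of $\Phi_t$ is \emph{not} $x_t$. With your recursion one computes (differentiating) that the minimizer $v_t$ of $\Phi_t$ satisfies
$$v_{t+1} = \Bigl(1 - \tfrac{1}{\sqrt{\kappa}}\Bigr) v_t + \tfrac{1}{\sqrt{\kappa}} x_t - \tfrac{1}{\alpha\sqrt{\kappa}} \nabla f(x_t),$$
starting from $v_1 = x_1$. The correct identity tying this to the algorithm is $v_t - x_t = \sqrt{\kappa}\,(x_t - y_t)$, proved by induction using the definitions of $y_{t+1}$ and $x_{t+1}$; this is precisely where the momentum coefficient $\frac{\sqrt{\kappa}-1}{\sqrt{\kappa}+1}$ is pinned down. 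Because $v_t \neq x_t$ for $t \geq 2$, your expansion $\Phi_t(x_{t+1}) = \Phi_t^* + \frac{\alpha}{2}\|x_{t+1} - x_t\|^2$ is false, and the inductive step as you have sketched it does not close. The paper instead evaluates $\Phi_{t+1}$ at $x_t$ (not at its minimizer $v_{t+1}$), yielding an expression in $\|x_t - v_{t+1}\|^2$ and $\|x_t - v_t\|^2$, and then uses the identity above to convert the cross term $\nabla f(x_t)^\top (v_t - x_t)$ into $\sqrt{\kappa}\,\nabla f(x_t)^\top(x_t - y_t)$, which matches what the convexity bound on $f(y_t)$ supplies.

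There is also a minor slip at the end: $\Phi_1(x^*) - f(x^*) = f(x_1) - f(x^*) + \frac{\alpha}{2}\|x_1 - x^*\|^2$, not just $\frac{\alpha}{2}\|x_1 - x^*\|^2$. The extra term $f(x_1) - f(x^*)$ is bounded by $\frac{\beta}{2}\|x_1 - x^*\|^2$ via smoothness, which is why $\frac{\alpha+\beta}{2}$ (rather than $\frac{\alpha}{2}$) appears in the prefactor; it is not a wasteful replacement but a necessary one.
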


\begin{proof}
We define $\alpha$-strongly convex quadratic functions $\Phi_s, s \geq 1$ by induction as follows:
\begin{align}
& \Phi_1(x) = f(x_1) + \frac{\alpha}{2} \|x-x_1\|^2 , \notag \\
& \Phi_{s+1}(x) = \left(1 - \frac{1}{\sqrt{\kappa}}\right) \Phi_s(x) \notag \\
& \qquad + \frac{1}{\sqrt{\kappa}} \left(f(x_s) + \nabla f(x_s)^{\top} (x-x_s) + \frac{\alpha}{2} \|x-x_s\|^2 \right). \label{eq:AGD0}
\end{align}
Intuitively $\Phi_s$ becomes a finer and finer approximation (from below) to $f$ in the following sense:
\begin{equation} \label{eq:AGD1}
\Phi_{s+1}(x) \leq f(x) + \left(1 - \frac{1}{\sqrt{\kappa}}\right)^s (\Phi_1(x) - f(x)). 
\end{equation}
The above inequality can be proved immediately by induction, using the fact that by $\alpha$-strong convexity one has
$$f(x_s) + \nabla f(x_s)^{\top} (x-x_s) + \frac{\alpha}{2} \|x-x_s\|^2 \leq f(x) .$$
Equation \eqref{eq:AGD1} by itself does not say much, for it to be useful one needs to understand how ``far" below $f$ is $\Phi_s$. The following inequality answers this question:
\begin{equation} \label{eq:AGD2}
f(y_s) \leq \min_{x \in \mathbb{R}^n} \Phi_s(x) . 
\end{equation}
The rest of the proof is devoted to showing that \eqref{eq:AGD2} holds true, but first let us see how to combine \eqref{eq:AGD1} and \eqref{eq:AGD2} to obtain the rate given by the theorem (we use that by $\beta$-smoothness one has $f(x) - f(x^*) \leq \frac{\beta}{2} \|x-x^*\|^2$):
\begin{eqnarray*}
f(y_t) - f(x^*) & \leq & \Phi_t(x^*) - f(x^*) \\
& \leq & \left(1 - \frac{1}{\sqrt{\kappa}}\right)^{t-1} (\Phi_1(x^*) - f(x^*)) \\
& \leq & \frac{\alpha + \beta}{2} \|x_1-x^*\|^2 \left(1 - \frac{1}{\sqrt{\kappa}}\right)^{t-1} .
\end{eqnarray*}
We now prove \eqref{eq:AGD2} by induction (note that it is true at $s=1$ since $x_1=y_1$). Let $\Phi_s^* = \min_{x \in \mathbb{R}^n} \Phi_s(x)$. Using the definition of $y_{s+1}$ (and $\beta$-smoothness), convexity, and the induction hypothesis, one gets
\begin{eqnarray*}
f(y_{s+1}) & \leq & f(x_s) - \frac{1}{2 \beta} \| \nabla f(x_s) \|^2 \\
& = & \left(1 - \frac{1}{\sqrt{\kappa}}\right) f(y_s) + \left(1 - \frac{1}{\sqrt{\kappa}}\right)(f(x_s) - f(y_s)) \\
& & + \frac{1}{\sqrt{\kappa}} f(x_s) - \frac{1}{2 \beta} \| \nabla f(x_s) \|^2 \\
& \leq & \left(1 - \frac{1}{\sqrt{\kappa}}\right) \Phi_s^* + \left(1 - \frac{1}{\sqrt{\kappa}}\right) \nabla f(x_s)^{\top} (x_s - y_s) \\
& & + \frac{1}{\sqrt{\kappa}} f(x_s) - \frac{1}{2 \beta} \| \nabla f(x_s) \|^2 .
\end{eqnarray*}
Thus we now have to show that
\begin{eqnarray} 
\Phi_{s+1}^* & \geq & \left(1 - \frac{1}{\sqrt{\kappa}}\right) \Phi_s^* + \left(1 - \frac{1}{\sqrt{\kappa}}\right) \nabla f(x_s)^{\top} (x_s - y_s) \notag \\
& & + \frac{1}{\sqrt{\kappa}} f(x_s) - \frac{1}{2 \beta} \| \nabla f(x_s) \|^2 . \label{eq:AGD3}
\end{eqnarray}
To prove this inequality we have to understand better the functions $\Phi_s$. First note that $\nabla^2 \Phi_s(x) = \alpha \mathrm{I}_n$ (immediate by induction) and thus $\Phi_s$ has to be of the following form:
$$\Phi_s(x) = \Phi_s^* + \frac{\alpha}{2} \|x - v_s\|^2 ,$$
for some $v_s \in \mathbb{R}^n$. Now observe that by differentiating \eqref{eq:AGD0} and using the above form of $\Phi_s$ one obtains
$$\nabla \Phi_{s+1}(x) = \alpha \left(1 - \frac{1}{\sqrt{\kappa}}\right) (x-v_s) + \frac{1}{\sqrt{\kappa}} \nabla f(x_s) + \frac{\alpha}{\sqrt{\kappa}} (x-x_s) .$$
In particular $\Phi_{s+1}$ is by definition minimized at $v_{s+1}$ which can now be defined by induction using the above identity, precisely:
\begin{equation} \label{eq:AGD4}
v_{s+1} = \left(1 - \frac{1}{\sqrt{\kappa}}\right) v_s + \frac{1}{\sqrt{\kappa}} x_s - \frac{1}{\alpha \sqrt{\kappa}} \nabla f(x_s) .
\end{equation}
Using the form of $\Phi_s$ and $\Phi_{s+1}$, as well as the original definition \eqref{eq:AGD0} one gets the following identity by evaluating $\Phi_{s+1}$ at $x_s$:
\begin{align} 
& \Phi_{s+1}^* + \frac{\alpha}{2} \|x_s - v_{s+1}\|^2 \notag \\
& = \left(1 - \frac{1}{\sqrt{\kappa}}\right) \Phi_s^* + \frac{\alpha}{2} \left(1 - \frac{1}{\sqrt{\kappa}}\right) \|x_s - v_s\|^2 + \frac{1}{\sqrt{\kappa}} f(x_s) . \label{eq:AGD5}
\end{align}
Note that thanks to \eqref{eq:AGD4} one has
\begin{eqnarray*}
\|x_s - v_{s+1}\|^2 & = & \left(1 - \frac{1}{\sqrt{\kappa}}\right)^2 \|x_s - v_s\|^2 + \frac{1}{\alpha^2 \kappa} \|\nabla f(x_s)\|^2 \\
& & - \frac{2}{\alpha \sqrt{\kappa}} \left(1 - \frac{1}{\sqrt{\kappa}}\right) \nabla f(x_s)^{\top}(v_s-x_s) ,
\end{eqnarray*}
which combined with \eqref{eq:AGD5} yields
\begin{eqnarray*}
\Phi_{s+1}^* & = & \left(1 - \frac{1}{\sqrt{\kappa}}\right) \Phi_s^* + \frac{1}{\sqrt{\kappa}} f(x_s) + \frac{\alpha}{2 \sqrt{\kappa}} \left(1 - \frac{1}{\sqrt{\kappa}}\right) \|x_s - v_s\|^2 \\
& & \qquad - \frac{1}{2 \beta} \| \nabla f(x_s) \|^2 + \frac{1}{\sqrt{\kappa}} \left(1 - \frac{1}{\sqrt{\kappa}}\right) \nabla f(x_s)^{\top}(v_s-x_s) .
\end{eqnarray*}
Finally we show by induction that $v_s - x_s = \sqrt{\kappa}(x_s - y_s)$, which concludes the proof of \eqref{eq:AGD3} and thus also concludes the proof of the theorem:
\begin{eqnarray*}
v_{s+1} - x_{s+1} & = & \left(1 - \frac{1}{\sqrt{\kappa}}\right) v_s + \frac{1}{\sqrt{\kappa}} x_s - \frac{1}{\alpha \sqrt{\kappa}} \nabla f(x_s) - x_{s+1} \\
& = & \sqrt{\kappa} x_s - (\sqrt{\kappa}-1) y_s - \frac{\sqrt{\kappa}}{\beta} \nabla f(x_s) - x_{s+1} \\
& = & \sqrt{\kappa} y_{s+1} - (\sqrt{\kappa}-1) y_s - x_{s+1} \\
& = & \sqrt{\kappa} (x_{s+1} - y_{s+1}) ,
\end{eqnarray*}
where the first equality comes from \eqref{eq:AGD4}, the second from the induction hypothesis, the third from the definition of $y_{s+1}$ and the last one from the definition of $x_{s+1}$.
\end{proof}

\subsection{The smooth case}
In this section we show how to adapt Nesterov's accelerated gradient descent for the case $\alpha=0$, using a time-varying combination of the elements in the primary sequence $(y_t)$. First we define the following sequences:
$$\lambda_0 = 0, \ \lambda_{t} = \frac{1 + \sqrt{1+ 4 \lambda_{t-1}^2}}{2}, \ \text{and} \  \gamma_t = \frac{1 - \lambda_t}{\lambda_{t+1}}.$$
(Note that $\gamma_t \leq 0$.) Now the algorithm is simply defined by the following equations, with $x_1 = y_1$ an arbitrary initial point,
\begin{eqnarray*}
y_{t+1} & = & x_t  - \frac{1}{\beta} \nabla f(x_t) , \\
x_{t+1} & = & (1 - \gamma_s) y_{t+1} + \gamma_t y_t .
\end{eqnarray*}

\begin{theorem}
Let $f$ be a convex and $\beta$-smooth function, then Nesterov's accelerated gradient descent satisfies
$$f(y_t) - f(x^*) \leq \frac{2 \beta \|x_1 - x^*\|^2}{t^2} .$$
\end{theorem}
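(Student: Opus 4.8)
The plan is to mirror the strongly convex proof, using the same potential-function technique but now with purely quadratic lower models that have no curvature term, so that the convergence rate becomes $O(1/t^2)$ rather than linear. Concretely I would introduce a sequence of \emph{affine-below-quadratic} functions $\Phi_s$ (here of the form $\Phi_s(x) = \Phi_s^* + \frac{\gamma_s}{2}\|x - v_s\|^2$ with a scalar curvature $\gamma_s \geq 0$ that is allowed to shrink), defined by $\Phi_1 \equiv f(x_1)$ — or more conveniently $\Phi_1(x) = f(x_1) + \frac{1}{2}\|x-x_1\|^2$ rescaled appropriately — and the recursion
$$\Phi_{s+1}(x) = \left(1 - \frac{1}{\lambda_s}\right)\Phi_s(x) + \frac{1}{\lambda_s}\Bigl(f(x_s) + \nabla f(x_s)^{\top}(x - x_s)\Bigr),$$
so that convexity of $f$ immediately gives $\Phi_{s+1}(x) \leq f(x) + \prod_{i\leq s}(1 - 1/\lambda_i)(\Phi_1(x) - f(x))$, the analogue of \eqref{eq:AGD1}.

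The two things to establish are then: (i) the product $\prod_{i=1}^{s}(1 - 1/\lambda_i)$ decays like $1/\lambda_s^2 \sim 4/s^2$, which is exactly the reason the sequence $\lambda_t = (1 + \sqrt{1 + 4\lambda_{t-1}^2})/2$ is chosen — from $\lambda_t^2 - \lambda_t = \lambda_{t-1}^2$ one gets the telescoping identity $\lambda_s^2 \prod_{i=1}^{s}(1 - 1/\lambda_i) = $ const, and an easy induction shows $\lambda_s \geq (s+1)/2$; and (ii) the key invariant $f(y_s) \leq \min_x \Phi_s(x) = \Phi_s^*$, the analogue of \eqref{eq:AGD2}. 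Given both, one chains them at $x = x^*$ exactly as in the strongly convex case:
$$f(y_t) - f(x^*) \leq \Phi_t^* - f(x^*) \leq \Phi_t(x^*) - f(x^*) \leq \frac{2}{\lambda_{t-1}^2}\bigl(\Phi_1(x^*) - f(x^*)\bigr) \leq \frac{2\beta\|x_1 - x^*\|^2}{t^2},$$
using $\beta$-smoothness to bound $\Phi_1(x^*) - f(x^*)$ by $\frac{\beta}{2}\|x_1 - x^*\|^2$ (after choosing the normalization of $\Phi_1$ so that its leading coefficient matches up).

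The heart of the argument, and the step I expect to be the main obstacle, is proving the invariant $f(y_{s+1}) \leq \Phi_{s+1}^*$ by induction. As in the strongly convex proof one writes $\Phi_{s+1}(x) = \Phi_{s+1}^* + \frac{\gamma_{s+1}}{2}\|x - v_{s+1}\|^2$, reads off from differentiating the recursion that $v_{s+1}$ is a convex combination of $v_s$ and (a gradient-step perturbation of) $x_s$, evaluates the recursion at $x_s$ to get an identity for $\Phi_{s+1}^*$ in terms of $\Phi_s^*$, $f(x_s)$, $\|\nabla f(x_s)\|^2$ and the inner product $\nabla f(x_s)^{\top}(v_s - x_s)$, and then uses $f(y_{s+1}) \leq f(x_s) - \frac{1}{2\beta}\|\nabla f(x_s)\|^2$ together with $f(y_s) \leq \Phi_s^*$ and convexity $f(x_s) \leq f(y_s) + \nabla f(x_s)^{\top}(x_s - y_s)$. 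Matching the two sides forces a relation between $v_s - x_s$ and $x_s - y_s$; with the time-varying weights $\gamma_t = (1-\lambda_t)/\lambda_{t+1}$ and $x_{t+1} = (1-\gamma_t)y_{t+1} + \gamma_t y_t$ this relation should close the induction. The delicate points are getting the normalization/curvature bookkeeping of the $\Phi_s$ right (here $\gamma_s$ is not constant, unlike the $\alpha$ in the strongly convex case) and verifying that the specific choice of $\lambda_t$ and $\gamma_t$ makes the cross-terms cancel exactly; these are the calculations I would need to carry out carefully rather than wave through.
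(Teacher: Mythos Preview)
Your plan follows a genuinely different route from the paper: you attempt to port Nesterov's \emph{estimate-sequence} argument (used in the paper for the strongly convex case) to $\alpha=0$, whereas the paper instead gives the Beck--Teboulle telescoping proof. That proof never builds auxiliary quadratics $\Phi_s$; it applies Lemma~\ref{lem:smoothconst} at $y=y_s$ and at $y=x^*$, forms the combination $\lambda_s\delta_{s+1}-(\lambda_s-1)\delta_s$, multiplies by $\lambda_s$, and uses $\lambda_{s-1}^2=\lambda_s^2-\lambda_s$ together with $2a^\top b-\|a\|^2=\|b\|^2-\|b-a\|^2$ to exhibit a telescoping inequality $\lambda_s^2\delta_{s+1}-\lambda_{s-1}^2\delta_s\le\tfrac{\beta}{2}(\|u_s\|^2-\|u_{s+1}\|^2)$ with $u_s=\lambda_s x_s-(\lambda_s-1)y_s-x^*$. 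The definition of $x_{s+1}$ is engineered precisely so that $u_{s+1}=\lambda_s y_{s+1}-(\lambda_s-1)y_s-x^*$, closing the telescope. Summing and using $\lambda_{t-1}\ge t/2$ finishes.

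Your approach, as written, has a concrete gap. With the FISTA sequence $\lambda_0=0$, $\lambda_1=1$, your mixing weight at the first step is $1/\lambda_1=1$, so
\[
\Phi_2(x)=(1-1/\lambda_1)\Phi_1(x)+\tfrac{1}{\lambda_1}\bigl(f(x_1)+\nabla f(x_1)^\top(x-x_1)\bigr)
\]
is purely affine regardless of how you normalize $\Phi_1$. Every subsequent $\Phi_s$ is then also affine (your curvature $\gamma_s$ vanishes identically), $\Phi_s^*=-\infty$, and the invariant $f(y_s)\le\Phi_s^*$ is vacuous. Equivalently, your claimed identity $\lambda_s^2\prod_{i=1}^s(1-1/\lambda_i)=\text{const}$ is correct, but the constant is $\lambda_0^2=0$, which carries no information about $\|x_1-x^*\|^2$. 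The estimate-sequence machinery \emph{can} be made to work for $\alpha=0$, but one must choose the weights $\alpha_k$ and initial curvature $\gamma_0>0$ jointly so that $\gamma_{k+1}=(1-\alpha_k)\gamma_k>0$ and $\beta\alpha_k^2=\gamma_{k+1}$ (Nesterov's coupling); the resulting $\alpha_k$ are not $1/\lambda_k$ with the FISTA $\lambda_k$, and the momentum coefficients come out in a different (though ultimately equivalent) form. Either redo the bookkeeping along those lines, or switch to the paper's direct telescoping argument, which sidesteps the issue entirely.
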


We follow here the proof of \cite{BT09}. We also refer to \cite{Tse08} for a proof with simpler step-sizes.

\begin{proof}
Using the unconstrained version of Lemma \ref{lem:smoothconst} one obtains
\begin{align}
& f(y_{s+1}) - f(y_s) \notag \\
& \leq \nabla f(x_s)^{\top} (x_s-y_s) - \frac{1}{2 \beta} \| \nabla f(x_s) \|^2 \notag \\
& = \beta (x_s - y_{s+1})^{\top} (x_s-y_s) - \frac{\beta}{2} \| x_s - y_{s+1} \|^2 . \label{eq:1}
\end{align}
Similarly we also get
\begin{equation} \label{eq:2}
f(y_{s+1}) - f(x^*) \leq \beta (x_s - y_{s+1})^{\top} (x_s-x^*) - \frac{\beta}{2} \| x_s - y_{s+1} \|^2 .
\end{equation}
Now multiplying \eqref{eq:1} by $(\lambda_{s}-1)$ and adding the result to \eqref{eq:2}, one obtains with $\delta_s = f(y_s) - f(x^*)$,
\begin{align*}
& \lambda_{s} \delta_{s+1} - (\lambda_{s} - 1) \delta_s \\
& \leq \beta (x_s - y_{s+1})^{\top} (\lambda_{s} x_{s} - (\lambda_{s} - 1) y_s-x^*) - \frac{\beta}{2} \lambda_{s} \| x_s - y_{s+1} \|^2.
\end{align*}
Multiplying this inequality by $\lambda_{s}$ and using that by definition $\lambda_{s-1}^2 = \lambda_{s}^2 - \lambda_{s}$, as well as the elementary identity $2 a^{\top} b -  \|a\|^2 = \|b\|^2 - \|b-a\|^2$, one obtains
\begin{align}
& \lambda_{s}^2 \delta_{s+1} - \lambda_{s-1}^2 \delta_s \notag \\
& \leq \frac{\beta}{2} \bigg( 2 \lambda_{s} (x_s - y_{s+1})^{\top} (\lambda_{s} x_{s} - (\lambda_{s} - 1) y_s-x^*) - \|\lambda_{s}( y_{s+1} - x_s  )\|^2\bigg) \notag \\
& = \frac{\beta}{2} \bigg(\| \lambda_{s} x_{s} - (\lambda_{s} - 1) y_{s}-x^* \|^2 - \| \lambda_{s} y_{s+1} - (\lambda_{s} - 1) y_{s}-x^* \|^2 \bigg). \label{eq:3}
\end{align}
Next remark that, by definition, one has 
\begin{align}
& x_{s+1} = y_{s+1} + \gamma_s (y_s - y_{s+1}) \notag \\
& \Leftrightarrow \lambda_{s+1} x_{s+1} = \lambda_{s+1} y_{s+1} + (1-\lambda_{s})(y_s - y_{s+1}) \notag \\
& \Leftrightarrow \lambda_{s+1} x_{s+1} - (\lambda_{s+1} - 1) y_{s+1}= \lambda_{s} y_{s+1} - (\lambda_{s}-1) y_{s} . \label{eq:5}
\end{align}
Putting together \eqref{eq:3} and \eqref{eq:5} one gets with $u_s = \lambda_{s} x_{s} - (\lambda_{s} - 1) y_{s} - x^*$,
$$\lambda_{s}^2 \delta_{s+1} - \lambda_{s-1}^2 \delta_s^2 \leq \frac{\beta}{2} \bigg(\|u_s\|^2 - \|u_{s+1}\|^2 \bigg) .$$
Summing these inequalities from $s=1$ to $s=t-1$ one obtains:
$$\delta_t \leq \frac{\beta}{2 \lambda_{t-1}^2} \|u_1\|^2.$$
By induction it is easy to see that $\lambda_{t-1} \geq \frac{t}{2}$ which concludes the proof.
\end{proof}

\chapter{Almost dimension-free convex optimization in non-Euclidean spaces}
\label{mirror}
In the previous chapter we showed that dimension-free oracle complexity is possible when the objective function $f$ and the constraint set $\cX$ are well-behaved in the Euclidean norm; e.g. if for all points $x \in \cX$ and all subgradients $g \in \partial f(x)$, one has that $\|x\|_2$ and $\|g\|_2$ are independent of the ambient dimension $n$. If this assumption is not met then the gradient descent techniques of Chapter \ref{dimfree} may lose their dimension-free convergence rates. For instance consider a differentiable convex function $f$ defined on the Euclidean ball $\mB_{2,n}$ and such that $\|\nabla f(x)\|_{\infty} \leq 1, \forall x \in \mB_{2,n}$. This implies that $\|\nabla f(x)\|_{2} \leq \sqrt{n}$, and thus projected gradient descent will converge to the minimum of $f$ on $\mB_{2,n}$ at a rate $\sqrt{n / t}$. In this chapter we describe the method of \cite{NY83}, known as mirror descent, which allows to find the minimum of such functions $f$ over the $\ell_1$-ball (instead of the Euclidean ball) at the much faster rate $\sqrt{\log(n) / t}$. This is only one example of the potential of mirror descent. This chapter is devoted to the description of mirror descent and some of its alternatives. The presentation is inspired from \cite{BT03}, [Chapter 11, \cite{CL06}], \cite{Rak09, Haz11, Bub11}.
\newpage

In order to describe the intuition behind the method let us abstract the situation for a moment and forget that we are doing optimization in finite dimension. We already observed that projected gradient descent works in an arbitrary Hilbert space $\mathcal{H}$. Suppose now that we are interested in the more general situation of optimization in some Banach space $\mathcal{B}$. In other words the norm that we use to measure the various quantity of interest does not derive from an inner product (think of $\mathcal{B} = \ell_1$ for example). In that case the gradient descent strategy does not even make sense: indeed the gradients (more formally the Fr\'echet derivative) $\nabla f(x)$ are elements of the dual space $\mathcal{B}^*$ and thus one cannot perform the computation $x - \eta \nabla f(x)$ (it simply does not make sense). We did not have this problem for optimization in a Hilbert space $\mathcal{H}$ since by Riesz representation theorem $\mathcal{H}^*$ is isometric to $\mathcal{H}$. The great insight of Nemirovski and Yudin is that one can still do a gradient descent by first mapping the point $x \in \mathcal{B}$ into the dual space $\mathcal{B}^*$, then performing the gradient update in the dual space, and finally mapping back the resulting point to the primal space $\mathcal{B}$. Of course the new point in the primal space might lie outside of the constraint set $\mathcal{X} \subset \mathcal{B}$ and thus we need a way to project back the point on the constraint set $\mathcal{X}$. Both the primal/dual mapping and the projection are based on the concept of a {\em mirror map} which is the key element of the scheme. Mirror maps are defined in Section \ref{sec:mm}, and the above scheme is formally described in Section \ref{sec:MD}.

In the rest of this chapter we fix an arbitrary norm $\|\cdot\|$ on $\R^n$, and a compact convex set $\cX \subset \R^n$. The dual norm $\|\cdot\|_*$ is defined as $\|g\|_* = \sup_{x \in \mathbb{R}^n : \|x\| \leq 1} g^{\top} x$. We say that a convex function $f : \cX \rightarrow \R$ is (i) $L$-Lipschitz w.r.t. $\|\cdot\|$ if $\forall x \in \cX, g \in \partial f(x), \|g\|_* \leq L$, (ii) $\beta$-smooth w.r.t. $\|\cdot\|$ if $\|\nabla f(x) - \nabla f(y) \|_* \leq \beta \|x-y\|, \forall x, y \in \cX$, and (iii) $\alpha$-strongly convex w.r.t. $\|\cdot\|$ if 
$$f(x) - f(y) \leq g^{\top} (x - y) - \frac{\alpha}{2} \|x - y \|^2 , \forall x, y \in \cX, g \in \partial f(x).$$ We also define the Bregman divergence associated to $f$ as 
$$D_{f}(x,y) = f(x) - f(y) - \nabla f(y)^{\top} (x - y) .$$
The following identity will be useful several times:
\begin{equation} \label{eq:useful1}
(\nabla f(x) - \nabla f(y))^{\top}(x-z) = D_{f}(x,y) + D_{f}(z,x) - D_{f}(z,y) .
\end{equation}

\section{Mirror maps} \label{sec:mm}
Let $\cD \subset \R^n$ be a convex open set such that $\mathcal{X}$ is included in its closure, that is $\mathcal{X} \subset \overline{\mathcal{D}}$, and $\mathcal{X} \cap \mathcal{D} \neq \emptyset$. We say that $\Phi : \cD \rightarrow \R$ is a mirror map if it safisfies the following properties\footnote{Assumption (ii) can be relaxed in some cases, see for example \cite{ABL14}.}:
\begin{enumerate}
\item[(i)] $\Phi$ is strictly convex and differentiable.
\item[(ii)] The gradient of $\Phi$ takes all possible values, that is $\nabla \Phi(\cD) = \R^n$.
\item[(iii)] The gradient of $\Phi$ diverges on the boundary of $\cD$, that is 
$$\lim_{x \rightarrow \partial \mathcal{D}} \|\nabla \Phi(x)\| = + \infty .$$
\end{enumerate}

In mirror descent the gradient of the mirror map $\Phi$ is used to map points from the ``primal" to the ``dual" (note that all points lie in $\R^n$ so the notions of primal and dual spaces only have an intuitive meaning). Precisely a point $x \in \mathcal{X} \cap \mathcal{D}$ is mapped to $\nabla \Phi(x)$, from which one takes a gradient step to get to $\nabla \Phi(x) - \eta \nabla f(x)$. Property (ii) then allows us to write the resulting point as $\nabla \Phi(y) = \nabla \Phi(x) - \eta \nabla f(x)$ for some $y \in \cD$. The primal point $y$ may lie outside of the set of constraints $\cX$, in which case one has to project back onto $\cX$. In mirror descent this projection is done via the Bregman divergence associated to $\Phi$. Precisely one defines
$$\Pi_{\cX}^{\Phi} (y) = \argmin_{x \in \mathcal{X} \cap \mathcal{D}} D_{\Phi}(x,y) .$$
Property (i) and (iii) ensures the existence and uniqueness of this projection (in particular since $x \mapsto D_{\Phi}(x,y)$ is locally increasing on the boundary of $\mathcal{D}$). The following lemma shows that the Bregman divergence essentially behaves as the Euclidean norm squared in terms of projections (recall Lemma \ref{lem:todonow}).

\begin{lemma} \label{lem:todonow2}
Let $x \in \cX \cap \cD$ and $y \in \cD$, then
$$(\nabla \Phi(\Pi_{\cX}^{\Phi}(y)) - \nabla \Phi(y))^{\top} (\Pi^{\Phi}_{\cX}(y) - x) \leq 0 ,$$
which also implies 
$$D_{\Phi}(x, \Pi^{\Phi}_{\cX}(y)) + D_{\Phi}(\Pi^{\Phi}_{\cX}(y), y) \leq D_{\Phi}(x,y) .$$
\end{lemma}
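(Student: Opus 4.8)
Write $z^* = \Pi^{\Phi}_{\cX}(y)$ and consider the function $h(z) = D_{\Phi}(z,y) = \Phi(z) - \Phi(y) - \nabla \Phi(y)^{\top}(z-y)$, which by property (i) of the mirror map is strictly convex and differentiable on $\cD$, with $\nabla h(z) = \nabla \Phi(z) - \nabla \Phi(y)$. By definition $z^*$ minimizes $h$ over $\cX \cap \cD$. The plan is to apply a first-order optimality condition to this minimization, exactly as in Proposition \ref{prop:firstorder}: for every $x \in \cX \cap \cD$ one gets
$$\nabla h(z^*)^{\top}(z^* - x) \leq 0,$$
which is precisely the claimed inequality $(\nabla \Phi(z^*) - \nabla \Phi(y))^{\top}(z^* - x) \leq 0$.

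The one point that needs care is that Proposition \ref{prop:firstorder} is stated for a \emph{closed} convex set, whereas $\cX \cap \cD$ need not be closed. This is where property (iii) of the mirror map does the work: since $\|\nabla \Phi(x)\| \to +\infty$ as $x \to \partial \cD$, the map $z \mapsto D_{\Phi}(z,y)$ is locally increasing near $\partial \cD$, so its minimizer $z^*$ over $\overline{\cX \cap \cD}$ is attained in the open part $\cX \cap \cD$ and is in particular a minimizer over $\cX \cap \cD$. One can then run the argument of Proposition \ref{prop:firstorder} directly (consider $t \mapsto h(z^* + t(x - z^*))$ for $x \in \cX \cap \cD$ and $t \in [0,1]$, which is well-defined for small $t$ since $\cD$ is open and $z^* \in \cD$, and note its derivative at $0$ must be $\geq 0$). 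I expect this boundary bookkeeping to be the only mild obstacle; it is routine given the mirror-map axioms.

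For the ``which also implies'' part, the plan is to invoke the three-point identity \eqref{eq:useful1} with $f = \Phi$. Applying it with the substitutions $x \mapsto z^*$, $y \mapsto y$, $z \mapsto x$ gives
$$(\nabla \Phi(z^*) - \nabla \Phi(y))^{\top}(z^* - x) = D_{\Phi}(z^*, y) + D_{\Phi}(x, z^*) - D_{\Phi}(x, y).$$
Since the left-hand side is $\leq 0$ by the first part, rearranging yields $D_{\Phi}(x, z^*) + D_{\Phi}(z^*, y) \leq D_{\Phi}(x, y)$, which is the second claim. This last step is a one-line algebraic consequence and presents no difficulty.
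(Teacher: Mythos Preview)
Your proof is correct and follows essentially the same approach as the paper: apply the first-order optimality condition of Proposition \ref{prop:firstorder} to the minimizer of $z \mapsto D_{\Phi}(z,y)$, using that $\nabla_z D_{\Phi}(z,y) = \nabla \Phi(z) - \nabla \Phi(y)$. You are simply more explicit than the paper on two points it glosses over: the closedness technicality (which you handle correctly via property (iii) and a direct line-segment argument) and the derivation of the second inequality from the first via the three-point identity \eqref{eq:useful1}.
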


\begin{proof}
The proof is an immediate corollary of Proposition \ref{prop:firstorder} together with the fact that $\nabla_x D_{\Phi}(x,y) = \nabla \Phi(x) - \nabla \Phi(y)$.
\end{proof}

\section{Mirror descent} \label{sec:MD}
We can now describe the mirror descent strategy based on a mirror map $\Phi$. Let $x_1 \in \argmin_{x \in \mathcal{X} \cap \mathcal{D}} \Phi(x)$. Then for $t \geq 1$, let $y_{t+1} \in \mathcal{D}$ such that
\begin{equation} \label{eq:MD1}
\nabla \Phi(y_{t+1}) = \nabla \Phi(x_{t}) - \eta g_t, \ \text{where} \ g_t \in \partial f(x_t) ,
\end{equation}
and
\begin{equation} \label{eq:MD2}
x_{t+1} \in \Pi_{\cX}^{\Phi} (y_{t+1}) .
\end{equation}
See Figure \ref{fig:MD} for an illustration of this procedure.
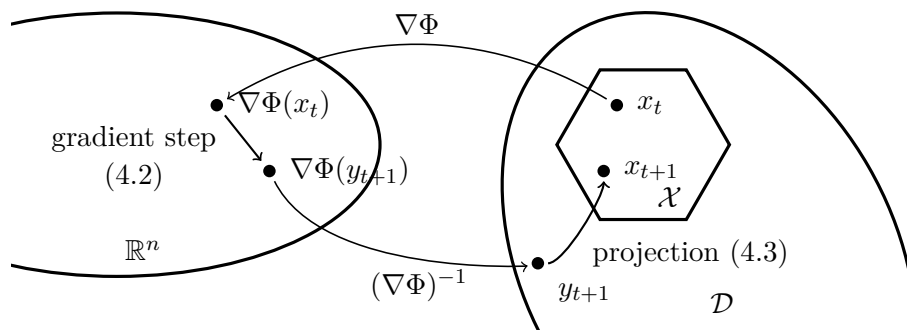
\begin{figure}
\begin{tikzpicture}[scale=3.5]
\clip (-2.4,-0.7) rectangle (1,1);
\draw[rotate=30, very thick] (0,-0.5) ellipse (0.7 and 1);
\draw[very thick] (-2,0) ellipse (1 and 0.5);
\node (S) [very thick, regular polygon, regular polygon sides=6, draw,
inner sep=20] at (0,0) {};
\node at (0.3,-0.6) {$\cD$};
\node at (-1.9,-0.4) {$\R^n$};
\node at (0.1, -0.2) {$\cX$};
\node [tokens=1] (noeudat) at (-0.1,0.15) [label=right:{$x_t$}] {};
\node [tokens=1] (noeudat1) at (-0.15,-0.1) [label=right:{$x_{t+1}$}] {};
\node [tokens=1] (noeudwt1) at (-0.4,-0.45) [label=below right:{$y_{t+1}$}] {};
\draw[->, thick] (noeudwt1) .. controls (-0.3, -0.45) and (-0.15, -0.2) .. (noeudat1) node[midway, below right] {projection \eqref{eq:MD2}};
\node [tokens=1] (noeudmat) at (-1.62,0.15) [label=right:{$\nabla \Phi(x_t)$}] {};
\node [tokens=1] (noeudmwt1) at (-1.42,-0.1) [label=right:{$\nabla \Phi(y_{t+1})$}] {};
\draw[->, thick] (noeudmat) -- (noeudmwt1) node[midway, left] {\begin{tabular}{c} \\ gradient step \\ \eqref{eq:MD1} \end{tabular}};
\draw[->, semithick] (noeudat) .. controls (-0.6,0.45) and (-1.12, 0.45) .. (noeudmat) node[midway, above] {$\nabla \Phi$}; 
\draw[->, semithick] (noeudmwt1) .. controls (-1.22,-0.5) and (-0.44, -0.46) .. (noeudwt1) node[midway, below] {$(\nabla \Phi)^{-1}$}; 
\end{tikzpicture}
\caption{Illustration of mirror descent.}
\label{fig:MD}
\end{figure}

\begin{theorem} \label{th:MD}
Let $\Phi$ be a mirror map $\rho$-strongly convex on $\mathcal{X} \cap \mathcal{D}$ w.r.t. $\|\cdot\|$.
Let $R^2 = \sup_{x \in \mathcal{X} \cap \mathcal{D}} \Phi(x) - \Phi(x_1)$, and $f$ be convex and $L$-Lipschitz w.r.t. $\|\cdot\|$. Then mirror descent with $\eta = \frac{R}{L} \sqrt{\frac{2 \rho}{t}}$ satisfies
$$f\bigg(\frac{1}{t} \sum_{s=1}^t x_s \bigg) - f(x^*) \leq RL \sqrt{\frac{2}{\rho t}} .$$
\end{theorem}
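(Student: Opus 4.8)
The plan is to mimic the Euclidean analysis of projected subgradient descent (Theorem~\ref{th:pgd}), replacing the squared Euclidean distance by the Bregman divergence $D_\Phi$ and using the three-point identity \eqref{eq:useful1} in place of the elementary identity $2a^\top b = \|a\|^2 + \|b\|^2 - \|a-b\|^2$. First I would fix $x \in \cX \cap \cD$ (eventually $x = x^*$, though I should be slightly careful since $x^*$ need not lie in $\cD$; one takes $x^*$ in the closure and passes to a limit, or simply notes the bound holds for every $x \in \cX\cap\cD$ and then extends by continuity). Starting from the subgradient inequality,
$$ f(x_s) - f(x) \le g_s^\top(x_s - x), $$
and using the definition \eqref{eq:MD1} of $y_{s+1}$, namely $\eta g_s = \nabla\Phi(x_s) - \nabla\Phi(y_{s+1})$, I would rewrite
$$ g_s^\top(x_s - x) = \frac{1}{\eta}\big(\nabla\Phi(x_s) - \nabla\Phi(y_{s+1})\big)^\top(x_s - x), $$
and then invoke \eqref{eq:useful1} with the roles $(x,y,z) \mapsto (x_s, y_{s+1}, x)$ to get
$$ g_s^\top(x_s-x) = \frac{1}{\eta}\Big( D_\Phi(x_s, y_{s+1}) + D_\Phi(x, x_s) - D_\Phi(x, y_{s+1}) \Big). $$

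Next I would handle the two ``error'' terms. For the projection step, Lemma~\ref{lem:todonow2} gives $D_\Phi(x, x_{s+1}) + D_\Phi(x_{s+1}, y_{s+1}) \le D_\Phi(x, y_{s+1})$, hence $-D_\Phi(x,y_{s+1}) \le -D_\Phi(x, x_{s+1}) - D_\Phi(x_{s+1}, y_{s+1})$, which turns the sum over $s$ into a telescoping sum $D_\Phi(x,x_1) - D_\Phi(x, x_{t+1}) \le D_\Phi(x,x_1) \le R^2$ (using $x_1 \in \argmin \Phi$, which forces $D_\Phi(x,x_1) = \Phi(x) - \Phi(x_1) \le R^2$, and $D_\Phi \ge 0$). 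The remaining term to control is $D_\Phi(x_s, y_{s+1}) - D_\Phi(x_{s+1}, y_{s+1})$; here I would use $\rho$-strong convexity of $\Phi$ to lower bound $D_\Phi(x_{s+1},y_{s+1}) \ge \frac{\rho}{2}\|x_{s+1}-y_{s+1}\|^2$, and bound $D_\Phi(x_s,y_{s+1})$ from above by writing it as $\Phi(x_s) - \Phi(y_{s+1}) - \nabla\Phi(y_{s+1})^\top(x_s - y_{s+1})$ and comparing with the same expression at $x_{s+1}$; the net effect is the standard estimate
$$ D_\Phi(x_s,y_{s+1}) - D_\Phi(x_{s+1},y_{s+1}) \le g_s^\top(x_s - x_{s+1}) \cdot \eta^{-1}\cdot\eta \;-\;\tfrac{\rho}{2}\|x_s - x_{s+1}\|^2, $$
and then Cauchy--Schwarz with $\|g_s\|_* \le L$ together with $ab - \frac{\rho}{2}b^2 \le \frac{a^2}{2\rho}$ yields the clean bound $\frac{\eta^2 L^2}{2\rho}$ per step.

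Putting these together gives
$$ \sum_{s=1}^t \big( f(x_s) - f(x) \big) \le \frac{R^2}{\eta} + \frac{\eta L^2 t}{2\rho}, $$
and optimizing in $\eta$ (i.e.\ $\eta = \frac{R}{L}\sqrt{2\rho/t}$) gives $RL\sqrt{2t/\rho}$; dividing by $t$ and applying Jensen's inequality, $f\big(\frac1t\sum_s x_s\big) \le \frac1t\sum_s f(x_s)$, finishes the proof with $x = x^*$. The main obstacle I anticipate is the middle step: correctly extracting the per-step quantity $D_\Phi(x_s,y_{s+1}) - D_\Phi(x_{s+1},y_{s+1})$ and bounding it using strong convexity of $\Phi$ rather than of $f$ — this is where the non-Euclidean geometry genuinely enters, and one must be careful that the ``local norm'' in the gradient step ($\|g_s\|_*$, measured in the dual norm) matches the norm in which $\Phi$ is $\rho$-strongly convex. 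A secondary technical point is the legitimacy of plugging in $x = x^*$ when $x^*$ lies only in $\overline{\cX\cap\cD}$; this is handled by a routine continuity/limiting argument, or by assuming (as is implicit) that $x^* \in \cX \cap \cD$.
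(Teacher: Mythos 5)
Your proposal follows exactly the paper's proof: the same subgradient start, the same rewriting via $\eta g_s = \nabla\Phi(x_s) - \nabla\Phi(y_{s+1})$, the same invocation of the three-point identity \eqref{eq:useful1}, the same use of Lemma~\ref{lem:todonow2} to create the telescoping sum, and the same $\rho$-strong-convexity-plus-Cauchy--Schwarz bound $\frac{(\eta L)^2}{2\rho}$ on the per-step residual $D_\Phi(x_s,y_{s+1}) - D_\Phi(x_{s+1},y_{s+1})$. One minor slip: $D_\Phi(x,x_1) \leq \Phi(x) - \Phi(x_1)$ by first-order optimality of $x_1$, not an equality, but the conclusion $\leq R^2$ and the rest of the argument are unaffected.
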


\begin{proof}
Let $x \in \mathcal{X} \cap \mathcal{D}$. The claimed bound will be obtained by taking a limit $x \rightarrow x^*$. Now by convexity of $f$, the definition of mirror descent, equation \eqref{eq:useful1}, and Lemma \ref{lem:todonow2}, one has
\begin{align*}
& f(x_s) - f(x) \\
& \leq g_s^{\top} (x_s - x) \\
& = \frac{1}{\eta} (\nabla \Phi(x_s) - \nabla \Phi(y_{s+1}))^{\top} (x_s - x) \\
& = \frac{1}{\eta} \bigg( D_{\Phi}(x, x_s) + D_{\Phi}(x_s, y_{s+1}) - D_{\Phi}(x, y_{s+1}) \bigg) \\
& \leq \frac{1}{\eta} \bigg( D_{\Phi}(x, x_s) + D_{\Phi}(x_s, y_{s+1}) - D_{\Phi}(x, x_{s+1}) - D_{\Phi}(x_{s+1}, y_{s+1}) \bigg) .
\end{align*}
The term $D_{\Phi}(x, x_s) -  D_{\Phi}(x, x_{s+1})$ will lead to a telescopic sum when summing over $s=1$ to $s=t$, and it remains to bound the other term as follows using $\rho$-strong convexity of the mirror map and $a z - b z^2 \leq \frac{a^2}{4 b}, \forall z \in \R$:
\begin{align*}
& D_{\Phi}(x_s, y_{s+1}) - D_{\Phi}(x_{s+1}, y_{s+1}) \\
& = \Phi(x_s) - \Phi(x_{s+1}) - \nabla \Phi(y_{s+1})^{\top} (x_{s} - x_{s+1}) \\
& \leq (\nabla \Phi(x_s) - \nabla \Phi(y_{s+1}))^{\top} (x_{s} - x_{s+1}) - \frac{\rho}{2} \|x_s - x_{s+1}\|^2 \\
& = \eta g_s^{\top} (x_{s} - x_{s+1}) - \frac{\rho}{2} \|x_s - x_{s+1}\|^2 \\
& \leq \eta L \|x_{s} - x_{s+1}\| - \frac{\rho}{2} \|x_s - x_{s+1}\|^2 \\
& \leq \frac{(\eta L)^2}{2 \rho}.
\end{align*}
We proved 
$$\sum_{s=1}^t \bigg(f(x_s) - f(x)\bigg) \leq \frac{D_{\Phi}(x,x_1)}{\eta} + \eta \frac{L^2 t}{2 \rho},$$
which concludes the proof up to trivial computation.
\end{proof}

We observe that one can rewrite mirror descent as follows:
\begin{eqnarray}
x_{t+1} & = & \argmin_{x \in \mathcal{X} \cap \mathcal{D}} \ D_{\Phi}(x,y_{t+1}) \notag \\
& = & \argmin_{x \in \mathcal{X} \cap \mathcal{D}} \ \Phi(x) - \nabla \Phi(y_{t+1})^{\top} x \label{eq:MD3} \\
& = & \argmin_{x \in \mathcal{X} \cap \mathcal{D}} \ \Phi(x) - (\nabla \Phi(x_{t}) - \eta g_t)^{\top} x \notag \\
& = & \argmin_{x \in \mathcal{X} \cap \mathcal{D}} \ \eta g_t^{\top} x + D_{\Phi}(x,x_t) . \label{eq:MDproxview}
\end{eqnarray}
This last expression is often taken as the definition of mirror descent (see \cite{BT03}). It gives a proximal point of view on mirror descent: the method is trying to minimize the local linearization of the function while not moving too far away from the previous point, with distances measured via the Bregman divergence of the mirror map.

\section{Standard setups for mirror descent} \label{sec:mdsetups}
\noindent \textbf{``Ball setup".} The simplest version of mirror descent is obtained by taking $\Phi(x) = \frac{1}{2} \|x\|^2_2$ on $\mathcal{D} = \mathbb{R}^n$. The function $\Phi$ is a mirror map strongly convex w.r.t. $\|\cdot\|_2$, and furthermore the associated Bregman divergence is given by $D_{\Phi}(x,y) = \frac{1}{2} \|x - y\|^2_2$. Thus in that case mirror descent is exactly equivalent to projected subgradient descent, and the rate of convergence obtained in Theorem \ref{th:MD} recovers our earlier result on projected subgradient descent.
\newline

\noindent
\textbf{``Simplex setup".} A more interesting choice of a mirror map is given by the negative entropy
$$\Phi(x) = \sum_{i=1}^n x(i) \log x(i),$$
on $\mathcal{D} = \mathbb{R}_{++}^n$. In that case the gradient update $\nabla \Phi(y_{t+1}) = \nabla \Phi(x_t) - \eta \nabla f(x_t)$ can be written equivalently as
$$y_{t+1}(i) = x_{t}(i) \exp\big(- \eta [\nabla f(x_t) ](i) \big) , \ i=1, \hdots, n.$$
The Bregman divergence of this mirror map is given by $D_{\Phi}(x,y) = \sum_{i=1}^n x(i) \log \frac{x(i)}{y(i)}$ (also known as the Kullback-Leibler divergence). It is easy to verify that the projection with respect to this Bregman divergence on the simplex $\Delta_n = \{x \in \mathbb{R}_+^n : \sum_{i=1}^n x(i) = 1\}$ amounts to a simple renormalization $y \mapsto y / \|y\|_1$. Furthermore it is also easy to verify that $\Phi$ is $1$-strongly convex w.r.t. $\|\cdot\|_1$ on $\Delta_n$ (this result is known as Pinsker's inequality). Note also that for $\mathcal{X} = \Delta_n$ one has $x_1 = (1/n, \hdots, 1/n)$ and $R^2 = \log n$.

The above observations imply that when minimizing on the simplex $\Delta_n$ a function $f$ with subgradients bounded in $\ell_{\infty}$-norm, mirror descent with the negative entropy achieves a rate of convergence of order $\sqrt{\frac{\log n}{t}}$. On the other hand the regular subgradient descent achieves only a rate of order $\sqrt{\frac{n}{t}}$ in this case!
\newline

\noindent
\textbf{``Spectrahedron setup".} We consider here functions defined on matrices, and we are interested in minimizing a function $f$ on the {\em spectrahedron} $\mathcal{S}_n$ defined as:
$$\mathcal{S}_n = \left\{X \in \mathbb{S}_+^n : \mathrm{Tr}(X) = 1 \right\} .$$
In this setting we consider the mirror map on $\mathcal{D} = \mathbb{S}_{++}^n$ given by the negative von Neumann entropy:
$$\Phi(X) = \sum_{i=1}^n \lambda_i(X) \log \lambda_i(X) ,$$
where $\lambda_1(X), \hdots, \lambda_n(X)$ are the eigenvalues of $X$. It can be shown that the gradient update $\nabla \Phi(Y_{t+1}) = \nabla \Phi(X_t) - \eta \nabla f(X_t)$ can be written equivalently as
$$Y_{t+1} = \exp\big(\log X_t - \eta \nabla f(X_t) \big) ,$$
where the matrix exponential and matrix logarithm are defined as usual. Furthermore the projection on $\mathcal{S}_n$ is a simple trace renormalization.

With highly non-trivial computation one can show that $\Phi$ is $\frac{1}{2}$-strongly convex with respect to the Schatten $1$-norm defined as
$$\|X\|_1 = \sum_{i=1}^n \lambda_i(X).$$
It is easy to see that for $\mathcal{X} = \mathcal{S}_n$ one has $x_1 = \frac{1}{n} \mI_n$ and $R^2 = \log n$. In other words the rate of convergence for optimization on the spectrahedron is the same than on the simplex!

\section{Lazy mirror descent, aka Nesterov's dual averaging}
In this section we consider a slightly more efficient version of mirror descent for which we can prove that Theorem \ref{th:MD} still holds true. This alternative algorithm can be advantageous in some situations (such as distributed settings), but the basic mirror descent scheme remains important for extensions considered later in this text (saddle points, stochastic oracles, ...). 

In lazy mirror descent, also commonly known as Nesterov's dual averaging or simply dual averaging, one replaces \eqref{eq:MD1} by
$$\nabla \Phi(y_{t+1}) = \nabla \Phi(y_{t}) - \eta g_t ,$$
and also $y_1$ is such that $\nabla \Phi(y_1) = 0$. In other words instead of going back and forth between the primal and the dual, dual averaging simply averages the gradients in the dual, and if asked for a point in the primal it simply maps the current dual point to the primal using the same methodology as mirror descent. In particular using \eqref{eq:MD3} one immediately sees that dual averaging is defined by:
\begin{equation} \label{eq:DA0}
x_t = \argmin_{x \in \mathcal{X} \cap \mathcal{D}} \ \eta \sum_{s=1}^{t-1} g_s^{\top} x + \Phi(x) .
\end{equation}
\begin{theorem}
Let $\Phi$ be a mirror map $\rho$-strongly convex on $\mathcal{X} \cap \mathcal{D}$ w.r.t. $\|\cdot\|$.
Let $R^2 = \sup_{x \in \mathcal{X} \cap \mathcal{D}} \Phi(x) - \Phi(x_1)$, and $f$ be convex and $L$-Lipschitz w.r.t. $\|\cdot\|$. Then dual averaging with $\eta = \frac{R}{L} \sqrt{\frac{\rho}{2 t}}$ satisfies
$$f\bigg(\frac{1}{t} \sum_{s=1}^t x_s \bigg) - f(x^*) \leq 2 RL \sqrt{\frac{2}{\rho t}} .$$
\end{theorem}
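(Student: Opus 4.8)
The plan is to run the standard ``lazy'' analysis in three moves: (1) control the iterate stability $\|x_t-x_{t+1}\|$ using strong convexity of the running objective; (2) bound the regret $\sum_{s=1}^t g_s^{\top}(x_s-x)$ by a ``be the leader'' telescoping argument combined with the stability estimate; (3) convert this to a function-value bound via convexity and Jensen, then optimize $\eta$.

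First I would write $\psi_t(x) = \Phi(x) + \eta\sum_{s=1}^{t-1} g_s^{\top}x$, so that \eqref{eq:DA0} reads $x_t \in \argmin_{x\in\cX\cap\cD}\psi_t(x)$ and $x_1\in\argmin_{x\in\cX\cap\cD}\Phi(x)$ (this is exactly what the choices $\nabla\Phi(y_1)=0$ and $x_1=\Pi^{\Phi}_{\cX}(y_1)$ give, by the rewriting \eqref{eq:MD3}). Since $\psi_t$ differs from $\Phi$ by a linear term it is again $\rho$-strongly convex w.r.t. $\|\cdot\|$, hence $\psi_{t+1}(x_t)\geq \psi_{t+1}(x_{t+1}) + \tfrac{\rho}{2}\|x_t-x_{t+1}\|^2$. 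Expanding $\psi_{t+1}=\psi_t+\eta g_t^{\top}(\cdot)$ and using $\psi_t(x_t)\leq\psi_t(x_{t+1})$ yields $\eta g_t^{\top}(x_t-x_{t+1})\geq \tfrac{\rho}{2}\|x_t-x_{t+1}\|^2$; combined with $g_t^{\top}(x_t-x_{t+1})\leq\|g_t\|_*\|x_t-x_{t+1}\|\leq L\|x_t-x_{t+1}\|$ this gives $\|x_t-x_{t+1}\|\leq 2\eta L/\rho$ and therefore $\eta g_t^{\top}(x_t-x_{t+1})\leq 2\eta^2 L^2/\rho$.

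Next, the ``be the leader'' step: by induction on $t$ one shows that for every $x\in\cX\cap\cD$ one has $\eta\sum_{s=1}^t g_s^{\top}x_{s+1} + \Phi(x_1)\leq \eta\sum_{s=1}^t g_s^{\top}x + \Phi(x)$ — the base case $t=0$ is $x_1\in\argmin\Phi$, and the inductive step applies the hypothesis at $x=x_{t+1}$ and uses $x_{t+1}\in\argmin\psi_{t+1}$. Since $\Phi(x)-\Phi(x_1)\leq R^2$ and $g_s^{\top}x_s = g_s^{\top}x_{s+1} + g_s^{\top}(x_s-x_{s+1})$, summing over $s$ gives $\sum_{s=1}^t g_s^{\top}(x_s-x)\leq \tfrac{R^2}{\eta} + \tfrac{2\eta L^2 t}{\rho}$ for all $x\in\cX\cap\cD$. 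By convexity $f(x_s)-f(x)\leq g_s^{\top}(x_s-x)$, so dividing by $t$ and applying Jensen's inequality gives $f\big(\tfrac1t\sum_{s=1}^t x_s\big) - f(x) \leq \tfrac{R^2}{\eta t} + \tfrac{2\eta L^2}{\rho}$; with $\eta=\tfrac{R}{L}\sqrt{\tfrac{\rho}{2t}}$ both terms equal $RL\sqrt{2/(\rho t)}$, proving the claimed bound after letting $x\to x^*$ through $\cX\cap\cD$ and invoking continuity of $f$ (which is convex and $L$-Lipschitz), exactly as in the proof of Theorem \ref{th:MD}.

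The main obstacle is getting the regret bookkeeping right: one has to correctly assemble the two ingredients (stability of the iterates and the be-the-leader inequality) and, in particular, be careful that $x_1$ is the minimizer of $\Phi$ over $\cX\cap\cD$ rather than over $\cD$, which is precisely what the dual-averaging initialization secures. The stability estimate obtained from strong convexity of $\psi_{t+1}$ is the only genuinely new computation compared with the eager mirror descent analysis; everything else is telescoping plus the same limiting argument and step-size tuning used for Theorem \ref{th:MD}.
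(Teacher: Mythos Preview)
Your proof is correct and follows essentially the same approach as the paper: the same stability estimate from strong convexity of the running objective, the same ``be the leader'' induction, and the same final step-size tuning. The only cosmetic difference is the index shift in the definition of $\psi_t$ (you sum up to $t-1$ where the paper sums up to $t$), which has no bearing on the argument.
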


\begin{proof}
We define $\psi_t(x) = \eta \sum_{s=1}^{t} g_s^{\top} x + \Phi(x)$, so that $x_t \in  \argmin_{x \in \mathcal{X} \cap \mathcal{D}} \psi_{t-1}(x)$. Since $\Phi$ is $\rho$-strongly convex one clearly has that $\psi_t$ is $\rho$-strongly convex, and thus
\begin{eqnarray*}
\psi_t(x_{t+1}) - \psi_t(x_t) & \leq & \nabla \psi_t(x_{t+1})^{\top}(x_{t+1} - x_{t}) - \frac{\rho}{2} \|x_{t+1} - x_t\|^2 \\
& \leq & - \frac{\rho}{2} \|x_{t+1} - x_t\|^2 ,
\end{eqnarray*}
where the second inequality comes from the first order optimality condition for $x_{t+1}$ (see Proposition \ref{prop:firstorder}). Next observe that
\begin{eqnarray*}
\psi_t(x_{t+1}) - \psi_t(x_t) & = & \psi_{t-1}(x_{t+1}) - \psi_{t-1}(x_t) + \eta g_t^{\top} (x_{t+1} - x_t) \\
& \geq & \eta g_t^{\top} (x_{t+1} - x_t) .
\end{eqnarray*}
Putting together the two above displays and using Cauchy-Schwarz (with the assumption $\|g_t\|_* \leq L$) one obtains
$$\frac{\rho}{2} \|x_{t+1} - x_t\|^2 \leq \eta g_t^{\top} (x_t - x_{t+1}) \leq \eta L \|x_t - x_{t+1} \|.$$
In particular this shows that $\|x_{t+1} - x_t\| \leq \frac{2 \eta L}{\rho}$ and thus with the above display
\begin{equation} \label{eq:DA1}
g_t^{\top} (x_t - x_{t+1}) \leq \frac{2 \eta L^2}{\rho} .
\end{equation}
Now we claim that for any $x \in \cX \cap \cD$,
\begin{equation} \label{eq:DA2}
\sum_{s=1}^t g_s^{\top} (x_s - x) \leq \sum_{s=1}^t g_s^{\top} (x_s - x_{s+1}) + \frac{\Phi(x) - \Phi(x_1)}{\eta} ,
\end{equation}
which would clearly conclude the proof thanks to \eqref{eq:DA1} and straightforward computations. Equation \eqref{eq:DA2} is equivalent to 
$$\sum_{s=1}^t g_s^{\top} x_{s+1} + \frac{\Phi(x_1)}{\eta} \leq \sum_{s=1}^t g_s^{\top} x + \frac{\Phi(x)}{\eta} ,$$
and we now prove the latter equation by induction. At $t=0$ it is true since $x_1 \in \argmin_{x \in \cX \cap \cD} \Phi(x)$. The following inequalities prove the inductive step, where we use the induction hypothesis at $x=x_{t+1}$ for the first inequality, and the definition of $x_{t+1}$ for the second inequality:
$$\sum_{s=1}^{t} g_s^{\top} x_{s+1} + \frac{\Phi(x_1)}{\eta} \leq g_{t}^{\top}x_{t+1} + \sum_{s=1}^{t-1} g_s^{\top} x_{t+1} + \frac{\Phi(x_{t+1})}{\eta} \leq \sum_{s=1}^{t} g_s^{\top} x + \frac{\Phi(x)}{\eta} .$$
\end{proof}

\section{Mirror prox}
It can be shown that mirror descent accelerates for smooth functions to the rate $1/t$. We will prove this result in Chapter \ref{rand} (see Theorem \ref{th:SMDsmooth}). We describe here a variant of mirror descent which also attains the rate $1/t$ for smooth functions. This method is called mirror prox and it was introduced in \cite{Nem04}. 
The true power of mirror prox will reveal itself later in the text when we deal with smooth representations of non-smooth functions as well as stochastic oracles\footnote{Basically mirror prox allows for a smooth vector field point of view (see Section \ref{sec:vectorfield}), while mirror descent does not.}.

Mirror prox is described by the following equations:
\begin{align*}
& \nabla \Phi(y_{t+1}') = \nabla \Phi(x_{t}) - \eta \nabla f(x_t), \\ \\
& y_{t+1} \in \argmin_{x \in \mathcal{X} \cap \mathcal{D}} D_{\Phi}(x,y_{t+1}') , \\  \\
& \nabla \Phi(x_{t+1}') = \nabla \Phi(x_{t}) - \eta \nabla f(y_{t+1}), \\ \\
& x_{t+1} \in \argmin_{x \in \mathcal{X} \cap \mathcal{D}} D_{\Phi}(x,x_{t+1}') .
\end{align*}
In words the algorithm first makes a step of mirror descent to go from $x_t$ to $y_{t+1}$, and then it makes a similar step to obtain $x_{t+1}$, starting again from $x_t$ but this time using the gradient of $f$ evaluated at $y_{t+1}$ (instead of $x_t$), see Figure \ref{fig:mp} for an illustration. The following result justifies the procedure.

\begin{figure}
\begin{tikzpicture}[scale=4]
\clip (-2.4,-0.7) rectangle (0.5,1);
\draw[rotate=30, very thick] (0,-0.5) ellipse (0.73 and 1);
\draw[very thick] (-2,0) ellipse (1 and 0.5);
\node (S) [very thick, regular polygon, regular polygon sides=6, draw,
inner sep=22] at (0,0) {};
\node at (0.3,-0.6) {$\cD$};
\node at (-2.2,-0.4) {$\R^n$};
\node at (0.12, -0.22) {$\cX$};
\node [tokens=1] (noeudat) at (-0.1,0.15) [label=right:{$x_t$}] {};
\node [tokens=1] (noeudat1) at (-0.15,-0.13) [label=right:{$y_{t+1}$}] {};
\node [tokens=1] (noeudwt1) at (-0.4,-0.45) [label=below right:{$y_{t+1}'$}] {};
\draw[->, thick] (noeudwt1) .. controls (-0.3, -0.45) and (-0.15, -0.2) .. (noeudat1) node[midway, below right] {projection};
\node [tokens=1] (noeudat3) at (-0.2,0) [label=right:{$x_{t+1}$}] {};
\node [tokens=1] (noeudwt3) at (-0.3,-0.2) [label=left:{$x_{t+1}'$}] {};
\draw[->, thick] (noeudwt3) .. controls (-0.22, -0.12) and (-0.22, -0.1) .. (noeudat3) {};
\node [tokens=1] (noeudmat) at (-1.7,0.3) [label=right:{$\nabla \Phi(x_t)$}] {};
\node [tokens=1] (noeudmwt1) at (-2,-0.2) [label=below right:{$\nabla \Phi(y_{t+1}')$}] {};
\node [tokens=1] (noeudmwt2) at (-1.6,-0.1) [label=right:{$\nabla \Phi(x_{t+1}')$}] {};
\draw[->, thick] (noeudmat) -- (noeudmwt2) node[midway, right] {$- \eta \nabla f(y_{t+1})$};
\draw[->, thick, dashed] (noeudmat) -- (noeudmwt1) node[midway, left] {$- \eta \nabla f(x_t)$};
\draw[->, semithick] (-0.4,0.5) .. controls (-0.7,0.6) and (-1, 0.6) .. (-1.3,0.5) node[midway, above] {$\nabla \Phi$}; 
\draw[->, semithick] (-1.45,-0.5) .. controls (-1.15,-0.6) and (-0.85, -0.6) .. (-0.55,-0.5) node[midway, below] {$(\nabla \Phi)^{-1}$}; 
\end{tikzpicture}
\caption{Illustration of mirror prox.}
\label{fig:mp}
\end{figure}
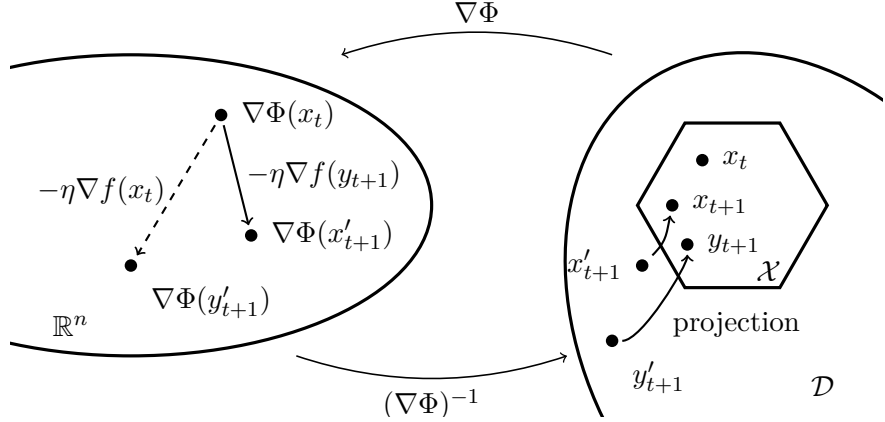

\begin{theorem}
Let $\Phi$ be a mirror map $\rho$-strongly convex on $\mathcal{X} \cap \mathcal{D}$ w.r.t. $\|\cdot\|$. Let $R^2 = \sup_{x \in \mathcal{X} \cap \mathcal{D}} \Phi(x) - \Phi(x_1)$, and $f$ be convex and $\beta$-smooth w.r.t. $\|\cdot\|$. Then mirror prox with $\eta = \frac{\rho}{\beta}$ satisfies
$$f\bigg(\frac{1}{t} \sum_{s=1}^t y_{s+1} \bigg) - f(x^*) \leq \frac{\beta R^2}{\rho t} .$$
\end{theorem}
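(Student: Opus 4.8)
The plan is to run the standard ``one-step'' analysis of mirror-type methods, bookkeeping the Bregman divergences of the mirror map $\Phi$, and to mimic the proof of Theorem \ref{th:MD}. Fix an arbitrary $x \in \mathcal{X} \cap \mathcal{D}$; the goal is to bound $\sum_{s=1}^t \bigl(f(y_{s+1}) - f(x)\bigr)$, then apply Jensen's inequality to $\frac1t \sum_{s=1}^t y_{s+1}$ and pass to the limit $x \to x^*$. By convexity of $f$, $\eta\bigl(f(y_{s+1}) - f(x)\bigr) \le \eta \nabla f(y_{s+1})^{\top} (y_{s+1} - x)$, and the whole argument consists of splitting the right-hand side as $\eta\nabla f(y_{s+1})^{\top}(x_{s+1} - x) + \eta\nabla f(y_{s+1})^{\top}(y_{s+1} - x_{s+1})$ and controlling each part using the algorithm's two ``dual'' updates and Lemma \ref{lem:todonow2}.

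For the first part I would write $\eta \nabla f(y_{s+1}) = \nabla\Phi(x_s) - \nabla\Phi(x_{s+1}')$, use Lemma \ref{lem:todonow2} (with $y = x_{s+1}'$, test point $x$) to replace $\nabla\Phi(x_{s+1}')$ by $\nabla\Phi(x_{s+1})$ at no cost, and then expand $(\nabla\Phi(x_s) - \nabla\Phi(x_{s+1}))^{\top}(x_{s+1} - x)$ via the three-point identity \eqref{eq:useful1}; this produces the telescoping pair $D_{\Phi}(x,x_s) - D_{\Phi}(x,x_{s+1})$ plus a leftover $-D_{\Phi}(x_{s+1},x_s)$. For the second part, split $\eta\nabla f(y_{s+1}) = \eta\nabla f(x_s) + \eta\bigl(\nabla f(y_{s+1}) - \nabla f(x_s)\bigr)$. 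On the $\nabla f(x_s)$ piece use $\eta\nabla f(x_s) = \nabla\Phi(x_s) - \nabla\Phi(y_{s+1}')$, apply Lemma \ref{lem:todonow2} again (with $y = y_{s+1}'$, test point $x_{s+1}$) and \eqref{eq:useful1}, obtaining $+D_{\Phi}(x_{s+1},x_s) - D_{\Phi}(y_{s+1},x_s) - D_{\Phi}(x_{s+1},y_{s+1})$; on the gradient-difference piece simply invoke $\beta$-smoothness and Cauchy--Schwarz to get the bound $\eta\beta\,\|y_{s+1} - x_s\|\,\|y_{s+1} - x_{s+1}\|$.

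Adding the two parts, the crucial cancellation is $-D_{\Phi}(x_{s+1},x_s) + D_{\Phi}(x_{s+1},x_s) = 0$, leaving
$$\eta\bigl(f(y_{s+1}) - f(x)\bigr) \le D_{\Phi}(x,x_s) - D_{\Phi}(x,x_{s+1}) - D_{\Phi}(y_{s+1},x_s) - D_{\Phi}(x_{s+1},y_{s+1}) + \eta\beta\,\|y_{s+1} - x_s\|\,\|y_{s+1} - x_{s+1}\|.$$
Now use $\rho$-strong convexity of $\Phi$ to lower bound $D_{\Phi}(y_{s+1},x_s) \ge \tfrac{\rho}{2}\|y_{s+1}-x_s\|^2$ and $D_{\Phi}(x_{s+1},y_{s+1}) \ge \tfrac{\rho}{2}\|x_{s+1}-y_{s+1}\|^2$, and $ab \le \tfrac12(a^2+b^2)$ on the cross term; the three non-telescoping terms collapse to $\tfrac{\eta\beta-\rho}{2}\bigl(\|y_{s+1}-x_s\|^2 + \|y_{s+1}-x_{s+1}\|^2\bigr)$, which is nonpositive exactly because $\eta = \rho/\beta$. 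Hence $\eta\bigl(f(y_{s+1}) - f(x)\bigr) \le D_{\Phi}(x,x_s) - D_{\Phi}(x,x_{s+1})$.

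Summing $s=1,\dots,t$ telescopes to $\eta\sum_{s=1}^t(f(y_{s+1}) - f(x)) \le D_{\Phi}(x,x_1) - D_{\Phi}(x,x_{t+1}) \le D_{\Phi}(x,x_1)$, and since $x_1$ minimizes $\Phi$ over $\mathcal{X}\cap\mathcal{D}$ the first-order optimality condition gives $\nabla\Phi(x_1)^{\top}(x-x_1)\ge 0$, so $D_{\Phi}(x,x_1) \le \Phi(x) - \Phi(x_1) \le R^2$. Dividing by $\eta t = \rho t/\beta$, applying Jensen, and letting $x\to x^*$ gives the claim. The main obstacle is entirely the sign bookkeeping in the two uses of the three-point identity \eqref{eq:useful1}, so that the $D_{\Phi}(x_{s+1},x_s)$ terms cancel and the remaining pair of negative Bregman divergences is precisely large enough (after invoking $\rho$-strong convexity and the choice $\eta=\rho/\beta$) to absorb the smoothness cross term rather than leaving a positive residual.
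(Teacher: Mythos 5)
Your proposal is correct and is essentially the paper's own proof: the same three-term decomposition of $\nabla f(y_{s+1})^{\top}(y_{s+1}-x)$ (you just write it as a two-step split, but the resulting pieces are identical), the same two applications of Lemma~\ref{lem:todonow2} and the three-point identity \eqref{eq:useful1}, the same cancellation of $D_{\Phi}(x_{s+1},x_s)$, and the same absorption of the smoothness cross term by the two Bregman divergences via $\rho$-strong convexity of $\Phi$ and the choice $\eta=\rho/\beta$. The only cosmetic difference is that you carry the factor $\eta$ throughout while the paper divides by $\eta$ at the end.
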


\begin{proof}
Let $x \in \mathcal{X} \cap \mathcal{D}$. We write
\begin{eqnarray*}
f(y_{t+1}) - f(x) & \leq & \nabla f(y_{t+1})^{\top} (y_{t+1} - x) \\
& = & \nabla f(y_{t+1})^{\top} (x_{t+1} - x) + \nabla f(x_t)^{\top} (y_{t+1} - x_{t+1}) \\
& & + (\nabla f(y_{t+1}) - \nabla f(x_t))^{\top} (y_{t+1} - x_{t+1}) .
\end{eqnarray*}
We will now bound separately these three terms. For the first one, using the definition of the method, Lemma \ref{lem:todonow2}, and equation \eqref{eq:useful1}, one gets
\begin{align*}
& \eta \nabla f(y_{t+1})^{\top} (x_{t+1} - x) \\
& = ( \nabla \Phi(x_t) - \nabla \Phi(x_{t+1}'))^{\top} (x_{t+1} - x) \\
& \leq ( \nabla \Phi(x_t) - \nabla \Phi(x_{t+1}))^{\top} (x_{t+1} - x) \\
& = D_{\Phi}(x,x_t) - D_{\Phi}(x, x_{t+1}) - D_{\Phi}(x_{t+1}, x_t) .
\end{align*}
For the second term using the same properties than above and the strong-convexity of the mirror map one obtains
\begin{align}
& \eta \nabla f(x_t)^{\top} (y_{t+1} - x_{t+1}) \notag\\
& = ( \nabla \Phi(x_t) - \nabla \Phi(y_{t+1}'))^{\top} (y_{t+1} - x_{t+1}) \notag\\
& \leq ( \nabla \Phi(x_t) - \nabla \Phi(y_{t+1}))^{\top} (y_{t+1} - x_{t+1}) \notag\\
& = D_{\Phi}(x_{t+1},x_t) - D_{\Phi}(x_{t+1}, y_{t+1}) - D_{\Phi}(y_{t+1}, x_t) \label{eq:pourplustard1}\\
& \leq D_{\Phi}(x_{t+1},x_t) - \frac{\rho}{2} \|x_{t+1} - y_{t+1} \|^2 - \frac{\rho}{2} \|y_{t+1} - x_t\|^2 . \notag
\end{align}
Finally for the last term, using Cauchy-Schwarz, $\beta$-smoothness, and $2 ab \leq a^2 + b^2$ one gets
\begin{align*}
& (\nabla f(y_{t+1}) - \nabla f(x_t))^{\top} (y_{t+1} - x_{t+1}) \\
& \leq \|\nabla f(y_{t+1}) - \nabla f(x_t)\|_*  \cdot \|y_{t+1} - x_{t+1} \| \\
& \leq \beta \|y_{t+1} - x_t\| \cdot \|y_{t+1} - x_{t+1} \| \\
& \leq \frac{\beta}{2} \|y_{t+1} - x_t\|^2 + \frac{\beta}{2}  \|y_{t+1} - x_{t+1} \|^2 . 
\end{align*}
Thus summing up these three terms and using that $\eta = \frac{\rho}{\beta}$ one gets
$$f(y_{t+1}) - f(x) \leq \frac{D_{\Phi}(x,x_t) - D_{\Phi}(x,x_{t+1})}{\eta} .$$
The proof is concluded with straightforward computations.
\end{proof}

\section{The vector field point of view on MD, DA, and MP} \label{sec:vectorfield}
In this section we consider a mirror map $\Phi$ that satisfies the assumptions from Theorem \ref{th:MD}.

By inspecting the proof of Theorem \ref{th:MD} one can see that for arbitrary vectors $g_1, \hdots, g_t \in \R^n$ the mirror descent strategy described by \eqref{eq:MD1} or \eqref{eq:MD2} (or alternatively by \eqref{eq:MDproxview}) satisfies for any $x \in \cX \cap \cD$,
\begin{equation} \label{eq:vfMD}
\sum_{s=1}^t g_s^{\top} (x_s - x) \leq \frac{R^2}{\eta} + \frac{\eta}{2 \rho} \sum_{s=1}^t \|g_s\|_*^2 .
\end{equation}
The observation that the sequence of vectors $(g_s)$ does not have to come from the subgradients of a {\em fixed} function $f$ is the starting point for the theory of online learning, see \cite{Bub11} for more details. In this monograph we will use this observation to generalize mirror descent to saddle point calculations as well as stochastic settings. We note that we could also use dual averaging (defined by \eqref{eq:DA0}) which satisfies
$$\sum_{s=1}^t g_s^{\top} (x_s - x) \leq \frac{R^2}{\eta} + \frac{2 \eta}{\rho} \sum_{s=1}^t \|g_s\|_*^2 .$$
In order to generalize mirror prox we simply replace the gradient $\nabla f$ by an arbitrary vector field $g: \cX \rightarrow \R^n$ which yields the following equations:
\begin{align*}
& \nabla \Phi(y_{t+1}') = \nabla \Phi(x_{t}) - \eta g(x_t), \\
& y_{t+1} \in \argmin_{x \in \mathcal{X} \cap \mathcal{D}} D_{\Phi}(x,y_{t+1}') , \\ 
& \nabla \Phi(x_{t+1}') = \nabla \Phi(x_{t}) - \eta g(y_{t+1}), \\
& x_{t+1} \in \argmin_{x \in \mathcal{X} \cap \mathcal{D}} D_{\Phi}(x,x_{t+1}') .
\end{align*}
Under the assumption that the vector field is $\beta$-Lipschitz w.r.t. $\|\cdot\|$, i.e., $\|g(x) - g(y)\|_* \leq \beta \|x-y\|$ one obtains with $\eta = \frac{\rho}{\beta}$
\begin{equation} \label{eq:vfMP}
\sum_{s=1}^t g(y_{s+1})^{\top}(y_{s+1} - x) \leq \frac{\beta R^2}{\rho}.
\end{equation}

\chapter{Beyond the black-box model}
\label{beyond}
In the black-box model non-smoothness dramatically deteriorates the rate of convergence of first order methods from $1/t^2$ to $1/\sqrt{t}$. However, as we already pointed out in Section \ref{sec:structured}, we (almost) always know the function to be optimized {\em globally}. In particular the ``source" of non-smoothness can often be identified. For instance the LASSO objective (see Section \ref{sec:mlapps}) is non-smooth, but it is a sum of a smooth part (the least squares fit) and a {\em simple} non-smooth part (the $\ell_1$-norm). Using this specific structure we will propose in Section \ref{sec:simplenonsmooth} a first order method with a $1/t^2$ convergence rate, despite the non-smoothness. In Section \ref{sec:sprepresentation} we consider another type of non-smoothness that can effectively be overcome, where the function is the maximum of smooth functions. Finally we conclude this chapter with a concise description of interior point methods, for which the structural assumption is made on the constraint set rather than on the objective function.

\section{Sum of a smooth and a simple non-smooth term} \label{sec:simplenonsmooth}
We consider here the following problem\footnote{We restrict to unconstrained minimization for sake of simplicity. One can extend the discussion to constrained minimization by using ideas from Section \ref{sec:gdsmooth}.}:
$$\min_{x \in \R^n} f(x) + g(x) ,$$
where $f$ is convex and $\beta$-smooth, and $g$ is convex. We assume that $f$ can be accessed through a first order oracle, and that $g$ is known and ``simple". What we mean by simplicity will be clear from the description of the algorithm. For instance a separable function, that is $g(x) = \sum_{i=1}^n g_i(x(i))$, will be considered as simple. The prime example being $g(x) = \|x\|_1$. This section is inspired from \cite{BT09} (see also \cite{Nes07, WNF09}).

\section*{ISTA (Iterative Shrinkage-Thresholding Algorithm)}
Recall that gradient descent on the smooth function $f$ can be written as (see \eqref{eq:MDproxview})
$$x_{t+1} = \argmin_{x \in \mathbb{R}^n} \eta \nabla f(x_t)^{\top} x + \frac{1}{2} \|x - x_t\|^2_2 .$$
Here one wants to minimize $f+g$, and $g$ is assumed to be known and ``simple". Thus it seems quite natural to consider the following update rule, where only $f$ is locally approximated with a first order oracle:
\begin{eqnarray}
x_{t+1} & = & \argmin_{x \in \mathbb{R}^n} \eta (g(x) + \nabla f(x_t)^{\top} x) + \frac{1}{2} \|x - x_t\|^2_2 \notag \\
& = & \argmin_{x \in \mathbb{R}^n} \ g(x) + \frac{1}{2\eta} \|x - (x_t - \eta \nabla f(x_t)) \|_2^2 . \label{eq:proxoperator}
\end{eqnarray}
The algorithm described by the above iteration is known as ISTA (Iterative Shrinkage-Thresholding Algorithm). In terms of convergence rate it is easy to show that ISTA has the same convergence rate on $f+g$ as gradient descent on $f$. More precisely with $\eta=\frac{1}{\beta}$ one has
$$f(x_t) + g(x_t) - (f(x^*) + g(x^*)) \leq \frac{\beta \|x_1 - x^*\|^2_2}{2 t} .$$
This improved convergence rate over a subgradient descent directly on $f+g$ comes at a price: in general \eqref{eq:proxoperator} may be a difficult optimization problem by itself, and this is why one needs to assume that $g$ is simple. For instance if $g$ can be written as $g(x) = \sum_{i=1}^n g_i(x(i))$ then one can compute $x_{t+1}$ by solving $n$ convex problems in dimension $1$. In the case where $g(x) = \lambda \|x\|_1$ this one-dimensional problem is given by:
$$\min_{x \in \mathbb{R}} \ \lambda |x| + \frac{1}{2 \eta}(x - x_0)^2, \ \text{where} \ x_0 \in \mathbb{R} .$$
Elementary computations shows that this problem has an analytical solution given by $\tau_{\lambda \eta}(x_0)$,
where $\tau$ is the shrinkage operator (hence the name ISTA), defined by
$$\tau_{\alpha}(x) = (|x|-\alpha)_+ \mathrm{sign}(x) .$$
Much more is known about \eqref{eq:proxoperator} (which is called the {\em proximal operator} of $g$), and in fact entire monographs have been written about this equation, see e.g. \cite{PB13, BJMO12}.

\section*{FISTA (Fast ISTA)}
An obvious idea is to combine Nesterov's accelerated gradient descent (which results in a $1/t^2$ rate to optimize $f$) with ISTA. This results in FISTA (Fast ISTA) which is described as follows. Let
$$\lambda_0 = 0, \ \lambda_{t} = \frac{1 + \sqrt{1+ 4 \lambda_{t-1}^2}}{2}, \ \text{and} \  \gamma_t = \frac{1 - \lambda_t}{\lambda_{t+1}}.$$
Let $x_1 = y_1$ an arbitrary initial point, and
\begin{eqnarray*}
y_{t+1} & = & \mathrm{argmin}_{x \in \mathbb{R}^n} \ g(x) + \frac{\beta}{2} \|x - (x_t - \frac1{\beta} \nabla f(x_t)) \|_2^2 , \\
x_{t+1} & = & (1 - \gamma_t) y_{t+1} + \gamma_t y_t .
\end{eqnarray*}
Again it is easy show that the rate of convergence of FISTA on $f+g$ is similar to the one of Nesterov's accelerated gradient descent on $f$, more precisely:
$$f(y_t) + g(y_t) - (f(x^*) + g(x^*)) \leq \frac{2 \beta \|x_1 - x^*\|^2}{t^2} .$$

\section*{CMD and RDA}
ISTA and FISTA assume smoothness in the Euclidean metric. Quite naturally one can also use these ideas in a non-Euclidean setting. Starting from \eqref{eq:MDproxview} one obtains the CMD (Composite Mirror Descent) algorithm of \cite{DSSST10}, while with \eqref{eq:DA0} one obtains the RDA (Regularized Dual Averaging) of \cite{Xia10}. We refer to these papers for more details.

\section{Smooth saddle-point representation of a non-smooth function} \label{sec:sprepresentation}
Quite often the non-smoothness of a function $f$ comes from a $\max$ operation. More precisely non-smooth functions can often be represented as
\begin{equation} \label{eq:sprepresentation}
f(x) = \max_{1 \leq i \leq m} f_i(x) ,
\end{equation}
where the functions $f_i$ are smooth. This was the case for instance with the function we used to prove the black-box lower bound $1/\sqrt{t}$ for non-smooth optimization in Theorem \ref{th:lb1}. We will see now that by using this structural representation one can in fact attain a rate of $1/t$. This was first observed in \cite{Nes04b} who proposed the Nesterov's smoothing technique. Here we will present the alternative method of \cite{Nem04} which we find more transparent (yet another version is the Chambolle-Pock algorithm, see \cite{CP11}). Most of what is described in this section can be found in \cite{JN11a, JN11b}.

In the next subsection we introduce the more general problem of saddle point computation. We then proceed to apply a modified version of mirror descent to this problem, which will be useful both in Chapter \ref{rand} and also as a warm-up for the more powerful modified mirror prox that we introduce next.

\subsection{Saddle point computation} \label{sec:sp}
Let $\cX \subset \R^n$, $\cY \subset \R^m$ be compact and convex sets. Let $\phi : \cX \times \cY \rightarrow \mathbb{R}$ be a continuous function, such that $\phi(\cdot, y)$ is convex and $\phi(x, \cdot)$ is concave. We write $g_{\cX}(x,y)$ (respectively $g_{\cY}(x,y)$) for an element of $\partial_x \phi(x,y)$ (respectively $\partial_y (-\phi(x,y))$). We are interested in computing
$$\min_{x \in \cX} \max_{y \in \cY} \phi(x,y) .$$
By Sion's minimax theorem there exists a pair $(x^*, y^*) \in \cX \times \cY$ such that
$$\phi(x^*,y^*) = \min_{x \in \mathcal{X}} \max_{y \in \mathcal{Y}} \phi(x,y) = \max_{y \in \mathcal{Y}} \min_{x \in \mathcal{X}} \phi(x,y) .$$
We will explore algorithms that produce a candidate pair of solutions $(\tx, \ty) \in \cX \times \cY$. The quality of $(\tx, \ty)$ is evaluated through the so-called duality gap\footnote{Observe that the duality gap is the sum of the primal gap $\max_{y \in \mathcal{Y}} \phi(\tx,y) - \phi(x^*,y^*)$ and the dual gap $\phi(x^*,y^*) - \min_{x \in \mathcal{X}} \phi(x, \ty)$.}
$$\max_{y \in \mathcal{Y}} \phi(\tx,y) - \min_{x \in \mathcal{X}} \phi(x,\ty) .$$
The key observation is that the duality gap can be controlled similarly to the suboptimality gap $f(x) - f(x^*)$ in a simple convex optimization problem. Indeed for any $(x, y) \in \cX \times \cY$,
$$\phi(\tx,\ty) - \phi(x,\ty) \leq g_{\cX}(\tx,\ty)^{\top} (\tx-x),$$
and 
$$- \phi(\tx,\ty) - (- \phi(\tx,y)) \leq g_{\cY}(\tx,\ty)^{\top} (\ty-y) .$$
In particular, using the notation $z = (x,y) \in \cZ := \cX \times \cY$ and $g(z) = (g_{\cX}(x,y), g_{\cY}(x,y))$ we just proved
\begin{equation} \label{eq:keysp}
\max_{y \in \mathcal{Y}} \phi(\tx,y) - \min_{x \in \mathcal{X}} \phi(x,\ty) \leq g(\tz)^{\top} (\tz - z) , 
\end{equation}
for some $z \in \mathcal{Z}.$ In view of the vector field point of view developed in Section \ref{sec:vectorfield} this suggests to do a mirror descent in the $\cZ$-space with the vector field $g : \cZ \rightarrow \R^n \times \R^m$. 

We will assume in the next subsections that $\cX$ is equipped with a mirror map $\Phi_{\cX}$ (defined on $\cD_{\cX}$) which is $1$-strongly convex w.r.t. a norm $\|\cdot\|_{\cX}$ on $\cX \cap \cD_{\cX}$. We denote $R^2_{\cX} = \sup_{x \in \cX} \Phi(x) - \min_{x \in \cX} \Phi(x)$. We define similar quantities for the space $\cY$.

\subsection{Saddle Point Mirror Descent (SP-MD)} \label{sec:spmd}
We consider here mirror descent on the space $\cZ = \cX \times \cY$ with the mirror map $\Phi(z) = a \Phi_{\cX}(x) + b \Phi_{\cY}(y)$ (defined on $\cD = \cD_{\cX} \times \cD_{\cY}$), where $a, b \in \R_+$ are to be defined later, and with the vector field $g : \cZ \rightarrow \R^n \times \R^m$ defined in the previous subsection. We call the resulting algorithm SP-MD (Saddle Point Mirror Descent). It can be described succintly as follows.

Let $z_1 \in \argmin_{z \in \cZ \cap \cD} \Phi(z)$. Then for $t \geq 1$, let
$$z_{t+1} \in \argmin_{z \in \cZ \cap \cD} \ \eta g_t^{\top} z + D_{\Phi}(z,z_t) ,$$
where $g_t = (g_{\cX,t}, g_{\cY,t})$ with $g_{\cX,t} \in \partial_x \phi(x_t,y_t)$ and $g_{\cY,t} \in \partial_y (- \phi(x_t,y_t))$.

\begin{theorem} \label{th:spmd}
Assume that $\phi(\cdot, y)$ is $L_{\cX}$-Lipschitz w.r.t. $\|\cdot\|_{\cX}$, that is $\|g_{\cX}(x,y)\|_{\cX}^* \leq L_{\cX}, \forall (x, y) \in \cX \times \cY$. Similarly assume that $\phi(x, \cdot)$ is $L_{\cY}$-Lipschitz w.r.t. $\|\cdot\|_{\cY}$. Then SP-MD with $a= \frac{L_{\cX}}{R_{\cX}}$, $b=\frac{L_{\cY}}{R_{\cY}}$, and $\eta=\sqrt{\frac{2}{t}}$ satisfies
$$\max_{y \in \mathcal{Y}} \phi\left( \frac1{t} \sum_{s=1}^t x_s,y \right) - \min_{x \in \mathcal{X}} \phi\left(x, \frac1{t} \sum_{s=1}^t y_s \right) \leq (R_{\cX} L_{\cX} + R_{\cY} L_{\cY}) \sqrt{\frac{2}{t}}.$$
\end{theorem}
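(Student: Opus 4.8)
The plan is to reduce the statement to the generic "vector field" bound for mirror descent, equation \eqref{eq:vfMD}, applied to the space $\cZ = \cX \times \cY$ with the composite mirror map $\Phi(z) = a \Phi_{\cX}(x) + b \Phi_{\cY}(y)$. First I would record the key inequality \eqref{eq:keysp}: for the averaged iterate $\tilde z = (\tx, \ty)$ with $\tx = \frac1t \sum_{s=1}^t x_s$, $\ty = \frac1t \sum_{s=1}^t y_s$, and for every $z = (x,y) \in \cZ$,
$$\max_{y \in \cY} \phi(\tx, y) - \min_{x \in \cX} \phi(x, \ty) \leq \frac1t \sum_{s=1}^t g_s^{\top}(z_s - z),$$
where I use convexity of $\phi(\cdot,y)$, concavity of $\phi(x,\cdot)$, and Jensen to pass from the average of the $\phi$-values at the $z_s$ to $\phi$ at the averaged points; the right-hand side comes out by summing the two subgradient inequalities stated just before \eqref{eq:keysp} and telescoping the choice of $z$. (One has to be slightly careful: the $z$ attaining the two maxima/minima on the left need not be a single point, but the bound \eqref{eq:keysp} already accounts for this since it holds for a worst-case $z$ depending on $\tilde z$.)

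Next I would invoke \eqref{eq:vfMD} for the mirror map $\Phi$ on $\cZ$. The point is that $\Phi = a\Phi_{\cX} + b\Phi_{\cY}$ is $\rho$-strongly convex w.r.t. the norm $\|z\| = \sqrt{\|x\|_{\cX}^2 + \|y\|_{\cY}^2}$ with $\rho = \min(a,b)$, and its dual norm satisfies $\|g\|_*^2 \leq \frac1a \|g_{\cX}\|_{\cX}^{*2} + \frac1b \|g_{\cY}\|_{\cY}^{*2}$; moreover $\sup_{z} \Phi(z) - \Phi(z_1) \leq a R_{\cX}^2 + b R_{\cY}^2 =: R^2$. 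Plugging the Lipschitz bounds $\|g_{\cX,s}\|_{\cX}^* \leq L_{\cX}$, $\|g_{\cY,s}\|_{\cY}^* \leq L_{\cY}$ into \eqref{eq:vfMD} gives
$$\frac1t\sum_{s=1}^t g_s^{\top}(z_s - z) \leq \frac{a R_{\cX}^2 + b R_{\cY}^2}{\eta t} + \frac{\eta}{2\rho}\left(\frac{L_{\cX}^2}{a} + \frac{L_{\cY}^2}{b}\right).$$
Now substitute $a = L_{\cX}/R_{\cX}$, $b = L_{\cY}/R_{\cY}$: then $a R_{\cX}^2 + b R_{\cY}^2 = R_{\cX}L_{\cX} + R_{\cY}L_{\cY}$, and $L_{\cX}^2/a + L_{\cY}^2/b = R_{\cX}L_{\cX} + R_{\cY}L_{\cY}$ as well, while $\rho = \min(a,b)$. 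The slightly annoying point is that $\rho = \min(a,b)$ rather than $1$; I would handle this by noting that one may rescale $\Phi_{\cX}, \Phi_{\cY}$ (equivalently renormalize $a, b$) so that effectively the constant works out, or simply carry $\rho$ through and observe that with the stated choice the dominant term is of the right order — in the clean statement of the theorem one absorbs this by the standard trick of choosing the norm on $\cZ$ so that $\Phi$ is $1$-strongly convex, i.e. redefining $a,b$ up to the factor $\min(a,b)^{-1}$, which only affects constants. Choosing $\eta = \sqrt{2/t}$ (after this normalization) then balances the two terms and yields the bound $(R_{\cX}L_{\cX} + R_{\cY}L_{\cY})\sqrt{2/t}$.

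The main obstacle, as usual in these composite-space arguments, is the bookkeeping around the strong-convexity constant and the dual-norm decomposition of the product mirror map: getting the interaction of $a$, $b$, $\rho$, and $R^2$ to collapse to exactly $(R_{\cX}L_{\cX} + R_{\cY}L_{\cY})\sqrt{2/t}$ rather than something off by a constant requires the right normalization convention for $\Phi_{\cX}$ and $\Phi_{\cY}$. Everything else — \eqref{eq:keysp}, Jensen, and \eqref{eq:vfMD} — is already available in the text and plugs in directly.
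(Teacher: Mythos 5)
Your overall strategy matches the paper's: reduce to the vector-field inequality \eqref{eq:vfMD} on the product space $\cZ = \cX \times \cY$ with the composite mirror map $\Phi = a\Phi_{\cX} + b\Phi_{\cY}$, and combine with \eqref{eq:keysp}. The substitution $a = L_{\cX}/R_{\cX}$, $b = L_{\cY}/R_{\cY}$ and the computation $aR_{\cX}^2 + bR_{\cY}^2 = L_{\cX}^2/a + L_{\cY}^2/b = R_{\cX}L_{\cX} + R_{\cY}L_{\cY}$ is exactly what makes the constant come out.

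However there is a genuine gap in your norm bookkeeping, and you yourself flag it as ``the slightly annoying point'' without actually resolving it. You pair the unweighted norm $\|z\| = \sqrt{\|x\|_{\cX}^2 + \|y\|_{\cY}^2}$ with strong-convexity constant $\rho = \min(a,b)$, yet write down the dual norm $\|g\|_*^2 \leq \frac1a(\|g_{\cX}\|_{\cX}^*)^2 + \frac1b(\|g_{\cY}\|_{\cY}^*)^2$, which is the dual of a \emph{different} norm. These two choices are incompatible, and if you carry $\rho = \min(a,b)$ through \eqref{eq:vfMD} with $\eta = \sqrt{2/t}$ you land on a bound that is worse than the claimed one by a factor depending on $\min(a,b)$, which is not $1$ in general (e.g.\ $a = L_{\cX}/R_{\cX}$ can be arbitrarily small). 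The vague appeal to rescaling ``up to constants'' does not close this: the theorem asserts the exact constant $(R_{\cX}L_{\cX}+R_{\cY}L_{\cY})\sqrt{2/t}$.

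The clean fix, which is what the paper does, is to endow $\cZ$ with the \emph{weighted} norm
$$\|z\|_{\cZ} = \sqrt{a\|x\|_{\cX}^2 + b\|y\|_{\cY}^2}.$$
Because $\Phi_{\cX}$ and $\Phi_{\cY}$ are each $1$-strongly convex in their own norms, one checks directly from the definition that $\Phi = a\Phi_{\cX} + b\Phi_{\cY}$ is exactly $1$-strongly convex with respect to $\|\cdot\|_{\cZ}$ (no $\min(a,b)$ appears). The dual of this norm is precisely
$$\|g\|_{\cZ}^* = \sqrt{\tfrac1a\left(\|g_{\cX}\|_{\cX}^*\right)^2 + \tfrac1b\left(\|g_{\cY}\|_{\cY}^*\right)^2},$$
which is the formula you wrote --- so your dual-norm bound was correct, just matched to the wrong primal norm. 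With $\rho = 1$ everything in \eqref{eq:vfMD} collapses to $(R_{\cX}L_{\cX}+R_{\cY}L_{\cY})\sqrt{2/t}$ with no slack, as required.
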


\begin{proof}
First we endow $\mathcal{Z}$ with the norm $\|\cdot\|_{\cZ}$ defined by
$$\|z\|_{\cZ} = \sqrt{a \|x\|_{\mathcal{X}}^2 + b \|y\|_{\mathcal{Y}}^2} .$$
It is immediate that $\Phi$ is $1$-strongly convex with respect to $\|\cdot\|_{\mathcal{Z}}$ on $\mathcal{Z} \cap \mathcal{D}$. Furthermore one can easily check that
$$\|z\|_{\mathcal{Z}}^* = \sqrt{\frac1{a} \left(\|x\|_{\mathcal{X}}^*\right)^2 + \frac1{b} \left(\|y\|_{\mathcal{Y}}^*\right)^2} ,$$
and thus the vector field $(g_t)$ used in the SP-MD satisfies:
$$\|g_t\|_{\mathcal{Z}}^* \leq \sqrt{\frac{L_{\mathcal{X}}^2}{a} + \frac{L_{\mathcal{Y}}^2}{b}} .$$
Using \eqref{eq:vfMD} together with \eqref{eq:keysp} and the values of $a, b$ and $\eta$ concludes the proof.
\end{proof}

\subsection{Saddle Point Mirror Prox (SP-MP)}
We now consider the most interesting situation in the context of this chapter, where the function $\phi$ is smooth. Precisely we say that $\phi$ is $(\beta_{11}, \beta_{12}, \beta_{22}, \beta_{21})$-smooth if for any $x, x' \in \cX, y, y' \in \cY$, 
\begin{align*}
& \|\nabla_x \phi(x,y) - \nabla_x \phi(x',y) \|_{\mathcal{X}}^* \leq \beta_{11} \|x-x'\|_{\mathcal{X}} , \\
& \|\nabla_x \phi(x,y) - \nabla_x \phi(x,y') \|_{\mathcal{X}}^* \leq \beta_{12} \|y-y'\|_{\mathcal{Y}} , \\
& \|\nabla_y \phi(x,y) - \nabla_y \phi(x,y') \|_{\mathcal{Y}}^* \leq \beta_{22} \|y-y'\|_{\mathcal{Y}} , \\
& \|\nabla_y \phi(x,y) - \nabla_y \phi(x',y) \|_{\mathcal{Y}}^* \leq \beta_{21} \|x-x'\|_{\mathcal{X}} ,
\end{align*}
This will imply the Lipschitzness of the vector field $g : \cZ \rightarrow \R^n \times \R^m$ under the appropriate norm. Thus we use here mirror prox on the space $\cZ$ with the mirror map $\Phi(z) = a \Phi_{\cX}(x) + b \Phi_{\cY}(y)$ and the vector field $g$. The resulting algorithm is called SP-MP (Saddle Point Mirror Prox) and we can describe it succintly as follows.

Let $z_1 \in \argmin_{z \in \cZ \cap \cD} \Phi(z)$. Then for $t \geq 1$, let $z_t=(x_t,y_t)$ and $w_t=(u_t, v_t)$ be defined by
\begin{eqnarray*}
w_{t+1} & = & \argmin_{z \in \cZ \cap \cD} \ \eta (\nabla_x \phi(x_t, y_t), - \nabla_y \phi(x_t,y_t))^{\top} z + D_{\Phi}(z,z_t) \\
z_{t+1} & = & \argmin_{z \in \cZ \cap \cD} \ \eta (\nabla_x \phi(u_{t+1}, v_{t+1}), - \nabla_y \phi(u_{t+1},v_{t+1}))^{\top} z + D_{\Phi}(z,z_t) .
\end{eqnarray*}

\begin{theorem} \label{th:spmp}
Assume that $\phi$ is $(\beta_{11}, \beta_{12}, \beta_{22}, \beta_{21})$-smooth. Then SP-MP with $a= \frac{1}{R_{\cX}^2}$, $b=\frac{1}{R_{\cY}^2}$, and 
$\eta= 1 / \left(2 \max \left(\beta_{11} R^2_{\cX}, \beta_{22} R^2_{\cY}, \beta_{12} R_{\cX} R_{\cY}, \beta_{21} R_{\cX} R_{\cY}\right) \right)$
satisfies
\begin{align*}
& \max_{y \in \mathcal{Y}} \phi\left( \frac1{t} \sum_{s=1}^t u_{s+1},y \right) - \min_{x \in \mathcal{X}} \phi\left(x, \frac1{t} \sum_{s=1}^t v_{s+1} \right) \\
& \leq \max \left(\beta_{11} R^2_{\cX}, \beta_{22} R^2_{\cY}, \beta_{12} R_{\cX} R_{\cY}, \beta_{21} R_{\cX} R_{\cY}\right) \frac{4}{t} .
\end{align*}
\end{theorem}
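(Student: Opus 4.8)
The plan is to reduce the theorem to the vector-field version of mirror prox, equation \eqref{eq:vfMP}, in exactly the way the saddle-point mirror descent bound (Theorem~\ref{th:spmd}) was reduced to \eqref{eq:vfMD}. First I would equip $\cZ = \cX \times \cY$ with the norm $\|z\|_{\cZ} = \sqrt{a\|x\|_{\cX}^2 + b\|y\|_{\cY}^2}$ for $a = 1/R_{\cX}^2$ and $b = 1/R_{\cY}^2$, so that $\Phi(z) = a\Phi_{\cX}(x) + b\Phi_{\cY}(y)$ is $1$-strongly convex with respect to $\|\cdot\|_{\cZ}$ on $\cZ \cap \cD$ and $\sup_{\cZ}\Phi - \min_{\cZ}\Phi = aR_{\cX}^2 + bR_{\cY}^2 = 2$; in the notation of \eqref{eq:vfMP} this means $\rho = 1$ and $R^2 = 2$. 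One also records that the dual norm is $\|\zeta\|_{\cZ}^* = \sqrt{\tfrac1a(\|\xi\|_{\cX}^*)^2 + \tfrac1b(\|\upsilon\|_{\cY}^*)^2}$ on the components $\zeta = (\xi,\upsilon)$.

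The crux of the argument is bounding the Lipschitz constant of the vector field $g(z) = (\nabla_x\phi(x,y), -\nabla_y\phi(x,y))$ with respect to $\|\cdot\|_{\cZ}$. I would split each component by a triangle inequality through the intermediate point $(x',y)$ (varying $x$ first, then $y$), apply the four inequalities defining $(\beta_{11},\beta_{12},\beta_{22},\beta_{21})$-smoothness, and then use $(u+v)^2 \le 2u^2 + 2v^2$ together with $1/a = R_{\cX}^2$, $1/b = R_{\cY}^2$ to obtain
\[
\|g(z) - g(z')\|_{\cZ}^{*2} \le 2\big(R_{\cX}^2\beta_{11}^2 + R_{\cY}^2\beta_{21}^2\big)\|x-x'\|_{\cX}^2 + 2\big(R_{\cX}^2\beta_{12}^2 + R_{\cY}^2\beta_{22}^2\big)\|y-y'\|_{\cY}^2 .
\]
Writing $M = \max(\beta_{11}R_{\cX}^2,\ \beta_{22}R_{\cY}^2,\ \beta_{12}R_{\cX}R_{\cY},\ \beta_{21}R_{\cX}R_{\cY})$, each of the four products is bounded so that the $x$-coefficient is at most $2M^2/R_{\cX}^2 = 2aM^2$ and the $y$-coefficient at most $2bM^2$; hence $\|g(z)-g(z')\|_{\cZ}^* \le 2M\|z-z'\|_{\cZ}$. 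Thus $g$ is $\beta$-Lipschitz with $\beta = 2M$, the prescribed step size $\eta = 1/(2M)$ is precisely $\rho/\beta$, and \eqref{eq:vfMP} applies to give $\sum_{s=1}^t g(w_{s+1})^{\top}(w_{s+1} - z) \le \beta R^2/\rho = 4M$ for every $z \in \cZ \cap \cD$.

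Finally I would feed this into the saddle-point bookkeeping from Section~\ref{sec:sp}: with $w_{s+1} = (u_{s+1},v_{s+1})$ and $z = (x,y)$, the subgradient inequalities for the convex–concave $\phi$ give $\phi(u_{s+1},y) - \phi(x,v_{s+1}) \le g(w_{s+1})^{\top}(w_{s+1}-z)$; summing over $s$, dividing by $t$, and invoking convexity of $\phi(\cdot,y)$ and concavity of $\phi(x,\cdot)$ (Jensen) yields $\phi(\tx,y) - \phi(x,\ty) \le 4M/t$ for the averaged iterates $\tx = \tfrac1t\sum_{s=1}^t u_{s+1}$, $\ty = \tfrac1t\sum_{s=1}^t v_{s+1}$, uniformly in $(x,y) \in \cX\times\cY$. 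Evaluating at the maximizer of $y \mapsto \phi(\tx,y)$ and the minimizer of $x \mapsto \phi(x,\ty)$ turns this into the claimed duality-gap bound $4M/t$. The one genuinely delicate step is the Lipschitz-constant computation — in particular choosing the weights $a,b$ and grouping the four smoothness constants so that the cross terms collapse into the single maximum $M$; the rest is routine manipulation of inequalities already established in the text.
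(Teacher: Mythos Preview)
Your proposal is correct and follows essentially the same route as the paper: equip $\cZ$ with the weighted norm, verify $\Phi$ is $1$-strongly convex with $R^2=2$, show the vector field $g$ is $2M$-Lipschitz by splitting through the intermediate point $(x',y)$ (exactly the hint the paper gives), and then invoke \eqref{eq:vfMP} together with the convex--concave subgradient inequality \eqref{eq:keysp} and Jensen. One small bookkeeping slip: after bounding each of the four products by $M^2/R_{\cX}^2$ or $M^2/R_{\cY}^2$, the full $x$-coefficient $2(R_{\cX}^2\beta_{11}^2 + R_{\cY}^2\beta_{21}^2)$ is at most $4aM^2$ (not $2aM^2$), and similarly for the $y$-coefficient; this is precisely what yields $\|g(z)-g(z')\|_{\cZ}^{*2}\le 4M^2\|z-z'\|_{\cZ}^2$ and hence the Lipschitz constant $2M$ you state.
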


\begin{proof}
In light of the proof of Theorem \ref{th:spmd} and \eqref{eq:vfMP} it clearly suffices to show that the vector field $g(z) = (\nabla_x \phi(x,y), - \nabla_y \phi_(x,y))$ is $\beta$-Lipschitz w.r.t. $\|z\|_{\cZ} = \sqrt{\frac{1}{R_{\cX}^2} \|x\|_{\mathcal{X}}^2 + \frac{1}{R_{\cY}^2} \|y\|_{\mathcal{Y}}^2}$ with $\beta = 2 \max \left(\beta_{11} R^2_{\cX}, \beta_{22} R^2_{\cY}, \beta_{12} R_{\cX} R_{\cY}, \beta_{21} R_{\cX} R_{\cY}\right)$. In other words one needs to show that
$$\|g(z) - g(z')\|_{\cZ}^* \leq \beta \|z - z'\|_{\cZ} ,$$
which can be done with straightforward calculations (by introducing $g(x',y)$ and using the definition of smoothness for $\phi$).
\end{proof}

\subsection{Applications} \label{sec:spex}
We investigate briefly three applications for SP-MD and SP-MP.

\subsubsection{Minimizing a maximum of smooth functions} \label{sec:spex1}
The problem \eqref{eq:sprepresentation} (when $f$ has to minimized over $\cX$) can be rewritten as
$$\min_{x \in \cX} \max_{y \in \Delta_m} \vec{f}(x)^{\top} y ,$$
where $\vec{f}(x) = (f_1(x), \hdots, f_m(x)) \in \R^m$. We assume that the functions $f_i$ are $L$-Lipschtiz and $\beta$-smooth w.r.t. some norm $\|\cdot\|_{\cX}$. Let us study the smoothness of $\phi(x,y) = \vec{f}(x)^{\top} y$ when $\cX$ is equipped with $\|\cdot\|_{\cX}$ and $\Delta_m$ is equipped with $\|\cdot\|_1$. On the one hand $\nabla_y \phi(x,y) = \vec{f}(x)$, in particular one immediately has $\beta_{22}=0$, and furthermore
$$ \|\vec{f}(x)  - \vec{f}(x') \|_{\infty} \leq L \|x-x'\|_{\mathcal{X}} , $$
that is $\beta_{21}=L$. On the other hand $\nabla_x \phi(x,y) = \sum_{i=1}^m y_i \nabla f_i(x)$, and thus
\begin{align*}
& \|\sum_{i=1}^m y(i) (\nabla f_i(x) - \nabla f_i(x')) \|_{\cX}^* \leq \beta \|x-x'\|_{\cX} , \\
& \|\sum_{i=1}^m (y(i)-y'(i)) \nabla f_i(x) \|_{\cX}^* \leq L\|y-y'\|_1 ,
\end{align*}
that is $\beta_{11} = \beta$ and $\beta_{12} = L$. Thus using SP-MP with some mirror map on $\cX$ and the negentropy on $\Delta_m$ (see the ``simplex setup" in Section \ref{sec:mdsetups}), one obtains an $\epsilon$-optimal point of $f(x) = \max_{1 \leq i \leq m} f_i(x)$ in $O\left(\frac{\beta R_{\cX}^2 + L R_{\cX} \sqrt{\log(m)}}{\epsilon} \right)$ iterations. Furthermore an iteration of SP-MP has a computational complexity of order of a step of mirror descent in $\cX$ on the function $x \mapsto \sum_{i=1}^m y(i) f_i(x)$ (plus $O(m)$ for the update in the $\cY$-space).

Thus by using the structure of $f$ we were able to obtain a much better rate than black-box procedures (which would have required $\Omega(1/\epsilon^2)$ iterations as $f$ is potentially non-smooth).

\subsubsection{Matrix games} \label{sec:spex2}
Let $A \in \R^{n \times m}$, we denote $\|A\|_{\mathrm{max}}$ for the maximal entry (in absolute value) of $A$, and $A_i \in \R^n$ for the $i^{th}$ column of $A$. We consider the problem of computing a Nash equilibrium for the zero-sum game corresponding to the loss matrix $A$, that is we want to solve
$$\min_{x \in \Delta_n} \max_{y \in \Delta_m} x^{\top} A y .$$
Here we equip both $\Delta_n$ and $\Delta_m$ with $\|\cdot\|_1$. Let $\phi(x,y) = x^{\top} A y$. Using that $\nabla_x \phi(x,y) = Ay$ and $\nabla_y \phi(x,y) = A^{\top} x$ one immediately obtains $\beta_{11} = \beta_{22} = 0$. Furthermore since
$$\|A(y - y') \|_{\infty} = \|\sum_{i=1}^m (y(i) - y'(i)) A_i \|_{\infty} \leq \|A\|_{\mathrm{max}} \|y - y'\|_1 ,$$
one also has $\beta_{12} = \beta_{21} = \|A\|_{\mathrm{max}}$. Thus SP-MP with the negentropy on both $\Delta_n$ and $\Delta_m$ attains an $\epsilon$-optimal pair of mixed strategies with $O\left(\|A\|_{\mathrm{max}} \sqrt{\log(n) \log(m)} / \epsilon \right)$ iterations. Furthermore the computational complexity of a step of SP-MP is dominated by the matrix-vector multiplications which are $O(n m)$. Thus overall the complexity of getting an $\epsilon$-optimal Nash equilibrium with SP-MP is $O\left(\|A\|_{\mathrm{max}} n m \sqrt{\log(n) \log(m)} / \epsilon  \right)$.

\subsubsection{Linear classification} \label{sec:spex3}
Let $(\ell_i, A_i) \in \{-1,1\} \times \R^n$, $i \in [m]$, be a data set that one wishes to separate with a linear classifier. That is one is looking for $x \in \mB_{2,n}$ such that for all $i \in [m]$, $\mathrm{sign}(x^{\top} A_i) = \mathrm{sign}(\ell_i)$, or equivalently $\ell_i x^{\top} A_i > 0$. Clearly without loss of generality one can assume $\ell_i = 1$ for all $i \in [m]$ (simply replace $A_i$ by $\ell_i A_i$). Let $A \in \R^{n \times m}$ be the matrix where the $i^{th}$ column is $A_i$. The problem of finding $x$ with maximal margin can be written as
\begin{equation} \label{eq:linearclassif}
\max_{x \in \mB_{2,n}} \min_{1 \leq i \leq m} A_i^{\top} x = \max_{x \in \mB_{2,n}} \min_{y \in \Delta_m} x^{\top} A y .
\end{equation}
Assuming that $\|A_i\|_2 \leq B$, and using the calculations we did in Section \ref{sec:spex1}, it is clear that $\phi(x,y) = x^{\top} A y$ is $(0, B, 0, B)$-smooth with respect to $\|\cdot\|_2$ on $\mB_{2,n}$ and $\|\cdot\|_1$ on $\Delta_m$. This implies in particular that SP-MP with the Euclidean norm squared on $\mB_{2,n}$ and the negentropy on $\Delta_m$ will solve \eqref{eq:linearclassif} in $O(B \sqrt{\log(m)} / \epsilon)$ iterations. Again the cost of an iteration is dominated by the matrix-vector multiplications, which results in an overall complexity of $O(B n m \sqrt{\log(m)} / \epsilon)$ to find an $\epsilon$-optimal solution to \eqref{eq:linearclassif}.

\section{Interior point methods} \label{sec:IPM}
We describe here interior point methods (IPM), a class of algorithms fundamentally different from what we have seen so far. The first algorithm of this type was described in \cite{Kar84}, but the theory we shall present was developed in \cite{NN94}. We follow closely the presentation given in [Chapter 4, \cite{Nes04}]. Other useful references (in particular for the primal-dual IPM, which are the ones used in practice) include \cite{Ren01, Nem04b, NW06}.
\newline

IPM are designed to solve convex optimization problems of the form
\begin{align*}
& \mathrm{min.} \; c^{\top} x \\
& \text{s.t.} \; x \in \cX ,
\end{align*}
with $c \in \R^n$, and $\cX \subset \R^n$ convex and compact. 
Note that, at this point, the linearity of the objective is without loss of generality as minimizing a convex function $f$ over $\cX$ is equivalent to minimizing a linear objective over the epigraph of $f$ (which is also a convex set). The structural assumption on $\cX$ that one makes in IPM is that there exists a {\em self-concordant barrier} for $\cX$ with an easily computable gradient and Hessian. The meaning of the previous sentence will be made precise in the next subsections. The importance of IPM stems from the fact that LPs and SDPs (see Section \ref{sec:structured}) satisfy this structural assumption.

\subsection{The barrier method} \label{sec:barriermethod}
We say that $F : \inte(\cX) \rightarrow \R$ is a {\em barrier} for $\cX$ if 
$$F(x) \xrightarrow[x \to \partial \cX]{} +\infty .$$
We will only consider strictly convex barriers. We extend the domain of definition of $F$ to $\R^n$ with $F(x) = +\infty$ for $x \not\in \inte(\cX)$. For $t \in \R_+$ let
$$x^*(t) \in \argmin_{x \in \R^n} t c^{\top} x + F(x) .$$
In the following we denote $F_t(x) := t c^{\top} x + F(x)$.
In IPM the path $(x^*(t))_{t \in \R_+}$ is referred to as the {\em central path}. It seems clear that the central path eventually leads to the minimum $x^*$ of the objective function $c^{\top} x$ on $\cX$, precisely we will have
$$x^*(t) \xrightarrow[t \to +\infty]{} x^* .$$
The idea of the {\em barrier method} is to move along the central path by ``boosting" a fast locally convergent algorithm, which we denote for the moment by $\cA$, using the following scheme: Assume that one has computed $x^*(t)$, then one uses $\cA$ initialized at $x^*(t)$ to compute $x^*(t')$ for some $t'>t$. There is a clear tension for the choice of $t'$, on the one hand $t'$ should be large in order to make as much progress as possible on the central path, but on the other hand $x^*(t)$ needs to be close enough to $x^*(t')$ so that it is in the basin of fast convergence for $\cA$ when run on $F_{t'}$. 

IPM follows the above methodology with $\cA$ being {\em Newton's method}. Indeed as we will see in the next subsection, Newton's method has a quadratic convergence rate, in the sense that if initialized close enough to the optimum it attains an $\epsilon$-optimal point in $\log\log(1/\epsilon)$ iterations! Thus we now have a clear plan to make these ideas formal and analyze the iteration complexity of IPM:
\begin{enumerate}
\item First we need to describe precisely the region of fast convergence for Newton's method. This will lead us to define self-concordant functions, which are ``natural" functions for Newton's method.
\item Then we need to evaluate precisely how much larger $t'$ can be compared to $t$, so that $x^*(t)$ is still in the region of fast convergence of Newton's method when optimizing the function $F_{t'}$ with $t'>t$. This will lead us to define $\nu$-self concordant barriers.
\item How do we get close to the central path in the first place? Is it possible to compute $x^*(0) = \argmin_{x \in \R^n} F(x)$ (the so-called analytical center of $\mathcal{X}$)?
\end{enumerate}

\subsection{Traditional analysis of Newton's method} \label{sec:tradanalysisNM}
We start by describing Newton's method together with its standard analysis showing the quadratic convergence rate when initialized close enough to the optimum. In this subsection we denote $\|\cdot\|$ for both the Euclidean norm on $\R^n$ and the operator norm on matrices (in particular $\|A x\| \leq \|A\| \cdot \|x\|$).

Let $f: \R^n \rightarrow \R$ be a $C^2$ function. 
Using a Taylor's expansion of $f$ around $x$ one obtains
$$f(x+h) = f(x) + h^{\top} \nabla f(x) + \frac12 h^{\top} \nabla^2 f(x) h + o(\|h\|^2) .$$
Thus, starting at $x$, in order to minimize $f$ it seems natural to move in the direction $h$ that minimizes 
$$h^{\top} \nabla f(x) + \frac12 h^{\top} \nabla f^2(x) h .$$
If $\nabla^2 f(x)$ is positive definite then the solution to this problem is given by $h = - [\nabla^2 f(x)]^{-1} \nabla f(x)$. Newton's method simply iterates this idea: starting at some point $x_0 \in \R^n$, it iterates for $k \geq 0$ the following equation:
$$x_{k+1} = x_k  - [\nabla^2 f(x_k)]^{-1} \nabla f(x_k) .$$
While this method can have an arbitrarily bad behavior in general, if started close enough to a strict local minimum of $f$, it can have a very fast convergence:

\begin{theorem}
\label{th:NM}
Assume that $f$ has a Lipschitz Hessian, that is $\| \nabla^2 f(x) - \nabla^2 f(y) \| \leq M \|x - y\|$. Let $x^*$ be local minimum of $f$ with strictly positive Hessian, that is $\nabla^2 f(x^*) \succeq \mu \mI_n$, $\mu > 0$. Suppose that the initial starting point $x_0$ of Newton's method is such that
$$\|x_0 - x^*\| \leq \frac{\mu}{2 M} .$$
Then Newton's method is well-defined and converges to $x^*$ at a quadratic rate:
$$\|x_{k+1} - x^*\| \leq \frac{M}{\mu} \|x_k - x^*\|^2.$$
\end{theorem}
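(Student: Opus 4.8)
The plan is to control the error $x_{k+1}-x^*$ directly from the Newton update, exploiting $\nabla f(x^*)=0$ to rewrite it in a form amenable to Taylor estimates. Assuming for the moment that $\nabla^2 f(x_k)$ is invertible, I would write
$$x_{k+1} - x^* = x_k - x^* - [\nabla^2 f(x_k)]^{-1}\nabla f(x_k) = [\nabla^2 f(x_k)]^{-1}\Big( \nabla^2 f(x_k)(x_k-x^*) - \nabla f(x_k) + \nabla f(x^*)\Big).$$
Then, representing the gradient difference as an integral of the Hessian, $\nabla f(x_k)-\nabla f(x^*) = \int_0^1 \nabla^2 f(x^*+s(x_k-x^*))(x_k-x^*)\,ds$, the bracketed term becomes $\int_0^1 \big[\nabla^2 f(x_k) - \nabla^2 f(x^*+s(x_k-x^*))\big](x_k-x^*)\,ds$. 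By the $M$-Lipschitzness of the Hessian, the norm of the integrand at parameter $s$ is at most $M\,\|x_k - (x^*+s(x_k-x^*))\|\cdot\|x_k-x^*\| = M(1-s)\|x_k-x^*\|^2$, so integrating in $s$ gives the bound $\tfrac{M}{2}\|x_k-x^*\|^2$ for the bracket.

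Next I would bound the operator norm of the inverse Hessian along the iterates. Since $\nabla^2 f(x^*)\succeq \mu \mathrm{I}_n$ and $\|\nabla^2 f(x_k)-\nabla^2 f(x^*)\|\le M\|x_k-x^*\|$, eigenvalue perturbation (Weyl) gives $\nabla^2 f(x_k)\succeq (\mu - M\|x_k-x^*\|)\mathrm{I}_n$; in particular, whenever $\|x_k-x^*\|\le \mu/(2M)$ this is $\succeq (\mu/2)\mathrm{I}_n$, so $\nabla^2 f(x_k)$ is indeed invertible and $\|[\nabla^2 f(x_k)]^{-1}\|\le 2/\mu$. Combining with the previous paragraph yields $\|x_{k+1}-x^*\|\le \tfrac{2}{\mu}\cdot\tfrac{M}{2}\|x_k-x^*\|^2 = \tfrac{M}{\mu}\|x_k-x^*\|^2$, the claimed quadratic rate.

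Finally, I would close the argument by induction on $k$ to make the scheme well-defined for all $k$: the hypothesis $\|x_0-x^*\|\le \mu/(2M)$ and the displayed quadratic bound give $\|x_{k+1}-x^*\|\le \tfrac{M}{\mu}\|x_k-x^*\|^2 \le \tfrac12\|x_k-x^*\| \le \tfrac{\mu}{2M}$, so the ball condition — and hence invertibility of the next Hessian — propagates, and moreover $\|x_k-x^*\|\to 0$. The only point requiring a little care is the uniform positive-definiteness of $\nabla^2 f(x_k)$ (i.e. the bound on the inverse Hessian), which is exactly what ties the radius restriction $\mu/(2M)$ to well-definedness; the rest is the routine Taylor-with-integral-remainder computation, and I do not anticipate any genuine obstacle.
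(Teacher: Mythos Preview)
Your proof is correct and follows essentially the same route as the paper: rewrite $x_{k+1}-x^*$ via the Newton update and $\nabla f(x^*)=0$, express the gradient difference as $\int_0^1 \nabla^2 f(x^*+s(x_k-x^*))(x_k-x^*)\,ds$, bound the resulting Hessian-difference integral by $\tfrac{M}{2}\|x_k-x^*\|^2$ using Lipschitzness, and control $\|[\nabla^2 f(x_k)]^{-1}\|\le 2/\mu$ via the eigenvalue perturbation $\nabla^2 f(x_k)\succeq(\mu-M\|x_k-x^*\|)\mathrm{I}_n$ together with the inductive ball condition. The paper's argument is the same in all essentials; your write-up is perhaps slightly more explicit about the $(1-s)$ factor in the integrand and the induction that propagates well-definedness.
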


\begin{proof}
We use the following simple formula, for $x, h \in \R^n$,
$$\int_0^1 \nabla^2 f(x + s h) \ h \ ds = \nabla f(x+h) - \nabla f(x) .$$
Now note that $\nabla f(x^*) = 0$, and thus with the above formula one obtains
$$\nabla f(x_k) = \int_0^1 \nabla^2 f(x^* + s (x_k - x^*)) \ (x_k - x^*) \ ds ,$$
which allows us to write:
\begin{align*}
& x_{k+1} - x^* \\
& = x_k - x^* - [\nabla^2 f(x_k)]^{-1} \nabla f(x_k) \\
& = x_k - x^* - [\nabla^2 f(x_k)]^{-1} \int_0^1 \nabla^2 f(x^* + s (x_k - x^*)) \ (x_k - x^*) \ ds \\
& = [\nabla^2 f(x_k)]^{-1} \int_0^1 [\nabla^2 f (x_k) - \nabla^2 f(x^* + s (x_k - x^*)) ] \ (x_k - x^*) \ ds .
\end{align*}
In particular one has
\begin{align*}
& \|x_{k+1} - x^*\| \\
& \leq \|[\nabla^2 f(x_k)]^{-1}\| \\
& \times \left( \int_0^1 \| \nabla^2 f (x_k) - \nabla^2 f(x^* + s (x_k - x^*)) \| \ ds \right) \|x_k - x^* \|.
\end{align*}
Using the Lipschitz property of the Hessian one immediately obtains that 
$$\left( \int_0^1 \| \nabla^2 f (x_k) - \nabla^2 f(x^* + s (x_k - x^*)) \| \ ds \right) \leq \frac{M}{2} \|x_k - x^*\| .$$
Using again the Lipschitz property of the Hessian (note that $\|A - B\| \leq s \Leftrightarrow s \mI_n \succeq A - B \succeq - s \mI_n$), the hypothesis on $x^*$, and an induction hypothesis that $\|x_k - x^*\| \leq \frac{\mu}{2M}$, one has
$$\nabla^2 f(x_k) \succeq \nabla^2 f(x^*) - M \|x_k - x^*\| \mI_n \succeq (\mu - M \|x_k - x^*\|) \mI_n \succeq \frac{\mu}{2} \mI_n ,$$
which concludes the proof.
\end{proof}

\subsection{Self-concordant functions}
Before giving the definition of self-concordant functions let us try to get some insight into the ``geometry" of Newton's method. Let $A$ be a $n \times n$ non-singular matrix. We look at a Newton step on the functions $f: x \mapsto f(x)$ and $\phi: y \mapsto f(A^{-1} y)$, starting respectively from $x$ and $y= A x$, that is:
$$x^+ = x  - [\nabla^2 f(x)]^{-1} \nabla f(x) , \; \text{and} \; y^+ = y  - [\nabla^2 \phi(y)]^{-1} \nabla \phi(y) .$$
By using the following simple formulas
$$\nabla (x \mapsto f(A x) ) =A^{\top} \nabla f(A x) , \; \text{and} \; \nabla^2 (x \mapsto f(A x) ) =A^{\top} \nabla^2 f(A x) A .$$
it is easy to show that
$$y^+ = A x^+ .$$
In other words Newton's method will follow the same trajectory in the ``$x$-space" and in the ``$y$-space" (the image through $A$ of the $x$-space), that is Newton's method is {\em affine invariant}. Observe that this property is not shared by the methods described in Chapter \ref{dimfree} (except for the conditional gradient descent).

The affine invariance of Newton's method casts some concerns on the assumptions of the analysis in Section \ref{sec:tradanalysisNM}. Indeed the assumptions are all in terms of the canonical inner product in $\R^n$. However we just showed that the method itself does not depend on the choice of the inner product (again this is not true for first order methods). Thus one would like to derive a result similar to Theorem \ref{th:NM} without any reference to a prespecified inner product. The idea of self-concordance is to modify the Lipschitz assumption on the Hessian to achieve this goal.

Assume from now on that $f$ is $C^3$, and let $\nabla^3 f(x) : \R^n \times \R^n \times \R^n \rightarrow \R$ be the third order differential operator. The Lipschitz assumption on the Hessian in Theorem \ref{th:NM} can be written as:
$$\nabla^3 f(x) [h,h,h] \leq M \|h\|_2^3 .$$
The issue is that this inequality depends on the choice of an inner product. More importantly it is easy to see that a convex function which goes to infinity on a compact set simply cannot satisfy the above inequality. A natural idea to try fix these issues is to replace the Euclidean metric on the right hand side by the metric given by the function $f$ itself at $x$, that is:
$$\|h\|_x = \sqrt{ h^{\top} \nabla^2 f(x) h }.$$
Observe that to be clear one should rather use the notation $\|\cdot\|_{x, f}$, but since $f$ will always be clear from the context we stick to $\|\cdot\|_x$.
\begin{definition}
Let $\mathcal{X}$ be a convex set with non-empty interior, and $f$ a $C^3$ convex function defined on $\inte(\mathcal{X})$. Then $f$ is self-concordant (with constant $M$) if for all $x \in \inte(\mathcal{X}), h \in \R^n$,
$$\nabla^3 f(x) [h,h,h] \leq M \|h\|_x^3 .$$
We say that $f$ is standard self-concordant if $f$ is self-concordant with constant $M=2$.
\end{definition}

An easy consequence of the definition is that a self-concordant function is a barrier for the set $\mathcal{X}$, see [Theorem 4.1.4, \cite{Nes04}]. The main example to keep in mind of a standard self-concordant function is $f(x) = - \log x$ for $x > 0$. The next definition will be key in order to describe the region of quadratic convergence for Newton's method on self-concordant functions. 

\begin{definition}
Let $f$ be a standard self-concordant function on $\mathcal{X}$. For $x \in \mathrm{int}(\mathcal{X})$, we say that $\lambda_f(x) = \|\nabla f(x)\|_x^*$ is the {\em Newton decrement} of $f$ at $x$.
\end{definition}
An important inequality is that for $x$ such that $\lambda_f(x) < 1$, and $x^* = \argmin f(x)$, one has
\begin{equation} \label{eq:trucipm3}
\|x - x^*\|_x \leq \frac{\lambda_f(x)}{1 - \lambda_f(x)} ,
\end{equation}
see [Equation 4.1.18, \cite{Nes04}]. We state the next theorem without a proof, see also [Theorem 4.1.14, \cite{Nes04}].
\begin{theorem} \label{th:NMsc}
Let $f$ be a standard self-concordant function on $\mathcal{X}$, and $x \in \mathrm{int}(\mathcal{X})$ such that $\lambda_f(x) \leq 1/4$, then
$$\lambda_f\Big(x - [\nabla^2 f(x)]^{-1} \nabla f(x)\Big) \leq 2 \lambda_f(x)^2 .$$
\end{theorem}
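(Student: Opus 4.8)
The plan is to mimic the classical quadratic-convergence proof of Theorem \ref{th:NM}, but carry it out entirely in the local norms $\|\cdot\|_x$ induced by $\nabla^2 f$, so as to stay affine-invariant. Write $H = \nabla^2 f(x)$, $x^+ = x - H^{-1}\nabla f(x)$, and $g = \nabla f(x)$, so $\lambda := \lambda_f(x) = \|g\|_x^* = \sqrt{g^\top H^{-1} g}$. The first step is to express $\nabla f(x^+)$ via the fundamental theorem of calculus along the segment from $x$ to $x^+$: since $x^+ - x = -H^{-1} g$, one has
$$
\nabla f(x^+) = g + \int_0^1 \nabla^2 f(x + s(x^+ - x))\,(x^+ - x)\,ds = \int_0^1 \big(\nabla^2 f(x+s(x^+-x)) - H\big)(x^+-x)\,ds,
$$
using $g = -H(x^+-x)$ to cancel the constant term. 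This is exactly the step where, in the Euclidean proof, one would invoke the Lipschitz-Hessian bound; here instead one must use self-concordance to control how much $\nabla^2 f$ moves along a short segment measured in the $\|\cdot\|_x$ metric.

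The key analytic input, which I would quote from Nesterov (the standard self-concordance machinery, [Nes04, Theorem 4.1.6 and its corollaries]), is the pair of inequalities: if $r := \|y-x\|_x < 1$ then
$$
(1-r)^2\, \nabla^2 f(x) \preceq \nabla^2 f(y) \preceq \frac{1}{(1-r)^2}\,\nabla^2 f(x),
$$
and consequently a bound of the form $\|\nabla^2 f(y) - \nabla^2 f(x)\|_{x}^{*} \le \big(\tfrac{1}{(1-r)^2}-1\big)$ in the appropriate operator sense with respect to the $H$-geometry. Here the relevant displacement is $\|x^+ - x\|_x = \|H^{-1}g\|_x = \sqrt{g^\top H^{-1} g} = \lambda$, and by hypothesis $\lambda \le 1/4 < 1$, so the machinery applies. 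Thus along the integral above, for $s\in[0,1]$ the point $x+s(x^+-x)$ is at $\|\cdot\|_x$-distance $s\lambda \le \lambda$ from $x$, and one gets a pointwise bound on $\nabla^2 f(x+s(x^+-x)) - H$ in the $H$-metric that integrates (the standard computation gives a factor like $\frac{\lambda}{1-\lambda}$, or after a slightly sharper integration $\frac{\lambda}{1-\lambda}$ times $\lambda$ for the full product).

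Putting it together: $\lambda_f(x^+) = \|\nabla f(x^+)\|_{x^+}^{*}$, and I would first bound $\|\nabla f(x^+)\|_{x}^{*}$ using the integral formula and the Hessian-stability estimate — this produces something of order $\lambda^2/(1-\lambda)$ or $\lambda^2/(1-\lambda)^2$ — and then convert the dual norm at $x$ into the dual norm at $x^+$ using $\nabla^2 f(x^+) \succeq (1-\lambda)^2 H$, which costs a factor $\frac{1}{1-\lambda}$. The net bound is of the form $\lambda_f(x^+) \le \frac{\lambda^2}{(1-\lambda)^2}$ (or a similarly shaped rational expression), and finally one checks that for $\lambda \le 1/4$ one has $\frac{1}{(1-\lambda)^2} \le 2$, so $\lambda_f(x^+) \le 2\lambda^2$, as claimed. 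The main obstacle is purely bookkeeping of constants: one has to choose the sharp enough form of the self-concordance inequalities so that the crude rational factors collapse to the clean constant $2$ precisely under the threshold $\lambda \le 1/4$; the conceptual content (FTC along the Newton segment, plus affine-invariant Hessian control via self-concordance) is routine once those inequalities are in hand. Since the excerpt explicitly permits citing [Theorem 4.1.14, \cite{Nes04}], I would in fact simply assemble the argument at the level of detail above and defer the tightest constant-chasing to that reference.
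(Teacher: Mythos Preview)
The paper does not prove this theorem at all: it explicitly states the result without proof and refers the reader to [Theorem 4.1.14, \cite{Nes04}]. So there is nothing to compare against on the paper's side.

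Your sketch is the standard Nesterov argument and is correct in outline. The integral representation of $\nabla f(x^+)$, the Hessian-stability bounds $(1-r)^2 H \preceq \nabla^2 f(y) \preceq (1-r)^{-2} H$ for $r = \|y-x\|_x < 1$, and the norm-transfer from $\|\cdot\|_x^*$ to $\|\cdot\|_{x^+}^*$ are exactly the right ingredients. The sharp form of the bound one obtains this way is $\lambda_f(x^+) \le \big(\lambda/(1-\lambda)\big)^2$, and your final check is correct: for $\lambda \le 1/4$ one has $(1-\lambda)^{-2} \le 16/9 < 2$, giving $\lambda_f(x^+) \le 2\lambda^2$. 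The only caveat is that your phrase ``something of order $\lambda^2/(1-\lambda)$ or $\lambda^2/(1-\lambda)^2$'' is a bit loose; to actually land on the clean constant $2$ at the threshold $1/4$ you do need the precise exponent, so in a full write-up you would want to carry the integration carefully rather than leave it schematic. But since you (and the paper) are content to defer to \cite{Nes04} for the final constant-chasing, this is fine.
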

In other words the above theorem states that, if initialized at a point $x_0$ such that $\lambda_f(x_0) \leq 1/4$, then Newton's iterates satisfy $\lambda_f(x_{k+1}) \leq 2 \lambda_f(x_k)^2$. Thus, Newton's region of quadratic convergence for self-concordant functions can be described as a ``Newton decrement ball" $\{x : \lambda_f(x) \leq 1/4\}$. In particular by taking the barrier to be a self-concordant function we have now resolved Step (1) of the plan described in Section \ref{sec:barriermethod}. 

\subsection{$\nu$-self-concordant barriers}
We deal here with Step (2) of the plan described in Section \ref{sec:barriermethod}. Given Theorem \ref{th:NMsc} we want $t'$ to be as large as possible and such that
\begin{equation} \label{eq:trucipm1}
\lambda_{F_{t'}}(x^*(t) ) \leq 1/4 .
\end{equation}
Since the Hessian of $F_{t'}$ is the Hessian of $F$, one has
$$\lambda_{F_{t'}}(x^*(t) ) = \|t' c + \nabla F(x^*(t)) \|_{x^*(t)}^* .$$
Observe that, by first order optimality, one has 
$t c + \nabla F(x^*(t))  = 0,$
which yields
\begin{equation} \label{eq:trucipm11}
\lambda_{F_{t'}}(x^*(t) ) = (t'-t) \|c\|^*_{x^*(t)} .
\end{equation}
Thus taking 
\begin{equation} \label{eq:trucipm2}
t' = t + \frac{1}{4 \|c\|^*_{x^*(t)}}
\end{equation} 
immediately yields \eqref{eq:trucipm1}. In particular with the value of $t'$ given in \eqref{eq:trucipm2} the Newton's method on $F_{t'}$ initialized at $x^*(t)$ will converge quadratically fast to $x^*(t')$.

It remains to verify that by iterating \eqref{eq:trucipm2} one obtains a sequence diverging to infinity, and to estimate the rate of growth. Thus one needs to control $\|c\|^*_{x^*(t)} = \frac1{t} \|\nabla F(x^*(t))\|_{x^*(t)}^*$. Luckily there is a natural class of functions for which one can control $\|\nabla F(x)\|_x^*$ uniformly over $x$. This is the set of functions such that
\begin{equation} \label{eq:nu}
\nabla^2 F(x) \succeq \frac1{\nu} \nabla F(x) [\nabla F(x) ]^{\top} .
\end{equation}
Indeed in that case one has:
\begin{eqnarray*}
\|\nabla F(x)\|_x^* & = & \sup_{h : h^{\top} \nabla F^2(x) h \leq 1} \nabla F(x)^{\top} h \\
& \leq & \sup_{h : h^{\top} \left( \frac1{\nu} \nabla F(x) [\nabla F(x) ]^{\top} \right) h \leq 1} \nabla F(x)^{\top} h \\
& = & \sqrt{\nu} .
\end{eqnarray*}
Thus a safe choice to increase the penalization parameter is $t' = \left(1 + \frac1{4\sqrt{\nu}}\right) t$. Note that the condition \eqref{eq:nu} can also be written as the fact that the function $F$ is $\frac1{\nu}$-exp-concave, that is $x \mapsto \exp(- \frac1{\nu} F(x))$ is concave. We arrive at the following definition.

\begin{definition}
$F$ is a $\nu$-self-concordant barrier if it is a standard self-concordant function, and it is $\frac1{\nu}$-exp-concave.
\end{definition}
Again the canonical example is the logarithmic function, $x \mapsto - \log x$, which is a $1$-self-concordant barrier for the set $\R_{+}$. We state the next theorem without a proof (see \cite{BE14} for more on this result).

\begin{theorem}
Let $\mathcal{X} \subset \R^n$ be a closed convex set with non-empty interior. There exists $F$ which is a $(c \ n)$-self-concordant barrier for $\mathcal{X}$ (where $c$ is some universal constant).
\end{theorem}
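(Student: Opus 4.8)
The plan is to exhibit such a barrier explicitly via the \emph{entropic barrier} of \cite{BE14}; an alternative (and historically the first) construction is the \emph{universal barrier} $F(x) = \log \mathrm{vol}\big((\mathcal{X}-x)^{\circ}\big)$ of \cite{NN94}, for which the same estimates below were first carried out. One may assume that $\mathcal{X}$ contains no line — in particular this covers all convex bodies, the case relevant for LPs and SDPs — since otherwise the integral below diverges for every parameter and one first passes to the quotient of $\R^n$ by the lineality space of $\mathcal{X}$. Fix a parameter $\theta$ for which it converges and set
$$
f(\theta) \;=\; \log \int_{\mathcal{X}} e^{\langle \theta, x\rangle}\, dx ,
$$
a smooth strictly convex function on the open convex set $\Theta \subset \R^n$ on which it is finite. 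Writing $p_{\theta}$ for the exponential-family density $p_{\theta}(x) \propto e^{\langle\theta,x\rangle} \ds1\{x \in \mathcal{X}\}$, one computes $\nabla f(\theta) = \E_{p_{\theta}}[X]$ and $\nabla^2 f(\theta) = \mathrm{Cov}(p_{\theta})$, and $\nabla f$ is a diffeomorphism from $\Theta$ onto $\inte(\mathcal{X})$. I would then define $F = f^{*}$, the Fenchel conjugate: by the Legendre-type structure of $f$, the function $F$ is smooth and strictly convex on $\inte(\mathcal{X})$ and blows up on $\partial \mathcal{X}$, hence is a barrier for $\mathcal{X}$.

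Next I would show that $F$ (after a universal constant rescaling if needed) is standard self-concordant. Writing $x = \nabla f(\theta)$, Legendre duality gives $\nabla^2 F(x) = [\nabla^2 f(\theta)]^{-1}$, and a further differentiation shows that the self-concordance inequality $|\nabla^3 F(x)[h,h,h]| \leq 2\,(\nabla^2 F(x)[h,h])^{3/2}$ is equivalent to its dual $|\nabla^3 f(\theta)[u,u,u]| \leq 2\,(\nabla^2 f(\theta)[u,u])^{3/2}$ for all $u$, i.e. self-concordance is preserved by conjugation. Since $\nabla^2 f(\theta)[u,u] = \mathrm{Var}_{p_{\theta}}(\langle X,u\rangle)$ and $\nabla^3 f(\theta)[u,u,u]$ is the third central moment of $\langle X,u\rangle$, this is exactly the assertion that a one-dimensional log-concave random variable (here $\langle X,u\rangle$, a marginal of the log-concave measure $p_{\theta}$) has third central moment controlled by the $3/2$-power of its variance — a classical fact about log-concave distributions.

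Finally I would check that $F$ is $\tfrac1\nu$-exp-concave with $\nu = O(n)$, i.e. $\nabla^2 F(x) \succeq \tfrac1\nu \nabla F(x)[\nabla F(x)]^{\top}$. Because $\nabla F(x) = \theta$ and $\nabla^2 F(x) = (\mathrm{Cov}\, p_{\theta})^{-1}$, this reads $\theta^{\top}\mathrm{Cov}(p_{\theta})\,\theta = \mathrm{Var}_{p_{\theta}}(\langle \theta, X\rangle) \leq \nu$. The image of $p_{\theta}$ under the linear functional $s = \langle\theta,x\rangle$ has density proportional to $e^{s}\,\phi(s)$ with $\phi(s) = \mathrm{vol}_{n-1}\big(\mathcal{X} \cap \{\langle\theta,\cdot\rangle = s\}\big)$, and by Brunn--Minkowski $\phi^{1/(n-1)}$ is concave on its support; an elementary computation for densities of the form $e^{s}$ times a nonnegative $(n-1)$-concave function — the model case being the $\Gamma(n)$ density $\propto e^{-s}s^{n-1}$, of variance exactly $n$ — bounds this variance by $cn$ for a universal $c$. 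Combining the two previous steps (the constant rescaling from the self-concordance step only multiplies $\nu$ by a constant) shows that $F$ is a $(cn)$-self-concordant barrier for $\mathcal{X}$.

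The differentiability and conjugacy bookkeeping, and the reduction to the line-free case, are routine. The genuine content — and the main obstacle — lies in the convex geometry underlying the last two steps: the universal third-moment bound for one-dimensional marginals of log-concave measures, and the $O(n)$ bound on the variance of the exponentially tilted marginal via Brunn--Minkowski. These are precisely the estimates carried out in \cite{BE14} (and, for the universal barrier, in \cite{NN94}).
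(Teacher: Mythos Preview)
The paper does not prove this theorem at all; it explicitly states it without proof and refers to \cite{BE14}. Your sketch is precisely the entropic-barrier construction of that reference, and the outline is correct in its essentials: take $F=f^*$ with $f$ the log-partition function of the exponential family supported on $\cX$; self-concordance of $F$ reduces by Legendre duality to self-concordance of $f$, which is the universal third-central-moment bound for one-dimensional log-concave variables (applied to the marginals $\langle u,X\rangle$ of the log-concave measure $p_\theta$, log-concave by Pr\'ekopa); and the barrier parameter bound $\theta^{\top}\mathrm{Cov}(p_\theta)\,\theta \leq cn$ follows from Brunn--Minkowski on the slice volumes, with the $\Gamma(n)$ density as the model case. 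So there is no alternative approach in the paper to compare against---you have supplied exactly the argument the paper defers to its citation.
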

A key property of $\nu$-self-concordant barriers is the following inequality:
\begin{equation} \label{eq:key}
c^{\top} x^*(t) - \min_{x \in \mathcal{X}} c^{\top} x \leq \frac{\nu}{t} ,
\end{equation}
see [Equation (4.2.17), \cite{Nes04}]. More generally using \eqref{eq:key} together with \eqref{eq:trucipm3} one obtains
\begin{eqnarray}
c^{\top} y- \min_{x \in \mathcal{X}} c^{\top} x & \leq & \frac{\nu}{t} + c^{\top} (y - x^*(t)) \notag \\
& = & \frac{\nu}{t} + \frac{1}{t} (\nabla F_t(y) - \nabla F(y))^{\top} (y - x^*(t)) \notag \\ 
& \leq & \frac{\nu}{t} + \frac{1}{t} \|\nabla F_t(y) - \nabla F(y)\|_y^* \cdot \|y - x^*(t)\|_y \notag \\
& \leq & \frac{\nu}{t} + \frac{1}{t} (\lambda_{F_t}(y) + \sqrt{\nu})\frac{\lambda_{F_t} (y)}{1 - \lambda_{F_t}(y)} \label{eq:trucipm4}
\end{eqnarray}
In the next section we describe a precise algorithm based on the ideas we developed above. As we will see one cannot ensure to be exactly on the central path, and thus it is useful to generalize the identity \eqref{eq:trucipm11} for a point $x$ close to the central path. We do this as follows:
\begin{eqnarray}
\lambda_{F_{t'}}(x) & = & \|t' c + \nabla F(x)\|_x^* \notag \\
& = &  \|(t' / t) (t c + \nabla F(x)) + (1- t'/t) \nabla F(x)\|_x^* \notag \\
& \leq & \frac{t'}{t} \lambda_{F_t}(x) + \left(\frac{t'}{t} - 1\right) \sqrt{\nu} .\label{eq:trucipm12}
\end{eqnarray}

\subsection{Path-following scheme}
We can now formally describe and analyze the most basic IPM called the {\em path-following scheme}. Let $F$ be $\nu$-self-concordant barrier for $\cX$. Assume that one can find $x_0$ such that $\lambda_{F_{t_0}}(x_0) \leq 1/4$ for some small value $t_0 >0$ (we describe a method to find $x_0$ at the end of this subsection).
Then for $k \geq 0$, let
\begin{eqnarray*}
& & t_{k+1} = \left(1 + \frac1{13\sqrt{\nu}}\right) t_k ,\\
& & x_{k+1} = x_k - [\nabla^2 F(x_k)]^{-1} (t_{k+1} c + \nabla F(x_k) ) .
\end{eqnarray*}
The next theorem shows that after $O\left( \sqrt{\nu} \log \frac{\nu}{t_0 \epsilon} \right)$ iterations of the path-following scheme one obtains an $\epsilon$-optimal point.
\begin{theorem}
The path-following scheme described above satisfies
$$c^{\top} x_k - \min_{x \in \mathcal{X}} c^{\top} x \leq \frac{2 \nu}{t_0} \exp\left( - \frac{k}{1+13\sqrt{\nu}} \right) .$$
\end{theorem}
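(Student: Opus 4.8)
The plan is to prove, by induction on $k$, the invariant that the current iterate stays inside the region of quadratic convergence of Newton's method for the penalized objective $F_{t_k}$, namely
\[
\lambda_{F_{t_k}}(x_k) \le 1/4 \qquad \text{for all } k \ge 0 ,
\]
the base case being exactly the standing hypothesis on $x_0$. Assuming $\lambda_{F_{t_k}}(x_k)\le 1/4$, the first sub-step is to bound the damage caused by bumping the penalization parameter up to $t_{k+1} = \big(1 + \tfrac1{13\sqrt\nu}\big) t_k$: inequality \eqref{eq:trucipm12} with $t=t_k$, $t'=t_{k+1}$ gives
\[
\lambda_{F_{t_{k+1}}}(x_k) \le \Big(1 + \tfrac1{13\sqrt\nu}\Big)\tfrac14 + \tfrac1{13\sqrt\nu}\,\sqrt\nu = \tfrac14 + \tfrac1{13} + \tfrac1{52\sqrt\nu},
\]
which stays below a constant strictly less than $3-2\sqrt2$ once $\nu\ge1$. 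The second sub-step is to observe that $x_{k+1}$ is precisely one full Newton step on $F_{t_{k+1}}$ started from $x_k$ (the Hessian of $F_{t_{k+1}}$ equals that of $F$), so Theorem~\ref{th:NMsc}, applied in the slightly-beyond-$1/4$ range for which the constant $13$ is chosen, yields $\lambda_{F_{t_{k+1}}}(x_{k+1}) \le 1/4$, closing the induction. (If one insists on using Theorem~\ref{th:NMsc} literally at the stated threshold, one runs the same argument with the smaller invariant $\lambda_{F_{t_k}}(x_k)\le 1/8$, for which the two displays above still fit together; the final rate only improves.)

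Next I would unroll the geometric growth of the parameter. By construction $t_k = \big(1+\tfrac1{13\sqrt\nu}\big)^k t_0$, and using the elementary inequality $\log(1+x)\ge \tfrac{x}{1+x}$ for $x\ge0$ with $x=\tfrac1{13\sqrt\nu}$ one obtains
\[
t_k \;\ge\; t_0 \exp\!\Big(\frac{k}{1+13\sqrt\nu}\Big).
\]

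Finally I would turn the invariant into a bound on the objective-value gap via \eqref{eq:trucipm4} evaluated at $y=x_k$, $t=t_k$: since $\lambda_{F_{t_k}}(x_k)\le 1/4$ and $z\mapsto z/(1-z)$ is increasing,
\[
c^{\top} x_k - \min_{x\in\cX} c^{\top} x \;\le\; \frac{\nu}{t_k} + \frac{1}{t_k}\Big(\tfrac14+\sqrt\nu\Big)\frac{1/4}{3/4} = \frac{1}{t_k}\Big(\nu + \frac{\sqrt\nu}{3} + \frac1{12}\Big) \;\le\; \frac{2\nu}{t_k},
\]
the last inequality again using $\nu\ge1$. Combining this with the lower bound on $t_k$ from the previous paragraph gives exactly the claimed estimate.

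The main obstacle is the induction step, i.e. verifying that the fixed geometric rate $1+\tfrac1{13\sqrt\nu}$ is conservative enough that a single full Newton step always restores the Newton decrement to within $1/4$; this is what pins down the constant $13$ and what forces the use of the quadratic-contraction estimate in a range slightly larger than the $1/4$ appearing in Theorem~\ref{th:NMsc} (or, alternatively, a marginally smaller invariant). Everything else — the geometric unrolling and the passage from Newton decrement to objective gap through \eqref{eq:trucipm4} — is bookkeeping once the invariant is in place; the separate question of producing an initial $x_0$ with $\lambda_{F_{t_0}}(x_0)\le 1/4$ for some small $t_0$ is, as in the statement, taken for granted (it is handled by an auxiliary ``phase~I'' path-following run on a shifted barrier).
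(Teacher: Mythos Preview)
Your approach is essentially identical to the paper's: the same induction on the invariant $\lambda_{F_{t_k}}(x_k)\le 1/4$, the same use of \eqref{eq:trucipm12} followed by the quadratic Newton contraction, the same appeal to \eqref{eq:trucipm4} to convert the invariant into an objective-gap bound, and the same geometric unrolling of $t_k$. The paper in fact applies Theorem~\ref{th:NMsc} directly at the intermediate value $\lambda_{F_{t_{k+1}}}(x_k)\le 18/52\approx 0.346$ (which is above the $1/4$ threshold stated there) without comment; you are right to flag this, and your proposed fixes---invoking the broader quadratic-convergence range or shrinking the invariant---are both sound.

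One numerical slip: your claim that $\tfrac14+\tfrac1{13}+\tfrac1{52\sqrt\nu}$ stays below $3-2\sqrt2\approx 0.172$ is false (the bound is about $0.346$ at $\nu=1$). What matters is that it stays below $1/(2\sqrt2)\approx 0.354$, so that $2\lambda^2\le 1/4$; this is exactly the arithmetic the paper carries out, and it goes through.
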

\begin{proof}
We show that the iterates $(x_k)_{k \geq 0}$ remain close to the central path $(x^*(t_k))_{k \geq 0}$. Precisely one can easily prove by induction that 
$$\lambda_{F_{t_k}}(x_k) \leq 1/4 .$$
Indeed using Theorem \ref{th:NMsc} and equation \eqref{eq:trucipm12} one immediately obtains
\begin{eqnarray*}
\lambda_{F_{t_{k+1}}}(x_{k+1}) & \leq & 2 \lambda_{F_{t_{k+1}}}(x_k)^2 \\
& \leq & 2 \left(\frac{t_{k+1}}{t_k} \lambda_{F_{t_k}}(x_k) + \left(\frac{t_{k+1}}{t_k} - 1\right) \sqrt{\nu}\right)^2  \\
& \leq & 1/4 ,
\end{eqnarray*}
where we used in the last inequality that $t_{k+1} / t_k = 1 + \frac1{13\sqrt{\nu}}$ and $\nu \geq 1$.

Thus using \eqref{eq:trucipm4} one obtains
$$c^{\top} x_k - \min_{x \in \mathcal{X}} c^{\top} x \leq \frac{\nu + \sqrt{\nu} / 3 + 1/12}{t_k} \leq \frac{2 \nu}{t_k} .$$
Observe that $t_{k} = \left(1 + \frac1{13\sqrt{\nu}}\right)^{k} t_0$, which finally yields
$$c^{\top} x_k - \min_{x \in \mathcal{X}} c^{\top} x \leq \frac{2 \nu}{t_0} \left(1 + \frac1{13\sqrt{\nu}}\right)^{- k}.$$
\end{proof}

At this point we still need to explain how one can get close to an intial point $x^*(t_0)$ of the central path. This can be done with the following rather clever trick. Assume that one has some point $y_0 \in \cX$. The observation is that $y_0$ is on the central path at $t=1$ for the problem where $c$ is replaced by $- \nabla F(y_0)$. Now instead of following this central path as $t \to +\infty$, one follows it as $t \to 0$. Indeed for $t$ small enough the central paths for $c$ and for $- \nabla F(y_0)$ will be very close. Thus we iterate the following equations, starting with $t_0' = 1$,
\begin{eqnarray*}
& & t_{k+1}' = \left(1 - \frac1{13\sqrt{\nu}}\right) t_k' ,\\
& & y_{k+1} = y_k - [\nabla^2 F(y_k)]^{-1} (- t_{k+1}' \nabla F(y_0) + \nabla F(y_k) ) .
\end{eqnarray*}
A straightforward analysis shows that for $k = O(\sqrt{\nu} \log \nu)$, which corresponds to $t_k'=1/\nu^{O(1)}$, one obtains a point $y_k$ such that $\lambda_{F_{t_k'}}(y_k) \leq 1/4$. In other words one can initialize the path-following scheme with $t_0 = t_k'$ and $x_0 = y_k$.

\subsection{IPMs for LPs and SDPs}
We have seen that, roughly, the complexity of interior point methods with a $\nu$-self-concordant barrier is $O\left(M \sqrt{\nu} \log \frac{\nu}{\epsilon} \right)$, where $M$ is the complexity of computing a Newton direction (which can be done by computing and inverting the Hessian of the barrier). Thus the efficiency of the method is directly related to the {\em form} of the self-concordant barrier that one can construct for $\mathcal{X}$. It turns out that for LPs and SDPs one has particularly nice self-concordant barriers. Indeed one can show that $F(x) = - \sum_{i=1}^n \log x_i$ is an $n$-self-concordant barrier on $\R_{+}^n$, and $F(x) = - \log \mathrm{det}(X)$ is an $n$-self-concordant barrier on $\mathbb{S}_{+}^n$. See also \cite{LS13} for a recent improvement of the basic logarithmic barrier for LPs.

There is one important issue that we overlooked so far. In most interesting cases LPs and SDPs come with {\em equality constraints}, resulting in a set of constraints $\cX$ with empty interior. From a theoretical point of view there is an easy fix, which is to reparametrize the problem as to enforce the variables to live in the subspace spanned by $\cX$. This modification also has algorithmic consequences, as the evaluation of the Newton direction will now be different. In fact, rather than doing a reparametrization, one can simply search for Newton directions such that the updated point will stay in $\cX$. In other words one has now to solve a convex quadratic optimization problem under linear equality constraints. Luckily using Lagrange multipliers one can find a closed form solution to this problem, and we refer to previous references for more details.

\chapter{Convex optimization and randomness}
\label{rand}
In this chapter we explore the interplay between optimization and randomness. A key insight, going back to \cite{RM51}, is that first order methods are quite robust: the gradients do not have to be computed exactly to ensure progress towards the optimum. Indeed since these methods usually do many small steps, as long as the gradients are correct {\em on average}, the error introduced by the gradient approximations will eventually vanish. As we will see below this intuition is correct for non-smooth optimization (since the steps are indeed small) but the picture is more subtle in the case of smooth optimization (recall from Chapter \ref{dimfree} that in this case we take long steps).

We introduce now the main object of this chapter: a (first order) {\em stochastic} oracle for a convex function $f : \cX \rightarrow \R$ takes as input a point $x \in \cX$ and outputs a random variable $\tg(x)$ such that $\E \ \tg(x) \in \partial f(x)$. In the case where the query point $x$ is a random variable (possibly obtained from previous queries to the oracle), one assumes that $\E \ (\tg(x) | x) \in \partial f(x)$.

The unbiasedness assumption by itself is not enough to obtain rates of convergence, one also needs to make assumptions about the fluctuations of $\tg(x)$.  Essentially in the non-smooth case we will assume that there exists $B >0$ such that $\E \|\tg(x)\|_*^2 \leq B^2$ for all $x \in \cX$, while in the smooth case we assume that there exists $\sigma > 0$ such that  $\E \|\tg(x) - \nabla f(x)\|_*^2 \leq \sigma^2$ for all $x \in \cX$.

We also note that the situation with a {\em biased} oracle is quite different, and we refer to \cite{Asp08, SLRB11} for some works in this direction.

The two canonical examples of a stochastic oracle in machine learning are as follows. 

Let $f(x) = \E_{\xi} \ell(x, \xi)$ where $\ell(x, \xi)$ should be interpreted as the loss of predictor $x$ on the example $\xi$. We assume that $\ell(\cdot, \xi)$ is a (differentiable\footnote{We assume differentiability only for sake of notation here.}) convex function for any $\xi$. The goal is to find a predictor with minimal expected loss, that is to minimize $f$. When queried at $x$ the stochastic oracle can draw $\xi$ from the unknown distribution and report $\nabla_x \ell(x, \xi)$. One obviously has $\E_{\xi} \nabla_x \ell(x, \xi) \in \partial f(x)$. 

The second example is the one described in Section \ref{sec:mlapps}, where one wants to minimize $f(x) = \frac{1}{m} \sum_{i=1}^m f_i(x)$. In this situation a stochastic oracle can be obtained by selecting uniformly at random $I \in [m]$ and reporting $\nabla f_I(x)$.

Observe that the stochastic oracles in the two above cases are quite different. Consider the standard situation where one has access to a data set of i.i.d. samples $\xi_1, \hdots, \xi_m$. Thus in the first case, where one wants to minimize the {\em expected loss}, one is limited to $m$ queries to the oracle, that is to a {\em single pass} over the data (indeed one cannot ensure that the conditional expectations are correct if one uses twice a data point). On the contrary for the {\em empirical loss} where $f_i(x) = \ell(x, \xi_i)$ one can do as many passes as one wishes.

\section{Non-smooth stochastic optimization} \label{sec:smd}
We initiate our study with stochastic mirror descent (S-MD) which is defined as follows: $x_1 \in \argmin_{\cX \cap \cD} \Phi(x)$, and
$$x_{t+1} = \argmin_{x \in \mathcal{X} \cap \mathcal{D}} \ \eta \tilde{g}(x_t)^{\top} x + D_{\Phi}(x,x_t) .$$
In this case equation \eqref{eq:vfMD} rewrites
$$\sum_{s=1}^t \tg(x_s)^{\top} (x_s - x) \leq \frac{R^2}{\eta} + \frac{\eta}{2 \rho} \sum_{s=1}^t \|\tg(x_s)\|_*^2 .$$
This immediately yields a rate of convergence thanks to the following simple observation based on the tower rule:
\begin{eqnarray*}
\E f\bigg(\frac{1}{t} \sum_{s=1}^t x_s \bigg) - f(x) & \leq & \frac{1}{t} \E \sum_{s=1}^t (f(x_s) - f(x)) \\
& \leq & \frac{1}{t} \E \sum_{s=1}^t \E(\tg(x_s) | x_s)^{\top} (x_s - x) \\
& = & \frac{1}{t} \E \sum_{s=1}^t \tg(x_s)^{\top} (x_s - x) .
\end{eqnarray*}
We just proved the following theorem.
\begin{theorem} \label{th:SMD}
Let $\Phi$ be a mirror map $1$-strongly convex on $\mathcal{X} \cap \mathcal{D}$ with respect to $\|\cdot\|$, and
let $R^2 = \sup_{x \in \mathcal{X} \cap \mathcal{D}} \Phi(x) - \Phi(x_1)$. Let $f$ be convex. Furthermore assume that the stochastic oracle is such that $\E \|\tg(x)\|_*^2 \leq B^2$. Then S-MD with $\eta = \frac{R}{B} \sqrt{\frac{2}{t}}$ satisfies
$$\E f\bigg(\frac{1}{t} \sum_{s=1}^t x_s \bigg) - \min_{x \in \mathcal{X}} f(x) \leq R B \sqrt{\frac{2}{t}} .$$
\end{theorem}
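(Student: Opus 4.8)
The plan is to combine the deterministic regret bound for mirror descent with the vector field point of view, and then take expectations using the unbiasedness of the stochastic oracle. The crucial observation is that \eqref{eq:vfMD} holds for an \emph{arbitrary} sequence of vectors $g_1, \hdots, g_t \in \R^n$, with no reference to a fixed objective function: the proof of Theorem \ref{th:MD} only uses the update rule \eqref{eq:MDproxview}. Hence, applying it along the realized trajectory with $g_s = \tg(x_s)$ and $\rho = 1$, one obtains (deterministically, for every realization of the oracle's randomness) that for any fixed $x \in \cX \cap \cD$,
$$\sum_{s=1}^t \tg(x_s)^{\top}(x_s - x) \leq \frac{R^2}{\eta} + \frac{\eta}{2} \sum_{s=1}^t \|\tg(x_s)\|_*^2 .$$

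Next I would take expectations. For the left-hand side, condition on $x_s$ (which is a deterministic function of $\tg(x_1), \hdots, \tg(x_{s-1})$, hence the conditioning is legitimate): since $\E(\tg(x_s) \mid x_s) \in \partial f(x_s)$, the subgradient inequality gives $f(x_s) - f(x) \leq \E(\tg(x_s)\mid x_s)^{\top}(x_s - x)$, and taking the full expectation and summing yields $\sum_{s=1}^t \E(f(x_s) - f(x)) \leq \E \sum_{s=1}^t \tg(x_s)^{\top}(x_s-x)$. For the right-hand side, the fluctuation assumption $\E \|\tg(x)\|_*^2 \leq B^2$ (which holds at every point, in particular at the random point $x_s$) bounds $\E \sum_{s=1}^t \|\tg(x_s)\|_*^2 \leq B^2 t$. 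Combining,
$$\sum_{s=1}^t \E(f(x_s) - f(x)) \leq \frac{R^2}{\eta} + \frac{\eta B^2 t}{2} .$$

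Finally I would use convexity of $f$ (Jensen's inequality) to replace $\frac1t \sum_{s=1}^t \E f(x_s)$ by a lower bound on $\E f\big(\frac1t \sum_{s=1}^t x_s\big)$, divide through by $t$, let $x \to x^*$, and plug in $\eta = \frac{R}{B}\sqrt{2/t}$, which balances the two terms and gives $\frac1t\big(\frac{R^2}{\eta} + \frac{\eta B^2 t}{2}\big) = RB\sqrt{2/t}$. This is exactly the claimed bound. There is no real obstacle here: the only point requiring a little care is the measurability/tower-rule step — making sure the conditional expectation $\E(\tg(x_s)\mid x_s)$ is used correctly given that $x_s$ itself depends on earlier (random) oracle answers — but this is handled precisely by the assumption on the oracle stated at the start of the chapter, namely $\E(\tg(x)\mid x) \in \partial f(x)$ when $x$ is itself a random query point.
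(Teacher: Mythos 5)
Your proposal is correct and follows essentially the same route as the paper: apply the pathwise vector-field bound \eqref{eq:vfMD} with $g_s = \tg(x_s)$ and $\rho=1$, take expectations using the tower rule and $\E(\tg(x_s)\mid x_s)\in\partial f(x_s)$, bound $\E\sum_s\|\tg(x_s)\|_*^2$ by $B^2 t$, apply Jensen, and optimize $\eta$. The only cosmetic difference is the order in which Jensen and the tower-rule step are applied, which is immaterial.
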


Similarly, in the Euclidean and strongly convex case, one can directly generalize Theorem \ref{th:LJSB12}. Precisely we consider stochastic gradient descent (SGD), that is S-MD with $\Phi(x) = \frac12 \|x\|_2^2$, with time-varying step size $(\eta_t)_{t \geq 1}$, that is
$$x_{t+1} = \Pi_{\cX}(x_t - \eta_t \tg(x_t)) .$$
\begin{theorem} \label{th:sgdstrong}
Let $f$ be $\alpha$-strongly convex, and assume that the stochastic oracle is such that $\E \|\tg(x)\|_*^2 \leq B^2$. Then SGD with $\eta_s = \frac{2}{\alpha (s+1)}$ satisfies
$$f \left(\sum_{s=1}^t \frac{2 s}{t(t+1)} x_s \right) - f(x^*) \leq \frac{2 B^2}{\alpha (t+1)} .$$
\end{theorem}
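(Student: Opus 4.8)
The plan is to run the proof of Theorem~\ref{th:LJSB12} essentially verbatim, inserting the tower rule at the one place where exactness of the gradient was used. Concretely, first I would establish the basic one-step inequality for SGD exactly as in Section~\ref{sec:psgd}: since $x_{s+1} = \Pi_{\cX}(x_s - \eta_s \tg(x_s))$, Lemma~\ref{lem:todonow} gives
\begin{equation*}
\|x_{s+1} - x^*\|^2 \leq \|x_s - x^*\|^2 - 2 \eta_s \tg(x_s)^{\top}(x_s - x^*) + \eta_s^2 \|\tg(x_s)\|^2 ,
\end{equation*}
so that $\tg(x_s)^{\top}(x_s - x^*) \leq \frac{1}{2\eta_s}\big(\|x_s - x^*\|^2 - \|x_{s+1} - x^*\|^2\big) + \frac{\eta_s}{2}\|\tg(x_s)\|^2$.

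Next I would take a conditional expectation given $x_s$. Since $\E(\tg(x_s)\mid x_s) \in \partial f(x_s)$, the $\alpha$-strong convexity inequality \eqref{eq:defstrongconv} gives $f(x_s) - f(x^*) \leq \E(\tg(x_s)\mid x_s)^{\top}(x_s - x^*) - \frac{\alpha}{2}\|x_s - x^*\|^2$. Taking full expectations, combining with the displayed one-step bound, and using $\E\|\tg(x_s)\|^2 \leq B^2$, I get
\begin{equation*}
\E[f(x_s) - f(x^*)] \leq \frac{\eta_s}{2} B^2 + \Big(\frac{1}{2\eta_s} - \frac{\alpha}{2}\Big)\E\|x_s - x^*\|^2 - \frac{1}{2\eta_s}\E\|x_{s+1} - x^*\|^2 ,
\end{equation*}
which is exactly the stochastic counterpart of the first display in the proof of Theorem~\ref{th:LJSB12}.

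From here the argument is purely algebraic and mirrors the deterministic one. Plugging in $\eta_s = \frac{2}{\alpha(s+1)}$ (so $\frac{1}{2\eta_s} = \frac{\alpha(s+1)}{4}$ and $\frac{1}{2\eta_s}-\frac{\alpha}{2} = \frac{\alpha(s-1)}{4}$), multiplying through by $s$, and using $\frac{s}{s+1}\le 1$, one obtains
\begin{equation*}
s\,\E[f(x_s) - f(x^*)] \leq \frac{B^2}{\alpha} + \frac{\alpha}{4}\Big(s(s-1)\,\E\|x_s - x^*\|^2 - s(s+1)\,\E\|x_{s+1} - x^*\|^2\Big).
\end{equation*}
Summing over $s = 1,\dots,t$ makes the second group telescope (the $s=1$ boundary term vanishes, the other boundary term is $\le 0$), leaving $\sum_{s=1}^t s\,\E[f(x_s) - f(x^*)] \leq \frac{t B^2}{\alpha}$. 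Finally, since $\sum_{s=1}^t \frac{2s}{t(t+1)} = 1$, convexity of $f$ and Jensen's inequality give $\E f\big(\sum_{s=1}^t \frac{2s}{t(t+1)} x_s\big) - f(x^*) \leq \frac{2}{t(t+1)}\sum_{s=1}^t s\,\E[f(x_s)-f(x^*)] \leq \frac{2B^2}{\alpha(t+1)}$, which is the claimed bound (understood in expectation, as in Theorem~\ref{th:SMD}).

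There is no real obstacle here: the only point requiring care is the bookkeeping of the conditional expectations (one should condition on $x_s$ before invoking the subgradient property, then apply the tower rule, exactly as in the derivation preceding Theorem~\ref{th:SMD}), and the weighted telescoping sum, which is identical to the one in Theorem~\ref{th:LJSB12} once the weights $s$ are introduced.
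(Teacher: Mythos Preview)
Your proposal is correct and is exactly the approach the paper intends: it states Theorem~\ref{th:sgdstrong} without proof, noting only that ``one can directly generalize Theorem~\ref{th:LJSB12},'' and your write-up carries this out precisely, with the tower-rule step handled just as in the paragraph preceding Theorem~\ref{th:SMD}. The algebra (multiplying by $s$, telescoping, Jensen with weights $\frac{2s}{t(t+1)}$) matches the deterministic proof line for line.
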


\section{Smooth stochastic optimization and mini-batch SGD}
In the previous section we showed that, for non-smooth optimization, there is basically no cost for having a stochastic oracle instead of an exact oracle. Unfortunately one can show (see e.g. \cite{Tsy03}) that smoothness does not bring any acceleration for a general stochastic oracle\footnote{While being true in general this statement does not say anything about specific functions/oracles. For example it was shown in \cite{BM13} that acceleration can be obtained for the square loss and the logistic loss.}. This is in sharp contrast with the exact oracle case where we showed that gradient descent attains a $1/t$ rate (instead of $1/\sqrt{t}$ for non-smooth), and this could even be improved to $1/t^2$ thanks to Nesterov's accelerated gradient descent. 

The next result interpolates between the $1/\sqrt{t}$ for stochastic smooth optimization, and the $1/t$ for deterministic smooth optimization. We will use it to propose a useful modification of SGD in the smooth case. The proof is extracted from \cite{DGBSX12}.

\begin{theorem} \label{th:SMDsmooth}
Let $\Phi$ be a mirror map $1$-strongly convex on $\mathcal{X} \cap \mathcal{D}$ w.r.t. $\|\cdot\|$, and let $R^2 = \sup_{x \in \mathcal{X} \cap \mathcal{D}} \Phi(x) - \Phi(x_1)$. Let $f$ be convex and $\beta$-smooth w.r.t. $\|\cdot\|$. Furthermore assume that the stochastic oracle is such that $\E \|\nabla f(x) - \tg(x)\|_*^2 \leq \sigma^2$. Then S-MD with stepsize $\frac{1}{\beta + 1/\eta}$ and $\eta = \frac{R}{\sigma} \sqrt{\frac{2}{t}}$ satisfies
$$\E f\bigg(\frac{1}{t} \sum_{s=1}^t x_{s+1} \bigg) - f(x^*) \leq R \sigma \sqrt{\frac{2}{t}} + \frac{\beta R^2}{t} .$$
\end{theorem}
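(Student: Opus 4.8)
The plan is to rerun the mirror-descent one-step analysis of Theorem \ref{th:MD}, but with two twists: measure progress at the \emph{next} iterate $x_{s+1}$ rather than at $x_s$, which is what lets $\beta$-smoothness enter and produces a ``spare'' negative term, and carefully separate the part of the stochastic error that is correlated with $x_{s+1}$ from the part that is mean-zero. Throughout, write $\tg(x_t) = \nabla f(x_t) + \xi_t$ with $\E(\xi_t\mid x_t)=0$ and $\E\|\xi_t\|_*^2\le\sigma^2$, and denote the actual step size used in the update by $\tilde\eta = \tfrac{1}{\beta+1/\eta}$.

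First I would establish a smooth one-step inequality. Combining $\beta$-smoothness in the form $f(x_{t+1})-f(x_t)\le \nabla f(x_t)^\top(x_{t+1}-x_t)+\frac{\beta}{2}\|x_{t+1}-x_t\|^2$ with the convexity bound $f(x_t)-f(x)\le\nabla f(x_t)^\top(x_t-x)$ gives, for any fixed $x\in\cX\cap\cD$,
$$f(x_{t+1})-f(x)\le \tg(x_t)^\top(x_{t+1}-x)-\xi_t^\top(x_{t+1}-x)+\frac{\beta}{2}\|x_{t+1}-x_t\|^2 .$$
The term $\tg(x_t)^\top(x_{t+1}-x)$ is then controlled exactly as in the proof of Theorem \ref{th:MD}: the first-order optimality condition for $x_{t+1}$ (Proposition \ref{prop:firstorder}) applied to $x\mapsto\tilde\eta\tg(x_t)^\top x + D_\Phi(x,x_t)$, the Bregman identity \eqref{eq:useful1}, and $1$-strong convexity of $\Phi$ (so $D_\Phi(x_{t+1},x_t)\ge\frac12\|x_{t+1}-x_t\|^2$) together yield $\tilde\eta\,\tg(x_t)^\top(x_{t+1}-x)\le D_\Phi(x,x_t)-D_\Phi(x,x_{t+1})-\frac12\|x_{t+1}-x_t\|^2$. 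Substituting and using the defining relation $1/\tilde\eta=\beta+1/\eta$, the coefficient of $\|x_{t+1}-x_t\|^2$ collapses to exactly $-\frac{1}{2\eta}$:
$$f(x_{t+1})-f(x)\le\frac{1}{\tilde\eta}\big(D_\Phi(x,x_t)-D_\Phi(x,x_{t+1})\big)-\frac{1}{2\eta}\|x_{t+1}-x_t\|^2-\xi_t^\top(x_{t+1}-x).$$

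The main obstacle — and the reason for the peculiar step size — is the noise term $-\xi_t^\top(x_{t+1}-x)$: since $x_{t+1}$ depends on $\xi_t$, its expectation is not zero and one cannot simply average. The resolution is to split it as $-\xi_t^\top(x_{t+1}-x_t)-\xi_t^\top(x_t-x)$; the correlated piece is bounded pathwise by Fenchel--Young, $-\xi_t^\top(x_{t+1}-x_t)\le\frac{\eta}{2}\|\xi_t\|_*^2+\frac{1}{2\eta}\|x_{t+1}-x_t\|^2$, which precisely cancels the surplus $-\frac{1}{2\eta}\|x_{t+1}-x_t\|^2$ that smoothness handed us, while $-\xi_t^\top(x_t-x)$ has zero conditional expectation since $x$ is fixed. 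Getting this chain of cancellations to line up is the crux of the argument; everything else is bookkeeping.

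To finish, I would telescope the resulting inequality over $s=1,\dots,t$, drop the nonnegative $D_\Phi(x,x_{t+1})$, use $D_\Phi(x,x_1)\le\Phi(x)-\Phi(x_1)\le R^2$ (because $x_1$ minimizes $\Phi$), take expectations and let $x\to x^*$ to obtain $\E\sum_{s=1}^t\big(f(x_{s+1})-f(x^*)\big)\le\beta R^2+\frac{R^2}{\eta}+\frac{\eta t\sigma^2}{2}$. Jensen's inequality then converts the left-hand side into $t\big(\E f(\tfrac1t\sum_{s=1}^t x_{s+1})-f(x^*)\big)$, and the stated choice $\eta=\frac{R}{\sigma}\sqrt{2/t}$ is exactly the minimizer of $\frac{R^2}{\eta t}+\frac{\eta\sigma^2}{2}$, which evaluates to $R\sigma\sqrt{2/t}$; this gives the claimed bound $R\sigma\sqrt{2/t}+\beta R^2/t$. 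Apart from the noise-splitting trick, every ingredient is already available from Theorem \ref{th:MD}.
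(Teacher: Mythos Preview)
Your proof is correct and follows essentially the same approach as the paper's: both use $\beta$-smoothness together with the three-point Bregman identity so that the step size $\tilde\eta=1/(\beta+1/\eta)$ makes the quadratic terms collapse, then handle the noise by splitting it into the correlated piece $-\xi_t^{\top}(x_{t+1}-x_t)$ (absorbed by the spare $-\frac{1}{2\eta}\|x_{t+1}-x_t\|^2$ via Fenchel--Young) and the martingale piece $-\xi_t^{\top}(x_t-x)$. The only cosmetic difference is the order of operations: the paper applies the Fenchel--Young bound to the noise before invoking the three-point lemma, whereas you apply it after, but the algebraic content is identical.
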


\begin{proof}
Using $\beta$-smoothness, Cauchy-Schwarz (with $2 ab \leq x a^2+ b^2 / x$ for any $x >0$), and the 1-strong convexity of $\Phi$, one obtains
\begin{align*}
& f(x_{s+1}) - f(x_s) \\
& \leq \nabla f(x_s)^{\top} (x_{s+1} - x_s) + \frac{\beta}{2} \|x_{s+1} - x_s\|^2 \\
& = \tg_s^{\top} (x_{s+1} - x_s) + (\nabla f(x_s) - \tg_s)^{\top} (x_{s+1} - x_s) + \frac{\beta}{2} \|x_{s+1} - x_s\|^2 \\
& \leq \tg_s^{\top} (x_{s+1} - x_s) + \frac{\eta}{2} \|\nabla f(x_s) - \tg_s\|_*^2 + \frac12 (\beta + 1/\eta) \|x_{s+1} - x_s\|^2 \\
& \leq \tg_s^{\top} (x_{s+1} - x_s) + \frac{\eta}{2} \|\nabla f(x_s) - \tg_s\|_*^2 + (\beta + 1/\eta) D_{\Phi}(x_{s+1}, x_s) .
\end{align*}
Observe that, using the same argument as to derive \eqref{eq:pourplustard1}, one has
$$\frac{1}{\beta + 1/\eta} \tg_s^{\top} (x_{s+1} - x^*) \leq D_{\Phi} (x^*, x_s) - D_{\Phi}(x^*, x_{s+1}) - D_{\Phi}(x_{s+1}, x_s) .$$
Thus
\begin{align*}
& f(x_{s+1}) \\
 & \leq f(x_s) + \tg_s^{\top}(x^* - x_s) + (\beta + 1/\eta) \left(D_{\Phi} (x^*, x_s) - D_{\Phi}(x^*, x_{s+1})\right) \\
& \qquad + \frac{\eta}{2} \|\nabla f(x_s) - \tg_s\|_*^2 \\
& \leq f(x^*) + (\tg_s-\nabla f(x_s))^{\top}(x^* - x_s) \\
& \qquad + (\beta + 1/\eta) \left(D_{\Phi} (x^*, x_s) - D_{\Phi}(x^*, x_{s+1})\right) + \frac{\eta}{2} \|\nabla f(x_s) - \tg_s\|_*^2 .
\end{align*}
In particular this yields
$$\E f(x_{s+1}) - f(x^*) \leq (\beta + 1/\eta) \E \left(D_{\Phi} (x^*, x_s) - D_{\Phi}(x^*, x_{s+1})\right) + \frac{\eta \sigma^2}{2} .$$
By summing this inequality from $s=1$ to $s=t$ one can easily conclude with the standard argument.
\end{proof}

We can now propose the following modification of SGD based on the idea of {\em mini-batches}. Let $m \in \N$, then mini-batch SGD iterates the following equation:
$$x_{t+1} = \Pi_{\cX}\left(x_t - \frac{\eta}{m} \sum_{i=1}^m \tg_i(x_t)\right).$$
where $\tg_i(x_t), i=1,\hdots,m$ are independent random variables (conditionally on $x_t$) obtained from repeated queries to the stochastic oracle. Assuming that $f$ is $\beta$-smooth and that the stochastic oracle is such that $\|\tg(x)\|_2 \leq B$, one can obtain a rate of convergence for mini-batch SGD with Theorem \ref{th:SMDsmooth}. Indeed one can apply this result with the modified stochastic oracle that returns $\frac{1}{m} \sum_{i=1}^m \tg_i(x)$, it satisfies
$$\E \| \frac1{m} \sum_{i=1}^m \tg_i(x) - \nabla f(x) \|_2^2 = \frac{1}{m}\E \| \tg_1(x) - \nabla f(x) \|_2^2 \leq \frac{2 B^2}{m} .$$
Thus one obtains that with $t$ calls to the (original) stochastic oracle, that is $t/m$ iterations of the mini-batch SGD, one has a suboptimality gap bounded by
$$R \sqrt{\frac{2 B^2}{m}} \sqrt{\frac{2}{t/m}} + \frac{\beta R^2}{t/m} = 2 \frac{R B}{\sqrt{t}} + \frac{m \beta R^2}{t} .$$
Thus as long as $m \leq \frac{B}{R \beta} \sqrt{t}$ one obtains, with mini-batch SGD and $t$ calls to the oracle, a point which is $3\frac{R B}{\sqrt{t}}$-optimal.

Mini-batch SGD can be a better option than basic SGD in at least two situations: (i) When the computation for an iteration of mini-batch SGD can be distributed between multiple processors. Indeed a central unit can send the message to the processors that estimates of the gradient at point $x_s$ have to be computed, then each processor can work independently and send back the estimate they obtained. (ii) Even in a serial setting mini-batch SGD can sometimes be advantageous, in particular if some calculations can be re-used to compute several estimated gradients at the same point.

\section{Sum of smooth and strongly convex functions}
Let us examine in more details the main example from Section \ref{sec:mlapps}. That is one is interested in the unconstrained minimization of 
$$f(x) = \frac1{m} \sum_{i=1}^m f_i(x) ,$$
where $f_1, \hdots, f_m$ are $\beta$-smooth and convex functions, and $f$ is $\alpha$-strongly convex. Typically in machine learning $\alpha$ can be as small as $1/m$, while $\beta$ is of order of a constant. In other words the condition number $\kappa= \beta / \alpha$ can be as large as $\Omega(m)$. Let us now compare the basic gradient descent, that is
$$x_{t+1} = x_t - \frac{\eta}{m} \sum_{i=1}^m \nabla f_i(x) ,$$
to SGD
$$x_{t+1} = x_t - \eta \nabla f_{i_t}(x) ,$$
where $i_t$ is drawn uniformly at random in $[m]$ (independently of everything else). Theorem \ref{th:gdssc} shows that gradient descent requires $O(m \kappa \log(1/\epsilon))$ gradient computations (which can be improved to $O(m \sqrt{\kappa} \log(1/\epsilon))$ with Nesterov's accelerated gradient descent), while Theorem \ref{th:sgdstrong} shows that SGD (with appropriate averaging) requires $O(1/ (\alpha \epsilon))$ gradient computations. Thus one can obtain a low accuracy solution reasonably fast with SGD, but for high accuracy the basic gradient descent is more suitable. Can we get the best of both worlds? This question was answered positively in \cite{LRSB12} with SAG (Stochastic Averaged Gradient) and in \cite{SSZ13} with SDCA (Stochastic Dual Coordinate Ascent). These methods require only $O((m+\kappa) \log(1/\epsilon))$ gradient computations. We describe below the SVRG (Stochastic Variance Reduced Gradient descent) algorithm from \cite{JZ13} which makes the main ideas of SAG and SDCA more transparent (see also \cite{DBLJ14} for more on the relation between these different methods). We also observe that a natural question is whether one can obtain a Nesterov's accelerated version of these algorithms that would need only $O((m + \sqrt{m \kappa}) \log(1/\epsilon))$, see \cite{SSZ13b, ZX14, AB14} for recent works on this question.

To obtain a linear rate of convergence one needs to make ``big steps", that is the step-size should be of order of a constant. In SGD the step-size is typically of order $1/\sqrt{t}$ because of the variance introduced by the stochastic oracle. The idea of SVRG is to ``center" the output of the stochastic oracle in order to reduce the variance. Precisely instead of feeding $\nabla f_{i}(x)$ into the gradient descent one would use $\nabla f_i(x) - \nabla f_i(y) + \nabla f(y)$ where $y$ is a centering sequence. This is a sensible idea since, when $x$ and $y$ are close to the optimum, one should have that $\nabla f_i(x) - \nabla f_i(y)$ will have a small variance, and of course $\nabla f(y)$ will also be small (note that $\nabla f_i(x)$ by itself is not necessarily small). This intuition is made formal with the following lemma.
\begin{lemma} \label{lem:SVRG}
Let $f_1, \hdots f_m$ be $\beta$-smooth convex functions on $\R^n$, and $i$ be a random variable uniformly distributed in $[m]$. Then
$$\E \| \nabla f_i(x) - \nabla f_i(x^*) \|_2^2 \leq 2 \beta (f(x) - f(x^*)) .$$
\end{lemma}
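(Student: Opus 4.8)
The plan is to reduce the claim to the one-step gradient descent inequality \eqref{eq:onestepofgd} applied to a cleverly centered auxiliary function, and then to average over $i$. First I would fix $i \in [m]$ and introduce
$$g_i(z) = f_i(z) - f_i(x^*) - \nabla f_i(x^*)^{\top} (z - x^*) .$$
Since $f_i$ is convex and $\beta$-smooth, so is $g_i$ (it differs from $f_i$ by an affine function, which changes neither convexity nor the Hessian), and its gradient is $\nabla g_i(z) = \nabla f_i(z) - \nabla f_i(x^*)$. Crucially $\nabla g_i(x^*) = 0$, so by convexity $x^*$ is a global minimizer of $g_i$, and $\min_z g_i(z) = g_i(x^*) = 0$.

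Next I would apply \eqref{eq:onestepofgd} (which only uses \eqref{eq:defaltsmooth}, valid for $g_i$) at the point $x$: evaluating $g_i$ at $x - \frac{1}{\beta}\nabla g_i(x)$ and using that $g_i$ is minimized at $x^*$ gives
$$0 = g_i(x^*) \leq g_i\!\left(x - \frac1\beta \nabla g_i(x)\right) \leq g_i(x) - \frac{1}{2\beta}\|\nabla g_i(x)\|_2^2 ,$$
hence $\|\nabla f_i(x) - \nabla f_i(x^*)\|_2^2 = \|\nabla g_i(x)\|_2^2 \leq 2\beta\, g_i(x) = 2\beta\bigl(f_i(x) - f_i(x^*) - \nabla f_i(x^*)^{\top}(x-x^*)\bigr)$.

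Finally I would take the expectation over $i$ uniform in $[m]$. By definition $\E[\nabla f_i(x^*)] = \frac1m\sum_{i=1}^m \nabla f_i(x^*) = \nabla f(x^*) = 0$ since $x^*$ minimizes the (unconstrained) convex function $f$, and similarly $\E[f_i(x) - f_i(x^*)] = f(x) - f(x^*)$. Therefore the cross term vanishes and
$$\E\|\nabla f_i(x) - \nabla f_i(x^*)\|_2^2 \leq 2\beta\bigl(f(x) - f(x^*)\bigr),$$
which is the desired inequality. There is no real obstacle here; the only thing to be careful about is the centering of $g_i$ so that $x^*$ (rather than the unknown minimizer of $f_i$) is its minimum — this is exactly what lets us turn the smoothness bound on each $f_i$ into a statement controlled by the suboptimality of $x$ for the \emph{average} $f$.
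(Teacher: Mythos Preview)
Your proof is correct and follows essentially the same approach as the paper: the same centered auxiliary function $g_i$, the same appeal to \eqref{eq:onestepofgd} to get $\|\nabla g_i(x)\|_2^2 \le 2\beta\, g_i(x)$, and the same averaging step using $\E\nabla f_i(x^*)=\nabla f(x^*)=0$. Your write-up is slightly more explicit about why $x^*$ minimizes $g_i$, but the argument is identical.
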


\begin{proof}
Let $g_i(x) = f_i(x) - f_i(x^*) - \nabla f_i(x^*)^{\top} (x - x^*)$. By convexity of $f_i$ one has $g_i(x) \geq 0$ for any $x$ and in particular using \eqref{eq:onestepofgd} this yields $- g_i(x) \leq - \frac{1}{2\beta} \|\nabla g_i(x)\|_2^2$ which can be equivalently written as
$$\| \nabla f_i(x) - \nabla f_i(x^*) \|_2^2 \leq 2 \beta (f_i(x) - f_i(x^*) - \nabla f_i(x^*)^{\top} (x - x^*)) .$$
Taking expectation with respect to $i$ and observing that $\E \nabla f_i(x^*) = \nabla f(x^*) = 0$ yields the claimed bound.
\end{proof}
On the other hand the computation of $\nabla f(y)$ is expensive (it requires $m$ gradient computations), and thus the centering sequence should be updated more rarely than the main sequence. These ideas lead to the following epoch-based algorithm.

Let $y^{(1)} \in \R^n$ be an arbitrary initial point. For $s=1, 2 \ldots$, let $x_1^{(s)}=y^{(s)}$. For $t=1, \hdots, k$ let 
$$x_{t+1}^{(s)} = x_t^{(s)} - \eta \left( \nabla f_{i_t^{(s)}}(x_t^{(s)}) - \nabla f_{i_t^{(s)}} (y^{(s)}) + \nabla f(y^{(s)}) \right) ,$$
where $i_t^{(s)}$ is drawn uniformly at random (and independently of everything else) in $[m]$. Also let
$$y^{(s+1)} = \frac1{k} \sum_{t=1}^k x_t^{(s)} .$$

\begin{theorem} \label{th:SVRG}
Let $f_1, \hdots f_m$ be $\beta$-smooth convex functions on $\R^n$ and $f$ be $\alpha$-strongly convex. Then SVRG with $\eta = \frac{1}{10\beta}$ and $k = 20 \kappa$ satisfies
$$\E f(y^{(s+1)}) - f(x^*) \leq 0.9^s (f(y^{(1)}) - f(x^*)) .$$
\end{theorem}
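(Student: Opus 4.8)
The plan is to analyze a single epoch in isolation and show that the \emph{expected suboptimality} $D_s := \E f(y^{(s)}) - f(x^*)$ contracts by a factor strictly smaller than $0.9$ at each epoch; since $y^{(1)}$ is deterministic one has $D_1 = f(y^{(1)}) - f(x^*)$, and the theorem then follows by a one-line induction. So fix an epoch $s$, drop the superscripts, write $y = y^{(s)}$, $x_1 = y$, and let $v_t = \nabla f_{i_t}(x_t) - \nabla f_{i_t}(y) + \nabla f(y)$ be the variance-reduced gradient estimate, so that $\E[v_t \mid x_t] = \nabla f(x_t)$.

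The first and most important step is to bound the second moment $\E\|v_t\|_2^2$. Using $\nabla f(x^*) = 0$, I would split $v_t = \big(\nabla f_{i_t}(x_t) - \nabla f_{i_t}(x^*)\big) + \big(\nabla f_{i_t}(y) - \nabla f_{i_t}(x^*) - \nabla f(y)\big)$ and apply $\|a+b\|_2^2 \le 2\|a\|_2^2 + 2\|b\|_2^2$. The first term is handled directly by Lemma \ref{lem:SVRG}. For the second term I would note that, conditionally on $x_t$, the random vector $Z = \nabla f_{i_t}(y) - \nabla f_{i_t}(x^*)$ has mean $\nabla f(y)$, so $\E\|Z - \E Z\|_2^2 = \E\|Z\|_2^2 - \|\E Z\|_2^2 \le \E\|Z\|_2^2$, and Lemma \ref{lem:SVRG} (applied at $y$) bounds $\E\|Z\|_2^2$. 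This yields
$$\E[\|v_t\|_2^2 \mid x_t] \le 4\beta(f(x_t) - f(x^*)) + 4\beta(f(y) - f(x^*)).$$

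Next I would expand $\|x_{t+1} - x^*\|_2^2 = \|x_t - x^*\|_2^2 - 2\eta v_t^{\top}(x_t - x^*) + \eta^2 \|v_t\|_2^2$, take the conditional expectation, use convexity in the form $\nabla f(x_t)^{\top}(x_t - x^*) \ge f(x_t) - f(x^*)$, and substitute the variance bound to get
$$\E[\|x_{t+1} - x^*\|_2^2 \mid x_t] \le \|x_t - x^*\|_2^2 - 2\eta(1 - 2\beta\eta)(f(x_t) - f(x^*)) + 4\beta\eta^2(f(y) - f(x^*)).$$
Summing over $t = 1,\dots,k$, taking full expectation, telescoping, dropping the nonnegative term $\E\|x_{k+1}-x^*\|_2^2$, bounding $\E\|x_1 - x^*\|_2^2 = \E\|y - x^*\|_2^2 \le \tfrac{2}{\alpha}(f(y)-f(x^*))$ via $\alpha$-strong convexity (and $\nabla f(x^*)=0$), and using Jensen $f(y^{(s+1)}) \le \tfrac1k \sum_{t=1}^k f(x_t)$, I obtain $2\eta(1-2\beta\eta)k\,D_{s+1} \le \big(\tfrac{2}{\alpha} + 4\beta\eta^2 k\big) D_s$, that is
$$D_{s+1} \le \left(\frac{1}{\alpha\eta(1-2\beta\eta)k} + \frac{2\beta\eta}{1 - 2\beta\eta}\right) D_s .$$
Plugging in $\eta = \tfrac{1}{10\beta}$ and $k = 20\kappa$ gives $\beta\eta = \tfrac1{10}$, $1-2\beta\eta = \tfrac45$, and $\alpha\eta k = 2$, so the contraction factor equals $\tfrac{5}{8} + \tfrac14 = \tfrac78 < 0.9$, and the induction closes. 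The main obstacle is genuinely the first step: extracting the clean bound on $\E\|v_t\|_2^2$, since this is where the variance-reduction mechanism is used and where one must be careful to expand $v_t$ around $x^*$ (not around $0$); the rest is bookkeeping with the descent lemma and the strong-convexity inequality.
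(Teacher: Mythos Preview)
Your proof is correct and follows essentially the same route as the paper's: the same decomposition of $v_t$ around $x^*$, the same use of Lemma~\ref{lem:SVRG} and the variance identity $\E\|Z-\E Z\|_2^2 \le \E\|Z\|_2^2$, the same telescoping and strong-convexity step, and the same final contraction bound $\tfrac{1}{\alpha\eta(1-2\beta\eta)k} + \tfrac{2\beta\eta}{1-2\beta\eta}$. One small typo: in your displayed splitting of $v_t$ the second bracket should carry a minus sign (as written the two terms do not sum to $v_t$), but your subsequent argument treats it correctly and the bound is unaffected.
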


\begin{proof}
We fix a phase $s \geq 1$ and we denote by $\E$ the expectation taken with respect to $i_1^{(s)}, \hdots, i_k^{(s)}$. We show below that
$$\E f(y^{(s+1)}) - f(x^*) =  \E f\left(\frac1{k} \sum_{t=1}^k x_t^{(s)}\right) - f(x^*)  \leq 0.9 (f(y^{(s)}) - f(x^*)) ,$$
which clearly implies the theorem. To simplify the notation in the following we drop the dependency on $s$, that is we want to show that
\begin{equation} \label{eq:SVRG0}
\E f\left(\frac1{k} \sum_{t=1}^k x_t\right) - f(x^*)  \leq 0.9 (f(y) - f(x^*)) .
\end{equation}
We start as for the proof of Theorem \ref{th:gdssc} (analysis of gradient descent for smooth and strongly convex functions) with
\begin{equation} \label{eq:SVRG1}
\|x_{t+1} - x^*\|_2^2 = \|x_t - x^*\|_2^2 - 2 \eta v_t^{\top}(x_t - x^*) + \eta^2 \|v_t\|_2^2 ,
\end{equation}
where
$$v_t = \nabla f_{i_t}(x_t) - \nabla f_{i_t} (y) + \nabla f(y) .$$
Using Lemma \ref{lem:SVRG}, we upper bound $\E_{i_t} \|v_t\|_2^2$ as follows (also recall that $\E\|X-\E(X)\|_2^2 \leq \E\|X\|_2^2$, and $\E_{i_t} \nabla f_{i_t}(x^*) = 0$):
\begin{align}
& \E_{i_t} \|v_t\|_2^2 \notag \\
& \leq 2 \E_{i_t} \|\nabla f_{i_t}(x_t) - \nabla f_{i_t}(x^*) \|_2^2 + 2 \E_{i_t} \|\nabla f_{i_t}(y) - \nabla f_{i_t}(x^*) - \nabla f(y) \|_2^2 \notag \\
& \leq 2 \E_{i_t} \|\nabla f_{i_t}(x_t) - \nabla f_{i_t}(x^*) \|_2^2 + 2 \E_{i_t} \|\nabla f_{i_t}(y) - \nabla f_{i_t}(x^*) \|_2^2 \notag \\
& \leq 4 \beta (f(x_t) - f(x^*) + f(y) - f(x^*)) . \label{eq:SVRG2}
\end{align}
Also observe that
$$\E_{i_t} v_t^{\top}(x_t - x^*) = \nabla f(x_t)^{\top} (x_t - x^*) \geq f(x_t) - f(x^*) ,$$
and thus plugging this into \eqref{eq:SVRG1} together with \eqref{eq:SVRG2} one obtains
\begin{eqnarray*}
\E_{i_t} \|x_{t+1} - x^*\|_2^2 & \leq & \|x_t - x^*\|_2^2 - 2 \eta (1 - 2 \beta \eta) (f(x_t) - f(x^*)) \\
& & + 4 \beta \eta^2 (f(y) - f(x^*)) .
\end{eqnarray*}
Summing the above inequality over $t=1, \hdots, k$ yields
\begin{eqnarray*} 
\E \|x_{k+1} - x^*\|_2^2 & \leq & \|x_1 - x^*\|_2^2 - 2 \eta (1 - 2 \beta \eta) \E \sum_{t=1}^k (f(x_t) - f(x^*)) \\
& & + 4 \beta \eta^2 k (f(y) - f(x^*)) .
\end{eqnarray*}
Noting that $x_1 = y$ and that by $\alpha$-strong convexity one has $f(x) - f(x^*) \geq \frac{\alpha}{2} \|x - x^*\|_2^2$, one can rearrange the above display to obtain
$$\E f\left(\frac1{k} \sum_{t=1}^k x_t\right) - f(x^*)  \leq \left(\frac{1}{\alpha \eta (1 - 2 \beta \eta) k} + \frac{2 \beta \eta}{1- 2\beta \eta} \right) (f(y) - f(x^*)) .$$
Using that $\eta = \frac{1}{10\beta}$ and $k = 20 \kappa$ finally yields \eqref{eq:SVRG0} which itself concludes the proof.
\end{proof}

\section{Random coordinate descent}
We assume throughout this section that $f$ is a convex and differentiable function on $\R^n$, with a unique\footnote{Uniqueness is only assumed for sake of notation.} minimizer $x^*$. We investigate one of the simplest possible scheme to optimize $f$, the random coordinate descent (RCD) method. In the following we denote $\nabla_i f(x) = \frac{\partial f}{\partial x_i} (x)$. RCD is defined as follows, with an arbitrary initial point $x_1 \in \R^n$,
$$x_{s+1} = x_s - \eta \nabla_{i_s} f(x) e_{i_s} ,$$
where $i_s$ is drawn uniformly at random from $[n]$ (and independently of everything else). 

One can view RCD as SGD with the specific oracle $\tg(x) = n \nabla_{I} f(x) e_I$ where $I$ is drawn uniformly at random from $[n]$. Clearly $\E \tg(x) = \nabla f(x)$, and furthermore
$$\E \|\tg(x)\|_2^2 = \frac{1}{n}\sum_{i=1}^n \|n \nabla_{i} f(x) e_i\|_2^2 = n \|\nabla f(x)\|_2^2 .$$
Thus using Theorem \ref{th:SMD} (with $\Phi(x) = \frac12 \|x\|_2^2$, that is S-MD being SGD) one immediately obtains the following result. 
\begin{theorem}
Let $f$ be convex and $L$-Lipschitz on $\R^n$, then RCD with $\eta = \frac{R}{L} \sqrt{\frac{2}{n t}}$ satisfies
$$\E f\bigg(\frac{1}{t} \sum_{s=1}^t x_s \bigg) - \min_{x \in \mathcal{X}} f(x) \leq R L \sqrt{\frac{2 n}{t}} .$$
\end{theorem}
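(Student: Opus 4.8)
The plan is to observe that RCD is nothing but stochastic (sub)gradient descent in disguise, and then to apply Theorem~\ref{th:SMD} essentially verbatim. Concretely, take the mirror map $\Phi(x) = \frac12 \|x\|_2^2$ on $\cD = \R^n$. Then $\Phi$ is $1$-strongly convex with respect to $\|\cdot\|_2$, whose dual norm is again $\|\cdot\|_2$, and S-MD with this $\Phi$ is exactly (projected) stochastic gradient descent. The stochastic oracle to feed in is $\tg(x) = n \nabla_I f(x) e_I$, where $I$ is drawn uniformly from $[n]$ independently of everything else; unrolling the S-MD update with this oracle reproduces the RCD recursion (up to the bookkeeping factor $n$ carried by $\tg$).

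Next I would check the two hypotheses of Theorem~\ref{th:SMD} for this oracle. Conditional unbiasedness is immediate from differentiability of $f$ and independence of $I$:
$$\E\big(\tg(x) \,\big|\, x\big) = n \cdot \frac1n \sum_{i=1}^n \nabla_i f(x) e_i = \nabla f(x) \in \partial f(x).$$
For the fluctuation bound, a one-line computation gives
$$\E \|\tg(x)\|_2^2 = \frac1n \sum_{i=1}^n \big\|n \nabla_i f(x) e_i\big\|_2^2 = n \|\nabla f(x)\|_2^2 \leq n L^2,$$
where the last inequality uses that $f$ being $L$-Lipschitz forces $\|\nabla f(x)\|_2 \leq L$ for all $x$. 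Hence one may take $B = \sqrt{n}\,L$ in Theorem~\ref{th:SMD}.

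It then only remains to substitute $B = \sqrt{n}\,L$ into Theorem~\ref{th:SMD}: the prescribed step size $\frac{R}{B}\sqrt{2/t}$ and the guaranteed suboptimality $R B \sqrt{2/t}$ become, respectively, the step size in the statement and the bound $R L \sqrt{2n/t}$, after translating between the S-MD step and the RCD step through the factor $n$ in $\tg$. There is no genuine obstacle here — the entire argument of Section~\ref{sec:smd} is being reused — so the only things that warrant a moment of care are (i) making the identification ``RCD $=$ SGD with oracle $\tg$'' precise, keeping track of that factor $n$; (ii) invoking the conditional (rather than unconditional) form of unbiasedness, which is exactly the version allowed by the stochastic-oracle definition, since the iterates $x_s$ are themselves random; and (iii) the reading of $R$: as $f$ is defined on all of $\R^n$, one either restricts attention to a Euclidean ball around $x^*$ or interprets $R$ as the relevant radius, so that $R^2 = \sup_{x} \Phi(x) - \Phi(x_1)$ is finite, exactly as in the Lipschitz setting of Theorem~\ref{th:SMD}.
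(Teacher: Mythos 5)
Your proposal is correct and is essentially the paper's own argument: the paper also identifies RCD with S-MD/SGD using the oracle $\tg(x) = n\nabla_I f(x) e_I$, notes unbiasedness, computes $\E\|\tg(x)\|_2^2 = n\|\nabla f(x)\|_2^2 \leq nL^2$ so $B=\sqrt{n}L$, and plugs into Theorem~\ref{th:SMD}. Your care over conditional unbiasedness, the factor-$n$ bookkeeping between the two recursions, and the interpretation of $R$ on an unbounded domain are good reflexes, but they do not constitute a different route — the argument is the same.
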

Somewhat unsurprisingly RCD requires $n$ times more iterations than gradient descent to obtain the same accuracy. In the next section, we will see that this statement can be greatly improved by taking into account directional smoothness.

\subsection{RCD for coordinate-smooth optimization}
We assume now directional smoothness for $f$, that is there exists $\beta_1, \hdots, \beta_n$ such that for any $i \in [n], x \in \R^n$ and $u \in \R$,
$$| \nabla_i f(x+u e_i) - \nabla_i f(x) | \leq \beta_i |u| .$$
If $f$ is twice differentiable then this is equivalent to $(\nabla^2 f(x))_{i,i} \leq \beta_i$. In particular, since the maximal eigenvalue of a matrix is upper bounded by its trace, one can see that the directional smoothness implies that $f$ is $\beta$-smooth with $\beta \leq \sum_{i=1}^n \beta_i$. We now study the following ``aggressive" RCD, where the step-sizes are of order of the inverse smoothness:
$$x_{s+1} = x_s - \frac{1}{\beta_{i_s}} \nabla_{i_s} f(x) e_{i_s} .$$
Furthermore we study a more general sampling distribution than uniform, precisely for $\gamma \geq 0$ we assume that $i_s$ is drawn (independently) from the distribution $p_{\gamma}$ defined by
$$p_{\gamma}(i) = \frac{\beta_i^{\gamma}}{\sum_{j=1}^n \beta_j^{\gamma}}, i \in [n] .$$
This algorithm was proposed in \cite{Nes12}, and we denote it by RCD($\gamma$). Observe that, up to a preprocessing step of complexity $O(n)$, one can sample from $p_{\gamma}$ in time $O(\log(n))$. 

The following rate of convergence is derived in \cite{Nes12}, using the dual norms $\|\cdot\|_{[\gamma]}, \|\cdot\|_{[\gamma]}^*$ defined by
$$\|x\|_{[\gamma]} = \sqrt{\sum_{i=1}^n \beta_i^{\gamma} x_i^2} , \;\; \text{and} \;\; \|x\|_{[\gamma]}^* = \sqrt{\sum_{i=1}^n \frac1{\beta_i^{\gamma}} x_i^2} .$$

\begin{theorem} \label{th:rcdgamma}
Let $f$ be convex and such that $u \in \R \mapsto f(x + u e_i)$ is $\beta_i$-smooth for any $i \in [n], x \in \R^n$. Then RCD($\gamma$) satisfies for $t \geq 2$,
$$\E f(x_{t}) - f(x^*) \leq \frac{2 R_{1 - \gamma}^2(x_1) \sum_{i=1}^n \beta_i^{\gamma}}{t-1} ,$$
where
$$R_{1-\gamma}(x_1) = \sup_{x \in \R^n : f(x) \leq f(x_1)} \|x - x^*\|_{[1-\gamma]} .$$
\end{theorem}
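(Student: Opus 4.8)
The plan is to run the standard one-step descent analysis, but measured in the $\|\cdot\|_{[1-\gamma]}$ norm rather than the Euclidean one, so that the sampling distribution $p_\gamma$ exactly cancels the per-coordinate smoothness weights. First I would establish the basic progress inequality for a single aggressive step: by the directional smoothness of $f$ along $e_i$ (which gives, exactly as in \eqref{eq:onestepofgd} applied to the one-dimensional slice $u \mapsto f(x + u e_i)$),
$$f\!\left(x - \tfrac{1}{\beta_i}\nabla_i f(x) e_i\right) - f(x) \leq - \frac{1}{2\beta_i} (\nabla_i f(x))^2 .$$
Taking expectation over $i_s \sim p_\gamma$ conditionally on $x_s$, and writing $S_\gamma := \sum_{j=1}^n \beta_j^\gamma$, one gets
$$\E\big( f(x_{s+1}) \,\big|\, x_s\big) - f(x_s) \leq - \frac{1}{2 S_\gamma} \sum_{i=1}^n \beta_i^{\gamma-1}(\nabla_i f(x_s))^2 = - \frac{1}{2 S_\gamma} \big(\|\nabla f(x_s)\|_{[\gamma-1]}^*\big)^2 ,$$
using that $\|g\|_{[1-\gamma]}^* = \sqrt{\sum_i \beta_i^{\gamma-1} g_i^2}$. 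So the natural dual norm of the gradient controls the expected decrease, and this is why the $[1-\gamma]$ / $[\gamma-1]$ pair of norms is the right bookkeeping device.

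Next I would convert this into a recursion on $\delta_s := \E f(x_s) - f(x^*)$. By convexity and the generalized Cauchy--Schwarz inequality for the dual pair $(\|\cdot\|_{[1-\gamma]}, \|\cdot\|_{[1-\gamma]}^*)$,
$$f(x_s) - f(x^*) \leq \nabla f(x_s)^\top (x_s - x^*) \leq \|\nabla f(x_s)\|_{[1-\gamma]}^* \cdot \|x_s - x^*\|_{[1-\gamma]} \leq \|\nabla f(x_s)\|_{[1-\gamma]}^* \cdot R_{1-\gamma}(x_1),$$
where the last step uses that aggressive RCD is monotone in function value (each step decreases $f$, which holds deterministically by the one-step inequality above), so $x_s$ stays in the sublevel set $\{f \leq f(x_1)\}$ and hence $\|x_s - x^*\|_{[1-\gamma]} \leq R_{1-\gamma}(x_1)$. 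Plugging this bound on $\|\nabla f(x_s)\|_{[1-\gamma]}^*$ into the expected-decrease inequality, and taking full expectation, yields
$$\delta_{s+1} \leq \delta_s - \frac{1}{2 S_\gamma R_{1-\gamma}(x_1)^2}\, \delta_s^2 .$$
From here the conclusion follows exactly as in the proof of Theorem \ref{th:gdsmooth}: with $\omega = \frac{1}{2 S_\gamma R_{1-\gamma}(x_1)^2}$, the inequality $\omega \delta_s^2 + \delta_{s+1} \leq \delta_s$ implies $\frac{1}{\delta_{s+1}} - \frac{1}{\delta_s} \geq \omega$, hence $\frac{1}{\delta_t} \geq \omega(t-1)$, i.e. $\delta_t \leq \frac{2 S_\gamma R_{1-\gamma}(x_1)^2}{t-1}$, which is the claimed bound.

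The main obstacle, and the only slightly delicate point, is the step where I replace $\|x_s - x^*\|_{[1-\gamma]}$ by $R_{1-\gamma}(x_1)$: this requires knowing that the (random) iterate $x_s$ almost surely lies in the sublevel set of $x_1$. This is fine here because the aggressive step-size $1/\beta_{i_s}$ makes the method a genuine descent method pathwise (not just in expectation), so the monotonicity $f(x_{s+1}) \leq f(x_s) \leq \cdots \leq f(x_1)$ holds deterministically along every sample path; one just has to state this carefully before taking expectations. Everything else — the one-dimensional smoothness inequality, the averaging over $p_\gamma$, the dual-norm Cauchy--Schwarz, and the final recursion — is routine and mirrors arguments already given in the text.
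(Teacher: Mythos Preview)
Your proposal is correct and follows essentially the same route as the paper's own proof: the one-dimensional smoothness inequality, averaging over $p_\gamma$ to get the decrease in terms of $(\|\nabla f(x_s)\|_{[1-\gamma]}^*)^2$, the dual-norm Cauchy--Schwarz combined with the pathwise descent property to bound $\|x_s - x^*\|_{[1-\gamma]}$ by $R_{1-\gamma}(x_1)$, and the standard recursion argument from Theorem~\ref{th:gdsmooth}. If anything you are slightly more explicit than the paper about the fact that monotonicity holds deterministically along every sample path (which is what legitimizes replacing $\|x_s - x^*\|_{[1-\gamma]}$ by $R_{1-\gamma}(x_1)$ before taking expectations, and then invoking $\E[(f(x_s)-f(x^*))^2] \geq \delta_s^2$).
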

Recall from Theorem \ref{th:gdsmooth} that in this context the basic gradient descent attains a rate of $\beta \|x_1 - x^*\|_2^2 / t$ where $\beta \leq \sum_{i=1}^n \beta_i$ (see the discussion above). Thus we see that RCD($1$) greatly improves upon gradient descent for functions where $\beta$ is of order of $\sum_{i=1}^n \beta_i$. Indeed in this case both methods attain the same accuracy after a fixed number of iterations, but the iterations of coordinate descent are potentially much cheaper than the iterations of gradient descent. 
\begin{proof}
By applying \eqref{eq:onestepofgd} to the $\beta_i$-smooth function $u \in \R \mapsto f(x + u e_i)$ one obtains
$$f\left(x - \frac{1}{\beta_i} \nabla_i f(x) e_i\right) - f(x) \leq - \frac{1}{2 \beta_i} (\nabla_i f(x))^2 .$$
We use this as follows:
\begin{eqnarray*}
\E_{i_s} f(x_{s+1}) - f(x_s)
& = & \sum_{i=1}^n p_{\gamma}(i) \left(f\left(x_s - \frac{1}{\beta_i} \nabla_i f(x_s) e_i\right) - f(x_s) \right) \\
& \leq & - \sum_{i=1}^n \frac{p_{\gamma}(i)}{2 \beta_i} (\nabla_i f(x_s))^2 \\
& = & - \frac{1}{2 \sum_{i=1}^n \beta_i^{\gamma}} \left(\|\nabla f(x_s)\|_{[1-\gamma]}^*\right)^2 .
\end{eqnarray*}
Denote $\delta_s = \E f(x_s) - f(x^*)$. Observe that the above calculation can be used to show that $f(x_{s+1}) \leq f(x_s)$ and thus one has, by definition of $R_{1-\gamma}(x_1)$,
\begin{eqnarray*} 
\delta_s & \leq & \nabla f(x_s)^{\top} (x_s - x^*) \\
& \leq & \|x_s - x^*\|_{[1-\gamma]} \|\nabla f(x_s)\|_{[1-\gamma]}^* \\
& \leq & R_{1-\gamma}(x_1) \|\nabla f(x_s)\|_{[1-\gamma]}^* .
\end{eqnarray*}
Thus putting together the above calculations one obtains
$$\delta_{s+1} \leq \delta_s - \frac{1}{2 R_{1 - \gamma}^2(x_1) \sum_{i=1}^n \beta_i^{\gamma} } \delta_s^2 .$$
The proof can be concluded with similar computations than for Theorem \ref{th:gdsmooth}.
\end{proof}

We discussed above the specific case of $\gamma = 1$. Both $\gamma=0$ and $\gamma=1/2$ also have an interesting behavior, and we refer to \cite{Nes12} for more details. The latter paper also contains a discussion of high probability results and potential acceleration \`a la Nesterov. We also refer to \cite{RT12} for a discussion of RCD in a distributed setting.

\subsection{RCD for smooth and strongly convex optimization}
If in addition to directional smoothness one also assumes strong convexity, then RCD attains in fact a linear rate.
\begin{theorem} \label{th:linearratercd}
Let $\gamma \geq 0$. Let $f$ be $\alpha$-strongly convex w.r.t. $\|\cdot\|_{[1-\gamma]}$, and such that $u \in \R \mapsto f(x + u e_i)$ is $\beta_i$-smooth for any $i \in [n], x \in \R^n$. Let $\kappa_{\gamma} = \frac{\sum_{i=1}^n \beta_i^{\gamma}}{\alpha}$, then RCD($\gamma$) satisfies
$$\E f(x_{t+1}) - f(x^*) \leq \left(1 - \frac1{\kappa_{\gamma}}\right)^t (f(x_1) - f(x^*)) .$$
\end{theorem}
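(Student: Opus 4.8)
The plan is to recycle the one-step inequality already derived in the proof of Theorem \ref{th:rcdgamma} and then, in place of the ``gradient domination'' estimate used there (which relied on $R_{1-\gamma}(x_1)$), invoke the stronger consequence of strong convexity. First I would observe that the very same computation as in the proof of Theorem \ref{th:rcdgamma} — which uses only the directional smoothness of $f$ and the sampling distribution $p_\gamma$ — yields
$$\E_{i_s} f(x_{s+1}) - f(x_s) \leq - \frac{1}{2 \sum_{i=1}^n \beta_i^{\gamma}} \left(\|\nabla f(x_s)\|_{[1-\gamma]}^*\right)^2 .$$

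The new ingredient is the bound relating the suboptimality gap to the (dual) gradient norm. Since $f$ is $\alpha$-strongly convex with respect to $\|\cdot\|_{[1-\gamma]}$, one has $f(y) \geq f(x_s) + \nabla f(x_s)^{\top}(y - x_s) + \frac{\alpha}{2}\|y - x_s\|_{[1-\gamma]}^2$ for all $y$; minimizing the right-hand side over $y \in \R^n$ (equivalently, using that the Fenchel conjugate of $\frac{\alpha}{2}\|\cdot\|_{[1-\gamma]}^2$ is $\frac{1}{2\alpha}(\|\cdot\|_{[1-\gamma]}^*)^2$, which is exactly the kind of identity used throughout the mirror-descent analyses) gives
$$f(x_s) - f(x^*) \leq \frac{1}{2\alpha} \left(\|\nabla f(x_s)\|_{[1-\gamma]}^*\right)^2 .$$
Combining the two displays yields $\E_{i_s} f(x_{s+1}) - f(x_s) \leq -\frac{\alpha}{\sum_{i=1}^n \beta_i^\gamma}\big(f(x_s) - f(x^*)\big) = -\frac{1}{\kappa_\gamma}\big(f(x_s) - f(x^*)\big)$, i.e. $\E_{i_s} f(x_{s+1}) - f(x^*) \leq \left(1 - \frac{1}{\kappa_\gamma}\right)\big(f(x_s) - f(x^*)\big)$. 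Taking the full expectation over $i_1, \hdots, i_s$ and using the tower rule, with $\delta_s = \E f(x_s) - f(x^*)$ one obtains $\delta_{s+1} \leq (1 - 1/\kappa_\gamma)\delta_s$, and iterating from $s = 1$ gives the claimed geometric rate.

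There is essentially no serious obstacle: the argument is a two-line combination once the per-step inequality is in hand. The only point deserving a sentence of justification is the strong-convexity/gradient-norm inequality in the non-Euclidean norm $\|\cdot\|_{[1-\gamma]}$, which is the standard Fenchel-duality computation. It is also worth recording, for the rate to be meaningful, that $\kappa_\gamma \geq 1$: comparing the lower bound $f(x + u e_i) \geq f(x) + u \nabla_i f(x) + \frac{\alpha}{2}\beta_i^{1-\gamma} u^2$ coming from $\alpha$-strong convexity with the $\beta_i$-smoothness of $u \mapsto f(x + u e_i)$ forces $\alpha \beta_i^{1-\gamma} \leq \beta_i$, hence $\alpha \leq \beta_i^\gamma \leq \sum_{j=1}^n \beta_j^\gamma$, so $1 - 1/\kappa_\gamma \in [0,1)$.
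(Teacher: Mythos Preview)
Your proof is correct and follows essentially the same approach as the paper: recycle the one-step decrease from Theorem~\ref{th:rcdgamma} and replace the $R_{1-\gamma}$-based bound by the Polyak--\L ojasiewicz-type inequality $f(x_s)-f(x^*)\le \frac{1}{2\alpha}(\|\nabla f(x_s)\|_{[1-\gamma]}^*)^2$, which the paper states separately as Lemma~\ref{lem:tittrucnes}. Your inline Fenchel-conjugate derivation of that inequality and your check that $\kappa_\gamma\ge 1$ are both fine additions.
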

We use the following elementary lemma.
\begin{lemma} \label{lem:tittrucnes}
Let $f$ be $\alpha$-strongly convex w.r.t. $\| \cdot\|$ on $\R^n$, then
$$f(x) - f(x^*) \leq \frac1{2\alpha} \|\nabla f(x)\|_*^2 .$$
\end{lemma}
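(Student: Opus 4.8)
The plan is to obtain the bound by minimizing the quadratic lower bound on $f$ coming from strong convexity. First I would rewrite the definition of $\alpha$-strong convexity (Equation \eqref{eq:defstrongconv}, applied with the roles of $x$ and $y$ arranged so that the gradient is taken at $x$) in the form
$$f(y) \geq f(x) + \nabla f(x)^{\top}(y-x) + \frac{\alpha}{2}\|y-x\|^2, \quad \forall y \in \R^n.$$
Since $x^*$ is the global minimizer, $f(x^*) = \min_{y \in \R^n} f(y)$, so taking the infimum over $y$ on the right-hand side gives
$$f(x^*) \geq f(x) + \inf_{y \in \R^n}\left( \nabla f(x)^{\top}(y-x) + \frac{\alpha}{2}\|y-x\|^2 \right).$$

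Next I would evaluate this infimum. Writing $z = y - x$ and using the definition of the dual norm, $\nabla f(x)^{\top} z \geq - \|\nabla f(x)\|_* \|z\|$, so the expression is bounded below by $-\|\nabla f(x)\|_* \|z\| + \frac{\alpha}{2}\|z\|^2$, which depends on $z$ only through $r := \|z\| \geq 0$. Minimizing the scalar quadratic $r \mapsto -\|\nabla f(x)\|_* r + \frac{\alpha}{2} r^2$ over $r \geq 0$ (the minimizer is $r = \|\nabla f(x)\|_*/\alpha$) yields the value $-\frac{1}{2\alpha}\|\nabla f(x)\|_*^2$. Substituting back gives $f(x^*) \geq f(x) - \frac{1}{2\alpha}\|\nabla f(x)\|_*^2$, which is exactly the claimed inequality after rearranging.

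There is essentially no obstacle here: the only point requiring a small amount of care is that we are working with a general norm $\|\cdot\|$ rather than the Euclidean one, so the minimization of $\nabla f(x)^{\top} z + \frac{\alpha}{2}\|z\|^2$ must be done via the dual-norm inequality as above rather than by completing the square with an inner product; once that reduction to a one-dimensional problem in $r = \|z\|$ is made, the computation is elementary. (In the Euclidean case one can alternatively just plug $y = x - \frac{1}{\alpha}\nabla f(x)$ into the strong convexity inequality, but the dual-norm argument covers the general statement uniformly.)
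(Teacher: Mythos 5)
Your proof is correct and follows essentially the same route as the paper: apply the strong-convexity inequality, bound the linear term via H\"older's inequality (dual norm), and minimize the resulting scalar quadratic in $\|x-y\|$ to get $\frac{1}{2\alpha}\|\nabla f(x)\|_*^2$. The paper simply writes the chain of inequalities directly and then sets $y = x^*$ at the end, whereas you take the infimum over $y$ first, but the computation is identical.
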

\begin{proof}
By strong convexity, H{\"o}lder's inequality, and an elementary calculation,
\begin{eqnarray*}
f(x) - f(y) & \leq & \nabla f(x)^{\top} (x-y) - \frac{\alpha}{2} \|x-y\|_2^2 \\
& \leq & \|\nabla f(x)\|_* \|x-y\| - \frac{\alpha}{2} \|x-y\|_2^2 \\
& \leq & \frac1{2\alpha} \|\nabla f(x)\|_*^2 ,
\end{eqnarray*}
which concludes the proof by taking $y = x^*$.
\end{proof}
We can now prove Theorem \ref{th:linearratercd}.
\begin{proof}
In the proof of Theorem \ref{th:rcdgamma} we showed that 
$$\delta_{s+1} \leq \delta_s - \frac{1}{2 \sum_{i=1}^n \beta_i^{\gamma}} \left(\|\nabla f(x_s)\|_{[1-\gamma]}^*\right)^2 .$$
On the other hand Lemma \ref{lem:tittrucnes} shows that 
$$\left(\|\nabla f(x_s)\|_{[1-\gamma]}^*\right)^2 \geq 2 \alpha \delta_s .$$
The proof is concluded with straightforward calculations.
\end{proof}

\section{Acceleration by randomization for saddle points}
We explore now the use of randomness for saddle point computations. That is we consider the context of Section \ref{sec:sp} with a stochastic oracle of the following form: given $z=(x,y) \in \cX \times \cY$ it outputs $\tg(z) = (\tg_{\cX}(x,y), \tg_{\cY}(x,y))$ where $\E \ (\tg_{\cX}(x,y) | x,y) \in \partial_x \phi(x,y)$, and $\E \ (\tg_{\cY}(x,y) | x,y) \in \partial_y (-\phi(x,y))$. Instead of using true subgradients as in SP-MD (see Section \ref{sec:spmd}) we use here the outputs of the stochastic oracle. We refer to the resulting algorithm as S-SP-MD (Stochastic Saddle Point Mirror Descent). Using the same reasoning than in Section \ref{sec:smd} and Section \ref{sec:spmd} one can derive the following theorem.
\begin{theorem} \label{th:sspmd}
Assume that the stochastic oracle is such that $\E \left(\|\tg_{\cX}(x,y)\|_{\cX}^* \right)^2 \leq B_{\cX}^2$, and $\E \left(\|\tg_{\cY}(x,y)\|_{\cY}^* \right)^2 \leq B_{\cY}^2$. Then S-SP-MD with $a= \frac{B_{\cX}}{R_{\cX}}$, $b=\frac{B_{\cY}}{R_{\cY}}$, and $\eta=\sqrt{\frac{2}{t}}$ satisfies
$$\E \left( \max_{y \in \mathcal{Y}} \phi\left( \frac1{t} \sum_{s=1}^t x_s,y \right) - \min_{x \in \mathcal{X}} \phi\left(x, \frac1{t} \sum_{s=1}^t y_s \right) \right) \leq (R_{\cX} B_{\cX} + R_{\cY} B_{\cY}) \sqrt{\frac{2}{t}}.$$
\end{theorem}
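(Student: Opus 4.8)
The plan is to mimic the two-step passage from the deterministic vector-field bound \eqref{eq:vfMD} to Theorem \ref{th:spmd}, but now with the stochastic oracle outputs playing the role of the vectors $g_t$. First I would set up the product space $\cZ = \cX \times \cY$ with the mirror map $\Phi(z) = a \Phi_{\cX}(x) + b \Phi_{\cY}(y)$ and the norm $\|z\|_{\cZ} = \sqrt{a \|x\|_{\cX}^2 + b \|y\|_{\cY}^2}$, exactly as in the proof of Theorem \ref{th:spmd}; with these choices $\Phi$ is $1$-strongly convex w.r.t. $\|\cdot\|_{\cZ}$, the dual norm is $\|z\|_{\cZ}^* = \sqrt{\frac1a (\|x\|_{\cX}^*)^2 + \frac1b (\|y\|_{\cY}^*)^2}$, and the stochastic field $\tg_t = (\tg_{\cX}(x_t,y_t), \tg_{\cY}(x_t,y_t))$ satisfies $\E \|\tg_t\|_{\cZ}^{*2} \leq \frac{B_{\cX}^2}{a} + \frac{B_{\cY}^2}{b}$. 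Since S-SP-MD is literally mirror descent in $\cZ$ driven by the vectors $\tg_t$, the deterministic regret bound \eqref{eq:vfMD} applies verbatim to give, for every $z = (x,y) \in \cZ \cap \cD$,
$$\sum_{s=1}^t \tg_s^{\top}(z_s - z) \leq \frac{R^2}{\eta} + \frac{\eta}{2} \sum_{s=1}^t \|\tg_s\|_{\cZ}^{*2},$$
where $R^2 = \sup_{z} \Phi(z) - \Phi(z_1)$, which with the chosen $a,b$ is $R_{\cX} B_{\cX} + R_{\cY} B_{\cY}$ up to a constant (same bookkeeping as in Theorem \ref{th:spmd}).

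Next I would take expectations. Writing $\tx = \frac1t \sum_s x_s$, $\ty = \frac1t \sum_s y_s$, and $\tz = (\tx, \ty)$, the key inequality \eqref{eq:keysp} says that the duality gap of $\tz$ is bounded by $g(\tz)^{\top}(\tz - z)$ for some fixed $z$, and by convexity/concavity this is in turn bounded by $\frac1t \sum_s g(z_s)^{\top}(z_s - z)$ for any subgradient-valued field $g$. The one subtlety — and this is the part to be careful about — is that we must replace the true subgradients $g(z_s)$ by the oracle outputs $\tg_s$ before taking expectations. Here one uses the tower rule exactly as in the passage preceding Theorem \ref{th:SMD}: conditionally on $z_s$ (which is a function of the earlier oracle responses), $\E(\tg_{\cX}(x_s,y_s)\mid z_s) \in \partial_x \phi(x_s,y_s)$ and $\E(\tg_{\cY}(x_s,y_s)\mid z_s) \in \partial_y(-\phi(x_s,y_s))$, so
$$\E\, g(z_s)^{\top}(z_s - z) = \E\big[\,\E(\tg_s \mid z_s)^{\top}(z_s - z)\,\big] = \E\, \tg_s^{\top}(z_s - z),$$
where in applying \eqref{eq:keysp} one chooses as $g(z_s)$ the conditional means, which are genuine subgradients. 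Note $z$ here is the (random, $z_s$-dependent) point attaining the max/min in the duality gap; the argument still goes through because \eqref{eq:keysp} holds for that particular $z$ pointwise, and then one takes expectation of the whole chain of inequalities.

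Combining the two displays: $\E\big(\max_y \phi(\tx,y) - \min_x \phi(x,\ty)\big) \leq \frac1t \E \sum_s \tg_s^{\top}(z_s - z) \leq \frac{R^2}{\eta t} + \frac{\eta}{2t}\big(\frac{B_{\cX}^2}{a} + \frac{B_{\cY}^2}{b}\big)$, and plugging in $a = B_{\cX}/R_{\cX}$, $b = B_{\cY}/R_{\cY}$, $\eta = \sqrt{2/t}$ gives the claimed $(R_{\cX} B_{\cX} + R_{\cY} B_{\cY})\sqrt{2/t}$ after the same routine optimization-of-constants computation as in Theorem \ref{th:spmd}. The main obstacle is purely the conditioning argument in the middle paragraph — making sure that the point $z$ realizing the duality gap can be handled (it is fixed *after* the randomness, so one should either state \eqref{eq:keysp} for that $z$ before averaging, or simply note that the bound $g(\tz)^{\top}(\tz-z) \le \frac1t\sum_s g(z_s)^{\top}(z_s-z)$ holds deterministically for each realization and only then take $\E$, with the conditional-mean substitution applied term by term); everything else is a direct transcription of Theorems \ref{th:spmd} and \ref{th:SMD}.
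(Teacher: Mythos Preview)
Your overall plan is exactly what the paper's one-line sketch intends: reuse the product-space mirror map and norm from Theorem~\ref{th:spmd}, apply the deterministic regret bound \eqref{eq:vfMD} to the stochastic vectors $\tg_s$, and pass to expectations via the tower rule as in Section~\ref{sec:smd}. The constant bookkeeping (the choices of $a,b,\eta$ and the resulting $(R_{\cX}B_{\cX}+R_{\cY}B_{\cY})\sqrt{2/t}$) is correct.

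The gap is precisely where you suspect it, and your proposed resolution does not close it. The tower-rule identity $\E\big[g(z_s)^{\top}(z_s-z)\big]=\E\big[\tg_s^{\top}(z_s-z)\big]$ is valid only when $z$ is measurable with respect to $\tg_1,\dots,\tg_{s-1}$ (or deterministic). The point $z$ attaining the duality gap depends on the averaged iterate $\frac1t\sum_s z_s$, hence on \emph{all} of $\tg_1,\dots,\tg_t$, so the ``conditional-mean substitution term by term'' is illegal at that $z$. As written, your argument yields only the weaker conclusion with the max/min \emph{outside} the expectation, i.e.\ $\sup_{(x,y)\in\cZ}\E\big[\phi(\frac1t\sum_s x_s,y)-\phi(x,\frac1t\sum_s y_s)\big]\le(R_{\cX}B_{\cX}+R_{\cY}B_{\cY})\sqrt{2/t}$. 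To get $\E[\max_y\cdots-\min_x\cdots]$ one writes $g(z_s)=\tg_s-\xi_s$ with $\xi_s=\tg_s-\E[\tg_s\mid z_s]$, so that for every $z$, $\sum_s g(z_s)^{\top}(z_s-z)=\sum_s\tg_s^{\top}(z_s-z)-\sum_s\xi_s^{\top}z_s+\sum_s\xi_s^{\top}z$; the first piece is bounded by \eqref{eq:vfMD} uniformly in $z$, the second has zero mean, but the residual $\E\sup_{z\in\cZ}\sum_s\xi_s^{\top}z$ does not vanish and must be controlled separately---the standard device (Nemirovski and coauthors) is to run an auxiliary mirror-descent sequence in $\cZ$ driven by the noise vectors and apply \eqref{eq:vfMD} a second time. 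The paper's pointer to Sections~\ref{sec:smd} and~\ref{sec:spmd} glosses over this step.
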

Using S-SP-MD we revisit the examples of Section \ref{sec:spex2} and Section \ref{sec:spex3}. In both cases one has $\phi(x,y) = x^{\top} A y$ (with $A_i$ being the $i^{th}$ column of $A$), and thus $\nabla_x \phi(x,y) = Ay$ and $\nabla_y \phi(x,y) = A^{\top} x$.
\newline

\noindent
\textbf{Matrix games.} Here $x \in \Delta_n$ and $y \in \Delta_m$. Thus there is a quite natural stochastic oracle:
\begin{equation} \label{eq:oraclematrixgame}
\tg_{\cX}(x,y) = A_I, \; \text{where} \; I \in [m] \; \text{is drawn according to} \; y \in \Delta_m ,
\end{equation}
and $\forall i \in [m]$,
\begin{equation} \label{eq:oraclematrixgame2}
\tg_{\cY}(x,y)(i) = A_i(J), \; \text{where} \; J \in [n] \; \text{is drawn according to} \; x \in \Delta_n .
\end{equation}
Clearly $\|\tg_{\cX}(x,y)\|_{\infty} \leq \|A\|_{\mathrm{max}}$ and $\|\tg_{\cX}(x,y)\|_{\infty} \leq \|A\|_{\mathrm{max}}$, which implies that S-SP-MD attains an $\epsilon$-optimal pair of points with $O\left(\|A\|_{\mathrm{max}}^2 \log(n+m) / \epsilon^2 \right)$ iterations. Furthermore the computational complexity of a step of S-SP-MD is dominated by drawing the indices $I$ and $J$ which takes $O(n + m)$. Thus overall the complexity of getting an $\epsilon$-optimal Nash equilibrium with S-SP-MD is $O\left(\|A\|_{\mathrm{max}}^2 (n + m) \log(n+m) / \epsilon^2  \right)$. While the dependency on $\epsilon$ is worse than for SP-MP (see Section \ref{sec:spex2}), the dependencies on the dimensions is $\tilde{O}(n+m)$ instead of $\tilde{O}(nm)$. In particular, quite astonishingly, this is {\em sublinear} in the size of the matrix $A$. The possibility of sublinear algorithms for this problem was first observed in \cite{GK95}.
\newline

\noindent
\textbf{Linear classification.} Here $x \in \mB_{2,n}$ and $y \in \Delta_m$. Thus the stochastic oracle for the $x$-subgradient can be taken as in \eqref{eq:oraclematrixgame} but for the $y$-subgradient we modify \eqref{eq:oraclematrixgame2} as follows. For a vector $x$ we denote by $x^2$ the vector such that $x^2(i) = x(i)^2$. For all $i \in [m]$,
$$\tg_{\cY}(x,y)(i) = \frac{\|x\|^2}{x(j)} A_i(J), \; \text{where} \; J \in [n] \; \text{is drawn according to} \; \frac{x^2}{\|x\|_2^2} \in \Delta_n .$$ 
Note that one indeed has $\E (\tg_{\cY}(x,y)(i) | x,y) = \sum_{j=1}^n x(j) A_i(j) = (A^{\top} x)(i)$.
Furthermore $\|\tg_{\cX}(x,y)\|_2 \leq B$, and
$$\E (\|\tg_{\cY}(x,y)\|_{\infty}^2 | x,y) = \sum_{j=1}^n \frac{x(j)^2}{\|x\|_2^2} \max_{i \in [m]} \left(\frac{\|x\|^2}{x(j)} A_i(j)\right)^2 \leq \sum_{j=1}^n \max_{i \in [m]} A_i(j)^2 .$$
Unfortunately this last term can be $O(n)$. However it turns out that one can do a more careful analysis of mirror descent in terms of local norms, which allows to prove that the ``local variance" is dimension-free. We refer to \cite{BC12} for more details on these local norms, and to \cite{CHW12} for the specific details in the linear classification situation.

\section{Convex relaxation and randomized rounding} \label{sec:convexrelaxation}
In this section we briefly discuss the concept of convex relaxation, and the use of randomization to find approximate solutions. By now there is an enormous literature on these topics, and we refer to \cite{Bar14} for further pointers. 

We study here the seminal example of $\mathrm{MAXCUT}$. This problem can be described as follows. Let $A \in \R_+^{n \times n}$ be a symmetric matrix of non-negative weights. The entry $A_{i,j}$ is interpreted as a measure of the ``dissimilarity" between point $i$ and point $j$. The goal is to find a partition of $[n]$ into two sets, $S \subset [n]$ and $S^c$, so as to maximize the total dissimilarity between the two groups: $\sum_{i \in S, j \in S^c} A_{i,j}$. Equivalently $\mathrm{MAXCUT}$ corresponds to the following optimization problem:
\begin{equation} \label{eq:maxcut1}
\max_{x \in \{-1,1\}^n} \frac12 \sum_{i,j =1}^n A_{i,j} (x_i - x_j)^2 .
\end{equation}
Viewing $A$ as the (weighted) adjacency matrix of a graph, one can rewrite \eqref{eq:maxcut1} as follows, using the graph Laplacian $L=D-A$ where $D$ is the diagonal matrix with entries $(\sum_{j=1}^n A_{i,j})_{i \in [n]}$,
\begin{equation} \label{eq:maxcut2}
\max_{x \in \{-1,1\}^n} x^{\top} L x .
\end{equation}
It turns out that this optimization problem is $\mathbf{NP}$-hard, that is the existence of a polynomial time algorithm to solve \eqref{eq:maxcut2} would prove that $\mathbf{P} = \mathbf{NP}$. The combinatorial difficulty of this problem stems from the hypercube constraint. Indeed if one replaces $\{-1,1\}^n$ by the Euclidean sphere, then one obtains an efficiently solvable problem (it is the problem of computing the maximal eigenvalue of $L$).

We show now that, while \eqref{eq:maxcut2} is a difficult optimization problem, it is in fact possible to find relatively good {\em approximate} solutions by using the power of randomization. 
Let $\zeta$ be uniformly drawn on the hypercube $\{-1,1\}^n$, then clearly
$$\E \ \zeta^{\top} L \zeta = \sum_{i,j=1, i \neq j}^n A_{i,j} \geq \frac{1}{2} \max_{x \in \{-1,1\}^n} x^{\top} L x .$$
This means that, on average, $\zeta$ is a $1/2$-approximate solution to \eqref{eq:maxcut2}. Furthermore it is immediate that the above expectation bound implies that, with probability at least $\epsilon$, $\zeta$ is a $(1/2-\epsilon)$-approximate solution. Thus by repeatedly sampling uniformly from the hypercube one can get arbitrarily close (with probability approaching $1$) to a $1/2$-approximation of $\mathrm{MAXCUT}$.

Next we show that one can obtain an even better approximation ratio by combining the power of convex optimization and randomization. This approach was pioneered by \cite{GW95}. The Goemans-Williamson algorithm is based on the following inequality
$$\max_{x \in \{-1,1\}^n} x^{\top} L x = \max_{x \in \{-1,1\}^n} \langle L, xx^{\top} \rangle \leq \max_{X \in \mathbb{S}_+^n, X_{i,i}=1, i \in [n]} \langle L, X \rangle .$$ 
The right hand side in the above display is known as the {\em convex (or SDP) relaxation} of $\mathrm{MAXCUT}$. The convex relaxation is an SDP and thus one can find its solution efficiently with Interior Point Methods (see Section \ref{sec:IPM}). The following result states both the Goemans-Williamson strategy and the corresponding approximation ratio.

\begin{theorem} \label{th:GW}
Let $\Sigma$ be the solution to the SDP relaxation of $\mathrm{MAXCUT}$. Let $\xi \sim \cN(0, \Sigma)$ and $\zeta = \mathrm{sign}(\xi) \in \{-1,1\}^n$. Then
$$\E \ \zeta^{\top} L \zeta \geq 0.878 \max_{x \in \{-1,1\}^n} x^{\top} L x .$$
\end{theorem}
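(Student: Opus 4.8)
The plan is to analyze the randomized rounding step coordinate-pair by coordinate-pair, using the key fact that for two jointly Gaussian variables the sign-correlation has an explicit closed form. Since $\xi \sim \cN(0,\Sigma)$ with $\Sigma_{i,i}=1$, each pair $(\xi_i,\xi_j)$ is jointly Gaussian with unit variances and correlation $\Sigma_{i,j} =: \rho_{i,j}$. The central identity (Grothendieck-type / Sheppard's formula) is
$$\E\big[ \mathrm{sign}(\xi_i) \mathrm{sign}(\xi_j) \big] = \frac{2}{\pi} \arcsin(\rho_{i,j}) ,$$
which I would prove by writing $\mathrm{sign}(\xi_i)\mathrm{sign}(\xi_j) = 1 - 2\cdot\ds1\{\xi_i,\xi_j \text{ have opposite signs}\}$ and computing the probability of opposite signs as the relative measure of the two ``bad'' sectors of a bivariate normal, which is $\frac{1}{\pi}\arccos(\rho_{i,j})$. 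Hence $\E[\zeta_i \zeta_j] = \frac{2}{\pi}\arcsin(\rho_{i,j})$, and consequently, since $\zeta\zeta^\top$ is the matrix with these entries,
$$\E\ \zeta^\top L \zeta = \langle L, \E[\zeta\zeta^\top]\rangle = \frac{2}{\pi}\sum_{i,j} L_{i,j} \arcsin(\rho_{i,j}) .$$

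The second step is to compare this quantity term-by-term with $\langle L, \Sigma\rangle = \sum_{i,j} L_{i,j}\rho_{i,j}$, the value of the SDP relaxation. Writing $L = D - A$ with $A_{i,j}\ge 0$ and $D$ diagonal, and noting the diagonal terms contribute $L_{i,i}(\tfrac{2}{\pi}\arcsin 1) = L_{i,i}$ exactly (matching the relaxation since $\rho_{i,i}=1$), it suffices to handle the off-diagonal part. Here one uses the elementary scalar inequality
$$\frac{2}{\pi}\arcsin(\rho) \ge 0.878 \,\rho \quad \text{for all } \rho \in [-1,1],$$
more precisely with the Goemans–Williamson constant $\alpha_{GW} = \min_{\rho\in[-1,1]} \frac{(2/\pi)\arccos(\rho)}{1-\rho} \ge 0.878$. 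Because the relevant quadratic form can be expanded (via $L_{i,j} = -A_{i,j}$ off-diagonal and $x^\top L x = \tfrac12\sum_{i,j}A_{i,j}(x_i-x_j)^2$) so that every weight $A_{i,j}\ge 0$ multiplies a factor of the form $\tfrac{2}{\pi}\arccos(\rho_{i,j})$ versus $1-\rho_{i,j}$, the nonnegativity of the weights is exactly what lets the term-by-term bound go through. This yields $\E\ \zeta^\top L\zeta \ge \alpha_{GW}\,\langle L,\Sigma\rangle$.

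The final step is to chain the inequalities: by construction of the SDP relaxation (stated just before the theorem),
$$\langle L, \Sigma\rangle = \max_{X\in\mathbb{S}_+^n, X_{i,i}=1}\langle L, X\rangle \ge \max_{x\in\{-1,1\}^n} x^\top L x ,$$
since every $xx^\top$ for $x\in\{-1,1\}^n$ is feasible for the relaxation. Combining, $\E\ \zeta^\top L\zeta \ge 0.878\,\max_{x\in\{-1,1\}^n} x^\top L x$, which is the claim; one also checks that $\zeta^\top L \zeta \ge 0$ always (since $L\succeq 0$), so the random cut is genuinely a $0.878$-approximation in expectation.

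The main obstacle is the scalar optimization $\alpha_{GW} = \min_{\rho\in[-1,1]}\frac{(2/\pi)\arccos\rho}{1-\rho}$: verifying that this minimum is indeed $\ge 0.878$ (attained near $\rho\approx -0.689$) requires a short but genuine calculus argument — differentiating, locating the critical point, and evaluating — rather than anything conceptual. Everything else (Sheppard's formula, the $D-A$ bookkeeping, feasibility of $xx^\top$) is routine once the setup is arranged so that all comparisons are made against nonnegative weights $A_{i,j}$.
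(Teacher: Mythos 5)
Your proposal takes essentially the same route as the paper: Sheppard's formula $\E[\zeta_i\zeta_j]=\frac{2}{\pi}\arcsin(\Sigma_{i,j})$ (the paper's Lemma~\ref{lem:GW}), the rewriting $\langle L,X\rangle=\sum_{i,j}A_{i,j}(1-X_{i,j})$ so that all weights are nonnegative, the Goemans--Williamson scalar inequality applied term by term, and feasibility of $xx^{\top}$ in the SDP relaxation to close the chain. One flag: the first version of the scalar inequality you display, $\frac{2}{\pi}\arcsin(\rho)\geq 0.878\,\rho$, is actually false --- for small $\rho>0$ the left side is $\approx \frac{2}{\pi}\rho\approx 0.637\rho < 0.878\rho$. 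The statement you need (and which you then correctly give via the Goemans--Williamson constant) is $\frac{2}{\pi}\arccos(t)\geq 0.878(1-t)$, equivalently $1-\frac{2}{\pi}\arcsin(t)\geq 0.878(1-t)$ for all $t\in[-1,1]$; this is the form the paper uses and the one your term-by-term comparison against the nonnegative weights $A_{i,j}$ actually requires, so the earlier display should be deleted or corrected.
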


The proof of this result is based on the following elementary geometric lemma.

\begin{lemma} \label{lem:GW}
Let $\xi \sim \mathcal{N}(0,\Sigma)$ with $\Sigma_{i,i}=1$ for $i \in [n]$, and $\zeta = \mathrm{sign}(\xi)$. Then
$$\E \ \zeta_i \zeta_j = \frac{2}{\pi} \mathrm{arcsin} \left(\Sigma_{i,j}\right) .$$
\end{lemma}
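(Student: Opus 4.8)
The plan is to reduce the statement to a two-dimensional fact about the pair $(\xi_i,\xi_j)$ and then compute a Gaussian orthant probability by a rotation-invariance argument.

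First I would observe that $\E\,\zeta_i\zeta_j$ depends only on the joint distribution of $(\xi_i,\xi_j)$, which is a centered Gaussian vector with unit variances and covariance $\rho := \Sigma_{i,j}$; since $\Sigma \succeq 0$ has unit diagonal we have $\rho \in [-1,1]$. Because $\P(\xi_i = 0) = \P(\xi_j = 0) = 0$ and the law of $\xi$ is symmetric under $\xi \mapsto -\xi$, one has $\E\,\zeta_i\zeta_j = \P(\xi_i\xi_j > 0) - \P(\xi_i\xi_j < 0) = 4\,\P(\xi_i > 0,\ \xi_j > 0) - 1$, so everything reduces to evaluating $p(\rho) := \P(\xi_i > 0,\ \xi_j > 0)$.

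To compute $p(\rho)$ I would write $\xi_j = \rho\,\xi_i + \sqrt{1-\rho^2}\,\eta$ where $\eta$ is a standard Gaussian independent of $\xi_i$ (this decomposition is valid for $|\rho| \le 1$, the case $|\rho| = 1$ being a degenerate but harmless limit). Then $(\xi_i,\eta)$ is a standard two-dimensional Gaussian, hence rotation invariant, so its angular coordinate is uniform on $[0,2\pi)$. In the $(\xi_i,\eta)$-plane the event $\{\xi_i > 0,\ \xi_j > 0\}$ is the intersection of the two half-planes through the origin with inward unit normals $(1,0)$ and $(\rho,\sqrt{1-\rho^2})$; these normals subtend the angle $\arccos\rho$, so the intersection is an angular sector of opening $\pi - \arccos\rho$. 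By rotation invariance $p(\rho) = (\pi - \arccos\rho)/(2\pi)$, and plugging this into the previous display gives $\E\,\zeta_i\zeta_j = 2 - \tfrac{2}{\pi}\arccos\rho - 1 = \tfrac{2}{\pi}\bigl(\tfrac{\pi}{2} - \arccos\rho\bigr) = \tfrac{2}{\pi}\arcsin\rho$, as claimed.

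The computation is essentially elementary; the only point requiring a little care is the claim that the intersection of the two half-planes has angular measure $\pi - \arccos\rho$ (equivalently, the standard orthant-probability formula $p(\rho) = \tfrac14 + \tfrac{1}{2\pi}\arcsin\rho$). If one prefers to avoid the geometric picture, an alternative is to differentiate $p(\rho)$ in $\rho$: a direct calculation with the bivariate Gaussian density shows $p'(\rho) = \tfrac{1}{2\pi\sqrt{1-\rho^2}}$, and since $p(0) = \tfrac14$ by independence, integration yields the same formula. Either route is short, so I do not expect a genuine obstacle here.
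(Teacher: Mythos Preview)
Your argument is correct and follows essentially the same route as the paper: both reduce $\E\,\zeta_i\zeta_j$ to a Gaussian orthant probability and evaluate it via rotation invariance, ending with the identity $\arccos\rho = \tfrac{\pi}{2} - \arcsin\rho$. The only cosmetic difference is that the paper factors $\Sigma = VV^{\top}$ in $\R^n$ and works with the unit vectors $V_i, V_j$ (so that $\P(V_j^{\top}\epsilon < 0 \mid V_i^{\top}\epsilon \geq 0) = \tfrac{1}{\pi}\arccos(V_i^{\top}V_j)$ via uniformity of $\epsilon/\|\epsilon\|_2$ on the sphere), whereas you immediately pass to the two-dimensional marginal and use the 2D Cholesky decomposition $\xi_j = \rho\,\xi_i + \sqrt{1-\rho^2}\,\eta$; the underlying geometric computation is identical.
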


\begin{proof}
Let $V \in \R^{n \times n}$ (with $i^{th}$ row $V_i^{\top}$) be such that $\Sigma = V V^{\top}$. Note that since $\Sigma_{i,i}=1$ one has $\|V_i\|_2 = 1$ (remark also that necessarily $|\Sigma_{i,j}| \leq 1$, which will be important in the proof of Theorem \ref{th:GW}). Let $\epsilon \sim \mathcal{N}(0,\mI_n)$ be such that $\xi = V \epsilon$. Then $\zeta_i = \mathrm{sign}(V_i^{\top} \epsilon)$, and in particular
\begin{eqnarray*}
\E \ \zeta_i \zeta_j & = & \P(V_i^{\top} \epsilon \geq 0 \ \text{and} \ V_j^{\top} \epsilon \geq 0) + \P(V_i^{\top} \epsilon \leq 0 \ \text{and} \ V_j^{\top} \epsilon \leq 0 \\
& & - \P(V_i^{\top} \epsilon \geq 0 \ \text{and} \ V_j^{\top} \epsilon < 0) - \P(V_i^{\top} \epsilon < 0 \ \text{and} \ V_j^{\top} \epsilon \geq 0) \\
& = & 2 \P(V_i^{\top} \epsilon \geq 0 \ \text{and} \ V_j^{\top} \epsilon \geq 0) - 2 \P(V_i^{\top} \epsilon \geq 0 \ \text{and} \ V_j^{\top} \epsilon < 0) \\
& = & \P(V_j^{\top} \epsilon \geq 0 | V_i^{\top} \epsilon \geq 0) - \P(V_j^{\top} \epsilon < 0 | V_i^{\top} \epsilon \geq 0) \\
& = & 1 - 2 \P(V_j^{\top} \epsilon < 0 | V_i^{\top} \epsilon \geq 0).
\end{eqnarray*}
Now a quick picture shows that $\P(V_j^{\top} \epsilon < 0 | V_i^{\top} \epsilon \geq 0) = \frac{1}{\pi} \mathrm{arccos}(V_i^{\top} V_j)$ (recall that $\epsilon / \|\epsilon\|_2$ is uniform on the Euclidean sphere). Using the fact that $V_i^{\top} V_j = \Sigma_{i,j}$ and $\mathrm{arccos}(x) = \frac{\pi}{2} - \mathrm{arcsin}(x)$ conclude the proof.
\end{proof}

We can now get to the proof of Theorem \ref{th:GW}.

\begin{proof}
We shall use the following inequality:
\begin{equation} \label{eq:dependsonL}
1 - \frac{2}{\pi} \mathrm{arcsin}(t) \geq 0.878 (1-t), \ \forall t \in [-1,1] .
\end{equation}
Also remark that for $X \in \R^{n \times n}$ such that $X_{i,i}=1$, one has
$$\langle L, X \rangle = \sum_{i,j=1}^n A_{i,j} (1 - X_{i,j}) ,$$
and in particular for $x \in \{-1,1\}^n$, $x^{\top} L x = \sum_{i,j=1}^n A_{i,j} (1 - x_i x_j)$.
Thus, using Lemma \ref{lem:GW}, and the facts that $A_{i,j} \geq 0$ and $|\Sigma_{i,j}| \leq 1$ (see the proof of Lemma \ref{lem:GW}), one has
\begin{eqnarray*}
\E \ \zeta^{\top} L \zeta
& = & \sum_{i,j=1}^n A_{i,j} \left(1- \frac{2}{\pi} \mathrm{arcsin} \left(\Sigma_{i,j}\right)\right)  \\
& \geq & 0.878 \sum_{i,j=1}^n A_{i,j} \left(1- \Sigma_{i,j}\right) \\
& = & 0.878 \ \max_{X \in \mathbb{S}_+^n, X_{i,i}=1, i \in [n]} \langle L, X \rangle \\
& \geq & 0.878 \max_{x \in \{-1,1\}^n} x^{\top} L x .
\end{eqnarray*}
\end{proof}

Theorem \ref{th:GW} depends on the form of the Laplacian $L$ (insofar as \eqref{eq:dependsonL} was used). We show next a result from \cite{Nes97} that applies to any positive semi-definite matrix, at the expense of the constant of approximation. Precisely we are now interested in the following optimization problem:
\begin{equation} \label{eq:quad}
\max_{x \in \{-1,1\}^n} x^{\top} B x .
\end{equation}
The corresponding SDP relaxation is
$$\max_{X \in \mathbb{S}_+^n, X_{i,i}=1, i \in [n]} \langle B, X \rangle .$$

\begin{theorem}
Let $\Sigma$ be the solution to the SDP relaxation of \eqref{eq:quad}. Let $\xi \sim \cN(0, \Sigma)$ and $\zeta = \mathrm{sign}(\xi) \in \{-1,1\}^n$. Then
$$\E \ \zeta^{\top} B \zeta \geq \frac{2}{\pi} \max_{x \in \{-1,1\}^n} x^{\top} B x .$$
\end{theorem}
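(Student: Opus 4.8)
The plan is to mimic the structure of the proof of Theorem \ref{th:GW}, adapting the two ingredients that actually used the special structure of the Laplacian, namely the combinatorial inequality \eqref{eq:dependsonL} and the rewriting $x^{\top} L x = \sum_{i,j} A_{i,j}(1-x_ix_j)$. First I would recall that by Lemma \ref{lem:GW} one has, with $\zeta = \mathrm{sign}(\xi)$ and $\xi \sim \cN(0,\Sigma)$,
$$\E \ \zeta^{\top} B \zeta = \sum_{i,j=1}^n B_{i,j} \ \E \ \zeta_i \zeta_j = \frac{2}{\pi} \sum_{i,j=1}^n B_{i,j} \arcsin(\Sigma_{i,j}) = \frac{2}{\pi} \langle B, \arcsin[\Sigma] \rangle,$$
where $\arcsin[\Sigma]$ denotes the entrywise application of $\arcsin$.

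The key step is then to show that $\langle B, \arcsin[\Sigma] \rangle \geq \langle B, \Sigma \rangle$. Since $B \succeq 0$, it suffices to prove that $\arcsin[\Sigma] \succeq \Sigma$, i.e. that the entrywise matrix $\arcsin[\Sigma] - \Sigma$ is positive semi-definite. This is the heart of the argument and the step I expect to be the main obstacle. The way to see it: $\Sigma$ is a correlation matrix (a Gram matrix of unit vectors $V_i$, as in the proof of Lemma \ref{lem:GW}), so all its entries lie in $[-1,1]$, and the Taylor expansion $\arcsin(t) = t + \sum_{k \geq 1} c_k t^{2k+1}$ has all nonnegative coefficients $c_k \geq 0$ on $[-1,1]$. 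Hence $\arcsin[\Sigma] - \Sigma = \sum_{k \geq 1} c_k \ \Sigma^{\circ(2k+1)}$, where $\Sigma^{\circ m}$ is the $m$-th Hadamard (entrywise) power of $\Sigma$. By the Schur product theorem the Hadamard product of PSD matrices is PSD, so each $\Sigma^{\circ(2k+1)} \succeq 0$, and therefore the nonnegative combination $\arcsin[\Sigma]-\Sigma \succeq 0$. Combined with $B \succeq 0$ this gives $\langle B, \arcsin[\Sigma] - \Sigma \rangle \geq 0$.

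Putting the pieces together: using the SDP relaxation bound $\langle B, \Sigma \rangle \geq \langle B, X \rangle$ for every feasible $X$ (in particular for $X = xx^{\top}$ with $x \in \{-1,1\}^n$, which is feasible since its diagonal is all ones), one obtains
$$\E \ \zeta^{\top} B \zeta = \frac{2}{\pi} \langle B, \arcsin[\Sigma] \rangle \geq \frac{2}{\pi} \langle B, \Sigma \rangle = \frac{2}{\pi} \max_{X \in \mathbb{S}_+^n, X_{i,i}=1} \langle B, X \rangle \geq \frac{2}{\pi} \max_{x \in \{-1,1\}^n} x^{\top} B x,$$
which is the claimed bound. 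The only subtlety worth double-checking is the reduction $\langle B, xx^{\top}\rangle = x^\top B x$ and the feasibility of $xx^\top$ in the relaxation, both of which are immediate; the real content is the Schur-product argument for $\arcsin[\Sigma] \succeq \Sigma$, and it is notable that $B \succeq 0$ (rather than the sign pattern of $B$) is exactly what makes this clean — this is why the Laplacian-specific inequality \eqref{eq:dependsonL} is no longer needed but the constant degrades from $0.878$ to $2/\pi \approx 0.636$.
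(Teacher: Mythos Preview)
Your proof is correct and follows essentially the same approach as the paper: apply Lemma~\ref{lem:GW} to get $\E\,\zeta^\top B\zeta = \tfrac{2}{\pi}\langle B,\arcsin[\Sigma]\rangle$, then prove $\arcsin[\Sigma]\succeq\Sigma$ via the Taylor expansion of $\arcsin$ and the Schur product theorem, and conclude using $B\succeq 0$ together with the optimality of $\Sigma$ for the SDP relaxation. The paper's proof is identical in structure, including the explicit Gram-matrix verification of the Schur product theorem.
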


\begin{proof}
Lemma \ref{lem:GW} shows that
$$\E \ \zeta^{\top} B \zeta = \sum_{i,j=1}^n B_{i,j} \frac{2}{\pi} \mathrm{arcsin} \left(X_{i,j}\right) = \frac{2}{\pi} \langle B, \mathrm{arcsin}(X) \rangle .$$
Thus to prove the result it is enough to show that $\langle B, \mathrm{arcsin}(\Sigma) \rangle \geq \langle B, \Sigma \rangle$, which is itself implied by $\mathrm{arcsin}(\Sigma) \succeq \Sigma$ (the implication is true since $B$ is positive semi-definite, just write the eigendecomposition). Now we prove the latter inequality via a Taylor expansion. Indeed recall that $|\Sigma_{i,j}| \leq 1$ and thus denoting by $A^{\circ \alpha}$ the matrix where the entries are raised to the power $\alpha$ one has
$$\mathrm{arcsin}(\Sigma) = \sum_{k=0}^{+\infty} \frac{{2k \choose k}}{4^k (2k +1)} \Sigma^{\circ (2k+1)} = \Sigma + \sum_{k=1}^{+\infty} \frac{{2k \choose k}}{4^k (2k +1)} \Sigma^{\circ (2k+1)}.$$
Finally one can conclude using the fact if $A,B \succeq 0$ then $A \circ B \succeq 0$. This can be seen by writing $A= V V^{\top}$, $B=U U^{\top}$, and thus 
$$(A \circ B)_{i,j} = V_i^{\top} V_j U_i^{\top} U_j = \mathrm{Tr}(U_j V_j^{\top} V_i U_i^{\top}) = \langle V_i U_i^{\top}, V_j U_j^{\top} \rangle .$$ In other words $A \circ B$ is a Gram-matrix and, thus it is positive semi-definite.
\end{proof}

\section{Random walk based methods} \label{sec:rwmethod}
Randomization naturally suggests itself in the center of gravity method (see Section \ref{sec:gravity}), as a way to circumvent the exact calculation of the center of gravity. This idea was proposed and developed in \cite{BerVem04}. We give below a condensed version of the main ideas of this paper.

Assuming that one can draw independent points $X_1, \hdots, X_N$ uniformly at random from the current set $\cS_t$, one could replace $c_t$ by $\hat{c}_t = \frac{1}{N} \sum_{i=1}^N X_i$. \cite{BerVem04} proved the following generalization of Lemma \ref{lem:Gru60} for the situation where one cuts a convex set through a point close the center of gravity. Recall that a convex set $\cK$ is in isotropic position if $\E X = 0$ and $\E X X^{\top} = \mI_n$, where $X$ is a random variable drawn uniformly at random from $\cK$. Note in particular that this implies $\E \|X\|_2^2 = n$. We also say that $\cK$ is in near-isotropic position if $\frac{1}{2} \mI_n \preceq \E X X^{\top} \preceq \frac3{2} \mI_n$.
\begin{lemma} \label{lem:BerVem04}
Let $\cK$ be a convex set in isotropic position. Then for any $w \in \R^n, w \neq 0$, $z \in \R^n$, one has
$$\mathrm{Vol} \left( \cK \cap \{x \in \R^n : (x-z)^{\top} w \geq 0\} \right) \geq \left(\frac{1}{e} - \|z\|_2\right) \mathrm{Vol} (\cK) .$$
\end{lemma}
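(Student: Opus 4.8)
# Proof proposal for Lemma~\ref{lem:BerVem04}

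The plan is to reduce the statement to Lemma~\ref{lem:Gru60} (Gr\"unbaum's inequality) by a translation argument, paying the cost of moving the cutting point $z$ back to the centroid $0$. First I would fix $w\neq 0$ and $z$, and set $H^+ = \{x : (x-z)^\top w \ge 0\}$ and $H_0^+ = \{x : x^\top w \ge 0\}$ (the halfspace through the centroid with the same normal). Gr\"unbaum's lemma, applied to $\cK$ (which is centered since it is in isotropic position), gives $\mathrm{Vol}(\cK \cap H_0^+) \ge \frac1e \mathrm{Vol}(\cK)$. So it suffices to control the volume of the ``sliver'' $\cK \cap (H_0^+ \setminus H^+)$, i.e.\ the region of $\cK$ lying between the two parallel hyperplanes $\{x^\top w = 0\}$ and $\{(x-z)^\top w = 0\} = \{x^\top w = z^\top w\}$, and show it is at most $\|z\|_2 \mathrm{Vol}(\cK)$; then
$$
\mathrm{Vol}(\cK \cap H^+) \ge \mathrm{Vol}(\cK \cap H_0^+) - \mathrm{Vol}\bigl(\cK \cap (H_0^+\setminus H^+)\bigr) \ge \Bigl(\tfrac1e - \|z\|_2\Bigr)\mathrm{Vol}(\cK).
$$
(If instead $z^\top w < 0$ the slab lies on the other side and $\cK\cap H^+ \supset \cK\cap H_0^+$, so the bound is immediate; so WLOG $z^\top w \ge 0$.)

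The key step is therefore the slab estimate: for a convex body $\cK$ in isotropic position and a unit vector $u = w/\|w\|_2$, and any real numbers $a \le b$,
$$
\mathrm{Vol}\bigl(\cK \cap \{x : a \le x^\top u \le b\}\bigr) \le (b-a)\,\mathrm{Vol}(\cK).
$$
I would prove this by writing the volume as $\int_a^b A(s)\,ds$ where $A(s) = \mathrm{Vol}_{n-1}(\cK \cap \{x^\top u = s\})$ is the $(n-1)$-dimensional slice area, and using that in isotropic position the one-dimensional marginal density $p(s) = A(s)/\mathrm{Vol}(\cK)$ is a log-concave probability density with variance $1$ (since $\E (X^\top u)^2 = u^\top (\E XX^\top) u = 1$ and $\E X^\top u = 0$). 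A standard fact about centered log-concave densities with variance $1$ is that $\|p\|_\infty \le 1$ (one can cite this, or derive it quickly: among log-concave densities on $\R$ with a given maximum value, the variance is minimized by a symmetric exponential/triangular-type profile, forcing $\mathrm{Var} \ge c\|p\|_\infty^{-2}$ with $c \ge 1$; the precise constant is classical). Then $\int_a^b A(s)\,ds = \mathrm{Vol}(\cK)\int_a^b p(s)\,ds \le \mathrm{Vol}(\cK)(b-a)\|p\|_\infty \le (b-a)\mathrm{Vol}(\cK)$.

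Finally I would apply this with $a = 0$, $b = z^\top w/\|w\|_2 = z^\top u$, and note $b - a = z^\top u \le \|z\|_2\|u\|_2 = \|z\|_2$ by Cauchy--Schwarz, which yields the sliver bound and hence the lemma. The main obstacle is the sharp bound $\|p\|_\infty \le 1$ for centered variance-one log-concave densities on the line; everything else is bookkeeping with Gr\"unbaum's lemma and Fubini. If one does not want to invoke that fact as a black box, the cleanest self-contained route is: by log-concavity $p$ is unimodal, and a short computation (replacing $p$ on each side of its mode by the exponential density with the same value and derivative there, which only decreases the variance) reduces to checking the inequality for two-sided exponential densities, where it is elementary.
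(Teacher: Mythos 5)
The paper does not actually prove this lemma; it is stated with a citation to \cite{BerVem04} and no argument. So there is nothing to compare against directly, but your proposal is correct and is in fact the standard route (and essentially the one in the cited reference): Gr\"unbaum's inequality for the halfspace through the centroid, plus a slab estimate, with the slab controlled by a pointwise bound on the log-concave marginal density. The decomposition, the Cauchy--Schwarz reduction of $z^\top u$ to $\|z\|_2$, the Pr\'ekopa/log-concavity input, and the variance computation $\E(X^\top u)^2 = u^\top(\E XX^\top)u = 1$ are all right, and the sign case $z^\top w < 0$ is handled correctly.

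The one place you are imprecise is the parenthetical sketch of why a centered log-concave density $p$ with variance $1$ satisfies $\|p\|_\infty \le 1$. You say the variance is minimized among log-concave densities of given sup by a \emph{symmetric} exponential or triangular profile; that is not the extremizer. The tight case is the \emph{one-sided} exponential (shifted to mean zero: density $e^{-(x+1)}$ on $[-1,\infty)$, which has variance $1$ and $\sup p = 1$), while the symmetric Laplace of variance $1$ has $\sup p = 1/\sqrt{2}$ and the triangular density has $\sup p \cdot \sigma = 1/\sqrt{6}$, both comfortably below $1$. Your proposed surgery (``replace $p$ on each side of its mode by the exponential matching value and derivative there'') also does not obviously produce a probability density nor monotonically decrease variance, so as written that sketch does not close the gap. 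None of this undermines the proof, because the inequality $\sigma \sup p \le 1$ for one-dimensional log-concave densities is a genuine classical fact (see e.g.\ the Saumard--Wellner survey on log-concavity, or Bobkov's work) and it is reasonable to invoke it as such. If you want a fully self-contained proof you would instead use the standard comparison of a unimodal log-concave density with the (one-sided) exponential tail through its mode; but citing the bound is entirely acceptable here, and the main body of your argument is sound.
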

Thus if one can ensure that $\cS_t$ is in (near) isotropic position, and $\|c_t - \hat{c}_t\|_2$ is small (say smaller than $0.1$), then the randomized center of gravity method (which replaces $c_t$ by $\hat{c}_t$) will converge at the same speed than the original center of gravity method. 

Assuming that $\cS_t$ is in isotropic position one immediately obtains $\E \|c_t - \hat{c}_t\|_2^2 = \frac{n}{N}$, and thus by Chebyshev's inequality one has $\P(\|c_t - \hat{c}_t\|_2 > 0.1) \leq 100 \frac{n}{N}$. In other words with $N = O(n)$ one can ensure that the randomized center of gravity method makes progress on a constant fraction of the iterations (to ensure progress at every step one would need a larger value of $N$ because of an union bound, but this is unnecessary).

Let us now consider the issue of putting $\cS_t$ in near-isotropic position. Let $\hat{\Sigma}_t = \frac1{N} \sum_{i=1}^N (X_i-\hat{c}_t) (X_i-\hat{c}_t)^{\top}$. \cite{Rud99} showed that as long as $N= \tilde{\Omega}(n)$, one has with high probability (say at least probability $1-1/n^2$) that the set $\hat{\Sigma}_t^{-1/2} (\cS_t - \hat{c}_t)$ is in near-isotropic position.

Thus it only remains to explain how to sample from a near-isotropic convex set $\cK$. This is where random walk ideas come into the picture. The hit-and-run walk\footnote{Other random walks are known for this problem but hit-and-run is the one with the sharpest theoretical guarantees. Curiously we note that one of those walks is closely connected to projected gradient descent, see \cite{BEL15}.} is described as follows: at a point $x \in \cK$, let $\cL$ be a line that goes through $x$ in a direction taken uniformly at random, then move to a point chosen uniformly at random in $\cL \cap \cK$. \cite{Lov98} showed that if the starting point of the hit-and-run walk is chosen from a distribution ``close enough" to the uniform distribution on $\cK$, then after $O(n^3)$ steps the distribution of the last point is $\epsilon$ away (in total variation) from the uniform distribution on $\cK$. In the randomized center of gravity method one can obtain a good initial distribution for $\cS_t$ by using the distribution that was obtained for $\cS_{t-1}$. In order to initialize the entire process correctly we start here with $\cS_1 = [-L, L]^n \supset \cX$ (in Section \ref{sec:gravity} we used $\cS_1 = \cX$), and thus we also have to use a {\em separation oracle} at iterations where $\hat{c}_t \not\in \cX$, just like we did for the ellipsoid method (see Section \ref{sec:ellipsoid}).

Wrapping up the above discussion, we showed (informally) that to attain an $\epsilon$-optimal point with the randomized center of gravity method one needs: $\tilde{O}(n)$ iterations, each iterations requires $\tilde{O}(n)$ random samples from $\cS_t$ (in order to put it in isotropic position) as well as a call to either the separation oracle or the first order oracle, and each sample costs $\tilde{O}(n^3)$ steps of the random walk. Thus overall one needs $\tilde{O}(n)$ calls to the separation oracle and the first order oracle, as well as $\tilde{O}(n^5)$ steps of the random walk.

\addcontentsline{toc}{chapter}{Acknowledgements}
\begin{acknowledgements}
This text grew out of lectures given at Princeton University in 2013 and 2014. I would like to thank Mike Jordan for his support in this project.
My gratitude goes to the four reviewers, and especially the non-anonymous referee Francis Bach, whose comments have greatly helped
to situate this monograph in the vast optimization literature. Finally I am thankful to Philippe Rigollet for suggesting the new title (a previous version of the manuscript was titled ``Theory of Convex Optimization for Machine Learning"), and to Yin-Tat Lee for many insightful discussions about cutting-plane methods.
\end{acknowledgements}

\bibliographystyle{plainnat}
\bibliography{newbib}

\end{document}